\definecolor{Red}{rgb}{0.7,0,0.1}
\definecolor{Green}{rgb}{0,0.7,0}
\def\url@leostyle{%
 \@ifundefined{selectfont}{\def\UrlFont{\sf}}{\def\UrlFont{\scriptsize\ttfamily}}} \makeatother\urlstyle{leo}
\newtheorem{theorem}{Theorem}
\newtheorem{proposition}[theorem]{Proposition}
\newtheorem{lemma}[theorem]{Lemma}
\newtheorem{corollary}[theorem]{Corollary}
\theoremstyle{definition}
\newtheorem{definition}[theorem]{Definition}
\theoremstyle{remark}
\newtheorem{remark}[theorem]{Remark}
\numberwithin{equation}{section}
\numberwithin{theorem}{section}
\def\cA{\mathcal{A}}
\def\cB{\mathcal{B}}
\def\cC{\mathcal{C}}
\def\cD{\mathcal{D}}
\def\cI{\mathcal{I}}
\def\cJ{\mathcal{J}}
\def\cK{\mathcal{K}}
\def\cM{\mathcal{M}}
\def\cR{\mathcal{R}}
\def\cT{\mathcal{T}}
\def\cZ{\mathcal{Z}}
\def\bC{\mathbb{C}}
\def\bN{\mathbb{N}}
\def\bR{\mathbb{R}}
\def\bS{\mathbb{S}}
\def\bZ{\mathbb{Z}}
\title{Time-Global Regularity of the Navier-Stokes System \\ with Hyper-Dissipation -- Turbulent Scenario\footnote{to appear in \emph{Ann. PDE}}}
\author{
Zoran Gruji\'c \\
\tiny  University of Alabama at Birmingham \\[-0.6ex]
\and 
 Liaosha Xu\\
\tiny  University of California at Riverside \\[-0.6ex]
}
\begin{document}

\maketitle

\begin{abstract}
The question of whether the hyper-dissipative (HD) Navier-Stokes (NS) system can exhibit spontaneous formation
of singularities in the super-critical regime--the hyperviscous effects being represented by a fractional power
of the Laplacian, say $\beta$, confined to interval $\bigl(1, \frac{5}{4}\bigr)$--has been a major open problem in
the mathematical fluid dynamics since the foundational work of J.L. Lions in 1960s. In this work,
an evidence of criticality of the Laplacian is presented, more precisely, a class of plausible blow-up scenarios
is ruled out as soon as $\beta$ is greater than one.
While the framework is based on the  `scale of sparseness' of
the super-level sets of the positive and negative parts of the components of the higher-order derivatives of the velocity 
previously introduced by the authors, a major novelty in the current work is classification of the HD flows near a potential spatiotemporal 
singularity in two main categories, `homogeneous' (the case consistent with a near-steady behavior) and `non-homogenous' (the case consistent 
with the formation and 
decay of turbulence). The main theorem states that in the non-homogeneous case any $\beta$ greater than one prevents a singularity. 
In order to illustrate the impact of this result
in a methodology-free setting, a two-parameter family of dynamically rescaled blow-up profiles is considered, and it is shown that as soon as 
$\beta$ is greater than one, a new region in the parameter space is ruled out. More importantly, the region is a neighborhood (in the parameter space) 
of the self-similar profile, i.e.,
the approximately self-similar blow-up, a prime suspect in possible singularity formation, is ruled out for all HD NS models.
\end{abstract}

\section{Introduction}

\bigskip

Recall that 3D hyper-dissipative (HD) Navier-Stokes (NS) system reads
\begin{align}
&\partial_t u+(-\Delta)^\beta u+ u\cdot\nabla u+\nabla p=0, &&\textrm{ in }\bR^3\times (0,T) \label{eq:HypNSE1}
\\
&\textrm{div}\ u=0,                                &&\textrm{ in }\bR^3\times (0,T) \label{eq:HypNSE2}
\\
&u(\cdot,0)=u_0(\cdot),                            &&\textrm{ in }\bR^3\times \{t=0\} \label{eq:HypNSE3}
\end{align}
where the exponent $\beta > 1$ calibrates the hyperviscos effects, the vector field $u$ is the velocity of the fluid, and the scalar field
$p$ the pressure. For simplicity, the hyperviscosity coefficient is set to one, the external force to zero, and
the spatial domain taken to be the whole space
(in this case, $(-\Delta)^\beta$ is a Fourier multiplier with the symbol $|\xi|^{2 \beta}$).

\medskip

Under the intrinsic scaling of the system,
$\Bigl(
\lambda^{2\beta -1} u(\lambda x, \lambda^{2 \beta} t), \,
\lambda^{4\beta -2} p(\lambda x, \lambda^{2\beta} t)
\Bigr)$,
the only exponent leaving the energy invariant is $\beta = \frac{5}{4}$, signaling the criticality.
It has been known since 1960s, more precisely, since the work of \citet{Lions1959, Lions1969}, that the system (\ref{eq:HypNSE1}) is
indeed globally-in-time regular for any $\beta \ge \frac{5}{4}$. \citet{Tao2009} extended this to the case of a logarithmic
correction to the critical diffusion given by
\begin{align*}
\frac{(-\Delta)^\frac{5}{4}}{\log^\frac{1}{2} (I - \Delta)}\ ;
\end{align*}
this was later improved to
\begin{align*}
\frac{(-\Delta)^\frac{5}{4}}{\log{(I - \Delta)}}
\end{align*}
in \citet{Barbato2014}. A different approach to reducing hyperviscous effects was presented in
\citet{Yang2019}, considering a globally anisotropic hyper-dissipation in the Fourier space (setting
some of the Fourier components to zero) of order $\frac{5}{4}$.

\medskip

Let us note that--up to now--mathematical theories in the NS case ($\beta =1$) and the super-critical
HD case $\Bigl(\beta \in \bigl(1, \frac{5}{4}\bigr)\Bigr)$ have been indistinguishable,
in other words, the HD theory in the super-critical regime has been a rescaled
analogue of the Navier-Stokes theory (with the additional difficulties stemming from the non-locality of
the fractional diffusion).
An example from the realm of the flows initiated at regular data is the work by \citet{Tao2016} where the NS nonlinearity
is replaced by an `averaged nonlinearity' enjoying the same scaling properties and the same fundamental
cancellation relation as the NS nonlinearity, leading to a finite time blow-up for a class of suitably (with respect to the
averaged nonlinearity) constructed Schwarz initial data. This type of blow-up is--by nature of its
construction--super-critical,
and can be adopted (as remarked in \citet{Tao2016}) to the HD case as well.
Note that the methods developed in our work do not apply to the `averaged NS' as they are fundamentally
pointwise. In particular, there is no H\"ormander-Mikhlin that would faithfully estimate the averaged nonlinearity 
for $p=\infty$.
An example from the realm of the flows initiated at the finite-energy data is the work
\citet{Buckmaster2018}, building on the fundamental paper of \citet{Buckmaster2019}, demonstrating
non-uniqueness of the finite-energy and integrable-vorticity initialized flows with an additional restriction
on the size of the time-singular set, for any $\beta \in [1, \frac{5}{4})$.

\medskip

A recent asymptotic criticality result for the NS system by \citet{Grujic2019} presented a mathematical framework--based
on a suitably defined `scale of sparseness' of the super-level sets of the positive and negative parts of the
components of the higher-order derivatives--in which the `scaling gap' between a regularity criterion and the
corresponding \emph{a priori} bound vanishes as the order of the derivative goes to infinity.
Since the radius of spatial analyticity of the solutions--a key player in the theory--is intimately related to
the strength of diffusion, it seems plausible that the type of analysis presented would have a bearing on the regularity of 
the HD NS system as well.
In particular, an evidence of criticality of the Laplacian, i.e., the emergence of a mechanism preventing the possible blow-up 
as soon as $\beta > 1$ is expected.

\medskip

Before diving into the world of hyperviscosity, let us mention two instances of the critical behavior of the NS system \emph{per se} within 
the realm of sparseness of the regions of intense vorticity. Recall that in this approach, based on the spatial analyticity of
solutions and the harmonic measure maximum principle, a possible blow-up will be prevented as long as the scale of
sparseness of the super-level sets of the field of interest (in this case the vorticity)--cut at a fraction of the $L^\infty$-norm--stays below 
a fraction of the scale of the radius of spatial analyticity.

\medskip

The first instance transpires if we consider a fully developed turbulent flow, and suppose that the regions of the intense vorticity
are comprised of the vortex filaments of length $O(1)$ (there is some numerical evidence supporting the emergence and persistence
of $O(1)$-long vortex filaments in turbulent flows). Then, the \emph{a priori} bound on the volume of the filament, stemming from the
\emph{a priori} $L^1$-bound on the vorticity (\citet{Constantin1990}), implies that the transversal scale of the filament--which is at the same time comparable
to the scale of sparseness--matches the scale of the analyticity radius, i.e., one arrives at the criticality (\citet{Grujic2016}).

\medskip

The second instance concerns a computational simulation of a Kida flow (\citet{Kida1985, KM87}) performed in \citet{TG2021}. 
Boratav and Pelz (\citet{BP94}) considered the Kida flow as a laboratory for 
the computational study
of the possible singularity formation in solutions to the 3D NS and Euler flows. In particular, they discovered a
time-interval of extreme intermittency (preceding the peak of the vorticity maximum) in which the local 
quantities increase sharply. Nevertheless, the simulations consistently
showed an eventual disruption in the approximately self-similar, critical scaling, a formation of the peak, and a
subsequent dissipation of the flow, prompting them to conclude that ``However, the increase in peak vorticity
stops at a certain time, possibly due to viscous dissipation effects''.
In \cite{TG2021}, the attention was focused on the time interval leading to the peak of $\|\omega(t)\|_\infty$, and the aim was to
investigate a possibility of a power-law dependence between the actual geometric scale of sparseness $r(t)$ (derived from a
suitable computational geometry algorithm, as well as from the data set generated by the citizen science game `Turbulence', 
ScienceAtHome, Aarhus University)
and the diffusion scale $d(t)=\frac{\nu^\frac{1}{2}}{\|\omega(t)\|_\infty^\frac{1}{2}}$ of the form $r \sim d^\alpha$
(in the vorticity formulation,  $d$ is a lower bound on the radius of spatial analyticity; $\nu$ is the viscosity).
Indeed, data analysis of the time-interval of interest revealed a very strong evidence of a power-law scaling. Moreover,
the scaling exponent $\alpha$ crystalized at $1.098 \pm 0.009$, offering a mechanism
behind the eventual `slump' and dissipation observed by Boratav and Pelz and demonstrating (sub-)criticality in the 
framework of sparseness.

\medskip

In this work we classify the super-critical HD flows near a
possible spatiotemporal singularity in two categories,  `homogeneous and `non-homogeneous'. The former exhibit
a special structure of the chain of derivatives consistent with a near-steady flow behavior, a typical example would be the initial state of a radially or axially
(with small axial component) symmetric flow, e.g. a Taylor-Green vortex, while the latter feature the higher-order analogues 
of the Taylor microscale consistent with the formation, development, and eventual decay (recall that we are in the zero external force scenario) 
of the turbulent flows.

\medskip

The main result of this paper, with precise formulation given in \textit{Theorem~\ref{th:HypNSEReg}}, states that as soon as the exponent $\beta$
is greater than 1, the non-homogeneous flows remain regular, revealing criticality of the Laplacian in the turbulent scenario. Essentially, the condition
identifying the turbulent regime is given by
\begin{equation}\label{t}
\frac{ \|D^{(k)} u(t)\|_\infty^\frac{1}{k+1} }{  \|D^{(2k)} u(t)\|_\infty^\frac{1}{2k+1}  }  \leq  \, c_* \ {(T-t)}^{-\frac{\beta - 1}{2k+1}}
\end{equation}
where $T$ denotes a possible singular time. This is 
required within a suitable spatial neighborhood of the singularity, as the flow approaches $T$, and it suffices that the dominance
of the higher-order spatial fluctuations holds over a finite range of indexes and along a single direction (the precise condition is given in terms of the
directional derivatives, cf.,  \textit{Theorem~\ref{th:HypNSEReg}}).

\medskip

Note that the dimension of the quantity bounded by $c_*$ is
\[
 L^{\frac{2k+2\beta-1}{2k+1}-\frac{k+2\beta-1}{k+1}+\frac{2\beta(\beta-1)}{2k+1}}
\]
which -- for a fixed $\beta>1$ -- becomes dimensionless as $k \to \infty$.

\medskip

In order to illustrate the impact of this result on the regularity theory of the HD
NS system in a methodology-free setting, consider the following two-parameter ($\alpha_x, \alpha_t > 0$) family of 
rescaled blow-up profiles at $(0, 0)$ (cf.  \citet{AlBr2022} where it was pointed out that for $\beta = 1$, the analysis
in \citet{Grujic2019} does not rule out new exponents), 

\begin{equation}\label{ha}
 u(x, t)=\frac{1}{(-t)^{\alpha_t}} \, U(y, s) \ \ \ \mbox{where} \ \ \ y=\frac{x}{(-t)^{\alpha_x}}, \ \ \ 
 s=-\log(-t),
\end{equation}
and $U$ is a smooth (in $y$)  base profile decaying outside $B(0, 1)$ such that
\begin{align}
  \|D^{(k)} U(s)\|_{L^\infty} &\le C_k\\
  \|D^{(k)} U(s)\|_{L^\infty(B(0, 1))} &\ge c_k
\end{align}
uniformly in large $s$, where 
\begin{equation}\label{r}
\frac{(C_k)^\frac{1}{k+1}}{(c_{2k})^\frac{1}{2k+1}} \le c.
\end{equation}
This assures that the scaling is driving the dynamics, making
the roles of the scaling exponents  $\alpha_x, \alpha_t$ more transparent. In particular, (\ref{r}) is consistent with a typical algebraic base profile
$U$, e.g., in the simplest case, $U(y)=\frac{1}{1+|y|}$.

\medskip

According to the state-of-the-art, the range of the scaling exponents $(\alpha_x, \alpha_t)$ 
allowing a blow-up corresponds to the shaded region in Figure 1
(light grey union dark grey; this is a $\beta$-version of the NS diagram in \citet{AlBr2022}). The bounding 
lines are as follows. The right line, $\alpha_t = (2\beta-1) \alpha_x$
corresponds to the scaling-invariant regularity class $L^\infty (0, T; L^\frac{3}{2\beta-1})$, the left line, 
$\alpha_t = \frac{3}{2} \alpha_x$
corresponds to the finite energy (the two lines meet at $\beta = \frac{5}{4}$),
the bottom line, $\alpha_t = \frac{1}{2\beta}$
corresponds to `Type I' blow-up, e.g., to the critical blow-up rate of the $L^\infty$-norm, $(-t)^{-\frac{1}{2\beta}}$,
while the $L^2$ space-time integrability of $D^\beta$ adds the top line, 
making the region bounded.

\medskip

\begin{figure}
  \centering
     \includegraphics[width=\linewidth]{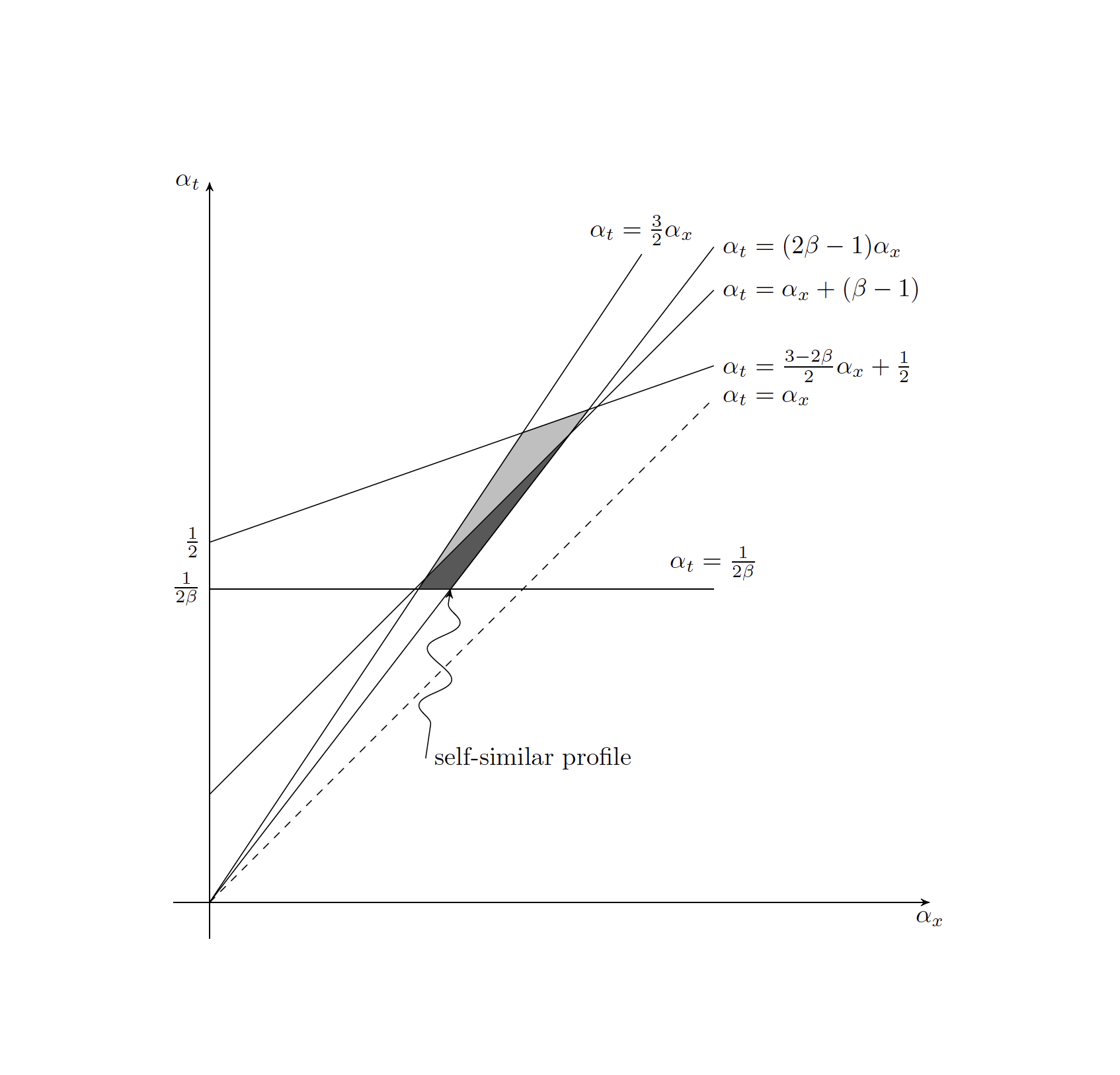}
     \vspace{.5in}
     \caption{The shaded area (light grey union dark grey) represents the region in the parameter space in which the singularity 
     formation--according to the classical, $L^p$-based theory--can not be ruled out.
     The mathematical framework based on the sparseness of 
     the super-level sets of the higher-order derivatives rules out the dark grey region 
     (notice that this is a neighborhood -- within the potentially singular region -- of the 
     self-similar profile, i.e., the approximately self-similar blow-up is ruled out as soon as $\beta$ is greater than one). The figure is a to-scale 
     rendition in the case $\beta=1.15$.}
     \label{yay}
\end{figure}

\medskip

The following theorem follows from  \textit{Theorem~\ref{th:HypNSEReg}} by a straightforward
calculation.

\medskip

\begin{theorem}\label{th:yay}
\noindent (i) Let $1 < \beta < \frac{5}{4}$. Then there exists a neighborhood--within the potentially singular region of the parameter 
space--of the self-similar 
profile in which a blow-up is ruled out (the dark grey region in Figure 1, the bounding line is
given by $\alpha_t = \alpha_x + (\beta-1)$). In other words,
the approximately self-similar blow-up is ruled out.

\smallskip

\noindent (ii) In particular, if $\frac{1+\sqrt{2}}{2} < \beta < \frac{5}{4}$, then the dark grey region covers everything, and a blow-up is ruled out
for all values of the scaling exponents.
\end{theorem}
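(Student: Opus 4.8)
The plan is to deduce Theorem~\ref{th:yay} from Theorem~\ref{th:HypNSEReg} by determining, for the two-parameter family \eqref{ha}, exactly when the flow is \emph{non-homogeneous} in the sense of that theorem, i.e. when an admissible block of indices $k$ satisfies $\theta(k,2k,t)<C^*(t)$ as $t\uparrow 0$. First I would record the scaling of the derivative chain: differentiating \eqref{ha} and using $y=x(-t)^{-\alpha_x}$ gives
\begin{equation*}
\|D^{(k)}u(\cdot,t)\|_{L^\infty}=(-t)^{-(\alpha_t+k\alpha_x)}\,\|D^{(k)}U(\cdot,s)\|_{L^\infty},
\end{equation*}
so the hypotheses on $U$ pin the $L^\infty$ size (and, after integration over the shrinking core $B(0,(-t)^{\alpha_x})$, the averaged size) of $D^{(k)}u$ between $c_k(-t)^{-(\alpha_t+k\alpha_x)}$ and $C_k(-t)^{-(\alpha_t+k\alpha_x)}$, uniformly in large $s$. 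Feeding this into the $\beta$--scale--invariant rescaling underlying $\cR(k,c,t)$ (the exponent $\tfrac{1}{2\beta-1+k}$ attached to $\|D^{(k)}u\|$) yields
\begin{equation*}
\theta(k,2k,t)\asymp A_k\,(-t)^{E(k)},\qquad E(k)=\frac{k\bigl((2\beta-1)\alpha_x-\alpha_t\bigr)}{(2\beta-1+k)(2\beta-1+2k)},
\end{equation*}
where $A_k$ collects the $s$--uniform constants.

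The decisive observation is that the sign of $E(k)$ is governed by the single affine quantity $(2\beta-1)\alpha_x-\alpha_t$, which vanishes precisely on the ray $\alpha_t=(2\beta-1)\alpha_x$ through the self-similar profile $\bigl(\tfrac{1}{2\beta},\tfrac{2\beta-1}{2\beta}\bigr)$; there $\theta(k,2k,\cdot)$ is time-independent, matching the reading of the self-similar flow as the borderline \emph{homogeneous} case (steady in the rescaled frame). Comparing $\theta(k,2k,t)\asymp A_k(-t)^{E(k)}$ with the prescribed bound $C^*(t)$ turns the non-homogeneity requirement into the explicit half-plane
\begin{equation*}
\alpha_t-(2\beta-1)\alpha_x \ < \ \sigma\,\frac{(2\beta-1+k)(2\beta-1+2k)}{k},
\end{equation*}
with $\sigma>0$ read off from the time-variational profile of $C^*$. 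Since the right-hand side is strictly positive, the self-similar ray lies in the interior of this region for every admissible $k$, so the corresponding set of exponents is a genuine neighborhood $R$ of the self-similar profile inside $S$; this gives part (i).

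For part (ii) I would extract the widest such region by optimizing over the derivative index. Writing $a=2\beta-1$ and minimizing $f(k)=\dfrac{(a+k)(a+2k)}{k}=\dfrac{a^2}{k}+3a+2k$ gives $f'(k^*)=0$ at $k^*=a/\sqrt2$, with minimal value $f(k^*)=a\,(3+2\sqrt2)=a\,(1+\sqrt2)^2$. The optimum is attained at an \emph{admissible} index (an integer $\ge 1$, the shortest meaningful block of the chain) exactly when $k^*\ge 1$, i.e. when $a=2\beta-1\ge\sqrt2$, equivalently $\beta\ge\frac{1+\sqrt2}{2}$; at that threshold $k^*=1$. For $\beta>\frac{1+\sqrt2}{2}$ the half-plane realized at $k\approx k^*$ is wide enough that its slack $\sigma a(1+\sqrt2)^2$ dominates $\max_{S}\bigl(\alpha_t-(2\beta-1)\alpha_x\bigr)$, so $R\supseteq S$ and blow-up is excluded throughout $S$; below the threshold the best admissible index is $k=1$, and $R$ stays the proper neighborhood of part (i).

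The main obstacle is the faithful bookkeeping that makes the threshold come out to exactly $\frac{1+\sqrt2}{2}$: one must fix the precise time-variational profile of $C^*(t)$ (hence $\sigma$) and the exact range of indices for which Theorem~\ref{th:HypNSEReg} licenses the non-homogeneity conclusion, then verify both that the optimal index $k^*=(2\beta-1)/\sqrt2$ is genuinely usable and that the resulting half-plane contains the extreme corner of $S$ if and only if $2\beta-1\ge\sqrt2$. The subordinate technical points---controlling the $s$--uniform constants $A_k,C_k,c_k$ so they do not corrupt the leading $(-t)$--power, and handling the spatial localization to the shrinking core together with the time-averaging built into $\theta$---are routine once the index optimization is in place.
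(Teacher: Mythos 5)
Your starting point is not the paper's. The homogeneity index in Theorem~\ref{th:HypNSEReg} is defined explicitly by $\bar\theta_\nu[\alpha](k,2k,t)\approx \|(\nabla_\nu)^k u_t\|^{1/(k+1)}/\|(\nabla_\nu)^{2k}u_t\|^{1/(2k+1)}$ -- the exponents are $\tfrac{1}{k+1}$ and $\tfrac{1}{2k+1}$, exactly as in $\cR_{m,n}(j,c,t)$, with no $\beta$-dependence. There is no ``$\beta$--scale--invariant rescaling'' with exponent $\tfrac{1}{2\beta-1+k}$ anywhere in the framework, so your
$E(k)=\tfrac{k((2\beta-1)\alpha_x-\alpha_t)}{(2\beta-1+k)(2\beta-1+2k)}$ is computed for the wrong quantity. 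The correct computation gives $\theta(k,2k,t)\approx(-t)^{k(\alpha_x-\alpha_t)/((k+1)(2k+1))}$, and comparing with the leading term $(-t)^{-(\beta-1)/(2k+1)}$ of $C_1^*[\beta]$ yields the half-plane $\alpha_t\le\alpha_x+\tfrac{k+1}{k}(\beta-1)$, hence (after absorbing the quadratic correction and letting $k\to\infty$) the dividing line $\alpha_t<\alpha_x+(\beta-1)$: a slope-one line, not your slope-$(2\beta-1)$ ray. Your claim that the self-similar profile sits where $\theta$ is time-independent is an artifact of the wrong normalization; with the correct one, $\theta$ is constant on $\alpha_t=\alpha_x$, and at the self-similar point $\theta$ actually grows -- the content of part (i) is that it grows \emph{more slowly} than $C_1^*$, since $\tfrac{\beta-1}{\beta}<\beta-1$.

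The argument for (ii) also does not work. First, the constant $\sigma$ you ``read off'' from $C^*$ is not $k$-independent: the exponent of $C_1^*$ is $-\tfrac{\beta-1}{2k+1}$ plus a quadratic-in-$k^{-1}$ correction, and this $k$-dependence is precisely what produces the factor $\tfrac{k+1}{k}$ and hence the limit line. Second, minimizing $f(k)=\tfrac{(a+k)(a+2k)}{k}$ and locating the optimum at $k^*=(2\beta-1)/\sqrt2\approx 1$ is incompatible with the framework, in which the indexes $k$ must be taken large ($k\ge\ell$ with $\|u_0\|\lesssim(1+\epsilon)^\ell$); the relevant regime is $k\to\infty$, where the admissible region \emph{shrinks} to $\alpha_t<\alpha_x+(\beta-1)$, so an interior optimum near $k=1$ cannot be invoked. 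That your algebra nevertheless outputs the number $\tfrac{1+\sqrt2}{2}$ is a coincidence, not a proof: in the paper the threshold arises from checking when the line $\alpha_t=\alpha_x+(\beta-1)$ sweeps the entire pre-existing singular region $S$ (the $\beta$-rescaled Leray diagram), i.e.\ from computing $\max_S(\alpha_t-\alpha_x)$ -- a step you never carry out, instead asserting that ``the slack dominates $\max_S$'' without identifying $S$. To repair the proof you should redo the exponent computation with the paper's $\theta$, derive the line \eqref{eq:line}, and then explicitly verify the covering of $S$ for $\beta>\tfrac{1+\sqrt2}{2}$.
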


\medskip

\begin{remark}
Some flows in the light grey region (in the $1 < \beta \le \frac{1+\sqrt{2}}{2}$ regime) might still be in the non-homogeneous
scenario (e.g., provided that the profiles $U$ feature local anisotropy around the origin), the rest is
in the homogeneous scenario which is being addressed in the upcoming work
\citet{Grujic2022}.
\end{remark}

\medskip

\begin{remark}
Note that Type I blow-up in this setting is ruled out as soon as $\displaystyle{\beta > \frac{3+\sqrt{15}}{6}}$.
\end{remark}

\medskip

At the end, we present a bit of heuristics behind the proof of  \textit{Theorem~\ref{th:HypNSEReg}}, identifying a principal source of the scaling gain.
Recall that in this framework, a `level-$k$ dynamics' is realized by evolution of the derivatives of order $k$, taking place in the
physical space. This is in contrast to the Fourier approach where one follows evolution of the modes of order $k$. Working in the physical space
here is necessary since a key component of our analysis is based on the spatial intermittency at different levels.

\medskip

For $\beta > 1$ and a nonnegative integer $k$, consider the following three scales in 3D (in reality, each scale
comes with a dimensional multiplicative constant that makes them a length scale).

\medskip

 \[
 l_k = \|D^{(k)}u\|_\infty^{-\frac{1}{k+\frac{3}{2}}}
 \]
 
 \[
  \rho_k = \|D^{(k)}u\|_\infty^{-\frac{1}{2\beta-1}\frac{3}{2}\frac{1}{k+\frac{3}{2}}}
 \]
  
 \[
   \widetilde{\rho}_k = \|D^{(k)}u\|_\infty^{-\frac{1}{2\beta-1}\frac{1}{k+1}}.
 \]

\medskip

$l_k$ is the level-$k$ `scale of sparseness' and can also be thought of as 
a `typical length scale' at level-$k$. If one can show that the level-$k$ lower bound on the radius of spatial
analyticity (a natural level-$k$ dissipation scale) dominates $l_k$, the road to a non blow-up argument based on the harmonic 
measure maximum principle opens up.

\medskip

$\rho_k$ is a general lower bound on the level-$k$ radius of spatial analyticity (cf. Theorem 4.2, once we
set $d=3$, $p=2$, the external force to 0, and take the worst case scenario). $\rho_k$ will
dominate $l_k$ only in the sub-critical regime delineated by Lions' exponent $\frac{5}{4}$. 
The proof is based on a complexified fixed-point algorithm in $L^\infty$ and
standard Gagliardo-Nirenberg interpolation (assuming $u_0 \in L^\infty \cap L^2$).

\medskip

And then there is $\widetilde{\rho}_k$. The idea here is to show that the `ascending chain condition', i.e., the assumption that
the higher-order derivatives (suitably rescaled) dominate the low-order derivatives, is capable of upgrading
the rigorous lower bound on the radius of spatial analyticity from $\rho_k$ to $\widetilde{\rho}_k$.

\medskip

The most transparent way to arrive at $\widetilde{\rho}_k$, or equivalently, to the time-scale of the order of
\[
 \tau_k ={\widetilde{\rho}_k}^{\, 2 \beta},
\]
is to assume the ascending chain condition uniformly in time, for as 
long as the solution is smooth, and
over the full range of indices, i.e., suppose that there exists a constant $c_0 > 1$ such that
\[
  \|D^ju(t)\|^{\frac{1}{j+1}} \leq c_0 \frac{(j!)^{\frac{1}{j+1}}}{(k!)^{\frac{1}{k+1}}}\|D^{k}u(t)\|^{\frac{1}{k+1}} \ \ \  \mbox{for all}  \ \ \  1 \le  j\le k.
\]
For the time being, let us focus on the real-space iterations. Then, it is straightforward to check that writing Duhamel for $D^{(k)}u$ 
and performing the fixed-point algorithm in $L^\infty$ (using the
above monotonicity assumption in place of the standard Gagliardo-Nirenberg interpolation in the Leibniz expansion of the nonlinearity) 
yields the time of existence precisely of the order of 
$\tau_k$.  Note that the quantity bounded by $c_0$ has a dimension of
\[
 L^{\frac{k+2\beta-1}{k+1}-\frac{j+2\beta-1}{j+1}}
\]
and is dimensionless only in the Navier-Stokes case (however, for a fixed $\beta>1$, and $k \to 2k, \, j \to k$
case, it becomes dimensionless in the limit as $k \to \infty$). This leads to a dimensional multiplicative constant in front of $\tau_k$,
making it a true time-scale once we bring back the hyperviscosity coefficent $\nu_\beta$ (which was -- for simplicity -- set to
1 at the beginning).

\medskip

Of course, this type of monotonicity assumption is unrealistic (plus the argument above in its simple form works only
for the real-space solutions). What one needs to show is that the montonicity assumed only at the initial time of the fixed-point algorithm,
and for a suitable range of indices,
will propagate (locally) in time and into the complex-space. This is essentially Theorem 4.5 which is stated for the much more general
bounding ratios (${\cal{M}}_{j, k}$) than 
\[
  \frac{(j!)^{\frac{1}{j+1}}}{(k!)^{\frac{1}{k+1}}},
\]
and the proof is analogous to the proof in the Navier-Stokes case given in \citet{Grujic2019}, and -- essentially -- a dynamic version
of its rudimentary form delineated above (hence, we refer to it as `dynamic interpolation').

\medskip

Incidentally, this argument is written out in detail in a short paper \citet{FaGr2023} (the results of which follow from the general theory
developed here), in particular, the conclusion of Corollary 3.2 for $j=k$ yields precisely 
$\tau_k$ and $\widetilde{\rho}_k$.

\medskip

For any $\beta>1$, the curves $l_k$ and $\widetilde{\rho}_k$ cross, and that (eventually) yields Theorem 4.1 which -- in turn -- rules out 
a class of super-critical (generalized) self-similar blow-ups (Theorem 1.1). For $\beta = 1$, they only get asymptotically close, 
and that -- as pointed out in 
\citet{AlBr2022} -- does not rule out any (generalized) self-similar blow-up in the super-critical regime.

\medskip

Let us remark that in this work a word `turbulent' is used in the sense that a portion of the chain being ascending can be thought of as a multi-level 
analogue of the conditions 
on either Taylor or Kraichnan scale leading to a cascade (spatial fluctuations of a field dominating the field, wrinkles, turbulence). 
Moreover, as long as $\beta>1$ and for $k$ large enough (depending on $\beta$ as well as some dynamic quantities),
a `typical length scale' at level-$k$ ($l_k$) indeed falls into the level-$k$ dissipation range delineated by the upgraded lower bound on the
radius of spatial analyticity ($\widetilde{\rho}_k$), demonstrating turbulent dissipation.

\medskip

Although falling of $l_k$ into the level-$k$ dissipation range
delineated by $\widetilde{\rho}_k$ is a key to the proof of Theorem 4.1, there was still much work to do. Namely,
one had to confirm that under the umbrella of the condition (\ref{t})
the ordered dynamics of `chain of derivatives' (ascending \emph{vs.} 
descending portions of the chain) indeed contradicts a blow-up (descending portions \emph{per se} are not a 
problem; here, $l_k$ falls into a level-$k$ dissipation range even in the Navier-Stokes case as demonstrated
in \citet{Grujic2019}). This was accomplished by carefully 
tracking all the relevant time-scales in an algorithmic way.

\medskip

\begin{remark}\label{qc}

Since the proof of Theorem 4.1 is quite long and technical it might not be readily transparent how a dynamic bound on the ratios of lower and higher order derivatives in the assumption (\ref{t}) fits in the overarching parable of the scale of the analyticity radius eventually overtaking the scale of a priori sparseness and it might be helpful to provide a quick rationale. Note that (\ref{t}) is equivalent to the assumption there exists a constant $c_{**} > 1$ such that
\begin{equation}\label{ts}
 \frac{\frac {\|D^{(k)}u(t)\|_\infty^{\frac{1}{k+1}}}{(k!)^{\frac{1}{k+1}}}}{\frac{\|D^{(2k)}u(t)\|_\infty^{\frac{1}{2k+1}}}{((2k)!)^{\frac{1}{2k+1}}}} 
 \le c_{**} \, (T-t)^{-\frac{\beta-1}{2k+1}}
\end{equation}
for $k$ large enough and $t$ near the first possible singular time $T$ (since $\displaystyle{(j!)^\frac{1}{j+1} \approx j}$), while 
a portion of the chain of derivatives was termed `ascending' if there exists a constant $c_0 > 1$ such that
\begin{equation}\label{asc}
 \frac{\frac {\|D^{(j)}u(t)\|_\infty^{\frac{1}{j+1}}}{(j!)^{\frac{1}{j+1}}}}{\frac{\|D^{(k)}u(t)\|_\infty^{\frac{1}{k+1}}}{(k!)^{\frac{1}{k+1}}}} \le c_0  
 \ \ \  \mbox{for all}  \ \ \  l \le  j\le k.
\end{equation}

\medskip

The rationale behind (\ref{asc}) bridging the super-criticality was given earlier in Introduction, here we present a quick scaling argument -- in the
same spirit -- indicating that the bound in (\ref{ts}) is indeed capable of bridging the super-criticality as soon as $\beta > 1$.

\medskip

Writing a Duhamel for $D^{(2k)}u$, the nonlinear term can be written as 
\[
 \iint  \nabla G_\beta \, D^{(2k)}\bigl[ u \otimes u \bigr]\, dx \, dt
\]
where $G_\beta$ is the fractional heat kernel (for simplicity, this is all real). Focusing on the symmetric term in the Leibniz
expansion, 
\[
 \iint  \nabla G_\beta \,\bigl[ D^{(k)}u \otimes D^{(k)}u \bigr]\, dx \, dt,
\]
taking the $L^\infty$-norms and utilizing (\ref{ts}) produces a closed estimate on $\|D^{(2k)}u(t)\|_\infty$ resulting in local time of
existence $T$ of at least 
\[
 \|D^{(2k)}u(t_0)\|_\infty^{-\frac{2\beta}{\alpha(\beta, 2k) (2k+1)}}
\]
where $\displaystyle{\alpha(\beta, 2k)=(2\beta-1)-\frac{4\beta (\beta-1)}{2k+1}}$. A complexified version then yields a lower
bound on the radius of spatial analyticity at $T$ of
\[
 \|D^{(2k)}u(t_0\|_\infty^{-\frac{1}{\alpha(\beta, 2k) (2k+1)}}
\]
which will overtake the \emph{a priori} estimate on the scale of sparseness
as soon as $\alpha(\beta, 2k) (2k+1) > 2k + \frac{3}{2}$ and we are back in business 
$\bigl($asymptotically, this takes place at $k \approx \frac{1}{\beta-1}$$\bigr)$.

\medskip

In the rigorous argument presented in Section 4, dynamics of the chain is decomposed in the primary ascending and descending
portions (the ratios bounded by constants) and the dynamic bound given in (\ref{ts}) serves as a `flexible ceiling' for the ascending pieces
to bounce off.

\end{remark}

\medskip

\begin{remark}\label{2d-3d}

In the case of the 2D NS system which is critical (the scaling-invariant level and the energy level coincide) all three scales  -- the 
\emph{a priori} scale of sparseness, the scale of the general lower
bound on the analyticity radius, and the scale of the lower bound on the analyticity radius in the monotone scenario -- coincide
and are equal to
\[
  \|D^{(k)}u\|_\infty^{-\frac{1}{k+1}}
\]
(in particular, the monotone scenario does not bring any gain).

\medskip

In the case of the 3D NS system which is super-critical (there is a gap between the scaling-invariant level and the energy level), 
the aforementioned scales are
\[
 \|D^{(k)}u\|_\infty^{-\frac{1}{k+\frac{3}{2}}}, \  \  \|D^{(k)}u\|_\infty^{-\frac{1}{\frac{2}{3}k+1}}, \ \mbox{and} \ \|D^{(k)}u\|_\infty^{-\frac{1}{k+1}},
\]
respectively. In this setting, the super-criticality reveals itself as a constant (independent of $k$) scaling gap between the first two scales, 
while the scaling gap between the first and the last scale vanishes as $k \to \infty$ (this was termed `asymptotic criticality' in
 \citet{Grujic2019}). It is informative to note that the last scale is the only true length-scale, the first two come with dimensional
 multiplicative constants attached to them (depending on the $L^2$-norm of the initial velocity), and is in this sense a natural
 level-$k$ scale in turbulent regime.

\end{remark}

\medskip

The paper is organized as follows. Section~\ref{sec:PrfThm} provides a synopsis and refinement of the asymptotic criticality
result for the NS system presented in \citet{Grujic2019}, Section~\ref{sec:HomASS} contains the precise definitions 
of homogeneity and non-homogeneity, Section~\ref{sec:MainThm} states the main theorem for the HD NS in the non-homogeneous case 
(\textit{Theorem~\ref{th:HypNSEReg}}) followed by a proof based on the mechanism developed in \citet{Grujic2019}, 
while Section~\ref{sec:hh} addresses Theorem~\ref{th:yay}.

\section{A Review and Refinement of Asymptotic Criticality}\label{sec:PrfThm}

The NS regularity problem, i.e. the time-global existence of smooth solutions to the Navier-Stokes system in $\bR^d$ ($d\ge 3$)
\begin{align}
&\partial_t u-\Delta u+ u\cdot\nabla u+\nabla p=f, &&\textrm{ in }\bR^d\times (0,T) \label{eq:NSE1}
\\
&\textrm{div}\ u=0,                                &&\textrm{ in }\bR^d\times (0,T) \label{eq:NSE2}
\\
&u(\cdot,0)=u_0(\cdot),                            &&\textrm{ in }\bR^d\times \{t=0\} \label{eq:NSE3}
\end{align}
where the force $f(\cdot,t)$ is real-analytic in space with a uniform analyticity radius $\delta_f$ for all $t\in\bR^+$, which admits some analytic extension $f+ig$, while $u_0$ is the given initial velocity vector field, has been super-critical in the sense that there has been a `scaling gap' between any regularity criterion and the corresponding \emph{a priori} bound. More precisely, all the regularity criteria are at best scaling-invariant (with respect to the intrinsic scaling), while all the corresponding \emph{a priori} bounds had been on the scaling level of the bounded kinetic energy, $u \in L^\infty (0,T; L^2)$. A classical example (in three dimensions) is given by the Ladyzhenskaya-Prodi-Serrin regularity criterion, $u \in L^p (0,T; L^q)$,
\begin{align*}
\frac{3}{q} + \frac{2}{p} = 1
\end{align*}

\noindent \emph{vs.} the corresponding \emph{a priori} bound $u \in L^p (0,T; L^q)$,

\begin{align*}
\frac{3}{q} + \frac{2}{p} = \frac{3}{2}
\end{align*}

\noindent (for a suitable range of the parameters).

The \emph{a priori} bounds are traditionally derived for an arbitrary Leray-Hopf (weak) solution
to the 3D NS system. Since we are primarily interested in reducing the scaling gap in the
regularity problem, henceforth, the \emph{a priori} bounds will be discussed for a smooth
flow approaching a possible singular time.

The mathematical framework based on the suitably defined `scale of sparseness' of the regions of the intense fluid activity (\citet{Grujic2001, Grujic2013, Bradshaw2019, Grujic2019}) has been designed as a laboratory for a rigorous mathematical analysis of the phenomenon of spatial intermittency in turbulent flows, with a hope that it may lead to `scaling deviations' from the classical regularity theory.

In this section we first compile some notions and ideas about sparseness of the regions of intense fluid activity whose initial mathematical setup was developed in \citet{Grujic2001} and reformulated in \citet{Grujic2013, Farhat2017} and \citet{Bradshaw2019}, as well as provide a review of the key steps for the study of spatial intermittency of the higher order derivatives as presented in \citet{Grujic2019}. At the end, we develop some improvements -- giving the portions of the chain more freedom -- in preparation for the proof of \textit{Theorem~\ref{th:HypNSEReg}}.

Let $S$ be an open subset of $\mathbb{R}^3$ and $\mu_d$ the $d$-dimensional Lebesgue measure.

\begin{definition}
For a spatial point $x_0$ and $\delta\in (0,1)$, an open set $S$ is 1D $\delta$-sparse around $x_0$ at scale $r$ if there exists a unit vector $\nu$ such that
\begin{align*}
\frac{\mu_1\left(S\cap (x_0-r\nu, x_0+r\nu)\right)}{2r} \le \delta\ .
\end{align*}
\end{definition}

The volumetric version is the following.

\begin{definition}
For a spatial point $x_0$ and $\delta\in (0,1)$, an open set $S$ is 3D $\delta$-sparse around $x_0$ at scale $r$ if
\begin{align*}
\frac{\mu_3\left(S\cap B_r(x_0)\right)}{\mu_3(B_r(x_0))} \le \delta\ .
\end{align*}
Also $S$ is said to be $r$-semi-mixed with ratio $\delta$ if the above inequality holds for every $x_0\in\bR^3$. (It is straightforward to check that for any $S$, $3$-dimensional $\delta$-sparseness at scale $r$ implies 1D $(\delta)^\frac{1}{3}$-sparseness at scale $r$ around any spatial point $x_0$; however the converse is false, i.e. local-1D sparseness is in general a weaker condition.)
\end{definition}

Based on the scale of sparseness of the super-level sets of the positive and negative parts of the vectorial components of a function $f$, \citet{Bradshaw2019} introduced the regularity classes $Z_\alpha$ as a new device for scaling comparison of solutions to the 3D NSE. In what follows, let us denote the positive and the negative parts of the components of a vector field $f$ by $f_i^\pm$, and calculate the norm of a vector $v=(a, b, c)$ as $|v| = \max \{|a|, |b|, |c|\}$. Then we have the following definition.

\begin{definition}[\citet{Bradshaw2019}]
For a positive exponent $\alpha$, and a selection of parameters $\lambda$ in $(0,1)$, $\delta$ in $(0,1)$ and $c_0>1$, the class of functions $Z_\alpha(\lambda, \delta; c_0)$ consists of bounded, continuous functions $f : \mathbb{R}^3 \to \mathbb{R}^3$ subjected to the following uniformly-local condition. For $x_0$ in $\mathbb{R}^3$, select the/a component $f_i^\pm$ such that $f_i^\pm(x_0) = |f(x_0)|$, and require that the set
\begin{align*}
S_i^\pm:=\biggl\{ x \in \mathbb{R}^3: \, f_i^\pm(x) > \lambda \|f\|_\infty\biggr\}
\end{align*}
be 3D $\delta$-sparse around $x_0$ at scale $c \, \frac{1}{\|f\|_\infty^\alpha}$, for some $c, \frac{1}{c_0} \le c \le c_0$. Enforce this for all $x_0$ in $\mathbb{R}^3$. Here, $\alpha$ is the scaling parameter, $c_0$ is the size-parameter, and $\lambda$ and $\delta$ are the (interdependent) `tuning parameters'.
\end{definition}

Applying the $Z_\alpha$ framework to the vorticity field (\citet{Bradshaw2019}), the regularity class transpired to be $Z_\frac{1}{2}$, while the corresponding class of \emph{a priori} sparseness near a possible singular time transpired to be $Z_\frac{2}{5}$, bringing a scaling gain within the framework. The gain is due to the special structure of the vorticity form of the 3D NS system, namely, if one works with the full gradient, 
the class of  \emph{a priori} sparseness remains $Z_\frac{2}{5}$, while the regularity class worsens to $Z_\frac{3}{5}$ (this corresponds to the standard 
scaling gap).

\medskip

The next step (\citet{Grujic2019}) was to consider the higher-order spatial fluctuations of the velocity field (higher order derivatives)
and investigate the scaling gap--in the $Z_\alpha$ framework--as the order of the derivative goes to infinity. Essentially, the idea was to build a Sobolev scale $Z^{(k)}_{\alpha_k}$ based on $Z_\alpha$,
\begin{align*}
 u \in Z^{(k)}_{\alpha_k} \ \ \ \mbox{if} \ \ \ D^{(k)} u \in Z_{\alpha_k}.
\end{align*}

\medskip

The main results are depicted in the following table.

\medskip

\begin{table}[H]
{
\begin{center}
\begin{tabular}{ | p{5cm} | p{5cm} | }
\hline
 ~&~
\\
Regularity class
& A priori bound
\\  ~&~\\ \hline ~&
\\   $u(\tau) \in \bigcap_{k \ge k^*} Z^{(k)}_\frac{1}{k+1}$ on a suitable $(T^* - \epsilon, T^*)$, small size-parameters,
uniform in time; $k^*$ arbitrary large
&  $u(\tau) \in \bigcap_{k \ge 0} Z^{(k)}_\frac{1}{k+\frac{3}{2}}$  on a suitable $(T^* - \epsilon, T^*)$, the size-parameters
uniform in time
\\ ~  ~ & ~ 
\\\hline
\end{tabular}
\end{center}
}
\end{table}

\noindent It is instructive to take a closer look at the level-$k$ scales of sparseness realizing the above functional classes.

{
\begin{table}[H]
{
\begin{center}
\begin{tabular}{ | p{5cm} | p{5cm}  |}
\hline ~&~
\\
Regularity class-scale
& A priori bound-scale
\\  ~&~\\ \hline ~&
\\  $\frac{1}{C_1(k)} \frac{1}{\|D^{(k)} u\|_\infty^\frac{1}{k+1}}$
&  $C_2(\|u_0\|_2, k) \frac{1}{\|D^{(k)} u\|_\infty^\frac{1}{k+\frac{3}{2}}}$

\\ ~  ~ & ~ 
\\\hline
\end{tabular}
\end{center}
}
\end{table}
}

\noindent A key information of interest is the scaling of dynamic quantities in the table
above, given in the table below.

{
\begin{table}[H]
{
\begin{center}
\begin{tabular}{ | p{3.9cm} | p{3.7cm} | }
\hline
 ~&~
\\
Regularity class-scale
& A priori bound-scale

\\  ~&~\\ \hline ~&~
\\  $\frac{1}{\|D^{(k)} u\|_\infty^\frac{1}{k+1}} \approx r$
&  $\frac{1}{\|D^{(k)} u\|_\infty^\frac{1}{k+\frac{3}{2}}} \approx r^\frac{k+1}{k+\frac{3}{2}}$
\\ ~  ~ & ~ 
\\\hline
\end{tabular}
\end{center}
}
\end{table}
}

\noindent Since
\begin{align*}
r^\frac{k+1}{k+\frac{3}{2}}  \to  r, \ \ \  k \to \infty
\end{align*}
and $k^*$ can be taken arbitrary large, this was termed `asymptotic criticality' of the NS regularity problem within the framework (\citet{Grujic2019}).

\bigskip

The following is a list of some results, either derived or quoted in \citet{Grujic2013, Farhat2017, Bradshaw2019} and \citet{Grujic2019}, which will be referred to in the rest of the paper.

\begin{theorem}[\citet{Guberovic2010} and \citet{Bradshaw2019}]\label{th:LinftyIVP}
Let the initial datum $u_0\in L^\infty$ (resp. $\omega_0\in L^\infty\cap L^2$). Then, for any $M>1$, there exists a constant $c(M)$ such that there is a unique mild solution $u$ (resp. $\omega$) in $C_w([0,T], L^\infty)$ where $T\ge\frac{1}{c(M)^2\|u_0\|_\infty^2}$ (resp. $T\ge\frac{1}{c(M)\|\omega_0\|_\infty}$), which has an analytic extension $U(t)$ (resp. $W(t)$) to the region
\begin{align*}
\cD_t:=\left\{x+iy\in\bC^3\ :\ |y|\le \sqrt{t}/c(M)\ \left(\textrm{resp. }|y|\le \sqrt{t}/\sqrt{c(M)}\right)\right\}
\end{align*}
for all $t\in[0,T]$, and
\begin{align*}
\sup_{t\le T}\|U(t)\|_{L^\infty(\cD_t)}\le M\|u_0\|_\infty\ \left(\textrm{resp. }\sup_{t\le T}\|W(t)\|_{L^\infty(\cD_t)}\le M\|\omega_0\|_\infty\right).
\end{align*}
\end{theorem}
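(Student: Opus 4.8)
The plan is to solve the complexified integral (mild) formulation by a contraction-mapping argument in a function space whose norm simultaneously tracks the solution's sup-norm and its \emph{growing} radius of spatial analyticity. I will present the argument for the velocity formulation; the vorticity case is entirely analogous, with the Biot--Savart law and an auxiliary $L^2$ bound playing the role that the Leray projector plays below.

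First I would write the mild formulation
$$ u(t) = e^{t\Delta}u_0 - \int_0^t e^{(t-s)\Delta}\,\bP\,\nabla\cdot\bigl(u\otimes u\bigr)(s)\,ds, $$
where $\bP$ is the Leray projector, and seek the holomorphic-in-space extension $U(x+iy,t)$. Since the heat kernel $G_t(z)=(4\pi t)^{-3/2}\exp(-z\cdot z/4t)$ is entire in $z\in\bC^3$, the linear term extends analytically, and a contour-shift together with a Gaussian estimate yields $\|e^{t\Delta}u_0\|_{L^\infty(\cD_t)}\le \Phi\bigl(1/c(M)\bigr)\,\|u_0\|_\infty$ with $\Phi(\rho)\to 1$ as $\rho\to 0$. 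This is where the amplification factor $M$ enters: choosing $c(M)$ large forces $\Phi(1/c(M))\le \sqrt{M}$, so that the linear part lands safely inside the ball of radius $M\|u_0\|_\infty$. I would then work in the Banach space $X_T$ with norm $\|U\|_{X_T}=\sup_{0<t\le T}\|U(t)\|_{L^\infty(\cD_t)}$.

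The core step is the nonlinear estimate. The composite operator $e^{(t-s)\Delta}\bP\,\nabla\cdot$ has a convolution kernel whose complexified version, mapping $\cD_s$ into $\cD_t$, is absolutely integrable in the real variable with $L^1_x$-norm of order $(t-s)^{-1/2}$, uniformly over the admissible imaginary shifts. The point enabling this is that the strip widens only like $\sqrt{t}/c(M)$, so the extra imaginary extension needed to pass from time $s$ to time $t$ is of size $(\sqrt{t}-\sqrt{s})/c(M)$, and since $(\sqrt{t}-\sqrt{s})^2/(t-s)\le 1$ this increment is absorbed by the Gaussian tail of the kernel. Combining this with $\|(U\otimes U)(s)\|_{L^\infty(\cD_s)}\le \|U(s)\|^2_{L^\infty(\cD_s)}$ gives
$$ \Bigl\|\int_0^t e^{(t-s)\Delta}\bP\,\nabla\cdot(U\otimes U)(s)\,ds\Bigr\|_{L^\infty(\cD_t)} \le C\,\|U\|_{X_T}^2\int_0^t (t-s)^{-1/2}\,ds = 2C\sqrt{T}\,\|U\|_{X_T}^2. $$
Hence the solution map sends the ball of radius $M\|u_0\|_\infty$ into itself and is a contraction precisely when $C\sqrt{T}\,M\|u_0\|_\infty$ is sufficiently small relative to $\sqrt{M}(\sqrt{M}-1)$, i.e. when $T\le \frac{1}{c(M)^2\|u_0\|_\infty^2}$ after adjusting $c(M)$; this matches the scaling-invariance of the quantity $\|u_0\|_\infty^2\,T$ under the intrinsic NS scaling. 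Uniqueness, the bound $\sup_{t\le T}\|U(t)\|_{L^\infty(\cD_t)}\le M\|u_0\|_\infty$, and weak-$\ast$ continuity in time then follow from the fixed-point construction in a standard way.

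I expect the genuine obstacle to be the nonlinear kernel estimate on the complex strip. The Leray projector is not bounded on $L^\infty$, so one cannot estimate $\bP$ and $e^{(t-s)\Delta}\nabla\cdot$ separately; instead one must use the explicit oscillatory-Gaussian kernel of the composite operator and verify that its complexified $L^1_x$-norm still scales like $(t-s)^{-1/2}$, with the $\cD_s\to\cD_t$ imaginary shift controlled by the Gaussian decay as sketched above. For the vorticity formulation the analogous difficulty is that the Biot--Savart kernel is a convolution of order $-1$ and is likewise unbounded on $L^\infty$; this is exactly why the hypothesis $\omega_0\in L^\infty\cap L^2$ is imposed, the $L^2$ bound furnishing the long-range control needed to recover $u$ from $\omega$ on the strip. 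It is this different homogeneity that produces the distinct scaling $T\gtrsim 1/\|\omega_0\|_\infty$ and the modified analyticity radius $\sqrt{t}/\sqrt{c(M)}$.
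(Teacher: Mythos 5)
The paper states this theorem without proof, importing it from \citet{Guberovic2010} and \citet{Bradshaw2019}, and your contraction-mapping argument on the complexified mild formulation --- with the analyticity strip growing like $\sqrt{t}/c(M)$ and the increment $(\sqrt{t}-\sqrt{s})^2\le t-s$ absorbed into the kernel bound --- is exactly the method of those references and of the paper's own Theorems~\ref{th:MainThmVel} and \ref{th:MainThmVor}, which generalize it to higher derivatives. Your proposal is correct in outline; the only imprecision is attributing the absorption of the imaginary shift to a ``Gaussian tail'' of the composite kernel $e^{(t-s)\Delta}\bP\,\nabla\cdot$, whose Oseen part decays only polynomially (like $(|x|^2+(t-s))^{-2}$), but you correctly identify this composite-kernel estimate as the genuine obstacle and the $L^1_x$-bound of order $(t-s)^{-1/2}$ does survive complexification.
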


\begin{lemma}[\citet{Nirenberg1959} or \citet{Gagliardo1959}]\label{le:GNIneq}
Suppose $p,q,r\in[1,\infty]$, $s\in\bR$ and $m,j,d\in\bN$ satisfy
\begin{align*}
\frac{1}{p} = \frac{j}{d} + \left(\frac{1}{r}-\frac{m}{d}\right)s + \frac{1-s}{q} \ , \qquad \frac{j}{m}\le s\le 1\ .
\end{align*}
Then, there exists constant $C$ only depending on $m,d,j,q,r,s$ such that for any function $f: \bR^d\to\bR^d$
\begin{align*}
\|D^j f\|_{L^p}\le C\|D^m f\|_{L^r}^s\|f\|_{L^q}^{1-s}
\end{align*}
\end{lemma}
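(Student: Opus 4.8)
The plan is to prove the inequality by establishing it first in one dimension for the lowest-order case and then bootstrapping in two independent directions: raising the order of the derivative by iteration, and raising the dimension by an integration-over-lines argument. The admissible relation between the exponents is dictated by scaling, and I would record this at the outset as both a motivation and a running consistency check: replacing $f$ by $f(\lambda\,\cdot\,)$ multiplies $\|D^j f\|_{L^p}$ by $\lambda^{j-d/p}$, $\|D^m f\|_{L^r}$ by $\lambda^{m-d/r}$, and $\|f\|_{L^q}$ by $\lambda^{-d/q}$, so invariance of the putative inequality under this one-parameter group forces exactly the stated identity $\frac1p=\frac jd+\bigl(\frac1r-\frac md\bigr)s+\frac{1-s}{q}$ (the vector-valued case $f:\bR^d\to\bR^d$ reduces componentwise to the scalar one).

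The base case is $d=1$, $j=0$, $m=1$. For decaying $f$ and a suitable exponent $\gamma$ one writes $|f(x)|^{\gamma}=\gamma\int_{-\infty}^{x}|f|^{\gamma-1}\operatorname{sgn}(f)\,f'\,dt$, dominates the right-hand side by $\gamma\int_{\bR}|f|^{\gamma-1}|f'|$, and applies Hölder; choosing $\gamma$ and the Hölder exponents to match the target forces a small linear system in the exponents whose solvability is guaranteed precisely by the scaling identity. Passing to general $d$ (still $j=0$, $m=1$) then proceeds by applying the one-dimensional bound along each of the $d$ coordinate directions and combining the results by a multilinear Hölder inequality in the transverse variables — the Gagliardo--Loomis--Whitney mechanism — which is exactly where the dimensional factor $j/d$ and the combination $\frac1r-\frac md$ are produced.

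To reach general $m$ and $j$ I would iterate the first-order estimate: control $D^j f$ by $D^{j+1}f$ and $D^{j-1}f$, chain such steps together, and at each stage use Young's inequality $ab\le\varepsilon\,a^{1/\theta}+C_\varepsilon\,b^{1/(1-\theta)}$ to redistribute and absorb the intermediate-order norms, so that an induction on $m-j$ collapses everything onto the two endpoint factors $\|D^m f\|_{L^r}$ and $\|f\|_{L^q}$. For the interior range $1<p,q,r<\infty$ this entire scheme can be shortcut by a Littlewood--Paley/Bernstein argument: from $\|D^jP_Nf\|_{L^p}\lesssim N^{\,j+d/r-d/p}\|P_Nf\|_{L^r}$ one interpolates the two endpoint frequency estimates and sums in $N$, at the cost of the square-function theory that confines $p$ away from $1$ and $\infty$.

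The main obstacle I anticipate is not any single estimate but the exponent bookkeeping across the two inductions while remaining inside the admissible window $j/m\le s\le1$, together with the genuinely delicate borderline configurations — in particular the exceptional case in which $m-j-d/r$ is a nonnegative integer, where the inequality can fail at $s=1$ and a naive iteration produces a logarithmic loss that must be treated separately via a BMO- or Besov-type substitute endpoint. Since Lemma~\ref{le:GNIneq} is invoked here only as a standard interpolation tool, in practice I would restrict to the cleanly interior parameter ranges relevant to the application and defer the borderline cases to the classical references \citet{Nirenberg1959, Gagliardo1959}.
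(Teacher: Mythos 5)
The paper offers no proof of Lemma~\ref{le:GNIneq}: it is listed among the results ``either derived or quoted'' from the literature and is simply cited to \citet{Nirenberg1959} and \citet{Gagliardo1959}, so there is no internal argument to compare yours against. Your outline is the standard classical proof of exactly those references --- scaling to identify the admissible exponent relation, the one-dimensional $j=0$, $m=1$ base case via the fundamental theorem of calculus and H\"older, the Gagliardo--Loomis--Whitney passage to higher dimensions, and the double induction on order of derivative closed by Young's inequality --- and as a proof \emph{plan} it is sound, though of course the exponent bookkeeping you yourself flag is where all the actual work lives and none of it is carried out here. One genuinely useful observation in your write-up: the exceptional case in which $m-j-d/r$ is a nonnegative integer (where the estimate fails at $s=1$) is real, and the paper's statement of the lemma omits this standard caveat; since the lemma is invoked in the paper only in interior parameter ranges this is harmless, but it is worth being aware that the statement as printed is slightly stronger than what \citet{Nirenberg1959} actually proves.
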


\begin{lemma}[Montel's]\label{le:Montel}
Let $p\in[1,\infty]$ and let $\mathscr{F}$ be a set of analytic functions $f$ in an open set $\Omega\subset \bC^d$ such that
\begin{align*}
\underset{f\in\mathscr{F}}{\sup}\ \|f\|_{L^p(\Omega)}<\infty\ .
\end{align*}
Then $\mathscr{F}$ is a normal family.
\end{lemma}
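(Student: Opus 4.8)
The plan is to reduce the $L^p$ hypothesis to the classical local-uniform-boundedness hypothesis of the usual Montel theorem, and then run the standard Arzel\`a--Ascoli argument. The only place where holomorphy and the integrability hypothesis genuinely interact is the first, reduction step; everything afterward is textbook.

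First I would exploit the sub-mean-value property of holomorphic functions. Since any $f\in\mathscr{F}$ is holomorphic on $\Omega\subset\bC^d\cong\bR^{2d}$, it is harmonic there: being separately holomorphic, it is annihilated by $\Delta_{\bR^{2d}}=\sum_{j=1}^d(\partial_{x_j}^2+\partial_{y_j}^2)$ via the Cauchy--Riemann equations in each variable, so it obeys the Euclidean mean-value identity over balls $B=B(z_0,\rho)\subset\Omega$,
\[
 f(z_0)=\frac{1}{\mu_{2d}(B)}\int_B f\, d\mu_{2d}\ .
\]
Applying H\"older's inequality with conjugate exponents $p,p'$ then yields the pointwise bound
\[
 \abs{f(z_0)}\ \le\ \frac{1}{\mu_{2d}(B)}\int_B\abs{f}\,d\mu_{2d}\ \le\ \mu_{2d}(B)^{-1/p}\,\norm{f}_{L^p(B)}\ \le\ \mu_{2d}(B)^{-1/p}\,\norm{f}_{L^p(\Omega)}\ ,
\]
valid for every $p\in[1,\infty]$. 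For a fixed compact $K\subset\Omega$, choosing $\rho=\tfrac12\operatorname{dist}(K,\partial\Omega)>0$ guarantees $B(z_0,\rho)\subset\Omega$ for all $z_0\in K$, so the right-hand side is uniform in $z_0$ and in $f$; hence $\sup_{f\in\mathscr{F}}\norm{f}_{L^\infty(K)}\le C(K)\,\sup_{f\in\mathscr{F}}\norm{f}_{L^p(\Omega)}<\infty$, i.e.\ $\mathscr{F}$ is locally uniformly bounded.

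Next I would upgrade local boundedness to equicontinuity. Covering $K$ by finitely many closed polydiscs whose concentric doubles still lie in $\Omega$ and invoking the iterated one-variable Cauchy integral formula, the $L^\infty$ bound on each doubled polydisc controls all first-order partial derivatives of $f$ on the corresponding original polydisc, uniformly over $\mathscr{F}$. This supplies a uniform Lipschitz constant for the family on each compact set, hence equicontinuity on compacta. With uniform boundedness and equicontinuity in hand, Arzel\`a--Ascoli shows that $\mathscr{F}\big|_K$ is precompact in $C(K)$ for every compact $K$. Exhausting $\Omega$ by an increasing sequence of compacts $K_1\subset K_2\subset\cdots$ with $\bigcup_n K_n=\Omega$ and running a Cantor diagonal argument, every sequence in $\mathscr{F}$ admits a subsequence converging uniformly on each compact subset of $\Omega$. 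Finally, by the Weierstrass/Osgood theorem a locally uniform limit of holomorphic functions is again holomorphic, so $\mathscr{F}$ is a normal family.

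The step I expect to be the crux is the sub-mean-value estimate of the first paragraph, since it is the sole point at which the integrability assumption is converted into a local sup bound. In the several-variable setting one must ensure the mean-value property is genuinely available (through pluriharmonicity, or equivalently through the polydisc Cauchy formula) and keep track of the dependence of $C(K)$ on $\operatorname{dist}(K,\partial\Omega)$, which blows up as $K$ approaches $\partial\Omega$; this degeneration is precisely why normality is only a local (compact-subset) statement rather than a uniform one on all of $\Omega$.
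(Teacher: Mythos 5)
Your proof is correct: the sub-mean-value/H\"older reduction to local uniform boundedness, followed by Cauchy estimates, Arzel\`a--Ascoli, and Weierstrass, is the standard argument, and each step (including the pluriharmonicity giving the Euclidean mean-value identity and the exponent bookkeeping $\mu_{2d}(B)^{-1/p}$) checks out for all $p\in[1,\infty]$. The paper itself offers no proof of this lemma --- it is quoted as a classical fact --- so there is nothing to compare against; your write-up is a faithful rendering of the textbook proof, with only the trivial caveat that when $\Omega=\bC^d$ one should replace $\tfrac12\operatorname{dist}(K,\partial\Omega)$ by any fixed positive radius.
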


\begin{theorem}\label{th:MainThmVel}
Assume $u_0\in L^\infty(\bR^d)\cap L^p(\bR^d)$ and $f(\cdot,t)$ is divergence-free and real-analytic in the space variable with the analyticity radius at least $\delta_f$ for all $t\in[0,\infty)$, and the analytic extension $f+ig$ satisfies
\begin{align*}
\Gamma_\infty^k(t) &:= \sup_{s<t}\sup_{|y|<\delta_f} \left(\|D^kf(\cdot,y,s)\|_{L^\infty}+\|D^kg(\cdot,y,s)\|_{L^\infty}\right)<\infty\ ,
\\
\Gamma_p(t) &:= \sup_{s<t}\sup_{|y|<\delta_f} \left(\|f(\cdot,y,s)\|_{L^p}+\|g(\cdot,y,s)\|_{L^p}\right)<\infty\ .
\end{align*}
Fix $k\in\bN$, $M>1$ and $t_0>0$ and let
\begin{align}
T_*&=\min\left\{\left(C_1(M) 2^{2k} \left(\|u_0\|_p + \Gamma_p(t_0)\right)^{2k/(k+\frac{d}{p})} \left(\|D^ku_0\|_\infty + \Gamma_\infty^k(t_0)\right)^{\frac{2d}{p}/(k+\frac{d}{p})} \right)^{-1}, \right. \notag
\\
&\qquad \left. \left(C_2(M) \left(\|u_0\|_p + \Gamma_p(T)\right)^{(k-1)/(k+\frac{d}{p})} \left(\|D^ku_0\|_\infty + \Gamma_\infty^k(T)\right)^{(1+\frac{d}{p})/(k+\frac{d}{p})} \right)^{-1} \right\} \label{eq:TimeLength}
\end{align}
where $C(M)$ is a constant only depending on $M$. Then there exists a solution
\begin{align*}
u\in C([0,T_*),L^p(\bR^d)^d) \cap C((0,T_*),C^\infty(\bR^d)^d)
\end{align*}
of the NSE \eqref{eq:NSE1}-\eqref{eq:NSE3} such that for every $t\in (0,T_*)$, $u$ is a restriction of an analytic function $u(x,y,t)+iv(x,y,t)$ in the region
\begin{align}\label{eq:AnalDom}
\cD_t=: \left\{(x,y)\in\bC^d\ \big|\ |y|\le \min\{ct^{1/2},\delta_f\}\right\}\ .
\end{align}
Moreover, $D^ju\in C((0,T_*),L^\infty(\bR^d)^d)$ for all $0\le j\le k$ and
\begin{align}
&\underset{t\in(0,T)}{\sup}\ \underset{y\in\cD_t}{\sup} \|u(\cdot,y,t)\|_{L^p} + \underset{t\in(0,T)}{\sup}\ \underset{y\in\cD_t}{\sup} \|v(\cdot,y,t)\|_{L^p}\le M\left(\|u_0\|_p + \Gamma_p(T)\right)\ ,
\\
&\underset{t\in(0,T)}{\sup}\ \underset{y\in\cD_t}{\sup} \|D^ku(\cdot,y,t)\|_{L^\infty} + \underset{t\in(0,T)}{\sup}\ \underset{y\in\cD_t}{\sup} \|D^kv(\cdot,y,t)\|_{L^\infty}\le M\left(\|D^ku_0\|_\infty + \Gamma_\infty^k(T)\right)\ .
\end{align}
\end{theorem}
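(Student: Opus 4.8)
The plan is to solve the mild (Duhamel) formulation of \eqref{eq:NSE1}--\eqref{eq:NSE3},
\begin{align*}
u(t) = e^{t\Delta}u_0 - \int_0^t e^{(t-s)\Delta}\,\bP\,\nabla\cdot\bigl(u\otimes u\bigr)(s)\,ds + \int_0^t e^{(t-s)\Delta}\,\bP f(s)\,ds ,
\end{align*}
by a Picard iteration performed directly on the complexified spatial variable, so that each iterate is manifestly analytic on a strip whose half-width grows like $t^{1/2}$ and is cut off at the forcing radius $\delta_f$. I would work in the space of continuous maps $t\mapsto u(\cdot,t)$ whose analytic extension $u(x,y,t)+iv(x,y,t)$ lives on $\cD_t$ and for which
\begin{align*}
\sup_{t}\,\sup_{y\in\cD_t}\|u(\cdot,y,t)\|_{L^p} \quad\text{and}\quad \sup_{t}\,\sup_{y\in\cD_t}\|D^k u(\cdot,y,t)\|_{L^\infty}
\end{align*}
are finite; these are exactly the two quantities bounded in the conclusion, and the reason to track precisely this pair is that Lemma~\ref{le:GNIneq} then controls every intermediate derivative by interpolation between them.

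The linear step rests on the complexified heat kernel $(4\pi t)^{-d/2}e^{-z^2/4t}$ with $z=x+iy$, which keeps Gaussian decay while $|y|\lesssim t^{1/2}$. This yields, for the analytic extension and uniformly in $y\in\cD_t$, the $L^p\to L^p$ contraction together with the smoothing bound $\|D^j e^{t\Delta}g(\cdot+iy)\|\lesssim t^{-j/2-\frac{d}{2}(\frac1r-\frac1{p'})}\|g\|$ of the real theory — the same mechanism already underlying Theorem~\ref{th:LinftyIVP}. The forcing Duhamel integral is absorbed using $\Gamma_p(\cdot),\Gamma_\infty^k(\cdot)<\infty$, the strip being capped at $\delta_f$ precisely because the analytic extension $f+ig$ is only guaranteed on $|y|<\delta_f$.

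The nonlinear step produces the structure of $T_*$. Writing the convective term as $u\cdot\nabla u$ and expanding $D^k(u\cdot\nabla u)$ by the Leibniz rule — which, with the Cauchy estimates for the analytic extension, is the source of the combinatorial factor $2^{2k}$ recorded with the first factor of \eqref{eq:TimeLength} — the contributions split into two types. The term carrying the full weight of the nonlinearity, which using $\operatorname{div} u=0$ can be written $\nabla\cdot(u\,D^k u)$, places one derivative on the semigroup and leaves, after matching $\|D^k u\|_{L^\infty}$ against the linear top-order term, the balance $t^{1/2}\|u\|_{L^\infty}\lesssim 1$; interpolating $\|u\|_{L^\infty}\lesssim\|D^k u\|_{L^\infty}^{(d/p)/(k+d/p)}\|u\|_{L^p}^{k/(k+d/p)}$ by Lemma~\ref{le:GNIneq} turns this into the first factor, with its exponents $2k/(k+d/p)$ and $2(d/p)/(k+d/p)$. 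The remaining Leibniz terms $D^j u\cdot\nabla D^{k-j}u$ with $1\le j\le k-1$ require no semigroup derivative and yield the Lipschitz-type balance $t\,\|\nabla u\|_{L^\infty}\lesssim 1$; since $\|\nabla u\|_{L^\infty}=\|D^1 u\|_{L^\infty}\lesssim\|D^k u\|_{L^\infty}^{(1+d/p)/(k+d/p)}\|u\|_{L^p}^{(k-1)/(k+d/p)}$, this reproduces the second factor, with exponents $(1+d/p)/(k+d/p)$ and $(k-1)/(k+d/p)$. The $L^p$ estimate of the nonlinear integral, via $\|u\otimes u\|_{L^p}\le\|u\|_{L^\infty}\|u\|_{L^p}$ and the same interpolation, is of the first type and is subsumed; taking $T_*$ to be the minimum of the two times makes the iteration a contraction.

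Each iterate is then analytic and uniformly bounded on $\cD_t$ in the norm above, so Lemma~\ref{le:Montel} furnishes a normal family, hence local-uniform convergence to an analytic limit, identifying the mild solution with the restriction of $u+iv$ on $\cD_t$ and delivering the stated uniform bounds. Continuity in $L^p$ and in each $D^j$, $0\le j\le k$ (and, by the analytic smoothing, $C^\infty$), follows from the Duhamel representation. I expect the main obstacle to be the nonlinear top-order estimate carried out in the moving strip: one must simultaneously keep the analyticity half-width growing like $t^{1/2}$, keep the time singularity integrable after distributing the derivatives, and track the Leibniz/Cauchy combinatorics explicitly enough to land on the precise interpolation exponents and the $2^{2k}$ factor recorded in \eqref{eq:TimeLength}.
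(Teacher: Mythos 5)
Your plan coincides with the paper's method (the paper quotes Theorem~\ref{th:MainThmVel} and exhibits the procedure in the proof of its hyper-dissipative analogue, Theorem~\ref{th:MainThmVelHyp}): a complexified Picard iteration controlled by exactly the pair $\sup_{y\in\cD_t}\|\cdot\|_{L^p}$ and $\sup_{y\in\cD_t}\|D^k\cdot\|_{L^\infty}$, with Lemma~\ref{le:GNIneq} supplying the interpolation exponents in \eqref{eq:TimeLength}, the strip capped at $\delta_f$ by the forcing, and a normal-family/limiting argument for the analytic extension. The only (harmless) bookkeeping difference is that the paper extracts the second factor of $T_*$ from the $L^p$ component of the iteration (the balance $T\,\|\nabla U\|_{L^\infty}\lesssim 1$) rather than from the lower-order Leibniz terms of the $D^k$ estimate, but both routes produce the same exponents $(k-1)/(k+\frac{d}{p})$ and $(1+\frac{d}{p})/(k+\frac{d}{p})$.
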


\bigskip

An analogous result for the vorticity is the following (here we set the external force to zero).

\begin{theorem}\label{th:MainThmVor}
Assume the initial value $\omega_0\in L^\infty(\bR^3)\cap L^p(\bR^3)$ where $1\le p<3$. Fix $k\in\bN$, $M>1$ and $t_0>0$ and let
\begin{align}
T_*&=C(M)\cdot \min\left\{2^{-k} \left( \|\omega_0\|_p^{k/(k+\frac{d}{p})} \cdot \|D^k\omega_0\|_\infty^{\frac{d}{p}/(k+\frac{d}{p})} + \|\omega_0\|_p \right)^{-1}, \right. \notag
\\
&\qquad \left. \left(\|D^k\omega_0\|_\infty^{\frac{d/p}{k+\frac{d}{p}}}\|\omega_0\|_p^{\frac{k}{k+\frac{d}{p}}} + \|D^k\omega_0\|_\infty^{\frac{1+\frac{d}{p}}{k+\frac{d}{p}}}\|\omega_0\|_p^{\frac{k-1}{k+\frac{d}{p}}} + \|D^k\omega_0\|_\infty^{\frac{1}{k+\frac{d}{p}}}\|\omega_0\|_p^{\frac{k-1+\frac{d}{p}}{k+\frac{d}{p}}} \right)^{-1} \right\}
\end{align}
where $C_i(M)$ is a constant only depending on $M$ and $d=3$. Then there exists a solution
\begin{align*}
\omega\in C([0,T_*),L^p(\bR^3)^3) \cap C((0,T_*),C^\infty(\bR^3)^3)
\end{align*}
of the NSE \eqref{eq:NSE1}-\eqref{eq:NSE3} such that for every $t\in (0,T_*)$, $\omega$ is a restriction of an analytic function $\omega(x,y,t)+i\zeta(x,y,t)$ in the region
\begin{align}\label{eq:AnalDomVor}
\cD_t=: \left\{(x,y)\in\bC^3\ \big|\ |y|\le ct^{1/2}\right\}\ .
\end{align}
Moreover, $D^k\omega\in C((0,T_*),L^\infty(\bR^3)^3)$ and
\begin{align}
&\underset{t\in(0,T)}{\sup}\ \underset{y\in\cD_t}{\sup} \|\omega(\cdot,y,t)\|_{L^p} + \underset{t\in(0,T)}{\sup}\ \underset{y\in\cD_t}{\sup} \|\zeta(\cdot,y,t)\|_{L^p}\le M\|\omega_0\|_p \ ,
\\
&\underset{t\in(0,T)}{\sup}\ \underset{y\in\cD_t}{\sup} \|D^k\omega(\cdot,y,t)\|_{L^\infty} + \underset{t\in(0,T)}{\sup}\ \underset{y\in\cD_t}{\sup} \|D^k\zeta(\cdot,y,t)\|_{L^\infty}\le M\|D^k\omega_0\|_\infty \ .
\end{align}
\end{theorem}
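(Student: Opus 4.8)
The plan is to prove \textit{Theorem~\ref{th:MainThmVor}} by the same complexified mild-solution scheme that underlies the velocity statement \textit{Theorem~\ref{th:MainThmVel}}, the one structural novelty being that the velocity is now a non-local function of the unknown, recovered through the Biot--Savart law $u=K*\omega$. Exploiting $\operatorname{div}u=\operatorname{div}\omega=0$ to put the vortex-stretching nonlinearity in divergence form, the mild formulation reads
\begin{align*}
\omega(t)=e^{t\Delta}\omega_0-\int_0^t e^{(t-\tau)\Delta}\,\nabla\cdot\bigl(u\otimes\omega-\omega\otimes u\bigr)(\tau)\,\d\tau,\qquad u=K*\omega.
\end{align*}
After complexifying the spatial variable $x\mapsto x+iy$ and running a Picard iteration, the objective is to control, uniformly on the growing analyticity slab $\cD_t=\{|y|\le ct^{1/2}\}$, the two quantities
\begin{align*}
A(t):=\sup_{s\le t}\sup_{y\in\cD_s}\bigl(\|\omega(\cdot,y,s)\|_{L^p}+\|\zeta(\cdot,y,s)\|_{L^p}\bigr),\qquad B(t):=\sup_{s\le t}\sup_{y\in\cD_s}\bigl(\|D^k\omega(\cdot,y,s)\|_{L^\infty}+\|D^k\zeta(\cdot,y,s)\|_{L^\infty}\bigr),
\end{align*}
with $\zeta$ the imaginary part of the extension. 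The hypothesis $1\le p<3=d$ enters exactly here: it is the range in which $K\sim|x|^{-2}$ is a benign convolution kernel, so that Hardy--Littlewood--Sobolev and Calder\'on--Zygmund estimates give $\|u\|_{L^q}\lesssim\|\omega\|_{L^p}$ and $\|\nabla u\|_{L^p}\lesssim\|\omega\|_{L^p}$ on each slab.

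For the linear term I would invoke the complexified Gaussian estimates behind \textit{Theorem~\ref{th:LinftyIVP}}: convolution with $G_{t-\tau}$ extended to complex arguments preserves the $L^p$ norm while widening the analyticity radius like $(t-\tau)^{1/2}$, each derivative pulled out of the semigroup costing $(t-\tau)^{-1/2}$ and the $L^p\to L^\infty$ passage an extra $(t-\tau)^{-d/(2p)}$. This gives $\|e^{t\Delta}\omega_0\|_{L^p}\lesssim\|\omega_0\|_p$ and $\|D^k e^{t\Delta}\omega_0\|_{L^\infty}\lesssim\|D^k\omega_0\|_\infty$ on $\cD_t$, i.e.\ the leading terms in the two asserted bounds.

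The heart of the matter — and the step I expect to be the main obstacle — is the Duhamel estimate, from which the structure of $T_*$ emerges. The first factor in the $\min$ governs $A(t)$: the nonlinearity is quadratic, and bounding one factor in $L^\infty$ by Gagliardo--Nirenberg (\textit{Lemma~\ref{le:GNIneq}}) with interpolation exponent $s=\tfrac{d/p}{k+d/p}$ gives $\|\omega\|_{L^\infty}\lesssim\|D^k\omega\|_\infty^{(d/p)/(k+d/p)}\|\omega\|_p^{k/(k+d/p)}$, which is precisely the disguised $\|\omega_0\|_\infty$ inside the first bracket controlling the basic $L^\infty$-level existence time in the spirit of \textit{Theorem~\ref{th:LinftyIVP}} (the additive $\|\omega_0\|_p$ carrying the accompanying velocity-recovery contribution). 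The second factor governs $B(t)$: applying $D^k$ and Leibniz to $\nabla\cdot(u\otimes\omega-\omega\otimes u)$ produces a sum of products of the form $\binom{k}{j}\,D^ju\cdot D^{k-j}\omega$, each velocity factor re-expressed through Biot--Savart and each intermediate derivative $\|D^j\omega\|$ interpolated between the endpoints $\|\omega\|_{L^p}$ and $\|D^k\omega\|_{L^\infty}$ by \textit{Lemma~\ref{le:GNIneq}}. The three monomials in the second bracket of $T_*$ correspond to the three characteristic distributions of the $k+1$ differentiations (the $k$ from $D^k$ plus the one in the divergence) across the product and the Biot--Savart kernel; matching each against the smoothing budget $(t-\tau)^{-\#/2}$, integrating $\int_0^t(t-\tau)^{-a}\,\d\tau=\tfrac{t^{1-a}}{1-a}$, and demanding that each nonlinear contribution stay below its linear counterpart forces $t<T_*$ with $T_*$ as stated. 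Keeping these three balances simultaneously subcritical while preserving the complex-analytic estimates, with the $2^{-k}$ prefactor absorbing the $2^k$ Leibniz combinatorics, is where essentially all the technical work resides.

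Finally, I would close the scheme as a contraction on $[0,T_*)$ in the norm $A+B$, yielding a fixed point that obeys the claimed $L^p$ and $D^kL^\infty$ bounds for both real and imaginary parts. Uniform analyticity on each $\cD_t$ follows by regarding the bounded iterates as a normal family and extracting an analytic limit via Montel's theorem (\textit{Lemma~\ref{le:Montel}}), and the statement for genuinely real solutions is recovered by restricting to $y=0$, where $\zeta$ vanishes.
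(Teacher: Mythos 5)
Your plan is, as far as one can tell, the same scheme the paper relies on: the paper does not actually prove \textit{Theorem~\ref{th:MainThmVor}} in this text (it is imported from \citet{Grujic2019}, together with \textit{Theorem~\ref{th:MainThmVel}}), but the template is exhibited in the proof of the hyper-dissipative analogue \textit{Theorem~\ref{th:MainThmVelHyp}}: complexify, run the Picard iteration on the real and imaginary parts over the slab $\cD_t$, pull one derivative out of the heat kernel at cost $(t-\tau)^{-1/2}$, interpolate every intermediate derivative between $\|\omega\|_{L^p}$ and $\|D^k\omega\|_{L^\infty}$ via \textit{Lemma~\ref{le:GNIneq}}, and read off $T_*$ from the requirement that each nonlinear contribution stay below its linear counterpart, with Montel's theorem closing the analyticity claim. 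Your identification of the first bracket of $T_*$ as the disguised $\|\omega_0\|_\infty$ and of the three monomials in the second bracket as the Gagliardo--Nirenberg avatars of $\|\omega\|_\infty$, $\|D\omega\|_\infty$ and $\|D\omega\|_{L^p}$ is consistent with the stated exponents.

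The one step that would fail as written is the recovery of $u$ from $\omega$: you invoke Hardy--Littlewood--Sobolev and Calder\'on--Zygmund to get $\|u\|_{L^q}\lesssim\|\omega\|_{L^p}$ and $\|\nabla u\|_{L^p}\lesssim\|\omega\|_{L^p}$ ``on each slab'', but both estimates fail at the endpoint $p=1$, which lies in the stated range $1\le p<3$ and is precisely the case the paper uses (``Theorem~\ref{th:MainThmVor} (setting $p=1$)''). This is why the hypothesis is $\omega_0\in L^\infty\cap L^p$ rather than $L^p$ alone: the velocity must be controlled pointwise by splitting the Biot--Savart kernel into near and far ranges, giving $\|u\|_\infty\lesssim\|\omega\|_\infty^{1-\theta}\|\omega\|_p^{\theta}$ (and analogous bounds for $\|D^j u\|_\infty$ in terms of $\|D^{j}\omega\|_\infty$ and $\|\omega\|_p$), with $\|\omega\|_\infty$ then re-expressed through the same Gagliardo--Nirenberg interpolation. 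The Duhamel estimate for the $L^p$ bound only needs $\|u\|_{L^\infty}\|\omega\|_{L^p}$ after the divergence is absorbed by the semigroup, so no singular-integral bound in $L^p$ is actually required; your contraction argument goes through once the kernel-splitting replaces the HLS/CZ step.
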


\bigskip

\begin{proposition}[\citet{Ransford1995}]\label{prop:HarMaxPrin}
Let $\Omega$ be an open, connected set in $\bC$ such that its boundary has nonzero Hausdorff dimension, and let $K$ be a Borel subset of the boundary. Suppose that $u$ is a subharmonic function on $\Omega$ satisfying
\begin{align*}
u(z) &\le M\ , \quad \textrm{for }z\in\Omega
\\
\limsup_{z\to\zeta} u(z) &\le m\ , \quad \textrm{for }\zeta\in K.
\end{align*}
Then
\begin{align*}
u(z)\le m \, h(z,\Omega,K) + M(1-h(z,\Omega,K)) \ , \quad \textrm{for }z\in\Omega.
\end{align*}
\end{proposition}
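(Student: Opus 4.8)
The plan is to prove this as a form of the \emph{two-constants theorem}, reading $h(z,\Omega,K)$ as the harmonic measure of $K$ relative to $\Omega$ and exploiting its Perron characterisation, so that the inequality becomes essentially tautological once the right superharmonic competitor is produced. First I would dispose of the trivial case: if $m \ge M$, then $m\,h + M(1-h) \ge M \ge u$ pointwise (using $0 \le h \le 1$ and $u \le M$), so there is nothing to prove. Hence I may assume $M - m > 0$; dividing by it, the assertion $u \le m\,h + M(1-h) = M - (M-m)\,h$ is equivalent to
\[
t(z) := \frac{M - u(z)}{M - m} \ \ge\ h(z), \qquad z \in \Omega .
\]

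The heart of the argument is to check that $t$ belongs to the upper Perron class of the boundary datum $\1_K$. Since $u$ is subharmonic, $t$ is superharmonic; since $u \le M$ throughout $\Omega$, we have $t \ge 0$, so $t$ is bounded below. For the boundary behaviour, at $\zeta \in K$ the hypothesis $\limsup_{z\to\zeta} u(z) \le m$ yields
\[
\liminf_{z\to\zeta} t(z) \;=\; \frac{M - \limsup_{z\to\zeta} u(z)}{M-m} \;\ge\; 1 \;=\; \1_K(\zeta),
\]
whereas at $\zeta \in \partial\Omega \setminus K$ the global bound $u \le M$ gives $\liminf_{z\to\zeta} t(z) \ge 0 = \1_K(\zeta)$. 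Thus $t$ is an admissible competitor in the infimum defining the upper solution $\overline{H}_{\1_K}$, so that $t \ge \overline{H}_{\1_K}$ on $\Omega$.

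It then remains to identify $\overline{H}_{\1_K}$ with $h$, and this is precisely where the hypothesis that $\partial\Omega$ has nonzero Hausdorff dimension enters. A set of positive Hausdorff dimension in $\bC$ is non-polar, hence $\Omega^c \supseteq \partial\Omega$ is non-polar and $\Omega$ is Greenian. On a Greenian domain Wiener's resolutivity theorem applies to the Borel datum $\1_K$, giving $\overline{H}_{\1_K} = \underline{H}_{\1_K} = h(z,\Omega,K)$, the genuine harmonic measure. Combining with the previous step yields $t \ge h$, which is the claimed inequality.

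I expect the only real obstacle to be this last step: guaranteeing that the Perron apparatus is non-degenerate and that $\1_K$ is resolutive. Were $\partial\Omega$ polar, the harmonic measure could collapse and the comparison would be vacuous or false, and the nonzero-Hausdorff-dimension assumption is exactly what excludes this (polar sets in $\bC$ have Hausdorff dimension zero). By contrast, the super/subharmonic bookkeeping in the middle step is routine, the one point to watch being that the boundedness below of $t$ required for membership in the upper class is supplied by the global bound $u \le M$ rather than by any boundary information.
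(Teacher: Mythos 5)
Your proof is correct. The paper offers no proof of this proposition -- it is quoted directly from Ransford's book -- and your argument is exactly the standard Perron-method proof of the two-constants theorem given there: dispose of the case $m\ge M$, verify that $(M-u)/(M-m)$ is a nonnegative superharmonic competitor in the upper class for the indicator of $K$, and use non-polarity of $\partial\Omega$ (supplied by the positive Hausdorff dimension hypothesis) to identify the upper Perron solution with the harmonic measure $h(z,\Omega,K)$.
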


The following extremal property of the harmonic measure in the unit disk $\mathbb{D}$
will be helpful in the calculations to follow.

\medskip

\begin{proposition}[\citet{Solynin1997}]
Let $\lambda$ be in $(0, 1)$, $K$ a closed subset of $[-1,1]$
such that $\mu (K) = 2\lambda$,
and suppose that the origin is in $\mathbb{D} \setminus K$. Then
\[
 h(0,\mathbb{D},K) \ge h(0,\mathbb{D}, K_\lambda) =
 \frac{2}{\pi} \arcsin \frac{1-(1-\lambda)^2}{1+(1-\lambda)^2}
\]
where $K_\lambda = [-1, -1+\lambda] \cup [1-\lambda, 1]$.
\end{proposition}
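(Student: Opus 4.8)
The plan is to separate the statement into two independent assertions: the \emph{extremal identity} $h(0,\mathbb{D},K_\lambda)=\frac{2}{\pi}\arcsin\frac{1-(1-\lambda)^2}{1+(1-\lambda)^2}$, which is an explicit conformal-mapping computation, and the \emph{extremality inequality} $h(0,\mathbb{D},K)\ge h(0,\mathbb{D},K_\lambda)$, which is a symmetrization statement. Throughout I interpret $h(0,\mathbb{D},K)$ as the harmonic measure of the two-sided slit $K$ in the slit domain $\mathbb{D}\setminus K$ evaluated at the origin, equivalently the probability that planar Brownian motion from $0$ meets the real-axis slit $K$ before exiting $\mathbb{D}$, equivalently the solution at $0$ of the Dirichlet problem with data $1$ on $K$ and $0$ on $\partial\mathbb{D}$. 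The governing geometric intuition, which also predicts the correct direction of the inequality, is that what matters is how deeply $K$ penetrates toward the center: for fixed length $2\lambda$, the configuration $K_\lambda$ keeps all of its mass within distance $\lambda$ of the two tips $\pm 1$, whereas any other $K$ of the same length must reach strictly closer to $0$ somewhere, making it easier to hit.

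For the identity I would compute directly. First apply the conformal map $\phi(z)=\frac{2z}{1+z^2}$, which one checks is injective on $\mathbb{D}$ (since $\phi(z_1)=\phi(z_2)$ forces $(z_1-z_2)(1-z_1z_2)=0$) and carries $\mathbb{D}$ onto the slit plane $\mathbb{C}\setminus\{x\in\mathbb{R}:|x|\ge 1\}$, fixing $0$ and $\pm 1$ and mapping the diameter monotonically onto itself. Writing $a=1-\lambda$ and $b=\phi(a)=\frac{2a}{1+a^2}$, the circle $\partial\mathbb{D}$ (carrying value $0$) goes to the rays $|x|\ge 1$ and $K_\lambda$ (carrying value $1$) goes to $[-1,-b]\cup[b,1]$, while the central segment $(-b,b)$ is interior; since the configuration is symmetric across $\mathbb{R}$, the transported harmonic function is even in the imaginary direction and the problem reduces to a single harmonic function on $\mathbb{C}\setminus\{|x|\ge b\}$ with two-sided boundary values $1$ on $[b,1]$ and $0$ on $[1,\infty)$ (and their reflections). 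After rescaling by $1/b$, the map $W=\arcsin(\cdot)$ sends this slit plane to the strip $\{|\operatorname{Re}W|<\tfrac\pi2\}$, and a rotation followed by the exponential sends the strip to the upper half-plane with $0\mapsto i$; the Poisson formula $\frac1\pi\int\frac{dx}{1+x^2}$ over the image of $\{u=1\}$ then evaluates $h(0,\mathbb{D},K_\lambda)$. A short simplification (using $\cosh(\ln\frac1a)=\frac12(a+\frac1a)=\frac1b$, so the relevant endpoint is $e^{\pm\ln(1/a)}=a^{\mp1}$) yields $\frac{2}{\pi}\bigl(\arctan\frac1a-\arctan a\bigr)=1-\frac{4}{\pi}\arctan(1-\lambda)$, which equals the claimed $\arcsin$ expression via $\sin(\tfrac\pi2-2\arctan a)=\frac{1-a^2}{1+a^2}$; the limits $\lambda\to 0^+$ and $\lambda\to 1^-$ correctly give $0$ and $1$.

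For the inequality I would argue by symmetrization, showing that replacing $K$ by the configuration obtained by transporting its mass symmetrically toward the two endpoints $\pm 1$ never increases $h(0,\cdot)$, the endpoint of this process being exactly $K_\lambda$. In Brownian terms the required monotonicity is that moving an infinitesimal portion of $K$ from a point $x_1$ to a point $x_2$ with $|x_2|>|x_1|$ (preserving length) decreases the hitting probability from $0$; the sign of this Hadamard-type variation is governed by an influence kernel, essentially the product of the normal derivative of $h(\cdot,\mathbb{D},K)$ and the Poisson-type density seen from $0$, which should be shown to be monotone in $|x|$ along the diameter. Combining this monotone first variation with the reflection symmetry of the extremal configuration and a rearrangement/continuity argument identifies $K_\lambda$ as the minimizer. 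An alternative would be to invoke the standard polarization (or circular-symmetrization / Baernstein star-function) monotonicity of harmonic measure; however, the usual polarizations with respect to vertical lines do not preserve $\mathbb{D}$, and the diameter-reflections that do preserve $\mathbb{D}$ only produce one-sided concentration, so some care (e.g. passing to a strip model in which outward transport along the centerline is realized by measure-preserving polarizations) is required.

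The main obstacle is the inequality, and specifically reconciling the symmetrization machinery with the \emph{Lebesgue-length} constraint $\mu(K)=2\lambda$: every natural conformal normalization ($\phi$, or $z\mapsto z^2$ reducing the symmetric case to a Beurling-type radial-slit problem) distorts length, turning the clean length constraint into a weighted one, so the rearrangement must ultimately be performed in the original metric of the diameter. Establishing the correct sign and the monotonicity of the influence kernel there, so that the outward (toward $\pm1$) transport is genuinely extremal, is the crux; once it is in hand, the extremal identity computed above finishes the proof. The value computation itself is routine.
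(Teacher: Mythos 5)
The paper offers no proof of this proposition: it is imported verbatim from \citet{Solynin1997} and used as a black box, so there is nothing internal to compare your argument against; the comparison is really with the literature (Solynin's polarization results, and the closely related solution of Gonchar's problem by Dubinin via dissymmetrization, or the Beurling--Hall ``shove'' theorem). Your treatment of the \emph{identity} is correct and complete: the map $\phi(z)=2z/(1+z^2)$ does carry $\mathbb{D}$ univalently onto $\mathbb{C}\setminus\{x\in\mathbb{R}:|x|\ge1\}$, the chain through $\arcsin$, the strip, and the half-plane is sound, and the resulting value $1-\tfrac{4}{\pi}\arctan(1-\lambda)$ agrees with $\tfrac{2}{\pi}\arcsin\tfrac{1-(1-\lambda)^2}{1+(1-\lambda)^2}$, with the right limits at $\lambda\to0^+$ and $\lambda\to1^-$.

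The inequality $h(0,\mathbb{D},K)\ge h(0,\mathbb{D},K_\lambda)$, which is the actual content of the proposition, is not proved; your own closing paragraph concedes that the ``monotonicity of the influence kernel'' is the crux and is left open. The gap is real and not merely technical. First, the Hadamard-type variational kernel you invoke (normal derivative of $h(\cdot,\mathbb{D},K)$ times a Green-type density) depends on the evolving configuration $K$ itself, so the claim that outward transport always decreases $h(0)$ is not a pointwise fact one can verify once and for all --- it is essentially equivalent to the theorem being proved, and for slit perturbations the relevant densities have square-root singularities at interval tips, so even the sign of a single endpoint variation requires care. Second, $K$ is an arbitrary closed set (possibly of Cantor type), so an endpoint-variation calculus only applies after an approximation by finite unions of intervals, which you do not set up. Third, as you yourself note, the length constraint is Euclidean while the reflections preserving $\mathbb{D}$ are hyperbolic, so the standard polarization machinery does not apply off the shelf; the known proofs resolve this not by a first-variation argument but by global, length-preserving reflection comparisons (the shove/dissymmetrization arguments), and that step is exactly what is missing here. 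As it stands the proposal establishes the value of the conjectured extremal configuration but not its extremality.
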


\medskip

As demonstrated in \citet{Farhat2017} and \citet{Bradshaw2019}, the concept of `escape time' allows for a more
streamlined presentation.

\begin{definition}
Let $u$ (resp. $\omega$) be in $C([0,T^*], L^\infty)$ where $T^*$ is the first possible blow-up time. A time $t\in(0,T^*)$ is an escape time if $\|u(s)\|_\infty>\|u(t)\|_\infty$ (resp. $\|\omega(s)\|_\infty>\|\omega(t)\|_\infty$) for any $s\in(t,T^*)$. (Local-in-time continuity of the $L^\infty$-norm implies there are continuum-many escape times.)
\end{definition}
	
Here we recall the main theorem about the spatial intermittency based regularity criterion for the velocity and the vorticity presented in \citet{Farhat2017}  and  \citet{Bradshaw2019}, respectively.

\vspace{-0.05in}
\begin{theorem}[\citet{Farhat2017} and \citet{Bradshaw2019}]\label{th:SparsityRegVel}
Let $u$ (resp. $\omega$) be in $C([0,T^*), L^\infty)$ where $T^*$ is the first possible blow-up time, and assume, in addition, that $u_0\in L^\infty$ (resp. $\omega_0\in L^\infty\cap L^2$). Let $t$ be an escape time of $u(t)$ (resp. $\omega(t)$), and suppose that there exists a temporal point
\begin{align*}
&\qquad\quad s=s(t)\in \left[t+\frac{1}{4c(M)^2\|u(t)\|_\infty^2},\ t+\frac{1}{c(M)^2\|u(t)\|_\infty^2}\right]
\\
&\left(\textrm{resp. } s=s(t)\in \left[t+\frac{1}{4c(M)\|\omega(t)\|_\infty},\ t+\frac{1}{c(M)\|\omega(t)\|_\infty}\right]\ \right)
\end{align*}
such that for any spatial point $x_0$, there exists a scale $\rho\le \frac{1}{2c(M)^2\|u(s)\|_\infty}$ $\left(\textrm{resp. }\rho\le \frac{1}{2c(M)\|\omega(s)\|_\infty^{\frac{1}{2}}}\right)$ with the property that the super-level set
\begin{align*}
&\qquad\quad V_\lambda^{j,\pm}=\left\{x\in\bR^d\ |\ u_j^\pm(x,s)>\lambda \|u(s)\|_\infty\right\}
\\
&\left(\textrm{resp. }\Omega_\lambda^{j,\pm}=\left\{x\in\bR^3\ |\ \omega_j^\pm(x,s)>\lambda \|\omega(s)\|_\infty\right\}\ \right)
\end{align*}
is 1D $\delta$-sparse around $x_0$ at scale $\rho$; here the index $(j,\pm)$ is chosen such that $|u(x_0,s)|=u_j^\pm(x_0,s)$ (resp. $|\omega(x_0,s)|=\omega_j^\pm(x_0,s)$), and the pair $(\lambda,\delta)$ is chosen such that the followings hold:
\begin{align*}
\lambda h+(1-h)=2\lambda\ ,\qquad h=\frac{2}{\pi}\arcsin\frac{1-\delta^2}{1+\delta^2}\ , \qquad \frac{1}{1+\lambda}<\delta<1\ .
\end{align*}
(Note that such pair exists and a particular example is that when $\delta=\frac{3}{4}$, $\lambda>\frac{1}{3}$.) Then, there exists $\gamma>0$ such that $u\in L^\infty((0,T^*+\gamma); L^\infty)$, i.e. $T^*$ is not a blow-up time.
\end{theorem}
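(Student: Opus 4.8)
The plan is to argue by contradiction, playing the spatial analyticity furnished by Theorem~\ref{th:LinftyIVP} against the two-constants (harmonic-measure) estimate of Proposition~\ref{prop:HarMaxPrin}. Suppose $T^*$ were a genuine blow-up time, so that $\|u(t)\|_\infty \to \infty$ and escape times accumulate at $T^*$. For such an escape time $t$ I would restart the local Cauchy problem with datum $u(t)\in L^\infty$: Theorem~\ref{th:LinftyIVP} produces, for the chosen $M>1$, an analytic extension $U(\cdot,s)$ of $u(\cdot,s)$ to a complex neighborhood of $\bR^d$ of half-width $\sqrt{s-t}/c(M)$, together with the uniform bound $\|U(s)\|_{L^\infty}\le M\|u(t)\|_\infty$. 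Since $s-t\ge \frac{1}{4c(M)^2\|u(t)\|_\infty^2}$, the analyticity radius is at least $\frac{1}{2c(M)^2\|u(t)\|_\infty}$; and because $t$ is an escape time, $\|u(s)\|_\infty>\|u(t)\|_\infty$, so the sparseness scale $\rho\le \frac{1}{2c(M)^2\|u(s)\|_\infty}$ is strictly smaller than this radius. This is the crucial geometric point: the escape-time property is exactly what guarantees that the complex disk of radius $\rho$ in the sparseness direction $\nu$ sits inside the analyticity domain.

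Next I would run the two-constants estimate on a single complex one-dimensional slice. Picking the component and sign $(j,\pm)$ with $u_j^\pm(x_0,s)=|u(x_0,s)|$, I consider the harmonic function $z\mapsto \pm\,\mathrm{Re}\,U_j(x_0+z\nu,s)$ on the unit disk obtained by rescaling the disk of radius $\rho$. On the whole disk this function is bounded by $M\|u(t)\|_\infty$, while on the real diameter the $1$D $\delta$-sparseness of the super-level set $V_\lambda^{j,\pm}$ confines it to $\le \lambda\|u(s)\|_\infty$ on a closed subset $K$ of relative measure at least $2(1-\delta)$. Proposition~\ref{prop:HarMaxPrin} then yields $u_j^\pm(x_0,s)\le \lambda\|u(s)\|_\infty\,h(0,\mathbb{D},K)+M\|u(t)\|_\infty\,(1-h(0,\mathbb{D},K))$, and the extremal property of the harmonic measure (Solynin) bounds $h(0,\mathbb{D},K)$ from below by $h=\frac{2}{\pi}\arcsin\frac{1-\delta^2}{1+\delta^2}$. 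The a priori bound $\|u(s)\|_\infty\le M\|u(t)\|_\infty$ makes the coefficient of the harmonic measure negative, so the worst case is $h(0,\mathbb{D},K)=h$, giving $u_j^\pm(x_0,s)\le \lambda\|u(s)\|_\infty h+M\|u(t)\|_\infty(1-h)$.

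Finally I would optimize over $x_0$. Taking a sequence of base points along which $|u(\cdot,s)|$ approaches $\|u(s)\|_\infty$, and using Lemma~\ref{le:Montel} to pass to a limit of the translated analytic extensions so that the supremum is genuinely realized, gives $\|u(s)\|_\infty(1-\lambda h)\le M(1-h)\|u(t)\|_\infty$, i.e. $\|u(s)\|_\infty\le \frac{M(1-h)}{1-\lambda h}\|u(t)\|_\infty$. The tuning relation $\lambda h+(1-h)=2\lambda$ together with $\frac{1}{1+\lambda}<\delta<1$ is precisely what forces the contraction factor to satisfy $\frac{M(1-h)}{1-\lambda h}\le 1$ for the admissible $M$ (the example $\delta=\frac34$, $\lambda>\frac13$ makes this explicit), so $\|u(s)\|_\infty\le \|u(t)\|_\infty$, contradicting the escape-time inequality $\|u(s)\|_\infty>\|u(t)\|_\infty$. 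Hence $\|u(t)\|_\infty$ stays bounded as $t\to T^*$, and one last application of Theorem~\ref{th:LinftyIVP} continues the solution past $T^*$, yielding $u\in L^\infty((0,T^*+\gamma);L^\infty)$. The vorticity statement is structurally identical, reading the scalings of Theorem~\ref{th:MainThmVor} (analyticity radius $\sim \sqrt{t}/\sqrt{c(M)}$ and the $\|\omega\|_\infty^{1/2}$-calibrated scale) in place of the velocity ones.

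I expect the main obstacle to be the rigorous realization of the supremum of $|u(\cdot,s)|$: the maximizing point $x_0$ need not exist a priori on $\bR^d$, so the normal-families/compactness step must be arranged so that the sparseness hypothesis, the global analytic bound, and the $(j,\pm)$ selection all survive the limit simultaneously. The secondary point requiring care is the purely quantitative verification that the tuned pair $(\lambda,\delta)$ indeed drives $\frac{M(1-h)}{1-\lambda h}$ strictly below $1$ uniformly over the admissible range of $M$.
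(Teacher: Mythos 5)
Your argument is correct and is essentially the proof from \citet{Farhat2017} and \citet{Bradshaw2019} that this theorem is quoted from (and the same analyticity-plus-harmonic-measure mechanism the present paper deploys in the proof of Theorem~\ref{le:DescendDer}): restart at an escape time, use the escape property to fit the sparseness scale inside the analyticity radius, run the two-constants theorem with Solynin's extremal bound on a complex line through $x_0$, and contradict $\|u(s)\|_\infty>\|u(t)\|_\infty$. The only inessential deviation is the Montel/normal-families step: since the bound $|u(x_0,s)|\le \lambda\|u(s)\|_\infty h+M\|u(t)\|_\infty(1-h)$ holds at \emph{every} $x_0$ with a right-hand side independent of $x_0$, the supremum over $x_0$ is immediate and no realization of the maximizer is needed, so the "main obstacle" you flag is not actually an obstacle.
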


With Theorem~\ref{th:MainThmVel} (setting $p=2$) and Theorem~\ref{th:MainThmVor} (setting $p=1$) we are able to generalize the above results as follows:
\vspace{-0.05in}
\begin{theorem}\label{th:SparsityRegDk}
Let $u$ (resp. $\omega$) be in $C([0,T^*), L^\infty)$ where $T^*$ is the first possible blow-up time, and assume, in addition, that $u_0\in L^\infty\cap L^2$ (resp. $\omega_0\in L^\infty\cap L^1$). Let $t$ be an escape time of $D^ku(t)$ (resp. $D^k\omega(t)$), and suppose that there exists a temporal point
\begin{align*}
&\qquad\quad s=s(t)\in \left[t+\frac{1}{4^{k+1}c(M,\|u_0\|_2)^2\|D^ku(t)\|_\infty^{2d/(2k+d)}},\ t+\frac{1}{4^kc(M,\|u_0\|_2)^2\|D^ku(t)\|_\infty^{2d/(2k+d)}}\right]
\\
&\left(\textrm{resp. } s=s(t)\in \left[t+\frac{1}{4^{k+1}c(M,\|\omega_0\|_1)\|D^k\omega(t)\|_\infty^{3/(k+3)}},\ t+\frac{1}{4^kc(M,\|\omega_0\|_1)\|D^k\omega(t)\|_\infty^{3/(k+3)}}\right]\ \right)
\end{align*}
such that for any spatial point $x_0$, there exists a scale $\rho\le \frac{1}{2^kc(M)\|D^ku(s)\|_\infty^{\frac{d}{2k+d}}}$ $\left(\textrm{resp. }\rho\le \frac{1}{2^kc(M)\|D^k\omega(s)\|_\infty^{\frac{3/2}{k+3}}}\right)$ with the property that the super-level set
\begin{align*}
&\qquad\quad V_\lambda^{j,\pm}=\left\{x\in\bR^d\ |\ (D^ku)_j^\pm(x,s)>\lambda \|D^ku(s)\|_\infty\right\}
\\
&\left(\textrm{resp. }\Omega_\lambda^{j,\pm}=\left\{x\in\bR^3\ |\ (D^k\omega)_j^\pm(x,s)>\lambda \|D^k\omega(s)\|_\infty\right\}\ \right)
\end{align*}
is 1D $\delta$-sparse around $x_0$ at scale $\rho$; here the index $(j,\pm)$ is chosen such that $|D^ku(x_0,s)|=(D^ku)_j^\pm(x_0,s)$ (resp. $|D^k\omega(x_0,s)|=(D^k\omega)_j^\pm(x_0,s)$), and the pair $(\lambda,\delta)$ is chosen such that the followings hold:
\begin{align*}
\lambda h+(1-h)=2\lambda\ ,\qquad h=\frac{2}{\pi}\arcsin\frac{1-\delta^2}{1+\delta^2}\ , \qquad \frac{1}{1+\lambda}<\delta<1\ .
\end{align*}
(Note that such pair exists and a particular example is that when $\delta=\frac{3}{4}$, $\lambda>\frac{1}{3}$.) Then, there exists $\gamma>0$ such that $u\in L^\infty((0,T^*+\gamma); L^\infty)$, i.e. $T^*$ is not a blow-up time.
\end{theorem}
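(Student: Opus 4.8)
The plan is to run the escape-time argument behind Theorem~\ref{th:SparsityRegVel} essentially verbatim, replacing the first-order local existence-and-analyticity input (Theorem~\ref{th:LinftyIVP}) by its higher-order counterparts: Theorem~\ref{th:MainThmVel} with $p=2$ for the velocity and Theorem~\ref{th:MainThmVor} with $p=1$ for the vorticity. The only genuinely new work is the bookkeeping of the scales produced by these sharper local theorems. I argue by contradiction: if $T^*$ were a blow-up time, then local-in-time continuity of $t\mapsto\|D^ku(t)\|_\infty$ produces continuum-many escape times $t$, and it suffices to contradict the defining inequality $\|D^ku(s)\|_\infty>\|D^ku(t)\|_\infty$ at the prescribed $s=s(t)$.

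First I would restart the flow at the escape time $t$ with datum $u(t)$ (resp. $\omega(t)$) and invoke Theorem~\ref{th:MainThmVel} (resp. Theorem~\ref{th:MainThmVor}). Treating $u(t)$ as initial datum and using that the energy $\|u(t)\|_2\le\|u_0\|_2$ is an a priori bounded quantity (which is exactly what fixes the size constant $c(M,\|u_0\|_2)$), the dominant term of $T_*$ scales like $\|D^ku(t)\|_\infty^{-2d/(2k+d)}$, so the whole interval prescribed for $s$ lies inside the interval of existence; there one has the analytic extension $D^ku(\cdot,y,s)+i\,D^kv(\cdot,y,s)$ on the strip $\cD_s$ together with the uniform bound $\sup_{y\in\cD_s}\|D^ku(\cdot,y,s)\|_\infty\le M\|D^ku(t)\|_\infty$. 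The analyticity radius at time $s$ is of order $\sqrt{s-t}\sim\|D^ku(t)\|_\infty^{-d/(2k+d)}$; since the a priori bound forces $\|D^ku(s)\|_\infty\le M\|D^ku(t)\|_\infty$, the prescribed sparseness scale $\rho\le\frac{1}{2^kc(M)\|D^ku(s)\|_\infty^{d/(2k+d)}}$ is comparable to, and in particular bounded by, that radius. Reconciling these scales — and checking that \emph{both} terms in the minimum defining $T_*$ tolerate the interval — is the step I expect to be the \emph{main obstacle}, as it is precisely where the exponent $\frac{d}{2k+d}$ coming from the higher-order Gagliardo--Nirenberg balance (Lemma~\ref{le:GNIneq}) must line up with the Fourier-analytic gain $\sqrt{s-t}$.

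With the scales reconciled, the harmonic-measure mechanism is identical to the first-order case. I pick $x_0$ nearly realizing $\|D^ku(s)\|_\infty$ with $(D^ku)_j^+(x_0,s)=|D^ku(x_0,s)|$, take $\nu$ to be the sparseness direction, and restrict the analytic extension of $(D^ku)_j$ to the complex line $x_0+\zeta\nu$. After rescaling the segment $(x_0-\rho\nu,x_0+\rho\nu)$ to the diameter $[-1,1]$ of $\mathbb{D}$, the real part of this restriction is harmonic, bounded above by $M\|D^ku(t)\|_\infty$ on the outer circle (via the analyticity bound, available precisely because $\rho$ does not exceed the radius) and by $\lambda\|D^ku(s)\|_\infty$ on the set $K\subset[-1,1]$ on which $(D^ku)_j^+\le\lambda\|D^ku(s)\|_\infty$. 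Since the super-level set is $\delta$-sparse, $\mu(K)\ge 2(1-\delta)$, so Proposition~\ref{prop:HarMaxPrin} combined with the extremal estimate of \citet{Solynin1997} yields $h(0,\mathbb{D},K)\ge h=\frac{2}{\pi}\arcsin\frac{1-\delta^2}{1+\delta^2}$.

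Evaluating at the center then gives $\|D^ku(s)\|_\infty\le\lambda h\,\|D^ku(s)\|_\infty+M(1-h)\,\|D^ku(t)\|_\infty$, i.e. $\|D^ku(s)\|_\infty\le\frac{M(1-h)}{1-\lambda h}\,\|D^ku(t)\|_\infty$. The admissibility constraints $\lambda h+(1-h)=2\lambda$ and $\frac{1}{1+\lambda}<\delta<1$ (which in particular force $\lambda<\frac12$, keep the origin in $\mathbb{D}\setminus K$, and make the threshold $\lambda$ consistent with the sparseness ratio $\delta$) render the contraction factor $\frac{1-h}{1-\lambda h}=\frac{\lambda}{1-2\lambda+2\lambda^2}$ strictly below one, leaving room to choose $M>1$ close enough to $1$ (legitimate, since the scale comparison above only improves as $M\downarrow 1$) so that $\frac{M(1-h)}{1-\lambda h}<1$. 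This gives $\|D^ku(s)\|_\infty<\|D^ku(t)\|_\infty$, contradicting the escape property, so $T^*$ is not a blow-up time. The vorticity statement is entirely parallel, running the same argument with Theorem~\ref{th:MainThmVor} ($p=1$, $d=3$) and replacing the exponents $\frac{d}{2k+d},\frac{2d}{2k+d}$ by $\frac{3/2}{k+3},\frac{3}{k+3}$ dictated by that theorem.
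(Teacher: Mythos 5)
Your proposal is correct and follows essentially the same route the paper intends: the paper offers no separate proof of Theorem~\ref{th:SparsityRegDk}, stating only that it follows by running the escape-time/analyticity/harmonic-measure argument of Theorem~\ref{th:SparsityRegVel} with Theorem~\ref{th:MainThmVel} ($p=2$) and Theorem~\ref{th:MainThmVor} ($p=1$) supplying the level-$k$ local existence, the uniform bound $M\|D^ku(t)\|_\infty$ on the strip, and the analyticity radius $\sim (s-t)^{1/2}\sim \|D^ku(t)\|_\infty^{-d/(2k+d)}$ that matches the prescribed sparseness scale. Your scale bookkeeping (the first term of the minimum in \eqref{eq:TimeLength} dominating near blow-up, the exponent $\tfrac{d}{2k+d}$ lining up with $\sqrt{s-t}$) and the Solynin/harmonic-measure contraction $\lambda h+(1-h)=2\lambda<1$ are exactly the intended ingredients.
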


The following lemma is a generalization of the $H^{-1}$-sparseness results in \citet{Farhat2017} and \citet{Bradshaw2019}.

\begin{lemma}\label{le:HkSparse}
Let $r\in(0,1]$ and $f$ be a bounded function from $\bR^d$ to $\bR^d$ with continuous partial derivatives
of order $k$. Then, for any tuple $(\zeta,\lambda, \delta, p)$, $\zeta\in\bN^d$ with $|\zeta|=k$, $\lambda\in (0,1)$, $\delta\in(\frac{1}{1+\lambda},1)$ and $p>1$, there exists $c^*(\zeta,\lambda,\delta,d,p)>0$ such that if
\begin{align}\label{eq:ZalphaCond}
\|D^\zeta f\|_{W^{-k,p}} \le c^*(\zeta,\lambda,\delta,d,p)\ r^{k+\frac{d}{p}} \|D^\zeta f\|_\infty
\end{align}
then each of the super-level sets
\begin{align*}
S_{\zeta,\lambda}^{i,\pm}=\left\{x\in\bR^d\ |\ (D^\zeta f)_i^\pm(x)>\lambda \|D^\zeta f\|_\infty\right\}\ , \qquad 1\le i\le d, \quad \zeta\in\bN^d
\end{align*}
is 3D $\delta$-sparse at scale $r$.
\end{lemma}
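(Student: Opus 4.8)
The plan is to argue by contradiction, exploiting the duality characterization $\|h\|_{W^{-k,p}} = \sup\{\int h\,\phi\,dx : \|\phi\|_{W^{k,p'}}\le 1\}$, where $\tfrac1p+\tfrac1{p'}=1$ (legitimate since $p>1$). Fix the tuple $(\zeta,\lambda,\delta,p)$, abbreviate $A := \|D^\zeta f\|_\infty$ and $k=|\zeta|$, and assume $A>0$ (otherwise the super-level sets are empty and the claim is trivial). It suffices to treat a single scalar component: set $g := (D^\zeta f)_i$, a bounded function with continuous derivatives and $\|g\|_\infty \le A$, noting that the negative-part set is handled by replacing $g$ with $-g$, so I only track $S_{\zeta,\lambda}^{i,+} = \{g > \lambda A\}$, which is open by continuity. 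Testing the vector $D^\zeta f$ against $\phi\,e_i$ gives $\|D^\zeta f\|_{W^{-k,p}} \ge \|g\|_{W^{-k,p}}$, so it is enough to produce a lower bound $\|g\|_{W^{-k,p}} \gtrsim A\,r^{k+d/p}$ whenever $r$-semi-mixedness fails; choosing $c^*$ below the resulting constant then contradicts \eqref{eq:ZalphaCond}.

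So, suppose $S_{\zeta,\lambda}^{i,+}$ is \emph{not} $r$-semi-mixed with ratio $\delta$: there is a point $x_0$ with
\[
\mu_d(E) > \delta\,\mu_d\bigl(B_r(x_0)\bigr), \qquad E := \{g > \lambda A\}\cap B_r(x_0).
\]
I would first record the purely measure-theoretic lower bound for the integral of $g$ against the \emph{indicator} of the ball: splitting into $E$ and its complement in $B_r(x_0)$ and using $g > \lambda A$ on $E$, $g \ge -A$ elsewhere,
\[
\int_{B_r(x_0)} g\,dx \;\ge\; \lambda A\,\mu_d(E) - A\bigl(\mu_d(B_r)-\mu_d(E)\bigr) \;\ge\; \bigl((1+\lambda)\delta - 1\bigr)A\,\mu_d(B_r).
\]
The admissibility hypothesis $\delta > \tfrac{1}{1+\lambda}$ is exactly what makes the prefactor $(1+\lambda)\delta-1$ strictly positive, and — crucially — this bound is independent of the \emph{location} of $E$ inside the ball.

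Next I would smooth the indicator. Let $\phi \in C_c^\infty(\bR^d)$ be supported in $B_r(x_0)$, equal to $1$ on $B_{(1-\epsilon)r}(x_0)$, with $0\le\phi\le 1$ and $\|D^j\phi\|_\infty \lesssim (\epsilon r)^{-j}$, where $\epsilon=\epsilon(\lambda,\delta,d)\in(0,1)$ is to be fixed. Since $\mathbbm{1}_{B_r}-\phi$ is supported in an annular layer of measure $\lesssim \epsilon\,r^d$, the previous estimate gives
\[
\int g\,\phi\,dx \;\ge\; \int_{B_r(x_0)} g\,dx - C_d\,\epsilon\,A\,r^d \;\ge\; \Bigl(\bigl((1+\lambda)\delta-1\bigr)\mu_d(B_1) - C_d\,\epsilon\Bigr)A\,r^d,
\]
which, for $\epsilon$ fixed small enough, is bounded below by $c_1(\lambda,\delta,d)\,A\,r^d$ with $c_1>0$. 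On the other hand, estimating the derivatives on the boundary layer gives $\|D^j\phi\|_{L^{p'}} \lesssim (\epsilon r)^{-j}(\epsilon\,r^d)^{1/p'}$, and since $r\le 1$ the top-order term dominates, so
\[
\|\phi\|_{W^{k,p'}} \;\le\; c_2(\zeta,\lambda,\delta,d,p)\,r^{d/p'-k}.
\]
Combining the two through duality and using $1-\tfrac{1}{p'}=\tfrac1p$,
\[
\|D^\zeta f\|_{W^{-k,p}} \;\ge\; \|g\|_{W^{-k,p}} \;\ge\; \frac{c_1\,A\,r^d}{c_2\,r^{d/p'-k}} \;=\; C^*(\zeta,\lambda,\delta,d,p)\,A\,r^{k+d/p},
\]
and taking $c^* < C^*$ contradicts \eqref{eq:ZalphaCond}, which proves the lemma.

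I expect the only genuine difficulty to be the simultaneous calibration of the smoothing scale $\epsilon r$: it must be coarse enough (that is, $\epsilon$ bounded below by a constant depending only on $\lambda,\delta,d$) that the boundary-layer error does not swallow the positive margin $(1+\lambda)\delta-1$ in the integral estimate, while the very same choice must still reproduce the exact homogeneity $r^{d/p'-k}$ of the test norm so that the two powers of $r$ combine to the prescribed scale $r^{k+d/p}$. The strict positivity of that margin, i.e. the condition $\delta>\tfrac{1}{1+\lambda}$, is precisely what makes this balancing feasible; everything else is bookkeeping of constants, all of which depend only on $(\zeta,\lambda,\delta,d,p)$ as required.
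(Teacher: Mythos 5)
Your proof is correct, and it follows essentially the argument the paper relies on: the lemma is stated without proof as a generalization of the $H^{-1}$-sparseness results in the cited works of Farhat--Gruji\'c--Leitmeyer and Bradshaw--Farhat--Gruji\'c, whose proofs are exactly this duality argument -- a lower bound on the pairing of $(D^\zeta f)_i^\pm$ with a smoothed bump adapted to the ball where sparseness fails, balanced against the $W^{k,p'}$ norm of that bump. Your calibration of the cutoff scale $\epsilon r$ and the resulting exponent $r^{k+d/p}$ are handled correctly, so nothing further is needed.
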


This leads to
\begin{theorem}\label{th:LerayZalpha}
Let $u$ be a Leray solution (a global-in-time weak solution satisfying the global energy inequality), and assume that $u$ is in $C((0,T^*), L^\infty)$ for some $T^*>0$. Then for any $t\in (0,T^*)$ the super-level sets
\begin{align*}
&\qquad\quad S_{\zeta,\lambda}^{i,\pm}=\left\{x\in\bR^d\ |\ (D^\zeta u)_i^\pm(x)>\lambda \|D^\zeta u\|_\infty\right\}\ , \qquad 1\le i\le d, \quad \zeta\in\bN^d
\\
&\left(\textrm{resp. } S_{\zeta,\lambda}^{i,\pm}=\left\{x\in\bR^3\ |\ (D^\zeta \omega)_i^\pm(x)>\lambda \|D^\zeta \omega\|_\infty\right\}\ , \qquad 1\le i\le 3, \quad \zeta\in\bN^3\ \right)
\end{align*}
are $d$-dimensional (resp. $3D$) $\delta$-sparse around any spatial point $x_0$ at scale
\begin{align}\label{eq:kNaturalScale}
r^*=c(\|u_0\|_2)\ \|D^\zeta u(t)\|_\infty^{-\frac{2}{2k+d}} \quad \left(\textrm{resp. } r^*=c(\|u_0\|_2)\ \|D^\zeta \omega(t)\|_\infty^{-\frac{2}{2k+5}}\ \right)
\end{align}
provided $r^* \in (0, 1]$.
In other words, $D^\zeta u(t)\in Z_\alpha(\lambda,\delta,c_0)$ with $\alpha=1/(k+d/2)$ (resp. $D^\zeta \omega(t)\in Z_\alpha(\lambda,\delta,c_0)$ with $\alpha=1/(k+5/2)$). Moreover, for any $p>2$, if we assume
\begin{align*}
u\in C((0,T^*), L^\infty)\cap L^\infty((0,T^*], L^p)
\end{align*}
then for any $t\in (0,T^*)$ the super-level sets $S_{\zeta,\lambda}^{i,\pm}$ are $d$-dimensional $\delta$-sparse around any spatial point $x_0$ at scale
\begin{align*}
r^*=c\left(\sup_{t<T^*}\|u(t)\|_{L^p}\right) \frac{1}{\|D^\zeta u(t)\|_\infty^{1/(k+d/p)}}
\end{align*}
provided $r^* \in (0, 1]$,
i.e. $D^\zeta u(t)\in Z_\alpha(\lambda,\delta,c_0)$ with $\alpha=1/(k+d/p)$.
\end{theorem}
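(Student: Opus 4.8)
The plan is to feed the two cheap \emph{a priori} bounds available for a Leray solution---the kinetic energy bound $\|u(t)\|_{L^2}\le\|u_0\|_{L^2}$, and (for the ``moreover'' part) the assumed uniform bound $\sup_{t<T^*}\|u(t)\|_{L^p}<\infty$---into the negative-Sobolev hypothesis \eqref{eq:ZalphaCond} of Lemma~\ref{le:HkSparse}, and then read off the largest admissible scale $r^*$. Fix $t\in(0,T^*)$; by hypothesis $u(\cdot,t)$ is smooth, so every $L^\infty$-norm below is finite. The one structural estimate needed is that, for a multi-index with $|\zeta|=k$, differentiation of order $k$ maps $L^p$ into $W^{-k,p}$ boundedly for $1<p<\infty$: the operator $(I-\Delta)^{-k/2}D^\zeta$ has symbol $(i\xi)^\zeta(1+|\xi|^2)^{-k/2}$, which satisfies the Mikhlin--H\"ormander conditions, whence
\begin{align*}
\|D^\zeta g\|_{W^{-k,p}}\ \le\ C(d,p,k)\,\|g\|_{L^p}.
\end{align*}

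For the velocity with $p=2$, combining this with the energy inequality gives $\|D^\zeta u(t)\|_{W^{-k,2}}\le C\|u(t)\|_{L^2}\le C\|u_0\|_2$. I then ask for the largest $r$ for which the right side of \eqref{eq:ZalphaCond} dominates this, i.e.\ $C\|u_0\|_2\le c^*\,r^{k+d/2}\,\|D^\zeta u(t)\|_\infty$; solving for $r$ yields $r^*=c(\|u_0\|_2)\,\|D^\zeta u(t)\|_\infty^{-1/(k+d/2)}=c(\|u_0\|_2)\,\|D^\zeta u(t)\|_\infty^{-2/(2k+d)}$, and Lemma~\ref{le:HkSparse} then makes each $S_{\zeta,\lambda}^{i,\pm}$ be $r^*$-semi-mixed with ratio $\delta$, hence $d$-dimensional $\delta$-sparse at scale $r^*$ about every $x_0$ (provided $r^*\le 1$). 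This is exactly $D^\zeta u(t)\in Z_\alpha(\lambda,\delta,c_0)$ with $\alpha=1/(k+d/2)$. The ``moreover'' clause is identical with $2$ replaced by $p$: the bound $\|D^\zeta u\|_{W^{-k,p}}\le C\sup_{t<T^*}\|u\|_{L^p}$ produces $r^*=c\,\|D^\zeta u(t)\|_\infty^{-1/(k+d/p)}$ and $\alpha=1/(k+d/p)$.

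The vorticity case is where the real care is needed, and it is the main obstacle, because now the number of derivatives $|\zeta|=k$ does \emph{not} match the negative-Sobolev index at which the energy bound becomes available. The point is that $\omega=\operatorname{curl}u$ carries one derivative of $u$, so the natural energy-level control is the $\dot H^{-1}$ bound $\|\omega(t)\|_{\dot H^{-1}}=\|u(t)\|_{L^2}\le\|u_0\|_2$ rather than an $L^2$-bound on $\omega$ itself. Concretely, testing $D^\zeta\omega$ against a bump $\phi$ supported in $B_r(x_0)$ and integrating by parts to transfer the $k$ derivatives and the curl onto $\phi$ gives $|\langle D^\zeta\omega,\phi\rangle|=|\langle u,\operatorname{curl}D^\zeta\phi\rangle|\le\|u\|_{L^2}\|D^{k+1}\phi\|_{L^2}$; by scaling $\|D^{k+1}\phi\|_{L^2}\sim r^{d/2-(k+1)}$, which is precisely the statement $\|D^\zeta\omega(t)\|_{W^{-(k+1),2}}\le C\|u_0\|_2$. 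Running the mechanism of Lemma~\ref{le:HkSparse} with the negative index raised from $k$ to $k+1$ then balances $\lambda\|D^\zeta\omega\|_\infty\,r^{d}$ against $\|u_0\|_2\,r^{d/2-(k+1)}$ and yields $r^*=c(\|u_0\|_2)\,\|D^\zeta\omega(t)\|_\infty^{-1/(k+1+d/2)}$; with $d=3$ this is the advertised $\|D^\zeta\omega\|_\infty^{-2/(2k+5)}$ and $\alpha=1/(k+5/2)$.

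The only remaining bookkeeping is to track how the constants $c^*(\zeta,\lambda,\delta,d,p)$ of Lemma~\ref{le:HkSparse} and the interdependence constraint $\tfrac{1}{1+\lambda}<\delta<1$ are absorbed into the final $c(\|u_0\|_2)$, and to observe that $r^*\in(0,1]$ is exactly the regime in which \eqref{eq:ZalphaCond} is a nonvacuous constraint---outside it the claim is void, matching the ``provided $r^*\in(0,1]$'' proviso. I expect no further difficulty here: everything reduces to the single scaling identity relating the negative-Sobolev index, the space dimension, and the exponent $\alpha$, together with the observation that the curl structure is what promotes the vorticity exponent from $1/(k+d/2)$ to $1/(k+(d+2)/2)$.
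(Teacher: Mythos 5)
Your proposal is correct and is essentially the argument the paper intends: Theorem~\ref{th:LerayZalpha} is presented as a direct consequence of Lemma~\ref{le:HkSparse} fed with the energy bound $\|u(t)\|_{L^2}\le\|u_0\|_2$ (resp.\ the $L^p$ bound, resp.\ the $\dot H^{-1}$ control of $\omega$ by $\|u\|_{L^2}$ via the curl structure), and your scaling computation recovers exactly the exponents $1/(k+d/2)$, $1/(k+d/p)$ and $1/(k+5/2)$. Your explicit identification of why the vorticity exponent is shifted by one unit in the negative-Sobolev index is precisely the bookkeeping underlying the stated $2/(2k+5)$ scale.
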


Henceforth, we will assume $D^k=\partial_{x_1}^k$ (the proof for other multi-indexes is analogous), and denote the $L^\infty$-norm simply by $\| \cdot \|$.  

\medskip

Let us generalize the notation adopted in \citet{Grujic2019} 
in the following manner,
\begin{align}
&\cR_{m,n}(j,c,t):=\frac{\|D^ju(t)\|^{\frac{1}{j+1}}}{c^{\frac{j-n}{j-n+1}}((j+m)!)^{\frac{1}{j+1}}}\ ,
\qquad T_{j,M}(t) := (M-1)^2\ c^{\frac{2j}{j+1}}\ \|D^j u(t)\|^{-\frac{2}{j+1}}\ , \label{eq:RealScaleNote}
\\
&\cC_{m,n}(j,c,\varepsilon,t_0,t):=\frac{\left(\|D^ju(\cdot,\varepsilon (t-t_0)^{1/2},t)\|+\|D^jv(\cdot,\varepsilon (t-t_0)^{1/2},t)\|\right)^{\frac{1}{j+1}}} {c^{\frac{j-n}{j-n+1}}((j+m)!)^{\frac{1}{j+1}}}\ , \quad m,n\in \bZ. \notag 
\end{align}
Then, the result with `descending assumption' can be improved as follows.

\begin{theorem}\label{le:DescendDer}
Let $u$ be a Leray solution of \eqref{eq:NSE1}-\eqref{eq:NSE3} initiated at $u_0$. Define
\begin{align*}
\cZ_k(t):=\|D^{k}u(t)\|^{\frac{d/2-1}{(k+1)(k+d/2)}}\ .
\end{align*}
Suppose $\ell$ is sufficiently large such that $\|u_0\|\lesssim (1+\epsilon)^{\ell}$. For a fixed $k\ge\ell$ and fixed $n, m\in\bZ$ ($n\le 0$), suppose that
\begin{align}\label{eq:DescDerOrd}
\cR_{m,n}(k,c,t_0) \ge \cR_{m,n}(j,c,t_0) \ , \qquad \forall  j\ge k
\end{align}
for a suitable constant $c=c(k)$ which also satisfies
\begin{align}\label{eq:ParaAdjMaxP}
\lambda h^* + \left(1+\frac{2e}{\eta}\frac{c^{\frac{k}{(k+n+1)^2}}\ \cZ_k(t_0)}{(k+m)^{m/k}} \exp\left(\frac{2e}{\eta}\frac{c^{\frac{k}{(k+n+1)^2}}\ \cZ_k(t_0)}{(k+m)^{m/k}} \right)\right) (1-h^*) \le \mu
\end{align}
where
$h^*=\frac{2}{\pi} \arcsin \frac{1-\delta^{2/d}}{1+\delta^{2/d}}$, $(1+\eta)^d=\frac{\delta(1+\lambda)+1}{2}$ and $\mu$ is a positive constant. Then there exist $t_*, T_*>t_0$ and a constant $\mu_*$ such that
\begin{align*}
\|D^ku(s)\|\le \mu_* \|D^ku_0\| \ , \qquad \forall\ t_*\le s\le T_*\ .
\end{align*}
Here $\mu_*$ is smaller than the threshold $M$ for $D^ku$ given in Theorem~\ref{th:MainThmVel} (and could be less than 1 with proper choices of $c$ and $\mu$). A particular consequence (with an argument by contradiction) of this result is
that--for sufficently small values of $\mu$--\eqref{eq:DescDerOrd} can not coexist with \eqref{eq:ParaAdjMaxP}.
\end{theorem}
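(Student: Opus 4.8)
The plan is to run the quantitative analyticity theory of Theorem~\ref{th:MainThmVel} together with the subharmonic (harmonic-measure) estimate of Proposition~\ref{prop:HarMaxPrin} and the Solynin extremal bound, using the descending assumption \eqref{eq:DescDerOrd} to control the analytic continuation of $D^k u$ on a complex disk whose radius is calibrated to the sparseness scale. First I would rewrite \eqref{eq:DescDerOrd} as an explicit tail bound: the inequality $\cR_{m,n}(j,c,t_0)\le \cR_{m,n}(k,c,t_0)$ for $j\ge k$ rearranges into $\|D^j u(t_0)\|\le \cR_{m,n}(k,c,t_0)^{\,j+1}\,c^{(j+1)(j-n)/(j-n+1)}(j+m)!$, which makes the Taylor series of the complexified field summable and bounds $\cC_{m,n}$ in terms of $\cR_{m,n}$; this is the only place the full chain $\{\|D^j u\|\}_{j\ge k}$ is used.

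Next I would propagate this analytic regularity forward by Theorem~\ref{th:MainThmVel} with $p=2$, keeping $D^k u$ analytic on $\cD_t$ with $|y|\le \varepsilon(t-t_0)^{1/2}$ over a window $[t_0,T_*]$ of length comparable to $T_{k,M}(t_0)$, and then sum the factorially-weighted Taylor tail over a disk of radius equal to the natural sparseness scale $r^*\sim\|D^k u\|^{-1/(k+d/2)}$ of Theorem~\ref{th:LerayZalpha}. That summation produces a multiplicative excess of exactly the shape $1+x e^{x}$ with $x=\frac{2e}{\eta}\frac{c^{k/(k+n+1)^2}\,\cZ_k(t_0)}{(k+m)^{m/k}}$: writing $\cZ_k(t)=\|D^k u_t\|^{1/(k+1)-1/(k+d/2)}$ reveals $\cZ_k$ as the ratio of the sparseness scale $r^*$ to the analyticity radius $\sim\|D^k u\|^{-1/(k+1)}$, so $x$ measures how far the continuation reaches beyond the natural Taylor radius, while $\eta$, fixed by $(1+\eta)^d=\frac{\delta(1+\lambda)+1}{2}$, is the admissible buffer between the two scales.

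I would then close with the harmonic-measure step. Fixing an escape time of $D^k u$ and a companion time $s$ as in Theorem~\ref{th:SparsityRegDk}, pick $x_0$ with $(D^k u)_j^\pm(x_0,s)=\|D^k u(s)\|_\infty$; Theorem~\ref{th:LerayZalpha} via Lemma~\ref{le:HkSparse} gives $d$D $\delta$-sparseness, hence $1$D $\delta^{1/d}$-sparseness, of the super-level set around $x_0$, so on a boundary set $K$ of definite relative length the subharmonic modulus $|F|$ of the continued component obeys $|F|\le\lambda\|D^k u(s)\|_\infty$, while on the whole disk the strip bound gives $|F|\le(1+xe^{x})\|D^k u(s)\|_\infty$. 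Proposition~\ref{prop:HarMaxPrin}, with $h^*=\frac{2}{\pi}\arcsin\frac{1-\delta^{2/d}}{1+\delta^{2/d}}$ lower bounded by the Solynin estimate, evaluated at the center $x_0$ then yields $\|D^k u(s)\|_\infty\le[\lambda h^*+(1+xe^{x})(1-h^*)]\,\|D^k u(s)\|_\infty$; carrying the same inequality relative to $\|D^k u_0\|$ through the propagation bound gives the stated $\|D^k u(s)\|\le\mu_*\|D^k u_0\|$ with $\mu_*=\frac{(1+xe^{x})(1-h^*)}{1-\lambda h^*}$.

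The consequence is then immediate: normalizing the previous display by $\|D^k u(s)\|_\infty$ shows the harmonic-measure estimate always forces $\lambda h^*+(1+xe^{x})(1-h^*)\ge 1$, whereas \eqref{eq:ParaAdjMaxP} asserts the same quantity is $\le\mu$; for $\mu<1$ this is a contradiction, so \eqref{eq:DescDerOrd} and \eqref{eq:ParaAdjMaxP} cannot coexist (equivalently $\mu_*<1$, which at a late escape time forces $\|D^k u\|$ below its escape value and is impossible near a blow-up). I expect the main obstacle to be the scale matching: one must simultaneously make $\varepsilon(t-t_0)^{1/2}$ large enough for the factorially-weighted tail to sum to the closed form $1+xe^{x}$ yet commensurate with the sparseness radius $r^*$ at which $K$ and $h^*$ are defined, all while respecting the interdependence of $(\lambda,\delta,\eta,h^*,\mu)$ and tracking the excess factor uniformly over $[t_0,T_*]$; reconciling these competing scale demands, rather than either the analyticity propagation or the subharmonic estimate in isolation, is the technical heart of the argument.
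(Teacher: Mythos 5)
Your overall mechanism is the paper's: the descending assumption makes the Taylor series of the complexified $D^k u$ summable on a disk whose radius is the sparseness scale $\sim\|D^k u\|^{-1/(k+d/2)}$ rather than the natural analyticity radius $\sim\|D^k u\|^{-1/(k+1)}$, the overshoot between the two scales being exactly $\cZ_k$ and producing the excess factor $1+xe^{x}$; sparseness (Theorem~\ref{th:LerayZalpha}) plus the harmonic-measure estimate (Proposition~\ref{prop:HarMaxPrin}) then force $\lambda h^*+(1+xe^{x})(1-h^*)\ge 1$, which is incompatible with \eqref{eq:ParaAdjMaxP} for small $\mu$. Your reading of $\cZ_k$ as the ratio of the sparseness scale to the analyticity radius, and of $\eta$ as the admissible buffer, is also how the paper's estimates are organized.

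The gap is in the time propagation. To sum the Taylor tail of $D^k u_s(z)$ at the \emph{later} time $s$ you need bounds on $|D^{k+i}u(s)|$ for \emph{all} $i\ge 0$, but Theorem~\ref{th:MainThmVel} only furnishes $\|D^{j}u(s)\|\le M\|D^{j}u_0\|$ on a time window whose length, cf. \eqref{eq:TimeLength}, shrinks to zero as $j\to\infty$ (through the $2^{2j}$ and $\|D^{j}u_0\|^{\cdots}$ factors); there is no common positive window over which the entire infinite chain propagates. Your claim that the descending assumption at $t_0$ is ``the only place the full chain is used'' is therefore not tenable: the tail must be re-controlled at time $s$. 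The paper's proof handles this by choosing an auxiliary level $k_*$ (via the condition \eqref{eq:AscDerCond}), splitting the expansion into $\cI_s$ (orders $i\le k_*-k$, controlled by forward propagation over the common window $\min_{k\le j\le k_*}$ of the individual spans) and $\cJ_s$ (orders $i> k_*-k$, controlled by \emph{assuming the descending order persists at all later times} $s<t_0+T_*$), and then disposing of the complementary case---where the descending order breaks down at some later time---by a separate argument, deferred to the corresponding part of Theorem~3.9 in \citet{Grujic2019}. Without this splitting and the accompanying case analysis, the step ``sum the factorially-weighted Taylor tail over a disk of radius $r^*$ at time $s$'' is unjustified, and this---rather than the scale matching you flag---is where the real work lies.
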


\begin{proof}
Pick $k_*$ such that \eqref{eq:AscDerCond} holds for $\ell=k$ and $k=k_*$. According to Theorem~\ref{th:MainThmVel}, there exists
\begin{align*}
T_*=C(M)\|u_0\|_2^2 \cdot \min_{k\le j\le k_*}\! 4^{-j} \|D^{j}u_0\|^{-\frac{d}{j+d/2}}
\end{align*}
such that
\begin{align*}
\sup_{t_0<s<t_0+T_*}\! \|D^{j}u(s)\| \le M\|D^{j}u_0\|\ , \qquad \forall\ k\le j\le k_*\ ,
\end{align*}
i.e. the uniform time span for the real solutions from $k$-th level to $k_*$-th level.

We first prove for the case that the order of `the tail of \eqref{eq:DescDerOrd} after $k_*$' continues for all $s$ up to $t_0+T_*$, that is assuming, for any $t_0<s<t_0+T_*$,
\begin{align}\label{eq:DescDerOrdAlls}
\cR_{m,n}(k,c,s) \ge \cR_{m,n}(j,c,s) \ , \qquad \forall\ j\ge k_*\ .
\end{align}
Fix an $x_0\in\bR^d$. Following the assumption~\eqref{eq:DescDerOrdAlls}, if $z\in B_{r_s}(x_0,0)\subset\bC^d$ with
\begin{align}\label{eq:NaScSparse}
r_s= \left(\frac{\sup_{s}\|u(s)\|_{L^2}}{c^*(\zeta,\lambda,\delta,d,p)}\right)^{\frac{1}{k+d/2}} \|D^ku(s)\|^{-\frac{1}{k+d/2}} \approx (\eta/2)^{-1}\|D^ku(s)\|^{-\frac{1}{k+d/2}}
\end{align}
(where $c^*$ is given in \eqref{le:HkSparse} and such choice for the radius becomes natural as we apply Theorem~\ref{th:LerayZalpha} later and Proposition~\ref{prop:HarMaxPrin} at the end of the proof) the complex extension of $D^ku(s)$ at any spatial point $x_0$ satisfies (for $z\neq x_0$)
\begin{align*}
\left|D^ku(z, s)\right| &\le \left(\sum_{0\le i\le k_*-k} + \sum_{i>k_*-k}\right)\frac{\left|D^{k+i}u(x_0, s)\right|}{i!} |z-x_0|^i =: \cI_s(z) + \cJ_s(z)
\end{align*}
where (with in mind \eqref{eq:DescDerOrdAlls} )
\begin{align*}
\cJ_s(z) &\le \sum_{i>k_*-k} \left(\cR_{m,0}(k+i,c,s) \right)^{k+i+1} \frac{(k+m+i)!}{i!}\ c^{k+i}\ |z-x_0|^i
\\
&\le \sum_{i>k_*-k} \left(\cR_{m,n}(k+i,c,s) \right)^{k+i+1} \frac{(k+m+i)!}{c^{-\frac{n}{k+n+i+1}}i!}\ c^{k+i}\ |z-x_0|^i
\\
&\le \sum_{i>k_*-k} \left(\cR_{m,n}(k,c,s)\right)^{k+i+1} \frac{(k+m+i)!}{c^{-\frac{n}{k+n+i+1}}\ i!}\ c^{k+i}\ |z-x_0|^i\ , \qquad \forall\ t_0<s<t_0+T_*\ .
\end{align*}
Thus, for any $s<t_0+T_*$, (with in mind \eqref{eq:NaScSparse} and the fact that $n\le 0$ and $c<1$)
\begin{align*}
\sup_{z\in B_{r_s}(x_0,0)} \cJ_s(z) &\le c^{-1} \sum_{i>k_*-k} \left(c\ \cR_{m,n}(k,c,s)\right)^{k+i+1} \frac{(k+m+i)!}{c^{-\frac{n}{k+n+i+1}}\ i!} \cdot r_s^i
\\
&\le \|D^ku(s)\| \sum_{i>k_*-k} c^{\frac{-n\cdot i}{(k+n+1)(k+n+i+1)}} \frac{(k+m+i)!}{(k+m)!\ i!} \left(c \cdot \cR_{m,n}(k,c,s)\cdot r_s\right)^i
\\
&\le \|D^ku(s)\| \sum_{i>k_*-k} \frac{(k+m+i)!}{(k+m)!\ i!} \left(\frac{c^{\frac{1}{k+n+1}}\ \|D^{k}u(s)\|^{\frac{1}{k+1}}\ r_s}{c^{\frac{n}{(k+n+1)(k+n+i+1)}} \left((k+m)! \right)^{\frac{1}{k+1}}}\right)^i
\\
&\le \|D^ku(s)\| \sum_{i>k_*-k} \frac{(k+m+i)!}{(k+m)!\ i!} \left(\frac{c^{\frac{1}{k+n+1} \frac{k+i+1}{k+n+i+1}} \|D^{k}u(s)\|^{\frac{d/2-1}{(k+1)(k+d/2)}}} {(\eta/2)\left((k+m)!\right)^{\frac{1}{k+1}}}\right)^{i}\ .
\end{align*}
By Theorem~\ref{th:MainThmVel} and the assumption \eqref{eq:DescDerOrd}, if $s\le t_0+T_*$ then
\begin{align*}
\sup_{z\in B_{r_s}(x_0,0)} \cI_s(z) &\le M \sum_{0\le i\le k_*-k} \frac{\|D^{k+i}u_0\|}{i!}\ r_s^i
\\
&\le M \sum_{0\le i\le k_*-k} \left(\cR_{m,n}(k,c,t_0)\right)^{k+i+1} \frac{(k+m+i)!}{c^{-\frac{n}{k+n+i+1}}\ i!}\ c^{k+i}\ r_s^i
\\
&\le M \|D^{k}u_0\| \sum_{0\le i\le k_*-k} \frac{(k+m+i)!}{(k+m)!\ i!} \left(\frac{c^{\frac{1}{k+n+1} \frac{k+i+1}{k+n+i+1}} \|D^{k}u(s)\|^{\frac{d/2-1}{(k+1)(k+d/2)}}} {(\eta/2)\left((k+m)!\right)^{\frac{1}{k+1}}}\right)^{i} \ .
\end{align*}
We will complete the proof by way of contradiction. Suppose there exists an $t<t_0+T_*$ such that $\|D^ku(t)\|> \mu_* \|D^ku_0\|$, then $r_t\le \mu_*^{-\frac{1}{k+d/2}}r_0$ and
\begin{align*}
\sup_{z\in B_{r_t}(x_0,0)} \cI_t(z) &\le M \|D^{k}u_0\| \sum_{0\le i\le k_*-k} \frac{(k+m+i)!}{(k+m)!\ i!} \left(\frac{c^{\frac{1}{k+n+1} \frac{k+i+1}{k+n+i+1}} \|D^{k}u_0\|^{\frac{d/2-1}{(k+1)(k+d/2)}}} {\mu_*^{\frac{1}{k+d/2}}(\eta/2)\left((k+m)!\right)^{\frac{1}{k+1}}}\right)^{i}\ .
\end{align*}
Combining the estimates for $\cJ_t(z)$ and $\cI_t(z)$ yields

\begin{align*}
\sup_{z\in B_{r_t}(x_0,0)} \left|D^ku(z, t)\right|  &\le M\|D^ku_0\| \sum_{i>k_*-k} \frac{(k+m+i)!}{(k+m)!\ i!} \left(\frac{c^{\frac{1}{k+n+1} \frac{k+i+1}{k+n+i+1}}\ \cZ_k(t)} {(\eta/2)\left((k+m)!\right)^{\frac{1}{k+1}}}\right)^{i}
\\
& + M \|D^{k}u_0\| \sum_{0\le i\le k_*-k} \frac{(k+m+i)!}{(k+m)!\ i!} \left(\frac{c^{\frac{1}{k+n+1} \frac{k+i+1}{k+n+i+1}}\ \cZ_k(t_0) } {\mu_*^{\frac{1}{k+d/2}}(\eta/2)\left((k+m)!\right)^{\frac{1}{k+1}}}\right)^{i}
\\
&\le M\|D^ku_0\| \sum_{i\ge 0} \frac{(k+m+i)!}{(k+m)!\ i!} \left(\frac{c^{\frac{1}{k+n+1} \frac{k+i+1}{k+n+i+1}}\ \cZ_k(t_0)} {\mu_*^{\frac{1}{k+d/2}}(\eta/2)\left((k+m)!\right)^{\frac{1}{k+1}}}\right)^{i}
\\
&\le M\|D^ku_0\| \left(1 + \sum_{i=1}^\infty \frac{c^{\frac{1}{k+n+1} \frac{k+i+1}{k+n+i+1}}\ \cZ_k(t_0)} {\mu_*^{\frac{1}{k+d/2}} (\eta/2)\left((k+m)!\right)^{\frac{1}{k+1}}} \right.
\\
&\qquad \left. \times \frac{k+m+i}{i}\ \frac{(k+m+i-1)!}{(k+m)!\ (i-1)!} \left(\frac{c^{\frac{1}{k+n+1} \frac{k+i+1}{k+n+i+1}}\ \cZ_k(t_0)} {\mu_*^{\frac{1}{k+d/2}} (\eta/2)\left((k+m)!\right)^{\frac{1}{k+1}}}\right)^{i-1} \right) \ .
\end{align*}
Note that for $n<0$ and $i$ small,
\begin{align*}
c^{\frac{1}{k+n+1} \frac{k+i+1}{k+n+i+1}} \big/ c^{\frac{k}{(k+n+1)^2}} = c^{\frac{1}{k+n+1} \left(\frac{1}{k+n+1} + \frac{ni}{(k+n+1)(k+n+i+1)}\right)} \lesssim \frac{(k+m+1)i}{k+m+i}
\end{align*}
while for $n<0$ and $i$ large, $c^{\frac{1}{k+n+1} \frac{k+i+1}{k+n+i+1}} \big/ c^{\frac{k}{(k+n+1)^2}}$ is negligible compared to $\left((k+m)!\right)^{\frac{1}{k+1}}$; thus
\begin{align*}
\sup_{z\in B_{r_t}(x_0,0)} \left|D^ku(z, t)\right| &\le M\|D^ku_0\| \left(1 + \frac{c^{\frac{k}{(k+n+1)^2}} \ \cZ_k(t_0)} {\mu_*^{\frac{1}{k+d/2}} (\eta/2)\left((k+m)!\right)^{\frac{1}{k+1}}} \right.
\\
&\qquad\quad \left.\times\ (k+m+1) \left(1- \frac{c^{\frac{k}{(k+n+1)^2}} \ \cZ_k(t_0)} {\mu_*^{\frac{1}{k+d/2}} (\eta/2)\left((k+m)!\right)^{\frac{1}{k+1}}}\right)^{-k-m-1} \right)\ .
\end{align*}
Since the above estimates hold for all $x_0$, if $\|u_0\|\lesssim (1+\epsilon)^{\ell}$, $M, \mu_*\approx1$ and $k$ is sufficiently large,
\begin{align*}
&\underset{y\in B_{r_t}(0)}{\sup} \|D^ku(\cdot,y,t)\|_{L^\infty} + \underset{y\in B_{r_t}(0)}{\sup} \|D^kv(\cdot,y,t)\|_{L^\infty}
\\
&\qquad \le M\left(1 + \frac{c^{\frac{k}{(k+n+1)^2}}(k+m+1)\ \cZ_k(t_0) }{(\eta/2)\left((k+m)!\right)^{\frac{1}{k+1}}}  \exp\left(\frac{c^{\frac{k}{(k+n+1)^2}} (k+m+1)\ \cZ_k(t_0)}{\mu_*^{\frac{1}{k+d/2}} (\eta/2)((k+m)!)^{\frac{1}{k+1}}}\right) \right) \|D^ku_0\|
\\
&\qquad \le M\left(1 + \frac{2e^{1-\frac{m}{k}} c^{\frac{k}{(k+n+1)^2}}\ \cZ_k(t_0) } {\eta\ (k+m)^{m/k}} \exp\left(\frac{2e^{1-\frac{m}{k}} c^{\frac{k}{(k+n+1)^2}} \ \cZ_k(t_0) }{\eta\ \mu_*^{\frac{1}{k+d/2}}(k+m)^{m/k}}\right) \right)\|D^ku_0\|\ .
\end{align*}
By Theorem~\ref{th:LerayZalpha}, for any spatial point $x_0$ there exists a direction $\nu$ along which the super-level set
\begin{align*}
S_{k,\lambda}^{i,\pm}=\left\{x\in\bR^d\ |\ (D^k u(t))_i^\pm(x)>\lambda \|D^k u(t)\|_\infty\right\}
\end{align*}
is 1-D $\delta^{1/d}$-sparse at scale $r_t$ given in \eqref{eq:NaScSparse}. Note that the results in Proposition~\ref{prop:HarMaxPrin} are scaling invariant and--for simplicity--assume $r_t=1$ and $\nu$ is a unit vector. Define
\begin{align*}
K=\overline{(x_0-\nu,x_0+\nu)\setminus S_{k,\lambda}^{i,\pm}}\ .
\end{align*}
Then--by sparseness--$|K|\ge 2(1-\delta^{1/d})$. If $x_0\in K$, the result follows immediately. If $x_0\notin K$, then by Proposition~\ref{prop:HarMaxPrin} and the above estimate for $D^ku(z, t)$,
\begin{align*}
|D^ku(x_0, t)|&\le \lambda \|D^k u(t)\|_\infty\ h^* + \sup_{z\in B_{r_t}(x_0,0)} \left|D^ku(z, t)\right| (1-h^*)
\\
&\le \lambda M\|D^k u_0\|_\infty\ h^* + \frac{M\ c^{\frac{k}{(k+n+1)^2}} \ \cZ_k(t_0) }{(\eta/2e)\ (k+m)^{m/k}} \exp\left(\frac{2e}{\eta} \frac{c^{\frac{k}{(k+n+1)^2}} \ \cZ_k(t_0) }{(k+m)^{m/k}}\right) \|D^k u_0\|_\infty (1-h^*)
\end{align*}
where $h^*=\displaystyle \frac{2}{\pi} \arcsin \frac{1-\delta^{2/d}}{1+\delta^{2/d}}$. Hence, if condition~\eqref{eq:ParaAdjMaxP} is satisfied, we observe a contradiction (from the above result) that $\|D^ku(t)\|\le \mu_* \|D^ku_0\|$ with $\mu_*=M\mu$.

For the opposite case to \eqref{eq:DescDerOrdAlls}, the proof is the same as that part of Theorem~3.9 in \citet{Grujic2019} with the subscripts $m,n$ for $\{\cR(k,c,t)\}$.
\end{proof}

\begin{remark}
If assuming $D^j u(t)\in Z_j(\lambda,\delta,c)$ with $\alpha=1/(k+1)$ for all $j\ge k_*$, then one can prove the statement with the same $\mu_*$ for much longer duration $T_*$.
\end{remark}

With a similar deduction, one can prove the following refined results of Lemma~3.11 and Corollary~3.12 in \citet{Grujic2019}

\begin{lemma}\label{le:ScaleBound}
Suppose $\|u_0\|\lesssim (1+\epsilon)^{\ell}$ and $u(t)$ is the solution to \eqref{eq:NSE1}-\eqref{eq:NSE3}. For any $\kappa>\ell$ and $n, m\in\bZ$ such that $|n|\ll \kappa$ and $|m|\ll\ell$, if \eqref{eq:ParaAdjMaxP} is satisfied (for $\kappa$), one of the two cases must occur:

(I)$^*$ There exist $t$ and $k \ge \kappa$ such that
\begin{align*}
\cR_{m,n}(j,c,t) \ge \cR_{m,n}(k,c,t)\ , \qquad \forall \ell\le j\le k
\end{align*}

(II)$^*$ Otherwise,
\begin{align*}
\sup_{s>t_0}\ \max_{j\ge\ell} \cR_{m,n}(j,c,s)  \le \max_{\ell\le j\le \kappa} \cR_{m,n}(j,c,t_0)
\end{align*}
\end{lemma}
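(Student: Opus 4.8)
The plan is to prove the dichotomy by contraposition: assuming the ``valley'' configuration (I)$^*$ never occurs, I will produce the uniform bound (II)$^*$. Throughout write $a_j(s):=\cR_{m,n}(j,c,s)$ and $A_0:=\max_{\ell\le j\le\kappa}a_j(t_0)$. Two preliminary facts drive the argument. First, by Theorem~\ref{th:MainThmVel} each $D^j u(\cdot)$ lies in $C([0,T_*),L^\infty)$, so every $a_j(\cdot)$ is continuous in time. Second, the spatial analyticity supplied by Theorem~\ref{th:MainThmVel} (equivalently Theorem~\ref{th:LinftyIVP}) gives, at each fixed time, a Gevrey-type control $\|D^j u(s)\|\lesssim C^{j+1}\,j!\,\delta(s)^{-j}$, so $a_j(s)$ is bounded uniformly in $j$ and $\max_{j\ge\ell}a_j(s)$ is attained (a normal-family argument via Lemma~\ref{le:Montel} makes the attainment precise).

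First I would record the structural meaning of ``(I)$^*$ fails.'' Negating (I)$^*$ says that for every $t$ and every $k\ge\kappa$ there is some $j\in[\ell,k]$ with $a_j(t)<a_k(t)$; equivalently, the running minimum $m_k(t):=\min_{\ell\le j\le k}a_j(t)$ stabilizes at $k=\kappa$, i.e. $m_k(t)=\min_{\ell\le j\le\kappa}a_j(t)$ for all $k\ge\kappa$, and every tail value $a_k(t)$ with $k\ge\kappa$ strictly exceeds this head-minimum. In particular no level beyond $\kappa$ can be a global minimizer of the chain.

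Next I set up a first-exit argument in time. Suppose, toward contradiction, that (II)$^*$ also fails, so some $s>t_0$ has $\max_{j\ge\ell}a_j(s)>A_0$. Let $\tau$ be the first time at which the running maximum $\max_{j\ge\ell}a_j(\cdot)$ reaches $A_0$ (using continuity and that only finitely many low levels start below $A_0$), and let $k$ realize the maximum at, or just past, $\tau$. The decisive observation is that a maximizing level automatically satisfies the descending-tail hypothesis \eqref{eq:DescDerOrd}, since $a_k(\tau)=\max_{j\ge\ell}a_j(\tau)\ge a_j(\tau)$ for all $j\ge k$. Provided this level is $\ge\kappa$, Theorem~\ref{le:DescendDer} applies: under the standing condition \eqref{eq:ParaAdjMaxP} (imposed precisely for $\kappa$, with $|n|\ll\kappa$, $|m|\ll\ell$ keeping the factorial-normalized high-order terms binding) it yields $\|D^k u(\sigma)\|\le\mu_*\|D^k u(\tau)\|$ on a forward interval with $\mu_*<1$ for small $\mu$. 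Hence $a_k$, the very quantity realizing the maximum, strictly decreases just past $\tau$, contradicting that $\tau$ was an exit time. The only way to avoid generating such a descending, growth-quashing peak at a level $\ge\kappa$ is for the binding level to be forced below $\kappa$ while some tail level nonetheless lies below it --- which is exactly the valley configuration (I)$^*$ at some $k\ge\kappa$. Thus ``(I)$^*$ fails'' together with the descending estimate forces $\max_{j\ge\ell}a_j(\cdot)\le A_0$ for all $s>t_0$, i.e. (II)$^*$; the finitely many low levels $\ell\le j<\kappa$ are absorbed into $A_0$ by direct local well-posedness over the uniform time span of Theorem~\ref{th:MainThmVel}.

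I expect the main obstacle to be the bookkeeping in the exit-time step: correctly matching the configuration appearing at a growth event to the hypothesis \eqref{eq:DescDerOrd} of Theorem~\ref{le:DescendDer} while tracking the hand-off of the maximizing level as time evolves. One must guarantee that whenever the overall maximum over $j\ge\ell$ attempts to cross $A_0$ the binding level is $\ge\kappa$ and caps a descending tail, so the estimate (with $\mu_*<1$) genuinely suppresses growth rather than merely bounding it; making this rigorous requires the uniform-in-time size-parameter control, the attainment of the supremum over levels via analyticity and Lemma~\ref{le:Montel}, and compatibility of the constants in \eqref{eq:ParaAdjMaxP} across the relevant band of indices. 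The low-level and hand-off analysis then parallels Lemma~3.11 and Corollary~3.12 of \citet{Grujic2019}, now carried with the $(m,n)$-shifted quantities $\cR_{m,n}$.
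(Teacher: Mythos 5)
Your toolkit --- locating the global maximizer of $j\mapsto\cR_{m,n}(j,c,s)$ (attained via analyticity and Lemma~\ref{le:Montel}), observing that a maximizing level automatically caps a descending tail so that Theorem~\ref{le:DescendDer} applies there, and running a first-exit argument in time --- is exactly the machinery behind the paper's proof (which is deferred to Lemma~3.11 of \citet{Grujic2019}, carried over with the subscripts $m,n$). But your case analysis is inverted at the crux, and as written the argument does not close. Note first that alternative (I)$^*$ is used downstream (Corollary~\ref{cor:DerOrdConfig}, and the computation of $\cB^{(p)}_{i,j}$ preceding Lemma~\ref{le:DescendDerRefn}) as the \emph{ascending} configuration $\cR_{m,n}(j,c,t)\le\cR_{m,n}(k,c,t)$ for $\ell\le j\le k$, i.e.\ the level $k\ge\kappa$ is a maximizer of its initial segment; the printed inequality has the two sides swapped. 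Your first paragraph negates the literal ``valley'' reading and derives facts about running minima; these are never used later, and under that reading the negation of (I)$^*$ only yields a \emph{lower} bound on the tail, which cannot produce the upper bound (II)$^*$ --- so that reading cannot be the operative one.

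More seriously, the hand-off in your exit-time step is backwards. If at some time the global maximizer $k^*$ of $\{\cR_{m,n}(j,c,\cdot)\}_{j\ge\ell}$ satisfies $k^*\ge\kappa$, you are \emph{already} in case (I)$^*$ (the global max in particular dominates all $j\in[\ell,k^*]$), and the proof terminates with no contradiction to derive; applying Theorem~\ref{le:DescendDer} there decides nothing about the dichotomy. Conversely, if (I)$^*$ never occurs, then (taking the smallest offending index) every tail value $\cR_{m,n}(k,c,t)$ with $k>\kappa$ lies below $\max_{\ell\le j\le\kappa}\cR_{m,n}(j,c,t)$, so the global maximizer always sits in $[\ell,\kappa]$; \emph{this} is the configuration in which Theorem~\ref{le:DescendDer} must be invoked --- at the sub-$\kappa$ maximizer, which tops a descending tail and for which \eqref{eq:ParaAdjMaxP} is the relevant hypothesis, not at a level $\ge\kappa$ where you place it --- to keep the head maximum from ever exceeding $\max_{\ell\le j\le\kappa}\cR_{m,n}(j,c,t_0)$, which is precisely (II)$^*$. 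Your claim that a maximizer at $j_0<\kappa$ ``is exactly the valley configuration (I)$^*$ at some $k\ge\kappa$'' is false: it gives $\cR_{m,n}(k,c,t)\le\cR_{m,n}(j_0,c,t)$ for $k\ge\kappa$, but says nothing about $\cR_{m,n}(k,c,t)$ lying below \emph{all} of $\cR_{m,n}(j,c,t)$, $\ell\le j\le k$. Swapping the roles of the two cases repairs the skeleton; the analytic ingredients you list (temporal continuity, Gevrey attainment of the supremum over levels, iteration of the forward window $[t_*,T_*]$ from Theorem~\ref{le:DescendDer} with $\mu_*M\le 1$) are then the right ones and match the paper's route.
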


\begin{corollary}\label{cor:DerOrdConfig}
Let $u$ be a Leray solution of \eqref{eq:NSE1}-\eqref{eq:NSE3}. Suppose $\ell$ is sufficiently large such that $\|u_0\|\lesssim (1+\epsilon)^{\ell}$. For any $\kappa>\ell$ and $n, m\in\bZ$ such that $|n|\ll \kappa$ and $|m|\ll \ell$, if there exists a sequence of positive numbers $\{c_j\}_{j=\ell}^k$ such that $c_{j+1}\le c_j<1$ and for some fixed $p\in\bN^+$
\begin{align}\label{eq:ParaAdjMaxPIndW}
\lambda h^* + \left(1+\frac{(1+\epsilon)^{\ell/k} c_{j-p}^{\frac{k}{(k+n+1)^2}}\ \cZ_k(t_0)}{(\eta/2e)(k+m)^{m/k}} \exp\left(\frac{(1+\epsilon)^{\ell/k} c_{j-p}^{\frac{k}{(k+n+1)^2}}\ \cZ_k(t_0)}{(\eta/2e)(k+m)^{m/k}}\right)\right) (1-h^*) \le M^{-1}
\end{align}
is satisfied for each $j$ (where $\eta$ and $h^*$ are defined as in Theorem~\ref{le:DescendDer} and $M$ is given in Theorem~\ref{th:MainThmVel}), then for sufficiently large $t$, one of the following two cases must occur:

(I)$^*$ There exist temporal point $t\ge t_0$, $k\ge\kappa +p$ and constants $\displaystyle \cB_{k,i}\le \prod_{j=i}^k c_j^{-\frac{1}{(j+1)(j+2)}}$ such that
\begin{align*}
\cR_{m,n}(i,c_\ell,t) \le \cB_{k,i} \cdot \cR_{m,n}(k,c_\ell,t) \qquad\textrm{for all}\quad \ell\le i\le k\ ;
\end{align*}

(II)$^*$ Otherwise there exist $t\ge t_0$, $r<\kappa$ and constants $\displaystyle \cB_{r,i}\le \prod_{j=i}^r c_j^{-\frac{1}{(j+1)(j+2)}}$ such that
\begin{align*}
\sup_{s>t}\ \cR_{m,n}(i,c_\ell,s) \le \cB_{r,i} \cdot \cR_{m,n}(r,c_\ell,t) \qquad\textrm{for all}\quad \ell\le i\le r
\end{align*}
and constants $\displaystyle \cC_{r,i}\le c_r^{\frac{1}{r+1}-\frac{1}{i+1}}$ such that
\begin{align*}
\sup_{s>t}\ \cR_{m,n}(i,c_r,s) \le \cC_{r,i} \cdot \max_{\ell\le j\le \ell+p} \cR_{m,n}(j,c_\ell,t_0) \qquad\textrm{for all}\quad i>r\ .
\end{align*}
\end{corollary}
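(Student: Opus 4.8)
The plan is to bootstrap the single-threshold dichotomy of Lemma~\ref{le:ScaleBound} into a statement about the entire chain $\{\cR_{m,n}(j,c,\cdot)\}_{j\ge\ell}$ by iterating it level-by-level along the prescribed scale sequence $\{c_j\}$, all the while tracking the position of the dominant level of the chain as $t$ increases. First I would fix the base normalization $c_\ell$ and, at each level $j$, read the hypothesis \eqref{eq:ParaAdjMaxPIndW} as asserting that the parameter-adjustment condition \eqref{eq:ParaAdjMaxP} of Theorem~\ref{le:DescendDer} holds at level $j$ with the $p$-delayed scale $c_{j-p}$; this is exactly what licenses the descending-derivative control $\|D^j u(s)\|\le \mu_* \|D^j u_0\|$ whenever the chain descends from level $j$ in the sense of \eqref{eq:DescDerOrd}, together with the fact that such a descending configuration cannot persist. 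The two alternatives of the conclusion will then correspond to whether the dominant level of the chain escapes to arbitrarily high order ($\ge \kappa+p$), producing the peak configuration (I)$^*$, or stabilizes at some finite $r<\kappa$, in which case the sustained descending structure above $r$ combined with Theorem~\ref{le:DescendDer} forces the time-uniform bounds of (II)$^*$.

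Concretely, I would run the iteration as follows. Applying Lemma~\ref{le:ScaleBound} at the threshold $\kappa$, either it furnishes a level $k\ge\kappa$ and a time at which $\cR_{m,n}(k,c,\cdot)$ is extremal relative to the lower range $[\ell,k]$, or it supplies the global-in-time cap $\sup_{s>t_0}\max_{j\ge\ell}\cR_{m,n}(j,c,s)\le \max_{\ell\le j\le\kappa}\cR_{m,n}(j,c,t_0)$. In the first situation I would re-initialize above $k$ with the next scale in the sequence and re-apply the lemma, generating either a strictly higher critical level or a stabilization; the $p$-level buffer built into the hypotheses is precisely what is needed to refresh \eqref{eq:ParaAdjMaxPIndW} each time the normalization is updated, and it is the source of the offsets $k\ge\kappa+p$ and $\max_{\ell\le j\le\ell+p}$ in the conclusion. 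If this process pushes the critical level past $\kappa+p$, I collect the accumulated consecutive comparisons into the peak estimate $\cR_{m,n}(i,c_\ell,t)\le \cB_{k,i}\,\cR_{m,n}(k,c_\ell,t)$ of (I)$^*$; otherwise it terminates at some $r<\kappa$, and the descending control of Theorem~\ref{le:DescendDer} above $r$ — propagated across all escape times via the uniform time span of Theorem~\ref{th:MainThmVel} — yields both displayed inequalities of (II)$^*$.

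The bookkeeping of the constants is the step that demands care but is ultimately routine. Each time a comparison produced in the diagonal normalization $\cR_{m,n}(j,c_j,\cdot)$ is converted back to the common base $c_\ell$ (respectively $c_r$ for the super-critical tail), it picks up a scale factor that is a power of $c_j$; chaining these conversions from level $i$ up to level $k$ and invoking the telescoping identity $\frac{1}{(j+1)(j+2)}=\frac{1}{j+1}-\frac{1}{j+2}$ produces exactly the product $\cB_{k,i}\le \prod_{j=i}^{k} c_j^{-1/((j+1)(j+2))}$, while the single residual factor from the $c_\ell\!\to\!c_r$ change for $i>r$ collapses to $\cC_{r,i}\le c_r^{1/(r+1)-1/(i+1)}$, which is $<1$ and records the suppression of the super-critical tail. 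I would also note that $\sum_j \frac{1}{(j+1)(j+2)}$ converges, so these constants remain controlled along the whole chain as long as the $c_j$ do not decay too fast — a compatibility that is guaranteed by \eqref{eq:ParaAdjMaxPIndW}.

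The main obstacle I anticipate is not any single estimate but the \emph{uniformity} of the whole induction: one must keep \eqref{eq:ParaAdjMaxPIndW} valid simultaneously at every level as the scales $c_j$ are refreshed and as $t$ approaches the potential singular time, and at the same time ensure that the factor $\cZ_k(t_0)=\|D^k u_{t_0}\|^{(d/2-1)/((k+1)(k+d/2))}$ together with the accumulated constants never overwhelms the gain $(1-h^*)$ supplied by the harmonic-measure step (Proposition~\ref{prop:HarMaxPrin}) that powers Theorem~\ref{le:DescendDer}. Equivalently, the delicate point is to show that ``escape of the dominant level to infinity'' and ``time-uniform control'' are genuinely mutually exclusive once \eqref{eq:ParaAdjMaxPIndW} holds for all $j$, so that exactly one of (I)$^*$, (II)$^*$ occurs; this is where the non-coexistence of \eqref{eq:DescDerOrd} with the parameter condition in Theorem~\ref{le:DescendDer} must be exploited uniformly across levels rather than at a single fixed order.
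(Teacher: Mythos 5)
Your proposal follows essentially the same route the paper intends: the paper gives no standalone proof of Corollary~\ref{cor:DerOrdConfig}, deferring to ``a similar deduction'' to Lemma~3.11 and Corollary~3.12 of \citet{Grujic2019}, which is precisely your scheme of iterating the dichotomy of Lemma~\ref{le:ScaleBound} (powered by the non-coexistence of the descending configuration \eqref{eq:DescDerOrd} with the parameter condition, via Theorem~\ref{le:DescendDer}) level-by-level along the scale sequence $\{c_j\}$, with the $p$-level buffer refreshing \eqref{eq:ParaAdjMaxPIndW} and the telescoping identity $\frac{1}{(j+1)(j+2)}=\frac{1}{j+1}-\frac{1}{j+2}$ producing the constants $\cB_{k,i}$ and $\cC_{r,i}$. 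Your identification of the two alternatives with escape versus stabilization of the dominant level, and of the delicate point as the uniformity of \eqref{eq:ParaAdjMaxPIndW} across levels, is consistent with the paper's intended argument.
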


In this setting, the result with `ascending assumption' becomes

\begin{theorem}\label{le:AscendDer}
Let $u$ be a Leray solution initiated at $u_0$ and
suppose that 
\begin{align}\label{eq:AscDerOrd}
\cR_{m,n}(j,c,t_0) \le \cR_{m,n}(k,c,t_0)\, \qquad \forall \ell\le j\le k
\end{align}
where $c$, $\ell$ and $k$ satisfy
\begin{align}\label{eq:AscDerCond}
c \|u_0\|_2 \|u_0\|^{d/2-1} \frac{(\ell!)^{1/2}\ell}{(\ell/2)!} \lesssim k.
\end{align}
If  $T^{1/2}\lesssim C(\|u_0\|_2,\ell,k,m,n)^{-1} \|D^ku_0\|^{-\frac{1}{k+1}}$ (where $C(\|u_0\|_2,\ell,k)$ also depends on the pre-specified constant $c$ and the threshold $M$; the constant $c=c(k)$ in \eqref{eq:AscDerOrd} are chosen according to the formation of the ascending chains in Corollary~\ref{cor:DerOrdConfig} and Lemma~\ref{le:ScaleBound}, which is originally determined by the assumption~\eqref{eq:ParaAdjMaxP} in Theorem~\ref{le:DescendDer}), then for any $\ell\le j \le k$ the complex solution of \eqref{eq:NSE1}-\eqref{eq:NSE3} has the following upper bounds:
\begin{align}\label{eq:AscDerUpBdd}
&\underset{t\in(0,T)}{\sup} \cC_{m,n}(j,c,\varepsilon,t_0,t) \le M^{\frac{1}{j+1}}\cR_{m,n}(j,c,t_0) + \left(j+ k\right)^{\frac{1}{j+1}} \cR_{m,n}(k,c,t_0)
\end{align}
where $\cD_t$ is given by \eqref{eq:AnalDom}. For the real solutions the above result becomes
\begin{align}
\underset{t\in(0,\tilde{T})}{\sup} \cR_{m,n}(j,c,t) \le \cR_{m,n}(j,c,t_0) + \left(j+ k\right)^{\frac{1}{j+1}} \cR_{m,n}(k,c,t_0) \label{eq:AscDerUpBddReal}
\end{align}
where $\tilde{T}$ does not depend on $M$.
\end{theorem}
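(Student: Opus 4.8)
The plan is to treat this as the ascending dual of Theorem~\ref{le:DescendDer}: rather than bounding the single dominant derivative and arguing by contradiction, I would directly bound the complex extension of each lower derivative $D^j u$, $\ell \le j \le k$, by expanding it along the imaginary direction and feeding the ascending ordering \eqref{eq:AscDerOrd} into the coefficients. Since $D^j = \partial_{x_1}^j$, the analytic continuation to imaginary part $\varepsilon(t-t_0)^{1/2}e_1$ has the Taylor representation $D^j u(x, \varepsilon(t-t_0)^{1/2}e_1, t) = \sum_{i\ge 0}\frac{(\sqrt{-1}\,\varepsilon(t-t_0)^{1/2})^i}{i!}\,D^{j+i}u(x, 0, t)$, so that $\|D^j u(\cdot, y_1 e_1, t)\| \le \sum_{i\ge 0}\frac{|y_1|^i}{i!}\|D^{j+i}u(\cdot, 0, t)\|$ with $y_1 = \varepsilon(t-t_0)^{1/2}$; the content of the proof is to resum this series using the two hypotheses.

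First I would invoke Theorem~\ref{th:MainThmVel} at each level $j$ in the band $\ell \le j \le k$ to produce the complex solution $u + \sqrt{-1}\,v$ on a common time interval together with the real bound $\|D^j u(s)\| \le M\|D^j u_0\|$ and the analytic bound $\|D^k u(\cdot, y, s)\| + \|D^k v(\cdot, y, s)\| \le M\|D^k u_0\|$ for $y \in \cD_s$ (in the force-free main application the source terms $\Gamma$ of that theorem vanish; otherwise they are carried along additively). The arithmetic constraint \eqref{eq:AscDerCond} is exactly what forces the individual local-existence times across the band to be mutually comparable, so that a single span survives down to the prescribed $T$, while the calibration $T^{1/2} \lesssim C^{-1}\|D^k u_0\|^{-1/(k+1)}$ matches the analyticity radius $\sim c\,t^{1/2}$ at the final time to the natural length scale of $D^k u$ in the $\cR_{m,n}$-normalization. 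I would then split the Taylor series at $j + i = k$.

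For the head $\ell \le j + i \le k$ (finitely many terms), the real bound $\|D^{j+i}u(s)\| \le M\|D^{j+i}u_0\|$ together with the ascending ordering $\cR_{m,n}(j+i,c,t_0) \le \cR_{m,n}(k,c,t_0)$ converts each summand, after absorbing the factorial normalization $(j+i+m)!$ and the power of $c$ built into $\cR_{m,n}$, into a multiple of $\cR_{m,n}(k,c,t_0)^{j+i+1}$; the $i = 0$ term alone reproduces the self-term $M^{1/(j+1)}\cR_{m,n}(j,c,t_0)$. For the tail $j + i > k$, the level-$k$ analytic bound lets me apply Cauchy estimates on the complex ball of radius $\sim c\,t^{1/2}$, yielding $\|D^{j+i}u(\cdot, 0, t)\| \lesssim M\|D^k u_0\|\,(j+i-k)!\,(c\,t^{1/2})^{-(j+i-k)}$; because $|y_1| = \varepsilon(t-t_0)^{1/2}$ with $\varepsilon$ chosen smaller than $c$, the evaluation point sits strictly inside the analyticity window and the tail is a convergent geometric series with ratio $\sim \varepsilon/c$. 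Collecting the $i\ge 1$ head terms together with the tail, controlling the combined multiplicity by $\sim j + k$, and taking the $(j+1)$-th root then produces the second term $(j+k)^{1/(j+1)}\cR_{m,n}(k,c,t_0)$, which gives \eqref{eq:AscDerUpBdd}. The real statement \eqref{eq:AscDerUpBddReal} follows by setting $\varepsilon = 0$: the imaginary part drops out, the analyticity-threshold $M$ is no longer consumed, the self-coefficient improves from $M^{1/(j+1)}$ to $1$, and the admissible span $\tilde T$ becomes independent of $M$.

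The step I expect to be the main obstacle is ensuring the bounds genuinely persist up to the calibrated time $T$ rather than merely the naive single-step existence time at level $k$. Because the top level carries the largest derivative, its elementary local-existence window is the shortest in the band, and reaching the coarser $\cR_{m,n}$-natural scale $\|D^k u_0\|^{-1/(k+1)}$ (larger than the sparseness scale $\|D^k u_0\|^{-1/(k+d/2)}$ when $d>2$) pushes the analyticity window to its limit; securing it requires either a careful accounting of the $k$-dependent constants or propagation of the ascending ordering in time through the escape-time bookkeeping of Corollary~\ref{cor:DerOrdConfig} and Lemma~\ref{le:ScaleBound}, so that $\|D^k u\|$ does not run away before $T$. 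Once this time-persistence is in hand, the remaining difficulty is purely combinatorial: one must verify that the shifts $m, n$ — which replace $(j+m)!$ by $(j+i+m)!$ and the exponent of $c$ by $c^{(j+i-n)/(j+i-n+1)}$ — telescope against the ratio $\|D^{j+i}u_0\|^{1/(j+i+1)}/\|D^k u_0\|^{1/(k+1)} \le 1$ so that no factorial growth leaks into the final constant, exactly as in the tracking carried out in Theorem~\ref{le:DescendDer}.
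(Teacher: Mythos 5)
Your proposal takes a genuinely different route from the paper -- a static Taylor expansion of $D^ju$ in the imaginary direction split at order $k$, with Cauchy estimates for the tail -- which is essentially the mechanism of the \emph{descending} result (Theorem~\ref{le:DescendDer}) transplanted to the ascending setting. The paper instead re-runs the Picard iteration for the complex solution: it applies the Leibniz rule to $D^j(U^{(n)}\cdot\nabla)U^{(n)}$, splits the binomial sum into a middle part $I_n$ (both factors of intermediate order, each converted to a power of $\|D^ku_0\|^{1/(k+1)}$ via \eqref{eq:AscDerOrd}) and a low part $J_n$ (handled by Gagliardo--Nirenberg interpolation against $\|u_0\|_2$, which is where the factor $\|u_0\|_2\|u_0\|^{d/2-1}(\ell!)^{1/2}\ell/(\ell/2)!$ in \eqref{eq:AscDerCond} actually enters), and then uses the Duhamel factor $t^{1/2}$ with $T^{1/2}\lesssim\|D^ku_0\|^{-1/(k+1)}$ to close the induction with the bound $M\cR(j)^{j+1}+2j\cR(k)^{j+1}$. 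Your reading of \eqref{eq:AscDerCond} as a calibration of local-existence times across the band is not its role in the argument.

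The genuine gap is the one you yourself flag as ``the main obstacle'' and then leave unresolved: your argument requires $\|D^{j+i}u(t)\|\le M\|D^{j+i}u_0\|$ for all head indices and the level-$k$ analyticity bound on $\cD_t$ to persist on the \emph{entire} interval $(0,T)$ with $T\approx\|D^ku_0\|^{-2/(k+1)}$. Theorem~\ref{th:MainThmVel} only guarantees these on the interval \eqref{eq:TimeLength}, which for $p=2$, $d=3$ scales like $4^{-k}\|D^ku_0\|^{-3/(k+3/2)}$ (or $\|D^ku_0\|^{-5/2/(k+3/2)}$ from the second alternative) -- strictly shorter than $\|D^ku_0\|^{-2/(k+1)}$ once $\|D^ku_0\|$ and $k$ are large. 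Extending the bounds to the longer interval is precisely the content of the theorem being proved: the ascending ordering weakens the nonlinearity, and only by feeding that into the Duhamel term (as the paper does) does the iteration close over the longer time span. Your Taylor-expansion argument never touches the equation, so it cannot manufacture the time extension; taking the long-time bounds as input makes the argument circular at the decisive step. Once those bounds are granted, your head/tail resummation and the recovery of the coefficients $M^{1/(j+1)}$ and $(j+k)^{1/(j+1)}$ are plausible, but the granting is the theorem.
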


\begin{proof}
Similar to the proof of Theorem~3.8 in \citet{Grujic2019}, by induction
\begin{align*}
&\left\|\int_0^t e^{(t-s)\Delta}D^j(U^{(n)}\cdot \nabla)U^{(n)}ds\right\| \lesssim t^{\frac{1}{2}} \sum_{i=0}^j \binom j i \sup_{s<T}\|D^iU^{(n-1)}(s)\| \sup_{s<T}\|D^{j-i}U^{(n-1)}(s)\|
\\
&\quad \lesssim t^{\frac{1}{2}} \left(\sum_{\ell \le i\le j-\ell} \binom j i L_{n-1}^{(i)}L_{n-1}^{(j-i)}+ 2\sum_{0\le i\le \ell} \binom j i K_{n-1}^{\frac{\ell-i}{\ell+d/2}}\left(L_{n-1}^{(\ell)}\right)^{\frac{i+d/2}{\ell+d/2}}L_{n-1}^{(j-i)} \right) := t^{\frac{1}{2}} \left(I_n+2J_n\right)
\end{align*}
where $L_n^{(j)} := \underset{t<T}{\sup}\ \|D^jU^{(n)}\|_{L^\infty}+\underset{t<T}{\sup}\ \|D^jV^{(n)}\|_{L^\infty}$ and $K_n :=\underset{t<T}{\sup}\ \|U^{(n)}\|_{L^2}+\underset{t<T}{\sup}\ \|V^{(n)}\|_{L^2}$.
By induction hypothesis, assumptions~\eqref{eq:AscDerOrd} and \eqref{eq:AscDerCond}, for $j>2\ell$,
\begin{align*}
I_n & \lesssim \sum_{\ell \le i\le j-\ell} \binom j i \left(1+ \frac{i+ c\|u_0\|_2 \|u_0\|^{1/2} \frac{(\ell!)^{1/2}\ell}{(\ell/2)!}}{c^{-\frac{1}{k +1}}(k !)^{\frac{1}{k+1}}}\right) \frac{c^{\frac{(i-n)(i+1)}{i-n+1}}\ (i+m)!}{\left(c^{\frac{k-n}{k-n+1}}((k+m)!)^{\frac{1}{k+1}}\right)^{i+1}} \|D^ku_0\|^{\frac{i+1}{k+1}}
\\
&\qquad \times \left(1+ \frac{j-i+ c\|u_0\|_2 \|u_0\|^{1/2} \frac{(\ell!)^{1/2}\ell}{(\ell/2)!}}{c^{-\frac{1}{k +1}}(k !)^{\frac{1}{k+1}}}\right) \frac{c^{\frac{(j-i-n)(j-i+1)}{j-i-n+1}}(j-i+m)!}{\left(c^{\frac{k-n}{k -n+1}}((k+m)!)^{\frac{1}{k+1}} \right)^{j-i+1}} \|D^ku_0\|^{\frac{j-i+1}{k+1}}
\\
&\lesssim \sum_{\ell \le i\le j-\ell} \binom j i \left(\frac{c^{\frac{(i-n)}{i-n+1}}}{c^{\frac{(k-n)}{k-n+1}}}\right)^{i+1} \left(\frac{c^{\frac{(j-i-n)}{j-i-n+1}}}{c^{\frac{(k-n)}{k-n+1}}}\right)^{j-i+1} \frac{(i+m)!\ (j-i+m)!}{((k+m)!)^{\frac{j+2}{k+1}}}\ \|D^ku_0\|^{\frac{j+2}{k+1}}
\\
&\lesssim \sum_{\ell \le i\le j-\ell} \left(\frac{c^{\frac{(j-n)}{j-n+1}}}{c^{\frac{(k-n)}{k-n+1}}}\right)^{j+2} \frac{c^{\frac{j+2}{j-n+1}}}{c^{\frac{i+1}{i-n+1}+\frac{j-i+1}{j-i-n+1}}}\ \frac{j!\ (i+m)!\ (j-i+m)!}{((k+m)!)^{\frac{j+2}{k+1}}\ i!\ (j-i)!}\ \|D^ku_0\|^{\frac{j+2}{k+1}}.
\end{align*}
By Theorem~\ref{th:LinftyIVP} we may assume without loss of generality $\|D^ku_0\|\lesssim k!\|u_0\|^{k+1}$, thus
\begin{align*}
J_n &\lesssim \sum_{0\le i\le \ell} \binom j i \|u_0\|_2^{\frac{\ell-i}{\ell+d/2}} \left(2+ \frac{\ell}{((k+m)!)^{\frac{1}{k+1}}}\right)^{\frac{i+d/2}{\ell+d/2}} \left(\frac{c^{\frac{\ell-n}{\ell-n+1}}((\ell+m)!)^{\frac{1}{i+1}}} {c^{\frac{k-n}{k-n+1}}((k+m)!)^{\frac{1}{k+1}}}\right)^{\frac{i+d/2}{\ell +d/2}(\ell+1)}
\\
&\qquad\qquad \|D^ku_0\|^{\frac{\ell+1}{k+1} \frac{i+d/2}{\ell +d/2}} \left(2+ \frac{j-i}{((k+m)!)^{\frac{1}{k+1}}}\right) \frac{c^{\frac{(j-i-n)(j-i+1)}{j-i-n+1}}(j-i+m)!}{\left(c^{\frac{k-n}{k -n+1}}((k+m)!)^{\frac{1}{k+1}} \right)^{j-i+1}}\ \|D^ku_0\|^{\frac{j-i+1}{k+1}}
\\
&\lesssim \|u_0\|_2 \sum_{0\le i\le \ell} \left(\frac{c^{\frac{(j-n)}{j-n+1}}}{c^{\frac{(k-n)}{k-n+1}}}\right)^{j+2} \frac{c^{\frac{j+2}{j-n+1}}}{c^{\frac{i+1}{i-n+1}+\frac{j-i+1}{j-i-n+1}}}\ \frac{j!\ (i+m)!\ (j-i+m)!}{((k+m)!)^{\frac{j+2}{k+1}} i!\ (j-i)!}\ \|D^ku_0\|^{\frac{j+2}{k+1}}
\\
& \qquad \times \frac{\left(c^{\frac{k-n}{k-n+1}}((k+m)!)^{\frac{1}{k+1}}\right)^{i+1}}{c^{\frac{(i-n)(i+1)}{i-n+1}}\ (i+m)!} \left(\frac{c^{\frac{\ell-n}{\ell-n+1}}((\ell+m)!)^{\frac{1}{i+1}}} {c^{\frac{k-n}{k-n+1}}((k+m)!)^{\frac{1}{k+1}}}\right)^{\frac{i+d/2}{\ell +d/2}(\ell+1)} \|D^ku_0\|^{\frac{\ell-i}{\ell +d/2}\frac{d/2-1}{k+1}}
\\
&\lesssim c\|u_0\|^{d/2-1} \frac{(\ell!)^{1/2}\ell}{(\ell/2)!}\ \|D^ku_0\|^{\frac{j+2}{k+1}}
\sum_{0\le i\le \ell} \left(\frac{c^{\frac{(j-n)}{j-n+1}}}{c^{\frac{(k-n)}{k-n+1}}}\right)^{j+2}\! \frac{c^{\frac{j+2}{j-n+1}}}{c^{\frac{i+1}{i-n+1}+\frac{j-i+1}{j-i-n+1}}}\ \frac{j!\ (i+m)!\ (j-i+m)!}{((k+m)!)^{\frac{j+2}{k+1}} i!\ (j-i)!}
\end{align*}
The other nonlinear terms are estimated in the same way and these implies, if $s\lesssim \cR_{m,n}(k,c,t_0)^{-1/2}$
\begin{align*}
\|D^jU_n(s)\|\big/\left(c^{\frac{(j-n)(j+1)}{j-n+1}}\cdot (j+m)!\right) \le M\cdot \cR_{m,n}(j,c,t_0)^{j+1} + 2j\cdot \cR_{m,n}(k,c,t_0)^{j+1}\ .
\end{align*}
For $\ell \le j\le 2\ell$, such estimate for both $U_n$ and $V_n$ can be obtained in the same way. So, by induction and the above estimates \eqref{eq:AscDerUpBdd} holds for all $\ell\le j\le k$. The proof of \eqref{eq:AscDerUpBddReal} is similar.
\end{proof}

\medskip

Finally, we state the spatial intermittency based regularity criterion in this setting.

\medskip

\begin{theorem}\label{th:MainResult}
Let $u_0 \in L^\infty \cap L^2$ (resp. $\omega_0 \in L^\infty \cap L^1$) and $u$ in $C((0,T^*), L^\infty)$ where $T^*$ is the first possible blow-up time. Let $c$, $\ell$, $k$ be such that $\|u_0\|\lesssim (1+\epsilon)^{(2/d)\ell}$ (resp. $\|\omega_0\|\lesssim (1+\epsilon)^{(2/d)\ell}$) and \eqref{eq:AscDerCond} holds.
For any index $k\ge \ell$ and temporal point $t$ such that \eqref{eq:AscDerOrd} is satisfied and
\begin{align}\label{eq:tIRRegAscBdd}
&\qquad\quad t+\frac{1}{\cC(\|u_0\|,\ell,k)^2 \|D^ku(t)\|_\infty^{2/(k+1)}} < T^* \\
&\left(\textrm{resp. } t+\frac{1}{\cC(\|\omega_0\|,\ell,k)^2 \|D^k\omega(t)\|_\infty^{2/(k+2)}} < T^* \ \right) \notag
\end{align}
we assume that for every $k\ge \ell$ there exists a temporal point
\begin{align*}
&\qquad\quad s=s(t)\in \left[t+\frac{1}{4\cdot\tilde{\cC}(\|u_0\|,\ell,k)\|D^ku(t)\|_\infty^{2/(k+1)}},\ t+\frac{1}{\tilde{\cC}(\|u_0\|,\ell,k)\|D^ku(t)\|_\infty^{2/(k+1)}}\right]
\\
&\left(\textrm{resp. } s=s(t)\in \left[t+\frac{1}{4\cdot\tilde{\cC}(\|\omega_0\|,\ell,k)\|D^k\omega(t)\|_\infty^{2/(k+2)}},\ t+\frac{1}{\tilde{\cC}(\|\omega_0\|,\ell,k)\|D^k\omega(t)\|_\infty^{2/(k+2)}}\right]\ \right)
\end{align*}
such that, with $(j,\pm)$ is chosen such that $|D^ku(x_0,s)|=(D^ku)_j^\pm(x_0,s)$ (resp. $|D^k\omega(x_0,s)|=(D^k\omega)_j^\pm(x_0,s)$),
\begin{align}\label{eq:kRegScale}
(D^ku)_j^\pm(s) \in Z_{\frac{1}{k+1}}(\lambda,\delta,C_k^{-1}) \quad \left(\textrm{resp. } (D^k\omega)_j^\pm(s)\in Z_{\frac{1}{k+2}}(\lambda,\delta,c_0)\ \right)
\end{align}
where the pair $(\lambda,\delta)$ is chosen such that \eqref{eq:ParaAdjMaxP} in Theorem~\ref{le:DescendDer} holds. Then, there exists $\gamma>0$ such that $u\in L^\infty((0,T^*+\gamma); L^\infty)$.

\end{theorem}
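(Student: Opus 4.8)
The plan is to argue by contradiction: assume $T^*$ is a genuine first blow-up time and derive a violation of the escape-time property for $D^k u$ (resp. $D^k\omega$). The vorticity case is entirely parallel, using Theorem~\ref{th:MainThmVor} in place of Theorem~\ref{th:MainThmVel} and the shifted exponents $2/(k+2)$ and the class $Z_{1/(k+2)}$, so I describe only the velocity case. First I would record two consequences of $T^*$ being a blow-up time. Since $u\in C((0,T^*),L^\infty)$ and the local existence span furnished by Theorem~\ref{th:MainThmVel} (with $p=2$) from any time $t$ is comparable to $\|D^k u(t)\|_\infty^{-2/(k+1)}$, the solution fails to continue past $T^*$ only if that span shrinks to zero; hence $\|D^k u(t)\|_\infty\to\infty$ and, quantitatively, $\|D^k u(t)\|_\infty\gtrsim (T^*-t)^{-(k+1)/2}$. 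For $\cC$ chosen in accordance with this span, the lower blow-up rate is exactly what makes the time-length hypothesis \eqref{eq:tIRRegAscBdd} hold automatically for all $t$ close enough to $T^*$, and it also guarantees a sequence of escape times of $D^k u$ accumulating at $T^*$.

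Next I would invoke the ordering dichotomy for the rescaled chain $\{\cR_{m,n}(j,c,t)\}$ supplied by Lemma~\ref{le:ScaleBound} and Corollary~\ref{cor:DerOrdConfig}. If the chain eventually falls into the globally controlled alternative (case (II)$^*$), then $\sup_{s}\max_{j\ge\ell}\cR_{m,n}(j,c,s)$ is bounded by the data, which by the definition \eqref{eq:RealScaleNote} bounds $\|D^k u\|_\infty$ uniformly in time and contradicts the blow-up established above; this is precisely the content of the descending estimate, Theorem~\ref{le:DescendDer}. Hence the remaining alternative (case (I)$^*$) must hold along a sequence $t=t_n\to T^*$ of escape times where the ascending ordering \eqref{eq:AscDerOrd} is in force. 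At each such $t$ the time-length condition holds by the previous paragraph, so the standing hypothesis of the theorem delivers a companion time $s=s(t)$ in the prescribed interval at which the critical sparseness $(D^k u)_j^\pm(s)\in Z_{1/(k+1)}(\lambda,\delta,C_k^{-1})$ holds, with $(\lambda,\delta)$ tuned so that \eqref{eq:ParaAdjMaxP} is satisfied.

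The decisive step is then the analyticity-plus-harmonic-measure contraction, run exactly as in the proof of Theorem~\ref{le:DescendDer}. Theorem~\ref{th:MainThmVel} extends $D^k u(s)$ to a complex-analytic function on the strip $\cD_{s-t}$ whose width is comparable to $\sqrt{s-t}\sim\|D^k u(t)\|_\infty^{-1/(k+1)}$, i.e. to the sparseness scale in \eqref{eq:kRegScale}; the ascending bound of Theorem~\ref{le:AscendDer} controls the supremum of this extension over the strip by a fixed multiple of $\|D^k u(t)\|_\infty$. Applying Proposition~\ref{prop:HarMaxPrin} together with the Solynin extremal estimate for the harmonic measure to the subharmonic function $\log|(D^k u)_j^\pm|$ along the one-dimensionally sparse direction (via Theorem~\ref{th:LerayZalpha}), condition \eqref{eq:ParaAdjMaxP} forces the resulting convex combination $\lambda h^* + (\textrm{growth factor})(1-h^*)$ below $1$, so that $|D^k u(x_0,s)|\le \mu_*\|D^k u(t)\|_\infty$ with $\mu_*\le 1$ for every $x_0$, whence $\|D^k u(s)\|_\infty\le \mu_*\|D^k u(t)\|_\infty$. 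Since $s>t$ and $t$ is an escape time of $D^k u$, this contradicts $\|D^k u(s)\|_\infty>\|D^k u(t)\|_\infty$. Both alternatives being impossible, $T^*$ is not a blow-up time, and continuity of the $L^\infty$-norm together with Theorem~\ref{th:MainThmVel} extends the solution past $T^*$ by some $\gamma>0$.

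I expect the main obstacle to be the matching of scales in the decisive step: controlling the supremum of the complex extension over the analyticity strip uniformly in $k$, so that the factorial and constant bookkeeping encoded in \eqref{eq:ParaAdjMaxP} genuinely drives the harmonic-measure contraction factor below one, while simultaneously keeping the companion time $s(t)$ inside the guaranteed existence interval and allowing the order $k$ to grow along the escape-time sequence. The remaining care is purely administrative, namely verifying that the constant $\cC$ in \eqref{eq:tIRRegAscBdd} and the tuning in \eqref{eq:ParaAdjMaxP} can be fixed consistently for all large $k$ along the sequence $t_n\to T^*$.
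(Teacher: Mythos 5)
Your proposal assembles the right ingredients---the ascending/descending dichotomy, the harmonic-measure contraction at the sparseness time $s(t)$, and the escape-time contradiction---but two points do not survive scrutiny. First, a scaling error at the outset: the unconditional existence span furnished by Theorem~\ref{th:MainThmVel} with $p=2$ is $\sim\|D^ku(t)\|_\infty^{-2/(k+d/2)}$, not $\sim\|D^ku(t)\|_\infty^{-2/(k+1)}$, and since $\tfrac{2}{k+d/2}>\tfrac{2}{k+1}$ for $d\ge 3$ this guaranteed span is \emph{shorter} than the one appearing in \eqref{eq:tIRRegAscBdd} and in the interval containing $s(t)$. Consequently the lower bound on the blow-up rate only yields $\|D^ku(t)\|_\infty\gtrsim (T^*-t)^{-(k+d/2)/d}$, for which $\|D^ku(t)\|_\infty^{-2/(k+1)}\gtrsim (T^*-t)^{(2k+d)/(d(k+1))}\gg T^*-t$ near $T^*$; so \eqref{eq:tIRRegAscBdd} tends to \emph{fail} rather than hold automatically. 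It is precisely the ascending ordering \eqref{eq:AscDerOrd} together with Theorem~\ref{le:AscendDer} that upgrades the analyticity/existence span to the critical scale $\|D^ku\|_\infty^{-2/(k+1)}$, and the proof must separately dispose of the times at which \eqref{eq:tIRRegAscBdd} fails (there $\|D^ku\|$ sits below the critical rate and is harmless) instead of assuming them away.

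Second, and more substantively, you treat the dichotomy of Lemma~\ref{le:ScaleBound}/Corollary~\ref{cor:DerOrdConfig} as a one-shot alternative. Case (I)$^*$ produces \emph{one} temporal point and \emph{one} index at which the ascending ordering holds; it does not produce a sequence of escape times of $D^ku$, for a fixed $k$, accumulating at $T^*$ with \eqref{eq:AscDerOrd} in force at each of them. After the contraction is executed at one such time the configuration of the chain $\{\cR_{m,n}(j,c,t)\}$ can change---sections switch between ascending and descending type, the distinguished index moves---and the growth accumulated across all these switches must be controlled uniformly up to $T^*$ before one can bound $\|D^\ell u\|$ and hence $\|u\|$. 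That bookkeeping is exactly what the Type-$\cA$/Type-$\cB$ section-and-string formalism and Lemmas~\ref{le:AstrBdd} and~\ref{le:BstrBdd} supply, and it is the part of the argument the paper identifies as the actual content of the proof (deferring to Theorem~3.14 of \citet{Grujic2019}). Without it, the escape-time contradiction rules out growth only at isolated times and does not yield the uniform bound needed to conclude $u\in L^\infty((0,T^*+\gamma);L^\infty)$.
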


The proof of Theorem~\ref{th:MainResult} is the same as that of Theorem~3.14 in \citet{Grujic2019} with the two lemmas below and the same notions of Type-$\cA$ and Type-$\cB$ sections/strings for $\cR_{m,n}(k, c, t)$ as in Definition~3.15 in \citet{Grujic2019}.

\begin{lemma}\label{le:AstrBdd}
Suppose $\sup_{t>t_0}\|u(t)\|\lesssim (1+\epsilon)^{\ell_i}$ such that \eqref{eq:AscDerCond} is satisfied at any temporal point with $\ell=\ell_i$ and $k=\ell_{i+q}$, and the assumption~\eqref{eq:AscDerCond} holds for all $k\ge \ell_i$. Fix a pair $(m,n)$. If a string $[\ell_i,\ell_{i+q}]$ is of Type-$\cA$ at an initial time $t_0$, then for any $i\le r<i+q$,
\begin{align}\label{eq:RestrAtoB}
\max_{\ell_r\le j\le \ell_{i+q}} \sup_{t_0<s<\tilde t} \cR_{m,n}(j, c(\ell_r), s) \lesssim \Theta(p^*,r) \max_{i\le p\le i+q} \cR_{m,n}(w_p,c(\ell_p), t_0)
\end{align}
where $\Theta(p^*,r) \lesssim \tilde{\cB}_{\ell_{p^*}, \ell_r}\cdot c(\ell_{p^*})/c(\ell_r)$ with $\tilde{\cB}_{i,j}:=\cB_{i,j}$ defined in Corollary~\ref{cor:DerOrdConfig} if $i>j$ and $\tilde{\cB}_{i,j}:=(\cB_{j,i})^{-1}$ if $i<j$, and $\tilde{\epsilon}(\ell_{i+q})$ is a small quantity which will be given explicitly in the proof; the subscript $p^*$ is the index for the maximal $\cR(m_p,c(\ell_p), t_0)$, and $\tilde t$ is the first time when $[\ell_i,\ell_{i+q}]$ switches to a Type-$\cB$ string; we set $\tilde t= \infty$ if $[\ell_i,\ell_{i+q}]$ is always of Type-$\cA$.
\end{lemma}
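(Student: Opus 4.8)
The plan is to follow the proof of the analogous A-string estimate supporting Theorem~3.14 in \citet{Grujic2019}, now carrying the two discrete parameters $(m,n)$ through every step and tracking the passage between the distinct natural scales $c(\ell_p)$ and the common output scale $c(\ell_r)$. Recall that a string $[\ell_i,\ell_{i+q}]$ is of Type-$\cA$ at a time $s$ precisely when the family $\{\cR_{m,n}(j,c(\ell_p),s)\}$ is organized into ascending sub-blocks in the sense of case (I)$^*$ of Corollary~\ref{cor:DerOrdConfig}; on each such sub-block the largest rescaled derivative sits at the top index, so the assumption~\eqref{eq:AscDerOrd} of Theorem~\ref{le:AscendDer} is in force there. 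The strategy is therefore three-fold: (a) propagate the bound in time on each ascending sub-block via Theorem~\ref{le:AscendDer}; (b) transfer control across indices inside the string by the ordering constants $\cB_{k,i}$ of Corollary~\ref{cor:DerOrdConfig}; and (c) convert every scale to the single scale $c(\ell_r)$ while locating the index $p^*$ at which the initial datum is largest.

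For step (a), I would fix the top index $k=\ell_{i+q}$ and apply the real-solution bound~\eqref{eq:AscDerUpBddReal}, which gives, for each $j$ in the sub-block and every $s$ before the stopping time $\tilde t$,
\begin{align*}
\sup_{t_0<s<\tilde t}\cR_{m,n}(j,c(\ell_{i+q}),s) \;\le\; \cR_{m,n}(j,c(\ell_{i+q}),t_0) + \left(j+\ell_{i+q}\right)^{\frac{1}{j+1}}\cR_{m,n}(\ell_{i+q},c(\ell_{i+q}),t_0)\ ;
\end{align*}
this is legitimate exactly because the Type-$\cA$ (hence ascending) property, and with it assumption~\eqref{eq:AscDerOrd}, persists on $(t_0,\tilde t)$. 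For step (c), I would use the scaling identity read off from~\eqref{eq:RealScaleNote},
\begin{align*}
\cR_{m,n}(j,c',t)=\cR_{m,n}(j,c,t)\left(c/c'\right)^{\frac{j-n}{j-n+1}}\ ,
\end{align*}
which is the precise origin of the factor $c(\ell_{p^*})/c(\ell_r)$ appearing in $\Theta(p^*,r)$; combined with the across-index constants $\cB_{k,i}$ of step (b) it produces $\Theta(p^*,r)\lesssim \tilde\cB_{\ell_{p^*},\ell_r}\cdot c(\ell_{p^*})/c(\ell_r)$, the definition of $\tilde\cB$ absorbing whether $p^*$ lies above or below $r$.

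Chaining these estimates along the consecutive sub-blocks of the Type-$\cA$ string, and using that the supremum of the initial configuration over the block is attained at some $w_{p^*}$ with scale $c(\ell_{p^*})$, all the individual bounds collapse into a single multiple of $\max_{i\le p\le i+q}\cR_{m,n}(w_p,c(\ell_p),t_0)$, which is exactly~\eqref{eq:RestrAtoB}. The main obstacle I anticipate is the constant bookkeeping: one must verify that the telescoping products of the $\cB$-constants across consecutive sub-blocks, the scale-conversion powers $(c/c')^{(j-n)/(j-n+1)}$, and the ascending-growth factors $(j+k)^{1/(j+1)}$ from Theorem~\ref{le:AscendDer} all fuse into the single clean factor $\Theta(p^*,r)$ of the stated form, while the accumulated error $\tilde{\epsilon}(\ell_{i+q})$ (produced along the chaining) remains small uniformly for $s<\tilde t$. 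This smallness should follow from the hypothesis~\eqref{eq:AscDerCond} and the scale monotonicity $c(\ell_{p+1})\le c(\ell_p)<1$, together with a careful verification that nothing degrades at $\tilde t$, where the Type-$\cA$ ordering---the very hypothesis enabling step (a)---is precisely what can first fail.
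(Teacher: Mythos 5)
Your overall architecture is the one the paper intends: the paper does not write out a proof of Lemma~\ref{le:AstrBdd} but states that it ``requires Corollary~\ref{cor:DerOrdConfig} and Theorem~\ref{le:AscendDer}'' and is otherwise the same as the A-string estimate in \citet{Grujic2019}, and your steps (a)--(c) -- propagate on ascending sub-blocks via \eqref{eq:AscDerUpBddReal}, transfer across indices via the constants $\cB_{k,i}$, and convert scales via $\cR_{m,n}(j,c',t)=\cR_{m,n}(j,c,t)(c/c')^{(j-n)/(j-n+1)}$ -- are exactly those two ingredients plus the correct bookkeeping for the factor $\tilde\cB_{\ell_{p^*},\ell_r}\cdot c(\ell_{p^*})/c(\ell_r)$.

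There is, however, one step that does not work as written: you apply \eqref{eq:AscDerUpBddReal} ``for every $s$ before the stopping time $\tilde t$,'' justified only by the persistence of the Type-$\cA$ ordering. Theorem~\ref{le:AscendDer} gives the bound only on a window of length $\tilde T\lesssim C(\|u_0\|_2,\ell,k,m,n)^{-2}\|D^ku_0\|^{-2/(k+1)}$ measured from the time at which the ascending hypothesis \eqref{eq:AscDerOrd} is imposed, whereas $\tilde t-t_0$ can be arbitrarily long (indeed $\tilde t=\infty$ is allowed). Persistence of the ordering lets you re-initialize at $t_0+\tilde T$, but then the right-hand side of \eqref{eq:AscDerUpBddReal} involves $\cR_{m,n}(k,c,t_0+\tilde T)$, which may already have grown by the factor produced in the first window; naive iteration therefore gives a multiplicative loss per window and no uniform bound on $(t_0,\tilde t)$. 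The missing ingredient is the feedback mechanism that caps the top of the chain on each window: whenever $\cR_{m,n}(\ell_{i+q},c,\cdot)$ approaches its threshold, the sparseness of the super-level sets (Theorem~\ref{th:LerayZalpha}) combined with the harmonic-measure maximum principle (Proposition~\ref{prop:HarMaxPrin}) forces it back down, and one must count the number of admissible windows (of order $(k/k_*)^2$) so that the accumulated factor stays below the threshold $M$ -- this is precisely the content of the two claims inside the proof of Lemma~\ref{le:DescendDerRefn}, and it is where the quantity $\tilde\epsilon(\ell_{i+q})$ actually comes from. Your ``chaining'' is a chaining over index sub-blocks only; without the time-iteration and the intermittency cap, the estimate on $(t_0,\tilde t)$ is not established.
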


\begin{lemma}\label{le:BstrBdd}
Suppose $\sup_{t>t_0}\|u(t)\|\lesssim (1+\epsilon)^{\ell_i}$ such that \eqref{eq:ParaAdjMaxP} is satisfied at any temporal point with $\ell=\ell_i$ and $(k,c(k))=(\ell_p,c(\ell_p))$ for any $i\le p\le i+q$. Fix a pair $(m,n)$. If a string $[\ell_i,\ell_{i+q}]$ is of Type-$\cB$ at an initial time $t_0$, then for any $i\le r<i+q$,
\begin{align}\label{eq:RestrBtoA}
\max_{\ell_r\le j\le \ell_{i+q}} \sup_{t_0<s<\tilde t} \cR_{m,n}(j, c(\ell_r), s) \le  \max_{r\le p\le i+q} \cR_{m,n}(w_p,c(\ell_p), t_0)
\end{align}
where $\tilde t$ is the first time when $[\ell_i,\ell_{i+q}]$ switches to a Type-$\cA$ string; we set $\tilde t= \infty$ if $[\ell_i,\ell_{i+q}]$ is always of Type-$\cB$.
\end{lemma}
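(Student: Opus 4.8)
The plan is to treat a Type-$\cB$ string as the dynamic incarnation of case (II)$^*$ of Corollary~\ref{cor:DerOrdConfig}, and then to promote the single-window contraction of Theorem~\ref{le:DescendDer} into a bound that is uniform in time up to the first switch $\tilde t$. First I would unwind Definition~3.15 of \citet{Grujic2019}: across the markers $\ell_i<\ell_{i+1}<\cdots<\ell_{i+q}$ the chain $\{\cR_{m,n}(j,c(\ell_r),t_0)\}$ splits into sub-blocks whose local peaks $w_p$ are arranged in descending order, so that for each block $r\le p\le i+q$ the descending hypothesis~\eqref{eq:DescDerOrd} of Theorem~\ref{le:DescendDer} holds at $(k,c)=(\ell_p,c(\ell_p))$. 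In this regime the higher-order tail is subordinate to the peak, which is precisely why the amplification factor $\Theta(p^*,r)$ appearing in the Type-$\cA$ estimate \eqref{eq:RestrAtoB} is absent here, and why the maximum on the right of \eqref{eq:RestrBtoA} need only run over blocks $p\ge r$: the lower blocks can neither control nor be driven into growth by the indices $j\ge\ell_r$.

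Next, for each marker level $\ell_p$ with $r\le p\le i+q$ I would invoke Theorem~\ref{le:DescendDer}. The standing assumption furnishes \eqref{eq:ParaAdjMaxP} at $(\ell_p,c(\ell_p))$, so the theorem returns $\|D^{\ell_p}u(s)\|\le\mu_*\|D^{\ell_p}u(t_0)\|$ with $\mu_*\le 1$ on a time window anchored at $t_0$; rewritten through the definition of $\cR_{m,n}$ this says $\cR_{m,n}(\ell_p,c(\ell_p),s)\lesssim\cR_{m,n}(w_p,c(\ell_p),t_0)$, i.e. the peak rescaled derivative does not grow. Feeding this into the (II)$^*$ estimate of Corollary~\ref{cor:DerOrdConfig} then bounds every index $j\in[\ell_r,\ell_{i+q}]$ by the initial peak values, while the passage from the block constants $c(\ell_p)$ to the single size-parameter $c(\ell_r)$ on the left of \eqref{eq:RestrBtoA} is absorbed by the factors $\cC_{r,i}\le c_r^{1/(r+1)-1/(i+1)}\le 1$ (using $c_r<1$ and $i>r$). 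This is the mechanism that keeps the right-hand side free of any extra constant.

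To upgrade from a single window to the whole interval $(t_0,\tilde t)$, I would chain the local estimates along a sequence of escape times: each application of Theorem~\ref{le:DescendDer} yields a window of length comparable to $\|D^{\ell_p}u\|^{-2/(\ell_p+1)}$, and re-anchoring at its endpoint extends the bound. The decisive structural input is that, by the definition of $\tilde t$, the string stays Type-$\cB$ throughout $(t_0,\tilde t)$, so the descending configuration---and hence the availability of \eqref{eq:DescDerOrd} at every re-anchoring---is preserved; the successive $\mu_*\le 1$ contractions then telescope rather than compound.

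I expect the main obstacle to be exactly this time-chaining together with the cross-block bookkeeping of constants: one must confirm that the descending order is genuinely self-propagating until $\tilde t$ (so the hypotheses of Theorem~\ref{le:DescendDer} remain valid at each step) and that the reindexing $c(\ell_p)\to c(\ell_r)$ combined with the iterated windows collapses to the clean bound \eqref{eq:RestrBtoA} with no accumulated $\Theta$-type amplification. Once persistence of the Type-$\cB$ configuration is secured, the conclusion follows by the same contradiction-and-bootstrap scheme used for Theorem~3.14 in \citet{Grujic2019}.
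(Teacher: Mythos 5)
Your overall architecture matches what the paper intends: the paper gives no detailed argument for this lemma, stating only that the proof requires Corollary~\ref{cor:DerOrdConfig} and Theorem~\ref{le:AscendDer} and is otherwise the same as in \citet{Grujic2019}, and your combination of the case (II)$^*$ structure of Corollary~\ref{cor:DerOrdConfig}, the descending-chain contraction of Theorem~\ref{le:DescendDer}, and re-anchoring along escape times until $\tilde t$ is the right skeleton -- including the observation that the factors $\cC_{r,i}\le c_r^{\frac{1}{r+1}-\frac{1}{i+1}}\le 1$ are what keep \eqref{eq:RestrBtoA} free of any $\Theta$-type amplification.

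There is, however, one concrete gap in your structural premise. A Type-$\cB$ string is defined as one containing \emph{at least one} Type-$\cB$ section, not as one in which every section is in the descending configuration; so your claim that for each block $r\le p\le i+q$ the descending hypothesis \eqref{eq:DescDerOrd} holds at $(k,c)=(\ell_p,c(\ell_p))$ is not given. What \eqref{eq:MaxDescAtLi} does supply is a single peak dominating all higher indices, which makes Theorem~\ref{le:DescendDer} available at that one peak; the sections above it may still be locally of Type-$\cA$, and controlling the intermediate indices $j$ inside those ascending sub-sections (and justifying each re-anchoring when the internal configuration reorganizes before $\tilde t$) requires Theorem~\ref{le:AscendDer} and the bound \eqref{eq:AscDerUpBdd} -- precisely the ingredient the paper lists among the required inputs and your plan omits in favor of the descending theorem alone. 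A second, smaller point: Theorem~\ref{le:DescendDer} only guarantees $\mu_*<1$ ``with proper choices of $c$ and $\mu$'', so the telescoping of the contractions (and hence the clean ``$\le$'' rather than ``$\lesssim$'' in \eqref{eq:RestrBtoA}) needs the parameters in \eqref{eq:ParaAdjMaxP} to be fixed accordingly rather than taken for granted.
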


The proofs of the above two Lemmas, though requiring Corollary~\ref{cor:DerOrdConfig} and Theorem~\ref{le:AscendDer}, are essentially the same as those in \citet{Grujic2019}.

\section{Homogeneity of $\{\cR(k,c,t)\}$}\label{sec:HomASS}

Here we generalize the notion of quotient of derivatives introduced in \citet{Grujic2019} to directional derivative and $L^\infty$-norm over an arbitrary domain.
\begin{definition}
For any measurable set $\Omega\subset \bR^d$ and unit vector $\nu\in\bS^{d-1}$, define
\begin{align*}
\cR_{m,n}^{\Omega,\nu} (k,c,t):= \frac{\sup_{y\in\Omega}\left|(\nabla_\nu)^k u(y,t)\right|^{\frac{1}{k+1}}}{c^{\frac{k+n}{k+n+1}}((k+m)!)^{\frac{1}{k+1}}} \ , \qquad \theta_{m,n}^{\Omega,\nu} (i,j,\ell,t):= \cR_{m,n}^{\Omega,\nu}(i,c_\ell,t) \big/ \cR_{m,n}^{\Omega,\nu}(j,c_\ell,t)\ ,
\end{align*}
and for any $\alpha=1,\ldots,d$, define
\begin{align*}
\cR_{m,n}^{\Omega,\nu}[\alpha] (k,c,t):= \frac{\sup_{y\in\Omega}\left|(\nabla_\nu)^k u_\alpha(y,t)\right|^{\frac{1}{k+1}}}{c^{\frac{k+n}{k+n+1}}((k+m)!)^{\frac{1}{k+1}}} \ , \qquad \theta_{m,n}^{\Omega,\nu} [\alpha] (i,j,\ell,t):= \frac{\cR_{m,n}^{\Omega,\nu}[\alpha](i,c_\ell,t)}{\cR_{m,n}^{\Omega,\nu}[\alpha](j,c_\ell,t)}\ .
\end{align*}
In the following $\|\cdot\|$ always denotes $\|\cdot\|_{L^\infty(\Omega)}$ unless specified.
We divide all the indexes into sections at $\ell=\ell_0<\ell_1<\cdots<\ell_i<\ell_{i+1}<\cdots$ such that $\ell_{i+1}\ge\phi(\ell_i)$ for some increasing function $\phi(x)\ge \Lambda x$ with $\Lambda>1$ and each pair $(\ell_i, \ell_{i+q})$ satisfies the condition~\eqref{eq:AscDerCond} (with $\ell=\ell_i$ and $k=\ell_{i+q}$) for some fixed integer $q$. With the notation introduced in \eqref{eq:RealScaleNote}, at any temporal point $t<T^*$ and for each index $i$ we pick $p_i\in [\ell_i,\ell_{i+1}]$ such that
\begin{align*}
\cR_{m,n}^{\Omega,\nu} (p_i,c(\ell_i),t) = \max_{\ell_i\le j\le \ell_{i+1}} \cR_{m,n}^{\Omega,\nu} (j,c(\ell_i),t)
\end{align*}
while
\vspace{-0.1in}
\begin{align*}
\cR_{m,n}^{\Omega,\nu} (p_i,c(\ell_i),t) > \max_{\ell_i\le j< p_i} \cR_{m,n}^{\Omega,\nu} (j,c(\ell_i),t) \
\end{align*}
where $c(\ell_i):=c_{\ell_{i+1}}$ which is the constant defined by \eqref{eq:ParaAdjMaxPIndW} with $j=\ell_{i+1}$. (If such index $p_i$ does not exist in $[\ell_i,\ell_{i+1}]$ then we let $p_i=\ell_i$.)
Note that $m_i(t)$ may be variant in time, and we will always assume $p_i$ corresponds to the temporal point $t$ in $\cR\left(p_i,\cdot, t\right)$ without ambiguity.
Then, we divide the argument into two basic scenarios: (I) either there exists $k_i>\ell_{i+1}$ such that
\begin{align}\label{eq:NonDescAtLi}
\cR_{m,n}^{\Omega,\nu}(k_i,c(\ell_i),t)\ge \max_{p_i\le j\le k_i} \cR_{m,n}^{\Omega,\nu}(j,c(\ell_i),t)\ ,
\end{align}
(II) or
\vspace{-0.18in}
\begin{align}\label{eq:MaxDescAtLi}
\cR_{m,n}^{\Omega,\nu}(p_i,c(\ell_i),t) > \max_{j> p_i} \cR_{m,n}^{\Omega,\nu}(j,c(\ell_i),t)\ .
\end{align}
We say a section $[\ell_i,\ell_{i+1}]:=\left\{\cR_{m,n}^{\Omega,\nu}(j,c(\ell_i),t)\right\}_{j=\ell_i}^{\ell_{i+1}}$ is of Type-$\cA$ if it satisfies \eqref{eq:NonDescAtLi}, and say a section $[\ell_i,\ell_{i+1}]$ is of Type-$\cB$ if it satisfies \eqref{eq:MaxDescAtLi}. We call the union of sections $[\ell_i, \ell_j]:= \cup_{i\le r\le j-1}[\ell_r,\ell_{r+1}]$ a string if $j-i\ge q$ or the condition~\eqref{eq:AscDerCond} is satisfied with $\ell=\ell_i$ and $k=\ell_j$, and we call a string is of Type-$\cA$ if it consists of only Type-$\cA$ sections and of Type-$\cB$ if it contains at least one Type-$\cB$ section.
Moreover, a section $[\ell_i,\ell_{i+1}]$ is said to be of strong Type-$\cA$ if \eqref{eq:NonDescAtLi} is componentwise satisfied, i.e. for each $\alpha=1,\ldots,d$, there exists $k_i(\alpha)>\ell_{i+1}$ such that
\begin{align}\label{eq:NonDescAtLiCompw}
\cR_{m,n}^{\Omega,\nu}[\alpha] (k_i(\alpha),c(\ell_i),t)\ge \max_{p_i\le j\le k_i(\alpha)} \cR_{m,n}^{\Omega,\nu}[\alpha] (j,c(\ell_i),t)\
\end{align}
and it is said to be of strong Type-$\cB$ if \eqref{eq:MaxDescAtLi} is componentwise satisfied.
Strong Type-$\cA$ and Type-$\cB$ strings are defined in the same way.
We remark that the sections $[\ell_r,\ell_{r+1}]$'s within a string may have different pairs of parameters $(m_r,n_r)$.
\end{definition}

\begin{definition}
We say $\{\cR_{m,n}^{\Omega,\nu}(k,c_k,t)\}_{k=i_*}^\infty$ is \textit{homogeneous as $t\to T^*$} with respect to the time-variational bounds $c_1^* , c_2^*$ if there exists $t_*<T^*$ such that, for all $\ell, k\ge i_*$ and $t_*<t<T$,
\begin{align}\label{eq:RelHom}
c_1^*[k](t) \le \theta_{m,n}^{\Omega,\nu} (k,2k,\ell,t) \le c_2^*[k](t)
\end{align}
We say $\{\cR_{m,n}^{\Omega,\nu}(k,c_k,t)\}_{k=i_*}^\infty$ is \textit{non-homogeneous as $t\to T^*$} with respect to some predetermined $c_1^* , c_2^*$ if \eqref{eq:RelHom} is not satisfied.
Moreover, $\{\cR_{m,n}^{\Omega,\nu}(k,c_k,t)\}_{k=i_*}^\infty$ is said to be \textit{strong homogeneous as $t\to T^*$} if there exists $t_*<T^*$ such that \eqref{eq:RelHom} holds componentwise (with possibly different constants $c_1^*(\alpha) , c_2^*(\alpha)$).
\end{definition}

\begin{lemma}\label{le:Rk-transitivity}
Let $\Omega=\bR^d$ and $\nu\in\bS^{d-1}$. The following implications hold for all $t$ and $p,j\in \bN$:
\begin{align}
\cR_{m,n}^{\Omega,\nu}(k+j,c(\ell_i),t)\lesssim \cR_{m,n}^{\Omega,\nu}(k,c(\ell_i),t) \quad \Longrightarrow \quad \cR_{m,n}^{\Omega,\nu}(k,c(\ell_i),t)\lesssim \cR_{m,n}^{\Omega,\nu}(k-p,c(\ell_i),t)\ ,\label{eq:Rk-transBack}
\\
\cR_{m,n}^{\Omega,\nu}(k+j,c(\ell_i),t)\gtrsim \cR_{m,n}^{\Omega,\nu}(k,c(\ell_i),t) \quad \Longleftarrow \quad \cR_{m,n}^{\Omega,\nu}(k,c(\ell_i),t)\gtrsim \cR_{m,n}^{\Omega,\nu}(k-p,c(\ell_i),t)\ ,\label{eq:Rk-transFor}
\end{align}
i.e. monotonicity of $\cR(k,c,t)$ is transitive if $\Omega=\bR^d$. Moreover, such implications hold componentwise, i.e. monotonicity of each $\cR[\alpha](k,c,t)$ is transitive.
A particular consequence is that for fixed $m,n,k,\nu,c(\ell)$ and $t$, there exists a unique maximal extension of Type-$\cA$ section (or Type-$\cB$ section) $[\ell_i,\ell_{i+1}]$ which contains the index $k$. (An extreme scenario would be the whole $\{\cR(k,c,t)\}$ is of Type-$\cA$ or Type-$\cB$.)
\end{lemma}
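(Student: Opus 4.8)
The plan is to reduce \eqref{eq:Rk-transBack}--\eqref{eq:Rk-transFor} to the (approximate) log-convexity in the index of the directional sup-norms
\[
a_q(t):=\sup_{y\in\bR^d}\bigl|(\nabla_\nu)^q u(y,t)\bigr| ,
\]
and then to transport that convexity through the normalization built into $\cR_{m,n}^{\Omega,\nu}$. First I would exploit that $\Omega=\bR^d$ and that the differentiation is along the fixed direction $\nu$: for every line $L$ parallel to $\nu$ the restriction $u|_L$ is a function of one real variable with $(\nabla_\nu)^q u|_L=(u|_L)^{(q)}$, so $a_q=\sup_L\|(u|_L)^{(q)}\|_{L^\infty(L)}$. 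Applying Lemma~\ref{le:GNIneq} with $d=1$ to $(u|_L)^{(k-p)}$ (interpolating its $p$-th derivative between its $0$-th and $(p+j)$-th derivatives, $s=p/(p+j)$) gives, on each line, $\|(u|_L)^{(k)}\|_\infty\le C(p,j)\|(u|_L)^{(k-p)}\|_\infty^{j/(p+j)}\|(u|_L)^{(k+j)}\|_\infty^{p/(p+j)}$; bounding the two right-hand factors by $a_{k-p},a_{k+j}$ and taking the supremum over $L$ yields the fundamental interpolation
\begin{equation*}
a_k\ \le\ C(p,j)\,a_{k-p}^{\,j/(p+j)}\,a_{k+j}^{\,p/(p+j)} .\tag{$\star$}
\end{equation*}
It is exactly here that $\Omega=\bR^d$ is used: each $u|_L$ lives on a full line, so the Landau--Kolmogorov constant is translation/dilation invariant and independent of $L$; on a bounded domain this step would acquire domain-dependent constants and break.

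Next I would insert the definition of $\cR_{m,n}^{\Omega,\nu}$. Writing $F(q):=c^{(q+1)(q+n)/(q+n+1)}(q+m)!$, so that $a_q=\bigl(\cR_{m,n}^{\Omega,\nu}(q,c,t)\bigr)^{q+1}F(q)$, and substituting into $(\star)$, the key observation is that $q\mapsto\log F(q)$ is convex with second differences tending to $0$, whence its Jensen defect over the fixed window $[k-p,k+j]$ is bounded uniformly in $k$. Thus the $F$-factors cancel up to a $k$-independent constant and one obtains
\begin{equation*}
\cR_{m,n}^{\Omega,\nu}(k,c,t)^{\,k+1}\ \lesssim\ \cR_{m,n}^{\Omega,\nu}(k-p,c,t)^{\frac{(k-p+1)j}{p+j}}\,\cR_{m,n}^{\Omega,\nu}(k+j,c,t)^{\frac{(k+j+1)p}{p+j}},\tag{$\star\star$}
\end{equation*}
the total weight being $\frac{(k-p+1)j}{p+j}+\frac{(k+j+1)p}{p+j}=k+1$. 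From $(\star\star)$ the two implications are bookkeeping. For \eqref{eq:Rk-transBack} I substitute $\cR(k+j)\lesssim\cR(k)$ and collect the $\cR(k)$-powers: the exponent of $\cR(k)$ left on the small side is $k+1-\frac{(k+j+1)p}{p+j}=\frac{j(k-p+1)}{p+j}$, which matches the exponent of $\cR(k-p)$, so $\cR(k)^{j(k-p+1)/(p+j)}\lesssim\cR(k-p)^{j(k-p+1)/(p+j)}$; the accumulated constant raised to the reciprocal power has exponent $\to p/j$ and stays bounded, giving $\cR(k)\lesssim\cR(k-p)$. Relation \eqref{eq:Rk-transFor} follows symmetrically by substituting $\cR(k-p)\lesssim\cR(k)$ and solving for $\cR(k)$ against $\cR(k+j)$.

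Finally, the componentwise claim requires no new idea: the one-dimensional interpolation applies verbatim to each scalar component $u_\alpha$, so $(\star)$ and hence $(\star\star)$ hold for $a_q^{[\alpha]}=\sup_y|(\nabla_\nu)^q u_\alpha|$, and the identical bookkeeping gives transitivity of each $\cR_{m,n}^{\Omega,\nu}[\alpha]$. For the structural consequence I would use the two implications to rule out a double reversal of monotonicity: \eqref{eq:Rk-transFor} propagates a non-descending run to higher indices and \eqref{eq:Rk-transBack} propagates a non-ascending run to lower indices, so the set of indices on which the run through $k$ keeps its type is an interval; its union over all consistent extensions is the unique maximal Type-$\cA$ (resp.\ Type-$\cB$) section containing $k$, with the degenerate case that this interval is the whole index range. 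The main obstacle is the passage $(\star)\Rightarrow(\star\star)$: one must check that the weight $F$ is log-convex with a Jensen defect bounded uniformly in the base index $k$ (legitimizing the cancellation of $F$) and that every interpolation and normalization constant, after the final extraction of the $(k+1)$-type root, converges to a finite limit and is absorbed into $\lesssim$; in other words, that the factorial normalization in $\cR$ is tuned precisely to the log-convexity of $\{a_q\}$ forced by Landau--Kolmogorov on $\bR^d$.
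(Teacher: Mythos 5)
Your proposal is correct and follows essentially the same route as the paper's proof: both hinge on Gagliardo--Nirenberg/Landau--Kolmogorov interpolation of $\|(\nabla_\nu)^{k}u_t\|_\infty$ between the orders $k-p$ and $k+j$ (the paper invokes Lemma~\ref{le:GNIneq} and phrases the last step as an argument by contradiction), followed by bookkeeping of the normalization built into $\cR_{m,n}^{\Omega,\nu}$. The only differences are presentational: you derive the interpolation by restricting to lines parallel to $\nu$ (which is indeed where $\Omega=\bR^d$ enters) and cancel the weights $F(q)$ through a uniform Jensen-defect estimate, whereas the paper absorbs the normalization factors using the specific form of $c(\ell_i)$ together with the auxiliary growth assumption $\|(\nabla_\nu)^k u_t\|\lesssim k^{C k^2}$.
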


\begin{proof}
We only prove \eqref{eq:Rk-transBack} (the proof for \eqref{eq:Rk-transFor} is the same). Note that $\cR_{m,n}^{\Omega,\nu}(k+j,c(\ell_i),t)\lesssim \cR_{m,n}^{\Omega,\nu}(k,c(\ell_i),t)$ indicates
\begin{align*}
\|(\nabla_\nu)^{k+j}u(t)\|^{\frac{1}{k+j+1}} &\lesssim \left(c(\ell_i)\right)^{\frac{j}{(k+n+1)(k+j+n+1)}} \frac{(k+j+m)^{1+\frac{m-1}{k+j+1}}} { (k+m)^{1+\frac{m-1}{k+1}}} \|(\nabla_\nu)^k u(t)\|^{\frac{1}{k+1}}
\\
&\lesssim \frac{(k+m_i)^{m_i/k^2}}{(2e/\varepsilon_i\eta)^{1/k} \cZ_{k+1}(t_0)^{1/k}} \|(\nabla_\nu)^ku(t)\|^{\frac{1}{k+1}} \lesssim \|(\nabla_\nu)^ku(t)\|^{\frac{1}{k+1}}
\end{align*}
assuming $\|(\nabla_\nu)^ku(t)\|\lesssim k^{C\cdot k^2}$.
Then it is proved by Lemma~\ref{le:GNIneq} with a contradictory argument that $\|(\nabla_\nu)^ku(t)\|^{\frac{1}{k+1}}\lesssim \|(\nabla_\nu)^{k-p}u(t)\|^{\frac{1}{k-p+1}}$ and the statement follows.
\end{proof}

\section{Time-Global Regularity with Dissipation Degree $\beta>1$ -- The Non-Homogeneous Case}\label{sec:MainThm}

In this section $\cR_{m,n}(k,c,t)$ always refers to $\cR_{m,n}^{\bR^d,\nu}(k,c,t)$ with a predetermined $\nu\in \bS^{d-1}$ unless otherwise specified.
The main result is the following:
\begin{theorem}\label{th:HypNSEReg}
Consider the $d$-dimensional Navier-Stokes system with Hyper-Dissipation ($d\ge 3$), i.e. the equations \eqref{eq:HypNSE1}-\eqref{eq:HypNSE3} in $\bR^d$. Fix an order of dissipation $\beta>1$ and $T>0$. Suppose $u_0\in L^\infty(\bR^d) \cap L^2(\bR^d)$ and for any $t<T$ there exist $\nu\in \bS^{d-1}$ and a finite range of indexes $k$ (the range depends on $\beta$ as well as on the fluctuations of the $L^\infty$-norm of $u$) such that for all component indexes $\alpha$, the \textit{homogeneity index} $\bar\theta_\nu[\alpha] (k,2k,t)$ satisfies
\begin{align}\label{eq:NonHomCond}
\bar\theta_\nu[\alpha] (k,2k,t) \le C_1^*[\beta](k,t) \approx_\eta \max\left\{ (T-t)^{-\frac{\beta-1}{2k+1}+\frac{(2\beta-1)(d-2)}{4(2k+1)^2}} \ ,\ \|(\nabla_\nu)^{2k} u(t)\|^{\frac{2(\beta-1)}{(2k+1)^2}-\frac{(2\beta-1)(d-2)}{2(2k+1)^3}}\right\}\
\end{align}
where $\bar\theta_\nu[\alpha] (k,2k,t) \approx_\eta \|(\nabla_\nu)^k u(t)^{(\alpha)}\|^{\frac{1}{k+1}}/\|(\nabla_\nu)^{2k} u(t)^{(\alpha)}\|^{\frac{1}{2k+1}}$
($\eta$ is the parameter featured in the descending chain arguments, cf.Theorem \ref{le:DescendDer}), and whose precise definition will be given later in this section (cf. Definition \ref{h-index}).
Then, the classical solution of \eqref{eq:HypNSE1}-\eqref{eq:HypNSE3} exists on $[0,T]$.
\end{theorem}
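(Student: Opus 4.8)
The plan is to argue by contradiction, assuming the classical solution first ceases to exist at some $T^*\le T$, and to show that the non-homogeneity hypothesis \eqref{eq:NonHomCond} forces a uniform-in-time bound on the rescaled chain $\{\cR_{m,n}(k,c,t)\}$ that is incompatible with blow-up. The first task is to transport the machinery of Section~\ref{sec:PrfThm} to the hyper-dissipative setting. The only structural change is that the linear propagator $e^{-t(-\Delta)^\beta}$ produces a Gevrey (analyticity) radius scaling like $t^{1/(2\beta)}$ rather than $t^{1/2}$; tracing this through the Duhamel iteration underlying Theorem~\ref{th:MainThmVel} replaces the natural time span $\sim\|D^k u(t)\|^{-2/(k+1)}$ by the $\beta$-adjusted span $\sim\|D^k u(t)\|^{-2\beta/(k+2\beta-1)}$, with companion analyticity (regularity-level) scale $\sim\|D^k u(t)\|^{-1/(k+2\beta-1)}$. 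I would first record this HD version of Theorem~\ref{th:MainThmVel} and of the escape-time statement Theorem~\ref{th:MainResult}, keeping the factorial weights $((k+m)!)^{1/(k+1)}$ intact so that the Type-$\cA$/Type-$\cB$ dichotomy of Corollary~\ref{cor:DerOrdConfig} and Lemmas~\ref{le:AstrBdd}--\ref{le:BstrBdd} transfers verbatim.

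Second, I would convert the homogeneity-index bound \eqref{eq:NonHomCond} into a genuine gain of sparseness. The energy-level a priori scale of Theorem~\ref{th:LerayZalpha} is $\|D^k u\|^{-1/(k+d/2)}$, while the HD regularity-level scale is $\|D^k u\|^{-1/(k+2\beta-1)}$; for $1<\beta<\frac{d+2}{4}$ these do not yet coincide, leaving a residual gap governed by $\tfrac{d/2-(2\beta-1)}{(k+d/2)(k+2\beta-1)}$ (which vanishes exactly at the critical $\beta=\frac{d+2}{4}$). The content of non-homogeneity is that $\bar\theta_\nu[\alpha](k,2k,t)\le C_1^*[\beta](k,t)$ makes $\|(\nabla_\nu)^{2k}u_t\|^{1/(2k+1)}$ large relative to $\|(\nabla_\nu)^{k}u_t\|^{1/(k+1)}$, so the sharper sparseness available at level $2k$ can be fed into the level-$k$ estimate. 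Concretely I would apply the descending/ascending comparison of Theorem~\ref{le:DescendDer} and Theorem~\ref{le:AscendDer} to the paired levels $(k,2k)$, using the precise exponents of $C_1^*[\beta]$ — whose leading $(T-t)$-power $-(\beta-1)/(2k+1)$ carries exactly the factor $\beta-1>0$ — to verify that the HD analogue of the gap quantity $\cZ_k$ becomes small enough to satisfy the parameter-adjustment inequality \eqref{eq:ParaAdjMaxP}.

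Third, with \eqref{eq:ParaAdjMaxP} in force I would run the harmonic-measure contradiction. Fixing an escape time $t$ of $D^k u$ and a spatial point $x_0$, the $Z_{1/(k+1)}$-sparseness of the super-level set along $\nu$ (Theorem~\ref{th:LerayZalpha}, in its componentwise form) together with the Solynin extremal estimate feeds Proposition~\ref{prop:HarMaxPrin} applied to the subharmonic function $|D^k u_s(\cdot)|$ on the analyticity disk of radius $\sim\|D^k u(s)\|^{-1/(k+2\beta-1)}$. The output is $\|D^k u(s)\|\le\mu_*\|D^k u_0\|$ with $\mu_*<1$, contradicting the definition of escape time and hence ruling out $T^*$ as a blow-up time; since \eqref{eq:NonHomCond} is assumed at every $t<T$, the solution persists on $[0,T]$. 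The decisive point is that $\beta=1$ makes the $(\beta-1)$-powers vanish, recovering only the asymptotic criticality of \citet{Grujic2019}, whereas any $\beta>1$ supplies a strictly positive margin that converts ``the gap tends to zero'' into ``the gap is closed,'' at the price of the non-homogeneity restriction.

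The main obstacle I anticipate is the simultaneous bookkeeping of three $\beta$-dependent scales — the local existence time, the analyticity radius, and the sparseness scale — while keeping all constants uniform as $k\to\infty$ and $t\to T^*$. In particular, reconciling the factorial weights with the $\max$-structure of $C_1^*[\beta](k,t)$ (the competition between the $(T-t)$-power and the $\|(\nabla_\nu)^{2k}u_t\|$-power) so as to meet \eqref{eq:ParaAdjMaxP} for all admissible $(m,n)$ will require a delicate choice of the constants $c=c(k)$ dictated by Corollary~\ref{cor:DerOrdConfig}. A secondary difficulty is upgrading the scalar comparison to the componentwise (strong Type-$\cA$/$\cB$) statement needed to treat every component index $\alpha$, which is where the directional derivative $(\nabla_\nu)^k$ and the transitivity of Lemma~\ref{le:Rk-transitivity} carry the load.
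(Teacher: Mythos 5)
Your reduction of the problem to HD analogues of the analyticity machinery is broadly in line with what the paper does (Theorem~\ref{th:MainThmVelHyp} indeed replaces the local time span by a $-\frac{2\beta}{2\beta-1}$-power and the analyticity radius by $t^{1/(2\beta)}$), and you correctly locate the source of the gain in the strictly positive exponent $\beta-1$ in $C_1^*[\beta]$. But the core of your argument --- ``non-homogeneity $\Rightarrow$ \eqref{eq:ParaAdjMaxP} $\Rightarrow$ one application of the harmonic-measure principle at an escape time $\Rightarrow$ $\|D^ku(s)\|\le\mu_*\|D^ku_0\|$, contradiction'' --- is a single-shot argument that does not close. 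The hypothesis \eqref{eq:NonHomCond} is a pointwise-in-time bound on the ratio $\bar\theta_\nu[\alpha](k,2k,t)$ for \emph{some} indexes $k$ (which may drift with $t$); it does not by itself deliver sparseness of the super-level sets of $D^ku$ at the analyticity scale at a controlled time after a given escape time, which is exactly the hypothesis of Theorem~\ref{th:SparsityRegDk}/Theorem~\ref{th:MainResult} that your third step silently assumes. You have moved the entire difficulty into the unproven implication ``$\bar\theta\le C_1^*[\beta]$ forces \eqref{eq:ParaAdjMaxP} together with the matching of the analyticity radius and the natural sparseness scale,'' and at a fixed time this implication fails in general: the chain $\{\cR_{m,n}(j,c,t)\}$ can sit in a Type-$\cA$ (ascending) configuration in which the relevant derivatives are still growing and no descending chain is available.

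What the paper actually does, and what is missing from your proposal, is a temporal iteration. It tracks the switching times $\hat t_n(i)$, $\tilde t_n(i)$ at which the sharp strings $[\hat\ell_i,\hat\ell_{i+1}]^*$ alternate between Type-$\cA^\beta$ and non-Type-$\cA^\beta$ states; shows via Theorem~\ref{le:AscendDerHypSp} and Lemma~\ref{le:AstrBddSp} that each Type-$\cA^\beta$ episode multiplies $\hat{\cR}_{\hat p_0}$ by at most $\bigl(1+\Theta[\beta]^{-1}\bigr)^{C_1^*[\beta]/\bar\theta_\nu}$; shows that each switch costs a time span $T_{k_q}^*$ bounded below in terms of the $\beta$-dependent exponents; counts the maximal number $n^*$ of switches before $T$; and verifies that the accumulated growth $\bigl(1+\Theta[\beta]^{-1}\bigr)^{n^*C_1^*[\beta]/\bar\theta_\nu}$ stays of order one precisely because of the $(\beta-1)$-powers in $C_1^*[\beta]$ and $\Theta[\beta]$. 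The endgame is also different from yours: the contradiction is not with the definition of an escape time but with the lower bound on the blow-up rate of a Leray solution, via the conclusion $\|u(t)\|\lesssim (T-t)^{-1/2}$. Without this bookkeeping --- in particular without Theorem~\ref{cor:LwBddTheta} (the natural upper bound $C_2^*$ on $\bar\theta$), Lemma~\ref{le:MaxIndxAStrSp} (the bound $k_*\le\ell_{i+3q}$ on where ascending chains terminate), and the final case analysis for strings whose index $i$ drifts to infinity as $t\to T$ --- the proof does not go through.
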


We start with several preliminary results. Similar to Theorem~\ref{th:MainThmVel} we have
\begin{theorem}\label{th:MainThmVelHyp}
Assume $u_0\in L^\infty(\bR^d)\cap L^p(\bR^d)$ and $f(\cdot,t)$ is divergence-free and real-analytic in the space variable with the analyticity radius at least $\delta_f$ for all $t\in[0,\infty)$, and the analytic extension $f+ig$ satisfies
\begin{align*}
\Gamma_\infty^k(t) &:= \sup_{s<t}\sup_{|y|<\delta_f} \left(\|D^kf(\cdot,y,s)\|_{L^\infty}+\|D^kg(\cdot,y,s)\|_{L^\infty}\right)<\infty\ ,
\\
\Gamma_p(t) &:= \sup_{s<t}\sup_{|y|<\delta_f} \left(\|f(\cdot,y,s)\|_{L^p}+\|g(\cdot,y,s)\|_{L^p}\right)<\infty\ .
\end{align*}
Fix an index $k\in\bN$ and $t_0>0$ and let
\begin{align}
T_*&=\min\left\{\left(C_1(M) 2^{k} \left(\|u_0\|_p + \Gamma_p(t_0)\right)^{k/(k+\frac{d}{p})} \left(\|D^ku_0\|_\infty + \Gamma_\infty^k(t_0)\right)^{\frac{d}{p}/(k+\frac{d}{p})} \right)^{-\frac{2\beta}{2\beta-1}}, \right. \notag
\\
&\qquad \left. \left(C_2(M) \left(\|u_0\|_p + \Gamma_p(T)\right)^{(k-1)/(k+\frac{d}{p})} \left(\|D^ku_0\|_\infty + \Gamma_\infty^k(T)\right)^{(1+\frac{d}{p})/(k+\frac{d}{p})} \right)^{-1} \right\} \label{eq:TimeLengthHyp}
\end{align}
where $C(M)$ is a constant only depending on $M$. Then there exists a solution
\begin{align*}
u\in C([0,T_*),L^p(\bR^d)^d) \cap C((0,T_*),C^\infty(\bR^d)^d)
\end{align*}
of \eqref{eq:HypNSE1}-\eqref{eq:HypNSE3} such that for every $t\in (0,T_*)$, $u$ is a restriction of an analytic function $u(x,y,t)+iv(x,y,t)$ in the region
\begin{align}\label{eq:AnalDomHyp}
\cD_t=: \left\{(x,y)\in\bC^d\ \big|\ |y|\le \min\{ct^{\frac{1}{2\beta}},\delta_f\}\right\}\ .
\end{align}
Moreover, $D^ju\in C([0,T_*),L^\infty(\bR^d)^d)$ for all $0\le j\le k$ and
\begin{align}
&\underset{t\in(0,T)}{\sup}\ \underset{y\in\cD_t}{\sup} \|u(\cdot,y,t)\|_{L^p} + \underset{t\in(0,T)}{\sup}\ \underset{y\in\cD_t}{\sup} \|v(\cdot,y,t)\|_{L^p}\le M\left(\|u_0\|_p + \Gamma_p(T)\right)\ ,
\\
&\underset{t\in(0,T)}{\sup}\ \underset{y\in\cD_t}{\sup} \|D^ku(\cdot,y,t)\|_{L^\infty} + \underset{t\in(0,T)}{\sup}\ \underset{y\in\cD_t}{\sup} \|D^kv(\cdot,y,t)\|_{L^\infty}\le M\left(\|D^ku_0\|_\infty + \Gamma_\infty^k(T)\right)\ .
\end{align}
\end{theorem}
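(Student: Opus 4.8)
The plan is to mirror the proof of Theorem~\ref{th:MainThmVel}, the only structural change being the replacement of the heat semigroup by the hyper-dissipative semigroup $e^{-t(-\Delta)^\beta}$. Accordingly I would work with the complexified mild formulation
\begin{align*}
u(t) = e^{-t(-\Delta)^\beta}u_0 + \int_0^t e^{-(t-s)(-\Delta)^\beta}\,\bP\bigl(f - u\cdot\nabla u\bigr)(s)\,ds\ ,
\end{align*}
where $\bP$ is the Leray projection, and seek the analytic extension $u(x,y,t)+iv(x,y,t)$ by shifting the spatial argument to $z=x+iy$ with $(x,y)\in\cD_t$. Writing $K_t^\beta$ for the convolution kernel with Fourier symbol $e^{-t|\xi|^{2\beta}}$, the entire argument reduces to controlling the complex-shifted kernel $K_t^\beta(\cdot+iy)$ and its derivatives in $L^1(\bR^d)$; once these bounds are in place, the iteration and the closing estimates are a transcription of the $\beta=1$ case.

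First I would record the two semigroup estimates that drive everything. Starting from $K_t^\beta(z)=(2\pi)^{-d}\int_{\bR^d}e^{iz\cdot\xi}\,e^{-t|\xi|^{2\beta}}\,d\xi$ and the scaling $K_t^\beta(z)=t^{-d/(2\beta)}K_1^\beta(t^{-1/(2\beta)}z)$, I would prove the derivative-gain bound $\|D^jK_t^\beta(\cdot+iy)\|_{L^1}\lesssim t^{-j/(2\beta)}$, uniformly for $|y|\le c\,t^{1/(2\beta)}$ with $c$ small. The complex-shift part follows from the Fourier side: for $|y|\le c\,t^{1/(2\beta)}$ the shift factor $e^{|y\cdot\xi|}$ is absorbed by $e^{-t|\xi|^{2\beta}}$ because $2\beta>1$, so $e^{|y\cdot\xi|}e^{-t|\xi|^{2\beta}}$ stays integrable with a $y$-independent bound; this is precisely what fixes the width of analyticity at $t^{1/(2\beta)}$ and yields the domain $\cD_t$ of \eqref{eq:AnalDomHyp}. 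These two facts replace, line for line, the Gaussian kernel estimates of Theorem~\ref{th:MainThmVel}.

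With these in hand I would run the successive-approximation scheme $U^{(n)}$ on $\cD_t$, bounding the Duhamel term by expanding $D^k(u\cdot\nabla u)$ through the Leibniz rule, interpolating the intermediate-order derivatives between $\|\cdot\|_{L^p}$ and $\|D^k\cdot\|_{L^\infty}$ via Lemma~\ref{le:GNIneq}, and absorbing one derivative into the semigroup. The crucial new feature is the time integral: transferring one derivative onto the kernel costs $(t-s)^{-1/(2\beta)}$, and $\int_0^t(t-s)^{-1/(2\beta)}\,ds\approx t^{(2\beta-1)/(2\beta)}$ since $2\beta>1$. Requiring this nonlinear contribution to be a small fraction of the linear term forces $T_*^{(2\beta-1)/(2\beta)}\lesssim(\text{initial norms})^{-1}$, which is the source of the exponent $-\tfrac{2\beta}{2\beta-1}$ in the first slot of \eqref{eq:TimeLengthHyp}; the second slot, which propagates the $\|D^k\cdot\|_{L^\infty}$ bound, is governed by derivative-counting alone and is insensitive to $\beta$, so it coincides with its counterpart in Theorem~\ref{th:MainThmVel}. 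Uniform boundedness of the iterates on $\cD_t$ then produces an analytic limit through the normal-family argument of Lemma~\ref{le:Montel}, and Cauchy estimates across the strip upgrade the $L^\infty(\cD_t)$ control to $D^ju\in C([0,T_*),L^\infty)$ for $0\le j\le k$.

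The main obstacle is the kernel analysis for $\beta>1$. Unlike the Gaussian case, $K_1^\beta$ is neither positive nor Gaussian-decaying---for non-integer $\beta$ it decays only polynomially, $\sim|x|^{-d-2\beta}$, and it oscillates in sign---so the uniform $L^1$ bounds on $K_t^\beta(\cdot+iy)$ and its derivatives cannot be read off from a closed-form expression and must be extracted from the Fourier-side estimates above, with genuine care that the bound on the complex shift is uniform in $y$ over the whole range $|y|\le c\,t^{1/(2\beta)}$. Once this uniformity is secured, every remaining step is a routine rewriting of the $\beta=1$ proof with $t^{1/2}$ replaced throughout by $t^{1/(2\beta)}$.
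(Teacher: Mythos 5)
Your proposal is correct and follows the same overall scheme as the paper's proof: Picard iteration on the complexified mild formulation, Leibniz expansion of $D^j(u\cdot\nabla u)$ with Gagliardo--Nirenberg interpolation between $\|\cdot\|_{L^p}$ and $\|D^k\cdot\|_{L^\infty}$, and the identification of $\int_0^t(t-s)^{-1/(2\beta)}\,ds\approx t^{1-\frac{1}{2\beta}}$ as the source of the exponent $-\frac{2\beta}{2\beta-1}$ in the first slot of \eqref{eq:TimeLengthHyp} (the second slot being $\beta$-insensitive, exactly as you say). The one place where you diverge is the mechanism for realizing the analytic extension. You propose to estimate the complex-shifted kernel $K_t^\beta(\cdot+iy)$ in $L^1$ uniformly over $|y|\le c\,t^{1/(2\beta)}$, and you correctly identify this as the main technical burden, since for non-integer $\beta$ the kernel is oscillatory with only polynomial decay. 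The paper instead follows the Gruji\'c--Kukavica device already used in Theorem~\ref{th:MainThmVel}: it writes a coupled system \eqref{eq:IterationHypDU}--\eqref{eq:IterationHypDV} for the real and imaginary parts $U^{(n)},V^{(n)}$ of the extension along a moving shift, so that the fractional heat kernel $G_{t-s}^{(\beta)}$ is only ever convolved at real arguments; the complexification enters solely through the transport terms $\alpha\cdot\nabla$, absorbed under the condition $|\alpha|t^{1-\frac{1}{2\beta}}\le\frac12$, which is precisely what pins the analyticity radius at $t^{1/(2\beta)}$ in \eqref{eq:AnalDomHyp}. This sidesteps your kernel obstacle entirely. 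Your Fourier-side route is also viable --- $e^{|y||\xi|}e^{-t|\xi|^{2\beta}}$ admits a $y$-uniform integrable majorant in that range, and $L^1$ control of the shifted kernel can be extracted by the usual weighted-$L^\infty$ argument --- but it costs you a nontrivial kernel lemma that the paper's formulation does not need.
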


\begin{proof}
Following the same procedure for constructing approximation sequence as in Theorem~\ref{th:MainThmVel}, we obtain the iteration formulas for \eqref{eq:HypNSE1}-\eqref{eq:HypNSE3} as follows:
\begin{align}
D^jU^{(n)}(x,t)& =G_t^{(\beta)}*D^ju_0 - \int_0^t G_{t-s}^{(\beta)}*\nabla D^j\left(U^{(n-1)}\otimes U^{(n-1)}\right) ds + \int_0^t G_{t-s}^{(\beta)}*\nabla D^j\left(V^{(n-1)}\otimes V^{(n-1)}\right) ds \notag
\\
&\quad -\int_0^t G_{t-s}^{(\beta)}*\nabla D^j\Pi^{(n-1)}ds + \int_0^t G_{t-s}^{(\beta)}*D^jF\ ds - \int_0^t G_{t-s}^{(\beta)}*\alpha\cdot\nabla D^jV^{(n)}ds\ , \label{eq:IterationHypDU}
\\
D^jV^{(n)}(x,t)& = - \int_0^t G_{t-s}^{(\beta)}* D^j\left(U^{(n-1)}\cdot\nabla\right)V^{(n-1)} ds - \int_0^t G_{t-s}^{(\beta)}* D^j\left(V^{(n-1)}\cdot\nabla\right)U^{(n-1)} ds \notag
\\
&\quad -\int_0^t G_{t-s}^{(\beta)}*\nabla D^j R^{(n-1)}ds + \int_0^t G_{t-s}^{(\beta)}* D^jG\ ds - \int_0^t G_{t-s}^{(\beta)}*\alpha\cdot\nabla D^jU^{(n)}ds \label{eq:IterationHypDV}
\end{align}
where $G_t^{(\beta)}$ denotes the fractional heat kernel of order $\beta$, and $U^{(n)}$, $V^{(n)}$, $\Pi^{(n)}$, $R^{(n)}$ are the same as those in Theorem~\ref{th:MainThmVel}. Similar argument leads to
\begin{align*}
\|D^kU^{(n)}\|_{L^\infty} &+ \|D^kV^{(n)}\|_{L^\infty} \lesssim \|D^ku_0\|_{L^\infty} + T^{1-\frac{1}{2\beta}} 2^k (L_{n-1})^{k/(k+\frac{d}{p})} \left(L_{n-1}'\right)^{(k+\frac{2d}{p})/(k+\frac{d}{p})}
\\
& + T\left(\underset{t<T}{\sup}\|D^kF\|_{L^\infty}+\underset{t<T}{\sup}\|D^kG\|_{L^\infty}\right) + |\alpha|t^{1-\frac{1}{2\beta}} \left(\|D^kU^{(n)}\|_{L^\infty} + \|D^kV^{(n)}\|_{L^\infty}\right)
\end{align*}
and
\begin{align*}
\|U^{(n)}\|_{L^p} + \|V^{(n)}\|_{L^p} &\lesssim \|u_0\|_{L^p} + T\ (L_{n-1})^{(2k-1+\frac{d}{p})/(k+\frac{d}{p})} \left(L_{n-1}'\right)^{(1+\frac{d}{p})/(k+\frac{d}{p})}
\\
& + T\left(\underset{t<T}{\sup}\|F\|_{L^p}+\underset{t<T}{\sup}\|G\|_{L^p}\right) + |\alpha|t^{1-\frac{1}{2\beta}} \left(\|U^{(n)}\|_{L^p}+\|V^{(n)}\|_{L^p}\right) \
\end{align*}
where $L_n$ and $L_n'$ are the same as in Theorem~\ref{th:MainThmVel}. If $|\alpha|t^{1-\frac{1}{2\beta}}\le \frac{1}{2}$ and if
$$T\le C\left(2^{k}\cdot M^2/(M-1) \cdot \left(\|u_0\|_2 + \Gamma_2(T)\right)^{k/(k+\frac{d}{p})} \cdot \left(\|D^ku_0\|_\infty + \Gamma_\infty^k(T)\right)^{\frac{d}{p}/(k+\frac{d}{p})} \right)^{-\frac{2\beta}{2\beta-1}}$$
and
$$T\le C\left(M^2/(M-1) \cdot \left(\|u_0\|_2 + \Gamma_2(T)\right)^{(k-1)/(k+\frac{d}{p})} \cdot \left(\|D^ku_0\|_\infty + \Gamma_\infty^k(T)\right)^{(1+\frac{d}{p})/(k+\frac{d}{p})} \right)^{-1}$$
Then the statement follows by a standard converging argument.
\end{proof}

The next result is stated for the Navier-Stokes case first.

\begin{theorem}\label{le:AscendDerMod}
Suppose $\ell$ and $k$ are sufficiently large and
\begin{align}\label{eq:AscDerOrdMod}
\|D^ju_0\|^{\frac{1}{j+1}} \le \cM_{j,k}\cdot \|D^ku_0\|^{\frac{1}{k+1}}\ , \qquad \forall\ 2\ell\le j\le k
\end{align}
where the constants $\{\cM_{j,k}\}$ and $\|u_0\|$ satisfy
\begin{align}\label{eq:BkCondM}
\sum_{\ell \le i\le j-\ell} \binom j i  \cM_{i,k}^{i+1}\ \cM_{j-i,k}^{j-i+1} \lesssim 1\ , \qquad \forall\ 2\ell\le j\le k
\end{align}
and
\begin{align}\label{eq:BkCondS}
\|u_0\|_2 \sum_{0\le i\le \ell} \binom j i \cM_{\ell,k}^{\frac{(\ell+1)(i+d/2)}{\ell+d/2}} \cM_{j-i,k}^{j-i+1} \left(k!\|u_0\|^k\right)^{\frac{(d/2-1)(\ell-i)}{\ell+d/2}\frac{1}{k+1}}\lesssim 1\ , \qquad \forall\ 2\ell\le j\le k
\end{align}
If $T^{1/2}\lesssim\|D^ku_0\|^{-\frac{1}{k+1}}$, then for any $2\ell\le j \le k$, the complex solution of \eqref{eq:NSE1}-\eqref{eq:NSE3} has the upper bound:
\begin{align}\label{eq:AscDerUpBddMod}
&\underset{t\in(0,T)}{\sup}\ \underset{y\in\cD_t}{\sup} \|D^ju(\cdot,y,t)\|_{L^\infty} + \underset{t\in(0,T)}{\sup}\ \underset{y\in\cD_t}{\sup} \|D^jv(\cdot,y,t)\|_{L^\infty} \lesssim \|D^ju_0\| +  \|D^ku_0\|^{\frac{j+1}{k+1}}
\end{align}
where $\cD_t$ is given by \eqref{eq:AnalDom}.
\end{theorem}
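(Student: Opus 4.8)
The plan is to mirror the Picard-iteration argument used in the proof of Theorem~\ref{le:AscendDer} (and of Theorem 3.8 in \citet{Grujic2019}), the only genuinely new feature being that the two summability hypotheses \eqref{eq:BkCondM} and \eqref{eq:BkCondS} are now imposed directly on the abstract constants $\{\cM_{j,k}\}$ rather than being read off from the $\cR_{m,n}$-structure of an ascending chain. I would work with the approximating sequence $(U^{(n)},V^{(n)})$ for the analytically-extended system and its Duhamel representation from Theorem~\ref{th:MainThmVel}, and set
\[
L_n^{(j)} := \sup_{t<T}\|D^jU^{(n)}\|_{L^\infty} + \sup_{t<T}\|D^jV^{(n)}\|_{L^\infty}\ .
\]
The goal is to establish, by induction on the iterate index $n$, the bound
\[
L_n^{(j)} \lesssim \|D^ju_0\| + \|D^ku_0\|^{\frac{j+1}{k+1}}\ , \qquad 2\ell\le j\le k\ ,
\]
uniformly in $n$; this is precisely \eqref{eq:AscDerUpBddMod} at the level of the iterates, and the conclusion for the genuine complex solution then follows from the normal-family (Montel, Lemma~\ref{le:Montel}) compactness argument already invoked earlier in the excerpt.

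For the inductive step I would bound each quadratic Duhamel term by $t^{1/2}\sum_{i=0}^j \binom{j}{i} L_{n-1}^{(i)} L_{n-1}^{(j-i)}$ and split the sum at level $\ell$ into a bulk part $I_n$ (both $i,\,j-i\ge\ell$, a nonempty range since $j\ge 2\ell$) and a boundary part $J_n$ (one index below $\ell$). In $I_n$ the induction hypothesis together with the ascending assumption \eqref{eq:AscDerOrdMod}, in the form $\|D^iu_0\|\le \cM_{i,k}^{i+1}\|D^ku_0\|^{(i+1)/(k+1)}$, yields $L_{n-1}^{(i)}\lesssim \cM_{i,k}^{i+1}\|D^ku_0\|^{(i+1)/(k+1)}$ for $\ell\le i\le j-\ell$, so that
\[
I_n \lesssim \|D^ku_0\|^{\frac{j+2}{k+1}} \sum_{\ell\le i\le j-\ell}\binom{j}{i}\,\cM_{i,k}^{i+1}\,\cM_{j-i,k}^{j-i+1}\ ,
\]
and the remaining sum is $\lesssim 1$ exactly by \eqref{eq:BkCondM}. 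This is the clean half of the estimate.

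The boundary term $J_n$ carries the delicate bookkeeping. For a small index $i<\ell$ I would interpolate via Gagliardo--Nirenberg (Lemma~\ref{le:GNIneq}), $\|D^i w\|_\infty \lesssim \|w\|_{L^2}^{(\ell-i)/(\ell+d/2)}\|D^\ell w\|_\infty^{(i+d/2)/(\ell+d/2)}$, thereby trading the order-$i$ derivative for the $L^2$-norm (bounded by $M\|u_0\|_2$) and the order-$\ell$ derivative controlled through $\cM_{\ell,k}$. Absorbing the residual power of $\|D^ku_0\|$ produced by the interpolation exponent $(d/2-1)(\ell-i)/(\ell+d/2)$ by means of the crude \emph{a priori} bound $\|D^ku_0\|\lesssim k!\,\|u_0\|^k$ from Theorem~\ref{th:LinftyIVP}, I arrive at
\[
J_n \lesssim \|u_0\|_2\,\|D^ku_0\|^{\frac{j+2}{k+1}} \sum_{0\le i\le\ell}\binom{j}{i}\,\cM_{\ell,k}^{\frac{(\ell+1)(i+d/2)}{\ell+d/2}}\,\cM_{j-i,k}^{j-i+1}\,\bigl(k!\|u_0\|^k\bigr)^{\frac{(d/2-1)(\ell-i)}{\ell+d/2}\frac{1}{k+1}}\ ,
\]
whose sum is $\lesssim 1$ precisely by \eqref{eq:BkCondS}. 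Combining the two halves and using $t^{1/2}\le T^{1/2}\lesssim \|D^ku_0\|^{-1/(k+1)}$, the nonlinear contribution is
\[
t^{1/2}(I_n+2J_n) \lesssim \|D^ku_0\|^{-\frac{1}{k+1}}\|D^ku_0\|^{\frac{j+2}{k+1}} = \|D^ku_0\|^{\frac{j+1}{k+1}}\ ,
\]
which together with the linear contribution $\|e^{t\Delta}D^ju_0\|_\infty\le \|D^ju_0\|$ closes the induction.

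The step I expect to be the main obstacle is the boundary analysis of $J_n$: one must track the Gagliardo--Nirenberg exponents with enough precision that, after substituting the factorial bound for $\|D^ku_0\|$, the leftover powers of $\|D^ku_0\|$ and $\|u_0\|$ reassemble into exactly the summand appearing in \eqref{eq:BkCondS}, so that the abstract hypothesis is seen to be the sharp requirement. The remaining items---uniformity in $n$, the identical treatment of the other quadratic Duhamel terms (the $V^{(n-1)}\otimes V^{(n-1)}$ and pressure contributions) and of the imaginary part $V^{(n)}$, and the passage to the real solution---are routine repetitions of the corresponding steps in the proof of Theorem~\ref{le:AscendDer}.
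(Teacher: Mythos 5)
Your proposal is correct and follows essentially the same route as the paper's own proof: the identical Picard--Duhamel iteration, the same split of the convolution sum at level $\ell$ into the bulk term controlled by \eqref{eq:BkCondM} and the boundary term handled via Gagliardo--Nirenberg plus the crude bound $\|D^ku_0\|\lesssim k!\|u_0\|^k$ and \eqref{eq:BkCondS}, and the same absorption of one power $\|D^ku_0\|^{1/(k+1)}$ by the hypothesis on $T^{1/2}$ before passing to the limit. No substantive differences to report.
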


\begin{proof}
Similar to the proof of Theorem~\ref{le:AscendDer}, we have estimates for the nonlinear terms as follows:
\begin{align*}
\|D^jU_n(t)\| & \lesssim \|D^ju_0\| + t^{1/2} \left(\sum_{\ell \le i\le j-\ell} \binom j i \sup_{s<t}\|D^iU_{n-1}(s)\| \sup_{s<t}\|D^{j-i}U_{n-1}(s)\| \right.
\\
& \left.+ 2\sum_{0\le i\le \ell} \binom j i \left(\sup_{s<t}\|U_{n-1}(s)\|_2\right)^{\frac{\ell-i}{\ell+d/2}} \left(\sup_{s<t}\|D^\ell U_{n-1}(s)\|\right)^{\frac{i+d/2}{\ell+d/2}} \sup_{s<t}\|D^{j-i}U_{n-1}(s)\|\right)
\\
&:= \|D^ju_0\| + t^{1/2} \left(I+2J\right)
\end{align*}
By the induction hypothesis and the assumptions~\eqref{eq:AscDerOrdMod}, if $\ell$, $k$ and $\{\cM_{j,k}\}$ are chosen as in \eqref{eq:BkCondM} and \eqref{eq:BkCondS},
\begin{align*}
I &\lesssim \sum_{\ell \le i\le j-\ell} \binom j i\  \cM_{i,k}^{i+1} \|D^k u_0\|^{\frac{i+1}{k+1}}\  \cM_{j-i,k}^{j-i+1} \|D^k u_0\|^{\frac{j-i+1}{k+1}}
\lesssim \|D^k u_0\|^{\frac{j+2}{k+1}}
\end{align*}
Without loss of generality we assume $\|D^ku_0\|\lesssim k!\|u_0\|^{k+1}$, then
\begin{align*}
J &\lesssim \sum_{0\le i\le \ell} \binom j i\  \|u_0\|_2^{\frac{\ell-i}{\ell+d/2}} \left(\cM_{\ell,k}^{\ell+1} \|D^k u_0\|^{\frac{\ell+1}{k+1}}\right)^{\frac{i+d/2}{\ell+d/2}}  \cM_{j-i,k}^{j-i+1}  \|D^k u_0\|^{\frac{j-i+1}{k+1}}
\\
&\lesssim \|u_0\|_2\|D^k u_0\|^{\frac{j+2}{k+1}} \sum_{0\le i\le \ell} \binom j i \cM_{\ell,k}^{\frac{(\ell+1)(i+d/2)}{\ell+d/2}} \cM_{j-i,k}^{j-i+1} \|D^k u_0\|^{\frac{(d/2-1)(\ell-i)}{\ell+d/2}\frac{1}{k+1}}
\\
&\lesssim \|u_0\|_2\|D^k u_0\|^{\frac{j+2}{k+1}} \sum_{0\le i\le \ell} \binom j i \cM_{\ell,k}^{\frac{(\ell+1)(i+d/2)}{\ell+d/2}} \cM_{j-i,k}^{j-i+1}  \left(k!\|u_0\|^k\right)^{\frac{(d/2-1)(\ell-i)}{\ell+d/2}\frac{1}{k+1}}
\end{align*}
To sum up, if $T^{1/2}\lesssim\|D^ku_0\|^{-\frac{1}{k+1}}$, then for all $n$
\begin{align*}
\sup_{s<t}\|D^jU_n(s)\| \lesssim \|D^ju_0\| + T^{1/2}\|D^k u_0\|^{\frac{j+2}{k+1}} \lesssim \|D^ju_0\| + \|D^k u_0\|^{\frac{j+1}{k+1}}
\end{align*}
and similarly, for all $n$,
\begin{align*}
\sup_{s<t}\|D^jV_n(s)\| \lesssim \|D^k u_0\|^{\frac{j+1}{k+1}}
\end{align*}
and a standard converging argument proves \eqref{eq:AscDerUpBddMod}.
\end{proof}

\begin{corollary}\label{le:DerOrdConfigHyp}
Theorem~\ref{le:DescendDer}, Lemma~\ref{le:ScaleBound} and Corollary~\ref{cor:DerOrdConfig} still hold if $u(t)$ is the solution to \eqref{eq:HypNSE1}-\eqref{eq:HypNSE3}.
\end{corollary}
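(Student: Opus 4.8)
The plan is to transfer the three results from the Navier--Stokes setting by the single substitution Theorem~\ref{th:MainThmVel} $\rightsquigarrow$ Theorem~\ref{th:MainThmVelHyp}, and then to check that the one $\beta$-sensitive ingredient---the width of the analyticity strip---does not obstruct the scheme. Indeed, the proofs of Theorem~\ref{le:DescendDer}, Lemma~\ref{le:ScaleBound} and Corollary~\ref{cor:DerOrdConfig} invoke the dissipation only through Theorem~\ref{th:MainThmVel}, and in exactly three ways: the local existence span $T_*$ expressed through the derivative norms, the uniform bound $\sup_s\|D^j u(s)\|\le M\|D^j u_0\|$ on that span, and the complex extension to $\cD_t=\{|y|\le c\,t^{1/2}\}$ underlying the Taylor estimates of $\cI_s$ and $\cJ_s$. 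Everything else---the $\cR_{m,n}$-chain bookkeeping, the descending/ascending dichotomy, the $W^{-k,p}$-sparseness of Lemma~\ref{le:HkSparse} and Theorem~\ref{th:LerayZalpha}, and the harmonic-measure maximum principle of Proposition~\ref{prop:HarMaxPrin}---is a statement about a fixed time slice or about complex analysis, hence entirely $\beta$-independent.

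First I would re-run the proof of Theorem~\ref{le:DescendDer} verbatim, replacing \eqref{eq:AnalDom} by \eqref{eq:AnalDomHyp} and the span \eqref{eq:TimeLength} by \eqref{eq:TimeLengthHyp}. The only genuine point to verify is geometric compatibility: the sparseness radius $r_s$ of \eqref{eq:NaScSparse}, being dictated by the $\beta$-free scale of Theorem~\ref{th:LerayZalpha}, must still lie inside the now-narrower strip $\cD_s=\{|y|\le c\,s^{1/2\beta}\}$. I would establish this by comparing the analyticity widths \emph{attained over the respective existence spans}. Writing $T_*^{\mathrm{NS}}$ and $T_*^{\mathrm{HD}}$ for the leading factors in \eqref{eq:TimeLength} and \eqref{eq:TimeLengthHyp}, the halving of the interior exponents in \eqref{eq:TimeLengthHyp} together with the outer power $\tfrac{2\beta}{2\beta-1}$ gives $T_*^{\mathrm{HD}}\approx (T_*^{\mathrm{NS}})^{\beta/(2\beta-1)}$, whence the attained HD width is $(T_*^{\mathrm{HD}})^{1/2\beta}\approx (T_*^{\mathrm{NS}})^{1/(2(2\beta-1))}$. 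Since $T_*^{\mathrm{NS}}<1$ near $T^*$ and $\tfrac{1}{2(2\beta-1)}<\tfrac12$ for every $\beta>1$, this width \emph{exceeds} the NS width $(T_*^{\mathrm{NS}})^{1/2}$. As the ball $B_{r_s}(x_0,0)$ is contained in the NS strip by the very construction of the Navier--Stokes proof, a fortiori it lies in the wider HD strip $\cD_s$, and the Taylor bounds for $\cI_s,\cJ_s$ transfer without change. This is the step where the hyper-dissipation helps rather than hurts.

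With the complex extension controlled on $B_{r_s}$, the contradiction argument closes identically: assuming $\|D^k u(t)\|>\mu_*\|D^k u_0\|$, I would bound $\sup_{B_{r_t}}|D^k u_t(z)|$ by the same geometric series in $\cR_{m,n}$ and invoke Proposition~\ref{prop:HarMaxPrin} with the 1D $\delta^{1/d}$-sparseness to violate \eqref{eq:ParaAdjMaxP}, yielding Theorem~\ref{le:DescendDer} in the HD setting; the opposite case to \eqref{eq:DescDerOrdAlls} is handled as its NS analogue. Lemma~\ref{le:ScaleBound} and Corollary~\ref{cor:DerOrdConfig} are then purely combinatorial consequences of this descending estimate paired with the ascending estimate, whose HD form is already Theorem~\ref{le:AscendDerMod} (in place of Theorem~\ref{le:AscendDer}); the structural constants $c_j$, $\cB_{k,i}$, $\cC_{r,i}$ are merely multiplied by harmless $\beta$-dependent factors arising from $(T_*^{\mathrm{HD}})^{1/2\beta}$ and are absorbed into the same thresholds.

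The main obstacle is the compatibility in the second step: a priori the narrower strip $s^{1/2\beta}$ could fail to contain $B_{r_s}$, which would break the tail estimate for $\cJ_s$ and with it the entire mechanism. The resolution is quantitative, and the delicate point is that the comparison $(T_*^{\mathrm{HD}})^{1/2\beta}\gtrsim r_s$ must be shown to hold with $k$-independent implied constants as $t\to T^*$, so that it survives the $k\to\infty$ limit on which the asymptotic-criticality estimates rest. Everything outside this single inequality is a transcription of the Navier--Stokes proofs.
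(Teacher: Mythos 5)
The paper states this corollary without proof, so the only question is whether your transfer argument is sound. Your overall strategy --- substitute Theorem~\ref{th:MainThmVelHyp} for Theorem~\ref{th:MainThmVel}, and observe that the sparseness machinery, the $\cR_{m,n}$ bookkeeping, Lemma~\ref{le:HkSparse}, Theorem~\ref{th:LerayZalpha} and Proposition~\ref{prop:HarMaxPrin} are statements about a fixed time slice and hence $\beta$-free --- is the natural one, and your arithmetic $T_*^{\mathrm{HD}}\approx (T_*^{\mathrm{NS}})^{\beta/(2\beta-1)}$ is correct. However, the step you single out as ``the main obstacle,'' together with your resolution of it, rests on a false premise. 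You assert that $B_{r_s}(x_0,0)$ is contained in the Navier--Stokes analyticity strip ``by the very construction of the Navier--Stokes proof.'' It is not: with $p=2$ the strip half-width supplied by Theorem~\ref{th:MainThmVel} over its existence span is of order $2^{-k}\|D^k u_0\|^{-\frac{d/2}{k+d/2}}$, whereas the radius in \eqref{eq:NaScSparse} is $r_s\approx\|D^ku(s)\|^{-\frac{1}{k+d/2}}$; since $d/2>1$ for $d\ge 3$, near a putative singularity the ball $B_{r_s}$ is much \emph{larger} than the strip. The proof of Theorem~\ref{le:DescendDer} does not need this containment: analyticity of $D^k u_s$ on $B_{r_s}(x_0,0)$ is obtained directly from the convergence of the Taylor expansion about $x_0$, which is forced by the descending assumption \eqref{eq:DescDerOrdAlls} (this is precisely the content of the $\cJ_s$ estimate), and that convergence argument never sees $\beta$. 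So the containment you set out to prove is neither true in the form you invoke it for, nor needed; as written, that step of your argument fails.

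What \emph{is} $\beta$-sensitive --- and what you do correctly flag elsewhere --- is the pair consisting of the span $T_*$ on which $\sup_s\|D^ju(s)\|\le M\|D^ju_0\|$ for the finitely many $k\le j\le k_*$ (used only in the $\cI_s$ estimate) and the strip on which the mild-solution extension $u+iv$ lives. For the corollary one only needs that these exist with the same functional dependence on $\|D^ku_0\|$ up to $\beta$-dependent exponents and constants, which \eqref{eq:TimeLengthHyp} and \eqref{eq:AnalDomHyp} provide, and that those constants can be absorbed into the choices of $c$, $\mu$ and $M$ in \eqref{eq:ParaAdjMaxP} --- exactly as in your final paragraph. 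If you delete the strip-containment discussion and instead note that the $\cI_s$/$\cJ_s$ estimates depend on the local existence theory only through the uniform bound on $[t_0,t_0+T_*]$, your write-up becomes a correct proof. One further slip: the hyper-dissipative ascending counterpart is Theorem~\ref{le:AscendDerHyp}, not Theorem~\ref{le:AscendDerMod} (the latter is stated for $\beta=1$); in any case Lemma~\ref{le:ScaleBound} and Corollary~\ref{cor:DerOrdConfig} are driven by the descending dichotomy, so this does not affect the conclusion.
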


The hyper-dissipative version of the previous theorem is as follows.

\begin{theorem}\label{le:AscendDerHyp}
Suppose $\ell$, $k$ and $\{\cM_{j,k}\}$ satisfy \eqref{eq:AscDerOrdMod} with $\{\cM_{j,k}\}$ and $\|u_0\|$ satisfying
\begin{align}\label{eq:BkCondMHyp}
\sum_{\ell \le i\le j-\ell} \binom j i  \cM_{i,k}^{i+1}\ \cM_{j-i,k}^{j-i+1} \lesssim \phi(j,k)\ , \qquad \forall\ 2\ell\le j\le k
\end{align}
and
\begin{align}\label{eq:BkCondSHyp}
\|u_0\|_2 \sum_{0\le i\le \ell} \binom j i \cM_{\ell,k}^{\frac{(\ell+1)(i+d/2)}{\ell+d/2}} \cM_{j-i,k}^{j-i+1} \left(k!\|u_0\|^k\right)^{\frac{(d/2-1)(\ell-i)}{\ell+d/2}\frac{1}{k+1}}\lesssim \psi(j,k)\ , \qquad \forall\ 2\ell\le j\le k\ .
\end{align}
If $T\lesssim \left(\phi(j,k)+\psi(j,k)\right)^{-\frac{2\beta}{2\beta-1}} \|D^ku_0\|^{-\frac{2\beta}{(2\beta-1)(k+1)}}$, then for any $\ell\le j \le k$, the complex solution of \eqref{eq:HypNSE1}-\eqref{eq:HypNSE3} has the upper bound:
\begin{align}\label{eq:AscDerUpBddHyp}
&\underset{t\in(0,T)}{\sup}\ \underset{y\in\cD_t}{\sup} \|D^ju(\cdot,y,t)\|_{L^\infty} + \underset{t\in(0,T)}{\sup}\ \underset{y\in\cD_t}{\sup} \|D^jv(\cdot,y,t)\|_{L^\infty} \lesssim \|D^ju_0\| +  \|D^ku_0\|^{\frac{j+1}{k+1}}
\end{align}
where $\cD_t$ is given by \eqref{eq:AnalDomHyp}. In particular, if $\phi(j,k), \psi(j,k)\lesssim A^k$ with any constant $A>1$, then the descending chain described by \eqref{eq:DescDerOrd} in Theorem~\ref{le:DescendDer} forms at a time after $T$ with some constant $c\approx \eta \cdot A^{-d/2+1}$.
\end{theorem}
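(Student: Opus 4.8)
The plan is to run the proof of Theorem~\ref{le:AscendDerMod} essentially verbatim, the sole structural change being that the Gaussian semigroup is replaced by the fractional heat semigroup $G_t^{(\beta)}$ of the iteration formulas \eqref{eq:IterationHypDU}--\eqref{eq:IterationHypDV}. The one quantitative input that changes is the single-derivative smoothing estimate $\|\nabla G_\tau^{(\beta)}\ast g\|_{L^\infty}\lesssim \tau^{-1/(2\beta)}\|g\|_{L^\infty}$, so that integrating the Duhamel term over $[0,t]$ yields the time factor $t^{1-1/(2\beta)}$ in place of the $t^{1/2}$ of the dissipative case---this is precisely the factor already recorded in the proof of Theorem~\ref{th:MainThmVelHyp}. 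All of the combinatorial bookkeeping is otherwise identical.

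First I would set up the Picard iterates $(U^{(n)},V^{(n)})$ and prove \eqref{eq:AscDerUpBddHyp} by induction on $n$, carrying the induction hypothesis $\sup_{s<t}\|D^jU_{n-1}(s)\|\lesssim \|D^ju_0\|+\|D^ku_0\|^{(j+1)/(k+1)}$ for $2\ell\le j\le k$. As in Theorem~\ref{le:AscendDerMod}, the nonlinear contribution to $\|D^jU_n(t)\|$ splits as $t^{1-1/(2\beta)}(I+2J)$, with $I$ collecting the \emph{middle} convolution indices $\ell\le i\le j-\ell$ and $J$ the \emph{low} indices $0\le i\le\ell$ treated by Gagliardo--Nirenberg interpolation (Lemma~\ref{le:GNIneq}) against the $L^2$-norm. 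Inserting the ascending hypothesis \eqref{eq:AscDerOrdMod} and the two summation bounds \eqref{eq:BkCondMHyp}, \eqref{eq:BkCondSHyp} would give $I\lesssim \phi(j,k)\,\|D^ku_0\|^{(j+2)/(k+1)}$ and $J\lesssim \psi(j,k)\,\|D^ku_0\|^{(j+2)/(k+1)}$ respectively; the reduction $\|D^ku_0\|\lesssim k!\,\|u_0\|^k$ used in the $J$-estimate is supplied by Theorem~\ref{th:LinftyIVP}, exactly as before. The range $\ell\le j\le 2\ell$ would then be recovered as the base case, as in Theorem~\ref{le:AscendDer}.

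The estimate closes by balancing the time factor. Summing the two pieces gives $\sup_{s<t}\|D^jU_n(s)\|\lesssim \|D^ju_0\|+ T^{1-1/(2\beta)}\,(\phi(j,k)+\psi(j,k))\,\|D^ku_0\|^{(j+2)/(k+1)}$, and since $1-1/(2\beta)=(2\beta-1)/(2\beta)$, the hypothesis $T\lesssim(\phi+\psi)^{-2\beta/(2\beta-1)}\|D^ku_0\|^{-2\beta/((2\beta-1)(k+1))}$ is exactly the statement $T^{1-1/(2\beta)}(\phi+\psi)\lesssim \|D^ku_0\|^{-1/(k+1)}$. This absorbs the surplus power $\|D^ku_0\|^{1/(k+1)}$ and produces $\sup_{s<t}\|D^jU_n(s)\|\lesssim \|D^ju_0\|+\|D^ku_0\|^{(j+1)/(k+1)}$, closing the induction; the computation for $V_n$ is identical. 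A standard convergence argument (as in Theorem~\ref{th:MainThmVelHyp}) then passes the uniform-in-$n$ bound to the limit and yields \eqref{eq:AscDerUpBddHyp} on the analyticity region $\cD_t$ of \eqref{eq:AnalDomHyp}.

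For the ``in particular'' clause I would specialize to $\phi,\psi\lesssim A^k$, so that the threshold reads $T\lesssim A^{-2\beta k/(2\beta-1)}\|D^ku_0\|^{-2\beta/((2\beta-1)(k+1))}$ and the ascending configuration is controlled on a genuine interval. By Corollary~\ref{le:DerOrdConfigHyp} (the hyper-dissipative transcription of Theorem~\ref{le:DescendDer}, Lemma~\ref{le:ScaleBound} and Corollary~\ref{cor:DerOrdConfig}) the ascending configuration cannot persist past this interval, so the descending chain \eqref{eq:DescDerOrd} must form at some time after $T$; the admissible constant is then read off from the smallness requirement \eqref{eq:ParaAdjMaxP}, whose sensitive quantity is $\frac{2e}{\eta}\,c^{k/(k+n+1)^2}\cZ_k(t_0)/(k+m)^{m/k}$ with $\cZ_k$ built from the dimensional exponent $d/2-1$. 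The hard part will be precisely this last bookkeeping: one must verify that the $A^k$-growth of $\phi,\psi$, after passing through the $\cZ_k$-exponent $(d/2-1)/((k+1)(k+d/2))$ and the normalization of \eqref{eq:ParaAdjMaxP}, collapses uniformly in $k$ to the clean value $c\approx\eta\,A^{-(d/2-1)}=\eta\,A^{-d/2+1}$---that is, reconciling the dimensional factor $d/2-1$ arising from the $L^2$-interpolation in the $J$-term with the power of $A$.
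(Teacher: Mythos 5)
Your proposal is correct and follows essentially the same route as the paper's own (much terser) proof: the same Picard iteration with the $I+2J$ split, the fractional-kernel time factor $t^{1-1/(2\beta)}$ replacing $t^{1/2}$, and the observation that the hypothesis on $T$ is exactly $T^{1-1/(2\beta)}(\phi+\psi)\lesssim\|D^ku_0\|^{-1/(k+1)}$. The paper likewise leaves the ``in particular'' clause to the surrounding machinery (Corollary~\ref{le:DerOrdConfigHyp} and the condition~\eqref{eq:ParaAdjMaxP}), so your sketch of that bookkeeping is consistent with what is actually done.
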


\begin{proof}
Similar to the proof of Theorem~\ref{le:AscendDer}, we have estimates for the nonlinear terms as follows:
\begin{align*}
\|D^jU_n(t)\| &\lesssim \|D^ju_0\| + t^{1-\frac{1}{2\beta}} \sum_{i=0}^j \binom j i \sup_{s<t}\|D^iU_{n-1}(s)\| \sup_{s<t}\|D^{j-i}U_{n-1}(s)\|
\\
&\lesssim \|D^ju_0\| + t^{1-\frac{1}{2\beta}} \left(I+2J\right) \lesssim \|D^ju_0\| + t^{1-\frac{1}{2\beta}} (\phi(j,k)+\psi(j,k))\|D^k u_0\|^{\frac{j+2}{k+1}}
\end{align*}
Similar argument leads to \eqref{eq:AscDerUpBddHyp} with $T^{1-\frac{1}{2\beta}}\lesssim (\phi(j,k)+\psi(j,k))^{-1} \|D^ku_0\|^{-\frac{1}{k+1}}$.
\end{proof}

\bigskip

In order to indicate the idea behind the proof of Theorem~\ref{th:HypNSEReg}, we first make some heuristic arguments about combining Theorem~\ref{le:DescendDer} and Theorem~\ref{le:AscendDerHyp} alternately in time as well as deriving some sharp estimates of $\cR_{m,n}(k,c,t)$ to achieve the requirements \eqref{eq:BkCondMHyp} and \eqref{eq:BkCondSHyp}. Consider the following division of the indexes:
\begin{align*}
k_{p-1}\le \Lambda \cdot k_p\ , \qquad m_p:= \mu_p\cdot k_p\ ,\qquad n_p\le \xi_p\cdot k_p\ , \qquad c_{j,p}:= \left( \frac{(j+m_p)^{m_p/j}}{(2e/\varepsilon_p\eta)\ \cZ_j(t_0)} \right)^{(j-n_p+1)^2/j}
\end{align*}
and define $\lambda_{j,p}=j/k_p$. Then, for any $k_{p-1}\le j\le k_p$
\begin{align*}
\frac{2e}{\eta} \frac{c_{j,p}^{\frac{j}{(j-n_p+1)^2}}\ \cZ_j(t_0)} {(j+m_p)^{m_p/j}} \exp\left(\frac{2e}{\eta} \frac{c_{j,p}^{\frac{j}{(j-n_p+1)^2}}\ \cZ_j(t_0)} {(j+m_p)^{m_p/j}}\right) &\lesssim \varepsilon_p e^{\varepsilon_p} \lesssim \varepsilon_0
\end{align*}
so the condition~\eqref{eq:ParaAdjMaxPIndW} is satisfied and Lemma~\ref{le:ScaleBound} is applicable for such choices. Then, by Lemma~\ref{le:ScaleBound} either Case~(I)$^*$ or Case~(II)$^*$ occurs. If Case~(II)$^*$ occurs, then it is trivial that $T^*$ is not a blow-up time. If Case~(I)$^*$ occurs, then with the above choices for $c_{j,p}$ and $n_p$ we compute that for any $k_{p-1}\le i,j\le k_p$
\begin{align*}
\cB_{i,j}^{(p)} &\lesssim \prod_{r=i}^j \left( \left(\frac{(r+m_p)^{m_p/r}}{(2e/\varepsilon_p\eta)\ \cZ_r(t_0)} \right)^{(r-n_p+1)^2/r} \right)^{-\frac{1}{(r-n_p+1)(r-n_p+2)}}
\\
&\lesssim k_p^{-\sum_{r=i}^j m_p/r^2} \left(\frac{\varepsilon_p\eta}{2e}\right)^{-\sum_{r=i}^j r^{-1}} \prod_{r=i}^j (\lambda_r+\lambda_{m_p})^{-m_p/r^2} \prod_{r=i}^j \cZ_r(t_0)^{1/r}
\\
&\lesssim k_p^{-m_p\left(i^{-1}- j^{-1}\right)} \left(\frac{\varepsilon_p\eta}{2e}\right)^{-\ln(j/i)} \exp\left(-\frac{m_p}{r^2} \sum_{r=i}^j \ln(\lambda_r+\lambda_{m_p})\right) \prod_{r=i}^j\ \cZ_r(t_0)^{1/r}
\end{align*}
For $\Lambda, \mu_p\approx 1$, the above estimate is reduced to
\begin{align*}
\cB_{i,j}^{(p)} &\lesssim k_p^{-m_p\left(i^{-1}- j^{-1}\right)} \left(\frac{2e}{\varepsilon_p\eta}\right)^{\ln(j/i)} \prod_{r=i}^j\ \cZ_r(t_0)^{1/r}
\end{align*}
Then, with sufficiently large $k_p$'s (compared to $\|u_0\|$ and $(\varepsilon_p\eta)^{-1}$) and $\Lambda, \mu_p\approx 1$,
\begin{align*}
\frac{\cB_{i,j}^{(p)}((i+m_p)!)^{\frac{1}{i+1}}}{((j+m_p)!)^{\frac{1}{j+1}}}  &\lesssim \frac{k_p^{-m_p\left(i^{-1}- j^{-1}\right)} \displaystyle\left(\frac{\varepsilon_p\eta}{2e}\right)^{\ln(i/j)} \prod_{r=i}^j\ \cZ_r(t_0)^{1/r} } {e^{m_p\left(i^{-1}- j^{-1}\right)} \displaystyle (j+m_p)^{1+\frac{m_p}{j}}/ (i+m_p)^{1+\frac{m_p}{i}}}
\lesssim \left(\frac{2e}{\varepsilon_p\eta}\right)^{\ln(j/i)} \prod_{r=i}^j\ \cZ_r(t_0)^{1/r}
\end{align*}
We set up $\cR(k,c,t)$ and $\cC(k,c,t)$ in \eqref{eq:RealScaleNote} with additional indexes $m$ and $n$ as well as the lacunary arguments above, expecting that application of Lemma~\ref{le:ScaleBound} for multiple times with varying indexes $m$ and $n$ can significantly reduce the ratios between $\|D^iu(s)\|^{\frac{1}{i+1}}$ and $\|D^ju(s)\|^{\frac{1}{j+1}}$ in the descending chains described in the lemma and the corollary; more precisely with proper choices of the pairs $(m_i,n_i)$ the descending chains with different indexes $(m_i,n_i)$ form simultaneously and it is possible to joint the portions with different indexes so that for any $\frac{k_1}{2}\le i\le k_1$ and $k_{\theta} \le j\le 2k_{\theta}$,
\begin{align*}
\|D^iu(s)\|^{\frac{1}{i+1}} &\lesssim \frac{\cB_{i,k_1}^{(1)}((i+m_1)!)^{\frac{1}{i+1}}}{((k_1+m_1)!)^{\frac{1}{k_1+1}}} \cdot \frac{\cB_{k_1,k_2}^{(2)}((k_1+m_2)!)^{\frac{1}{k_1+1}}}{((k_2+m_2)!)^{\frac{1}{k_2+1}}} \cdots
\\
&\quad \cdots \frac{\cB_{k_{\theta-1},k_{\theta}}^{(\theta)} ((k_{\theta-1}+m_{\theta})!)^{\frac{1}{k_{\theta-1}+1}}}{((k_{\theta}+m_{\theta})!)^{\frac{1}{k_{\theta}+1}}} \cdot \frac{\cB_{k_{\theta},j}^{(\theta+1)}((k_{\theta}+m_{\theta+1})!)^{\frac{1}{k_{\theta}+1}}} {((j+m_{\theta+1})!)^{\frac{1}{j+1}}}\ \|D^ju(s)\|^{\frac{1}{j+1}}
\\
\lesssim & \left(\frac{\varepsilon_1\eta}{2e}\right)^{\ln(i/k_1)} \left(\frac{\varepsilon_2\eta}{2e}\right)^{\ln(k_1/k_2)} \cdots \left(\frac{\varepsilon_\theta\eta}{2e}\right)^{\ln(k_{\theta}/j)} \prod_{r=i}^j\ \cZ_r(t_0)^{1/r} \|D^ju(s)\|^{\frac{1}{j+1}} =: k^{\alpha(i,j)} \|D^ju(s)\|^{\frac{1}{j+1}}
\end{align*}
while
\begin{align*}
\sum_{\ell \le i\le j-\ell} \binom j i  \cM_{i,k}^{i+1}\ \cM_{j-i,k}^{j-i+1}
&\lesssim \sum_{\ell \le i\le j-\ell} \binom j i \left(k^{\alpha(i,k)}\right)^{i+1} \left(k^{\alpha(j-i,k)}\right)^{j-i+1}
\lesssim j\cdot 2^j \cdot k^{\alpha(j/2,k)\cdot j/2}
\end{align*}
so, for sufficiently large $k$, \eqref{eq:BkCondM} is satisfied, and similarly, if $k$ is sufficiently large and $\|u_0\|\lesssim k$, then
\begin{align*}
\sum_{0\le i\le \ell} & \binom j i \cM_{\ell,k}^{\frac{(\ell+1)(i+d/2)}{\ell+d/2}} \cM_{j-i,k}^{j-i+1} \left(k!\|u_0\|^k\right)^{\frac{(d/2-1)(\ell-i)}{\ell+d/2}\frac{1}{k+1}}
\\
&\lesssim \sum_{0\le i\le \ell} \binom j i \left(k^{\alpha(\ell,k)}\right)^{\frac{(\ell+1)(i+d/2)}{\ell+d/2}} \left(k^{\alpha(j-i,k)}\right)^{j-i+1} \left(k!\|u_0\|^k\right)^{\frac{(d/2-1)(\ell-i)}{\ell+d/2}\frac{1}{k+1}} \lesssim k^{\tilde{\beta}}
\end{align*}
so \eqref{eq:BkCondS} is satisfied.

At each joint (between $(m_i,n_i)$ and $(m_{i+1},n_{i+1})$) we need the following implication
\begin{align}
&\cR_{m_{i+1},n_{i+1}}(k,c_{k,i+1},t) \ge \cR_{m_{i+1},n_{i+1}}(k+1,c_{k,i+1},t) \notag
\\
&\Longrightarrow \qquad \cR_{m_i,n_i}(k,c_{k,i},t) \ge \cR_{m_i,n_i}(k+1,c_{k,i},t) \label{eq:JointMonoImp}
\end{align}
and an equivalent requirement would be
\begin{align*}
\cR_{m_i,n_i}(k,c_{k,i},t) \approx \cR_{m_{i+1},n_{i+1}}(k,c_{k,i+1},t)
\end{align*}
that is
\begin{align*}
c_{k,i+1}^{\frac{k-n_{i+1}}{k-n_{i+1}+1}} \left((k+m_{i+1})!\right)^{\frac{1}{k+1}} \approx c_{k,i}^{\frac{k-n_i}{k-n_i+1}} \left((k+m_i)!\right)^{\frac{1}{k+1}}
\end{align*}
Plugging in $c_{k,i}$'s yields
\begin{align*}
\frac{\displaystyle\left((\varepsilon_i\eta/2e)\cdot (k+m_i)^{m_i/k}/ \cZ_k(t_0) \right)^{(k-n_i+1)^2/k} } {\displaystyle\left((\varepsilon_{i+1}\eta/2e)\cdot (k+m_{i+1})^{m_{i+1}/k}/ \cZ_k(t_0)\right)^{(k-n_{i+1}+1)^2/k}} \cdot \frac{\left((k+m_i)!\right)^{\frac{1}{k+1}}}{\left((k+m_{i+1})!\right)^{\frac{1}{k+1}}} \approx 1
\end{align*}
Let $k_i=\Lambda^i\cdot k_0$ and $n_i=\xi_i\cdot k_i=\xi_i \Lambda^i k_0$. Then $\Delta n_i= n_{i+1}- n_i\approx (\Lambda-1) n_i$ (assuming all $\xi_i>1/2$ and $\Lambda$ is close to 1) and based on the requirement above we need for each $i$
\begin{align*}
&\left(\frac{\varepsilon_i\eta}{2e}\right)^{2(1-\xi_i)\Delta n_i} \frac{(k+m_i)^{m_i(1-\xi_i)^2}}{(k+m_{i+1})^{m_{i+1}(1-\Lambda \xi_{i+1})^2}}\ \frac{(k+m_i)^{1+\frac{m_i}{k}}}{(k+m_{i+1})^{1+\frac{m_{i+1}}{k}}} \gtrsim \cZ_k(t_0)^{2(1-\xi_i)\Delta n_i} \ .
\end{align*}
If $m_i\approx \mu_i\cdot k_i$ with $\mu_i\approx1$, then the above relation is simplified as
\begin{align*}
& \left(\frac{\varepsilon_i\eta}{2e}\right) \frac{(k+m_i)^{m_i(1-\xi_i)/2\Delta n_i}}{(k+m_{i+1})^{m_{i+1}(1-\Lambda \xi_{i+1})^2/2(1-\xi_i)\Delta n_i }} \gtrsim \|D^{k}u_0\|^{\frac{d/2-1}{(k+1)(k+d/2)}}
\\
\Longrightarrow \quad & \left(\frac{\varepsilon_i\eta}{2e}\right) \left((1+\mu_i)k\right)^{\frac{\mu_i\left((1-\xi_i)^2 - \Lambda (1-\Lambda \xi_{i+1})^2\right)}{2\xi_i(1-\xi_i)(\Lambda-1)}} \gtrsim \|D^{k}u_0\|^{\frac{d/2-1}{(k+1)(k+d/2)}} \ .
\end{align*}
Recall that $k_i\le k\le k_{i+1}$ and we are particularly interested in the above requirement at each $k_i$. Unless there are significant difference between $\|D^{k_i}u_0\|$ and $\|D^{k_{i+1}}u_0\|$ (e.g. the non-homogeneous case) we assume $\mu_i\approx \mu_{i+1}\approx \mu$ and $\xi_i\approx \xi_{i+1}\approx \xi$.
Then, if $\xi$ and $\Lambda$ are chosen to satisfy $(1-\xi)^2 - \Lambda (1-\Lambda \xi)^2\approx 1$ (e.g. $\xi=1/2$ and $\Lambda=2$), the above relation can be reduced to
\begin{align*}
\|D^{k_i}u_0\|^{\frac{d/2-1}{(k_i+1)(k_i+d/2)}} \lesssim k_i^{\frac{\mu\left((1-\xi)^2 - \Lambda (1-\Lambda \xi)^2\right)}{2\xi(1-\xi)(\Lambda-1)}} \qquad \Longrightarrow \qquad \|D^{k_i}u_0\| \lesssim k_i^{C\cdot k_i^2}
\end{align*}
with some constant $C\approx 1$.

\medskip

Note that at any temporal point $t$ the above requirement must hold for large $k_i$ as Theorem~\ref{th:LinftyIVP} guarantees 
$\|D^ku(t)\|\lesssim k! \|u_0\|^{k+1}$ within a period after each $t_0$.

\medskip

Moreover, the recursive arguments are valid in Lemma~\ref{le:ScaleBound} and Corollary~\ref{cor:DerOrdConfig} when $c_{r+1, i}\le c_{r,i}$, so the following is also required:
\begin{align*}
\frac{c_{r+1, i}}{c_{r,i}} &= \left( \frac{(r+1+m_i)^{m_i/(r+1)}}{(2e/\varepsilon_i\eta)\ \cZ_{r+1}(t_0)} \right)^{(r-n_i+2)^2/(r+1)} \bigg/ \left( \frac{(r+m_i)^{m_i/r}}{(2e/\varepsilon_i\eta)\ \cZ_r(t_0)} \right)^{(r-n_i+1)^2/r}
\\
&= \left(\varepsilon_i\eta/2e\right)^{\frac{(r-n_i+2)^2}{r+1} - \frac{(r-n_i+1)^2}{r}} \left(r+1+m_i\right)^{\frac{m_i(r-n_i+2)^2}{(r+1)^2} - \frac{m_i(r-n_i+1)^2}{r^2}}
\\
&\qquad\qquad\times  \left(\frac{r+1+m_i}{r+m_i}\right)^{\frac{m_i(r-n_i+1)^2}{r^2}} \cZ_r(t_0)^{\frac{(r-n_i+1)^2}{r}} \big/ \cZ_{r+1}(t_0)^{\frac{(r-n_i+2)^2}{r+1}}
\\
&\lesssim \left(\varepsilon_i\eta/2e\right)^{1 - \frac{(n_i-1)^2}{r(r+1)}} \left(r+1+m_i\right)^{\frac{m_i(n_i-1)}{r(r+1)}} \exp\left(\frac{m_i}{r+m_i}\left(1-\frac{n_i-1}{r}\right)^2\right)
\\
&\qquad\qquad\times  \left(\cZ_r(t_0)\big/ \cZ_{r+1}(t_0)\right)^{\frac{(r-n_i+1)^2}{r}} \cZ_{r+1}(t_0)^{-1 + \frac{(n_i-1)^2}{r(r+1)}}
\end{align*}
Assuming $\|D^{r}u_0\|^{\frac{1}{r+1}} / \|D^{r+1}u_0\|^{\frac{1}{r+2}}\approx 1$ (Recall Lemma~\ref{le:Rk-transitivity}), it suffices to impose
\begin{align*}
\cZ_{r+1}(t_0)^{1 - \frac{(n_i-1)^2}{r(r+1)}} \gtrsim \left(r+1+m_i\right)^{\frac{m_i(n_i-1)}{r(r+1)}}
\end{align*}
Thus, under all the above conditions,
\begin{align}
\cM_{i,j} &\lesssim \frac{\cB_{i,k_1}^{(0)}((i+m_0)!)^{\frac{1}{i+1}}}{((k_1+m_0)!)^{\frac{1}{k_1+1}}} \cdot \frac{\cB_{k_1,k_2}^{(1)}((k_1+m_1)!)^{\frac{1}{k_1+1}}}{((k_2+m_1)!)^{\frac{1}{k_2+1}}} \cdots \frac{\cB_{k_{\theta},j}^{(\theta)}((k_{\theta}+m_{\theta})!)^{\frac{1}{k_{\theta}+1}}} {((j+m_{\theta})!)^{\frac{1}{j+1}}} \notag
\\
&\lesssim \left(2e/\varepsilon \eta \right)^{\ln(j/i)}\ \prod_{r=i}^j\ \|D^{r}u_0\|^{\frac{d/2-1}{r(r+1)(r+d/2)}} \label{eq:PureCmpctRatio}
\end{align}
To guarantee that the analyticity radius in Theorem~\ref{le:AscendDerHyp} equals the natural scale of sparseness in Theorem~\ref{th:LerayZalpha}, 
\begin{align*}
r_g:=\left(\phi(j,k)+\psi(j,k)\right)^{-\frac{2\beta}{2\beta-1}} \|D^ku_0\|^{-\frac{2\beta}{(2\beta-1)(k+1)}} \approx r^*:= c(\|u_0\|_2)\ \|D^k u(t)\|^{-\frac{1}{k+d/2}}.
\end{align*}
In such scenario, \eqref{eq:BkCondMHyp} requires that at almost all $k$-levels
\begin{align*}
\binom k {k/2} \cM_{k/2,k}^{k/2+1} \cdot \cM_{k/2,k}^{k/2+1} \lesssim \phi(k)
\end{align*}
which (essentially) reads  $\cM_{k/2,k}\lesssim \phi(k)^{\frac{1}{k+2}}$. This can be proved by way of contradiction.  Assume the opposite, that is $\cM_{k/2,k}\gtrsim \phi(k)^{\frac{1}{k+2}}$ for almost all $k$; then for sufficiently large $j/i$,
\begin{align*}
\cM_{i,j} = \cM_{i,2i}\cdot \cM_{2i,4i} \cdots \cM_{j/2,j} \gtrsim \phi(i)^{\frac{\ln(j/i)/\ln 2}{i+2}}
\end{align*}
which violates  $\cM_{i,j}\lesssim \phi(i,j)^{\frac{1}{j+2}}\lesssim \phi(i)^{\frac{\ln(j/i)/\ln 2}{i+2}}$. Notice that the natural upper bound for $\cM_{i,j}$, as indicated by \eqref{eq:PureCmpctRatio}, is comparable to
\begin{align*}
\left(\frac{2e}{\varepsilon \eta}\right)^{\ln(j/i)} \prod_{r=i}^j\ \|D^{r}u_0\|^{\frac{d/2-1}{r(r+1)(r+d/2)}} &\lesssim \left(\frac{2e}{\varepsilon \eta}\right)^{\ln(j/i)} \prod_{r=i}^j\ (T-t)^{-\frac{d/2-1}{2r(r+d/2)}}
\\
&\lesssim \left(\frac{2e}{\varepsilon \eta}\right)^{\ln(j/i)} (T-t)^{-(d/2-1)(i^{-1}-j^{-1})}.
\end{align*}
As we will see later, a smaller upper bound for $\cM_{i,j}$ is possible if \eqref{eq:NonHomCond} is satisfied.
In the above scenario, to make \eqref{eq:BkCondSHyp} hold, it suffices to require $k/\ell$ to be sufficiently large. Thus, Theorem~\ref{le:AscendDerHyp} is applicable. A rigorous argument will be provided in the proof of Theorem~\ref{th:HypNSEReg}.

\bigskip

\begin{lemma}\label{le:DescendDerRefn}
Let $u$ be a Leray solution of \eqref{eq:HypNSE1}-\eqref{eq:HypNSE3} initiated at $u_0$. Suppose $\ell$ is sufficiently large such that $\|u_0\|\lesssim (1+\epsilon)^{\ell}$. For a fixed $k\ge\ell$ and $n, m\in\bZ$, suppose that
\begin{align}\label{eq:DescDerOrdRefn}
&\cR_{m_p,n_p}[\alpha](k_p,c_p,t_0) \ge \cR_{m_p,n_p}[\alpha](j,c_p,t_0) \ , \qquad \forall  k_p\le j <k_{p+1}
\end{align}
for suitable constants $c_p=c_p(k)$ and pairs $(m_p,n_p)$ which satisfy
\begin{align}\label{eq:JointCompact}
\left(k_p+m_p\right)^{\frac{m_pn_p}{k_p^2-n_p^2}} \lesssim \cZ_{k_p}(t_0) \lesssim \left(k_p+m_p\right)^{\frac{\left((1-\xi)^2 - \Lambda (1-\Lambda \xi)^2\right)m_p}{2\xi(1-\xi)(\Lambda-1)k_p}}\ , \quad \textrm{for all } p
\end{align}
where $\xi,\Lambda$ are constants defined as above and
\begin{align}\label{eq:ParaAdjMaxPRefn}
\lambda h^* + \left(1+\frac{2e}{\eta}\frac{c_p^{\frac{k_p}{(k_p-n_p+1)^2}}\ \cZ_{k_p}(t_0)}{(k_p+m_p)^{m_p/k_p}} \exp\left(\frac{2e}{\eta}\frac{c_p^{\frac{k_p}{(k_p-n_p+1)^2}}\ \cZ_{k_p}(t_0)}{(k_p+m_p)^{m_p/k_p}} \right)\right) (1-h^*) \le \mu
\end{align}
where $h^*=\frac{2}{\pi} \arcsin \frac{1-\delta^{2/d}}{1+\delta^{2/d}}$, $(1+\eta)^d=\frac{\delta(1+\lambda)+1}{2}$ and $\mu$ is a positive constant. Then there exist $t_*, T_*>t_0$ and a constant $\mu_*$ such that
\begin{align*}
\|D^ku^{(\alpha)}(s)\|\le \mu_* \|D^ku^{(\alpha)}_0\| \ , \qquad \forall\ t_*\le s\le T_*\ .
\end{align*}
Here $\mu$ and $\mu_*$ are determined in the same way as described in Theorem~\ref{le:DescendDer}.
\end{lemma}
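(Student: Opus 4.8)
The plan is to run the argument of Theorem~\ref{le:DescendDer} componentwise, fixing the component $\alpha$ throughout and replacing $u$, $D^k u$, $\cR_{m,n}$ by $u^{(\alpha)}$, $D^k u^{(\alpha)}$, $\cR_{m,n}[\alpha]$, while replacing the single descending chain by the lacunary family of blocks $[k_p,k_{p+1})$ carrying the parameters $(m_p,n_p,c_p)$. By Corollary~\ref{le:DerOrdConfigHyp} the mechanism of Theorem~\ref{le:DescendDer} transfers verbatim to the hyper-dissipative system \eqref{eq:HypNSE1}--\eqref{eq:HypNSE3}, the only change being the analyticity radius $\sim t^{1/(2\beta)}$ of \eqref{eq:AnalDomHyp} supplied by Theorem~\ref{th:MainThmVelHyp}. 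First I would fix $x_0\in\bR^d$, argue by contradiction supposing $\|D^k u^{(\alpha)}(t)\|>\mu_*\|D^k u_0^{(\alpha)}\|$ for some $t<t_0+T_*$, and expand the complex extension $(D^k u^{(\alpha)})_t(z)$ in a power series about $x_0$ on the ball $B_{r_t}(x_0,0)$ whose radius $r_t$ is the natural scale of sparseness furnished by Theorem~\ref{th:LerayZalpha}.

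The series splits as $\cI_t+\cJ_t$ into a finite low-order part and an infinite tail, exactly as in Theorem~\ref{le:DescendDer}. The essential new feature is that the tail $\cJ_t$ now runs across several blocks. Within block $p$ the blockwise descending assumption \eqref{eq:DescDerOrdRefn} bounds each term by a power of $\cR_{m_p,n_p}[\alpha](k_p,c_p,t)$; to pass from one block to the next I would invoke the joint-compatibility condition \eqref{eq:JointCompact}. Its right-hand inequality is precisely the matching constraint derived in the heuristic discussion preceding \eqref{eq:JointMonoImp}, guaranteeing $\cR_{m_p,n_p}[\alpha](k_p,c_p,\cdot)\approx\cR_{m_{p+1},n_{p+1}}[\alpha](k_p,c_{p+1},\cdot)$, so that the descending property transfers across each seam and the whole tail telescopes into a single convergent series governed by the initial block. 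Its left-hand inequality secures the monotonicity $c_{r+1,p}\le c_{r,p}$ needed for the recursive steps of Lemma~\ref{le:ScaleBound} and Corollary~\ref{cor:DerOrdConfig} inside each block. The accumulated seam constants over the $O(\log(k/\ell))$ joints are controlled exactly as in \eqref{eq:PureCmpctRatio}.

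With the tail under control, the finite part $\cI_t$ is estimated from Theorem~\ref{th:MainThmVelHyp} together with the descending assumption at $t_0$, the contradiction hypothesis contributing the extra factor $\mu_*^{-1/(k+d/2)}$. Summing the two parts gives
\[
\sup_{z\in B_{r_t}(x_0,0)}\left|(D^k u^{(\alpha)})_t(z)\right|\le M\,\|D^k u_0^{(\alpha)}\|\left(1+\textrm{(small)}\right),
\]
where the smallness is exactly the quantity appearing on the left of the maximum-principle condition \eqref{eq:ParaAdjMaxPRefn}. Since Theorem~\ref{th:LerayZalpha} yields the sparseness of each componentwise super-level set $S_{k,\lambda}^{\alpha,\pm}$, I would then apply Proposition~\ref{prop:HarMaxPrin} (with Solynin's extremal harmonic-measure estimate) along the sparse direction to obtain $|(D^k u^{(\alpha)})_t(x_0)|\le\mu_*\|D^k u_0^{(\alpha)}\|$ for every $x_0$, provided \eqref{eq:ParaAdjMaxPRefn} holds with $\mu_*=M\mu$ --- the desired contradiction. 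The case opposite to all-time blockwise descent is handled as in the corresponding part of Theorem~\ref{le:DescendDer}.

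The hard part will be verifying that the two-sided condition \eqref{eq:JointCompact} is simultaneously strong enough and internally consistent across the entire lacunary sequence of blocks. One must check that the exponent $\frac{m_p n_p}{k_p^2-n_p^2}$ and the $\xi,\Lambda$-dependent upper bound are exactly what is needed so that the per-block geometric decay of $\cJ_t$ survives at every seam, while the growth of $\cZ_{k_p}(t_0)$ --- ultimately limited by $\|D^{k_p}u_0\|\lesssim k_p^{C k_p^2}$ from Theorem~\ref{th:LinftyIVP} --- does not destroy the matching. Keeping the product of seam constants bounded as the number of joints grows like $\log(k/\ell)$, while maintaining \eqref{eq:ParaAdjMaxPRefn} uniformly in $p$, is the genuine technical obstacle; the remaining estimates are routine adaptations of Theorem~\ref{le:DescendDer}.
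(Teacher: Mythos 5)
Your treatment of the persistent-order case is essentially the paper's own argument: the same splitting of the Taylor expansion of the complex extension into a finite head and a blockwise tail, the same use of \eqref{eq:JointCompact} to match $\cR_{m_p,n_p}$ across seams and to keep $c_{r+1,p}\le c_{r,p}$, and the same closing step via Theorem~\ref{th:LerayZalpha}, Proposition~\ref{prop:HarMaxPrin} and condition \eqref{eq:ParaAdjMaxPRefn}. That part is fine.

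The genuine gap is your one-sentence dismissal of the opposite case, i.e.\ the scenario in which the blockwise descending order fails at some temporal points $t_\tau<t_0+T_*$ for some indexes $k_\tau>k_*$. In Theorem~\ref{le:DescendDer} there is a single pair $(m,n)$ and a single chain, and even there the case is only cited from earlier work; in the refined lemma the order can break at different times on different layers $(m_p,n_p)$, and this case constitutes the bulk of the paper's proof and its genuinely new content. Concretely, the paper must: (i) argue that the layered descending processes terminate from the larger pairs down to the smallest admissible pair; (ii) prove the claim \eqref{eq:BddCplxExt}, that between the first violation time $t_p$ and $t_0+T_*$ each $\cR_{m_p,n_p}(k_p,c,\cdot)$ can grow by at most $M_*^{n_p/(k_p+1)}$ with $n_p\lesssim (k_p/k_*)^2$, so that the accumulated factor over all blocks stays below $M$; (iii) prove the claim \eqref{eq:RkbetaBdd}, that the foremost violating index $k_1$ satisfies $\sup_{s<t_0+T_*}\cR_{m_p,n_p}(k_1,c,s)\le M^{\beta}\,\cR_{m_p,n_p}(k,c,t_0)$ with $\beta<1$, via an intermittency argument combining Theorem~\ref{le:AscendDer}, Theorem~\ref{th:LerayZalpha} and Proposition~\ref{prop:HarMaxPrin}; and (iv) run a dyadic induction over the ranges $2^n\ell_*<j\le 2^{n+1}\ell_*$ with exponents $\beta_n\lesssim 2^{-n}k_*^{-1}$ so that the product of the growth factors converges and the tail of the Taylor expansion is still controlled by $M\,\cR(k,c,t_0)$. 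None of this is a verbatim transfer of the single-chain argument, and without it the contradiction step — which requires the \emph{same} upper bound on the complex extension as in the persistent case — is not justified in the generic scenario where the descending order does not persist.
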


\begin{proof}
Similar to the proof of Theorem~\ref{le:DescendDer}, we divide the expansion of $D^ku(t)$ at some index $k_*$ such that
\begin{align*}
\sup_{t_0<s<t_0+T_*}\! \|D^{j}u(s)\| \le M\|D^{j}u_0\|\ , \qquad \forall\ k\le j\le k_*\ .
\end{align*}
We first consider the case in which the order of `the tail of \eqref{eq:DescDerOrdRefn} after $k_*$' continues for all $s$ up to $t_0+T_*$, that is assuming, for any $t_0<s<t_0+T_*$,
\begin{align}\label{eq:DescDerOrdAlls}
\cR_{m_p,n_p}[\alpha](k_p,c_p,s) \ge \cR_{m_p,n_p}[\alpha](j,c_p,s) \ , \qquad \forall\ j\ge k_p^*\ .
\end{align}
Suppose there exists an $t<t_0+T_*$ such that $\|D^ku^{(\alpha)}(t)\|> \mu_* \|D^ku^{(\alpha)}_0\|$, then the argument before the theorem with condition~\eqref{eq:DescDerOrdRefn} leads to
\begin{align*}
\sup_{z\in B_{r_t}(x_0,0)} \left|D^ku(t)(z)\right|  &\le \left(\sum_{0\le i\le k_1} + \sum_{k_1<i\le k_2} + \cdots + \sum_{k_{p-1}<i\le k_p} + \sum_{i>k_p}\right)\frac{\left|D^{k+i}u(x_0,s))\right|}{i!} |z-x_0|^i
\\
&\le M \|D^{k}u_0\| \sum_{0\le i\le k_1} \frac{(k+m+i)!}{(k+m)!\ i!} \left(\frac{c_1^{\frac{1}{k+n+1} \frac{k+i+1}{k+n+i+1}}\ \cZ_k(t_0) } {\mu_*^{\frac{1}{k+d/2}}(\eta/2)\left((k+m)!\right)^{\frac{1}{k+1}}}\right)^{i}
\\
& \quad \cdots + M \|D^{k}u_0\| \sum_{i> k_p} \frac{(k+m+i)!}{(k+m)!\ i!} \left(\frac{c_p^{\frac{1}{k+n+1} \frac{k+i+1}{k+n+i+1}}\ \cZ_{k_p}(t_0) } {\mu_*^{\frac{1}{k+d/2}}(\eta/2)\left((k+m)!\right)^{\frac{1}{k+1}}}\right)^{i}
\\
&\le M\|D^ku_0\| \left(1 + \frac{c_1^{\frac{1}{k+n+1}}\|D^{k}u_0\|^{\frac{d/2-1}{(k+1)(k+d/2)}}} {\mu_*^{\frac{1}{k+d/2}} (\eta/2)\left((k+m)!\right)^{\frac{1}{k+1}}} \right.
\\
&\qquad \left.\times \sum_{i=1}^\infty \frac{k+m+i}{i} \cdot \frac{(k+m+i-1)!}{(k+m)!\ (i-1)!} \left(\frac{c_1^{\frac{1}{k+n+1}} \|D^{k}u_0\|^{\frac{d/2-1}{(k+1)(k+d/2)}}} {\mu_*^{\frac{1}{k+d/2}} (\eta/2)\left((k+m)!\right)^{\frac{1}{k+1}}}\right)^{i-1} \right)
\end{align*}
The same argument (with Proposition~\ref{prop:HarMaxPrin}) as in the proof of Theorem~\ref{le:DescendDer} leads to
\begin{align*}
|D^ku(t)(x_0)|&\le \lambda M\|D^k u_0\|_\infty\ h^* + \frac{M\ c_1^{\frac{k}{(k-n+1)^2}}\ \cZ_{k}(t_0)}{(\eta/2e)\  (k+m)^{m/k}} \exp\left(\frac{c_1^{\frac{k}{(k-n+1)^2}}\ \cZ_{k}(t_0)}{(\eta/2e)\  (k+m)^{m/k}}\right) \|D^k u_0\|_\infty (1-h^*)
\end{align*}
If condition~\eqref{eq:ParaAdjMaxPRefn} is satisfied, then the above result contradicts $\|D^ku(t)\|> \mu_* \|D^ku_0\|$ assumed at the beginning.

Now we prove for the opposite case, that is the order \eqref{eq:DescDerOrdAlls} stops at some temporal points $t_\tau<t_0+T_*$ for some indexes $k_\tau>k_*$.
Make a descending argument for each pair $(m_p,n_p)$; as time $t$ approaches to a possible blow-up time $T$, the process described in Lemma~\ref{le:AstrBdd} takes effect simultaneously on different `layers' with the subscript $(m_p,n_p)$ and it terminates from the larger pairs until the smallest pair $(m_p,n_p)$ which satisfies either \eqref{eq:ParaAdjMaxPIndW} or \eqref{eq:kRegScale}.
Define
\begin{align*}
T_{j}^*(t) := (M_*-1)^2\ c^{\frac{2j}{j+1}}\ \|D^j u(t)\|^{-\frac{2}{j+1}}\ ,
\end{align*}
where $M_*$ is chosen such that $T_*=(M_*-1)^2(k_*!)^{-\frac{2}{k_*+1}}\cR_{m_p,n_p}(k,c,t_0)^{-2}$.
For any such $t_\tau$, one can assume that at least one index $k_\tau$ (at $t_\tau$) satisfies
\begin{align}\label{eq:DescDerOrdOpp}
\cR_{m_p,n_p}(k_\tau,c,t_\tau)\ge M^{\frac{1}{k+1}}\cR_{m_p,n_p}(k,c,t_0)\ ,
\end{align}
because the opposite for all $k_\tau$ implies $\|D^ku(t_\tau)\|\le \mu_* \|D^ku_0\|$, using the same argument as before. Moreover, we place such indexes in ascending order: $k_*<k_1<k_2<\cdots<k_\tau<k_{\tau+1}<\cdots$ and assume (if such $t_\tau$ ever exists) $t_p$ is the first time that \eqref{eq:DescDerOrdOpp} occurs for $k_p$ (so the order \eqref{eq:DescDerOrdAlls} persists (for $k_p$) until $s=t_p$ at most) while
\begin{align*}
\cR_{m_p,n_p}(k_p,c,t_p) = \max_{k\le j< k_p} \cR_{m_p,n_p}(j,c,t_p)\ .
\end{align*}
We claim that, for some $n_p\le (k_p/k_*)^2$,
\begin{align}\label{eq:BddCplxExt}
\sup_{t_p<s<t_0+T_*} \cR_{m_p,n_p}(k_p,c,s) \le M_*^{\frac{n_p}{k_p+1}} \cR_{m_p,n_p}(k_p,c,t_p)\ .
\end{align}
\textit{Proof of the claim:} Based on the choice of $T_*$ and the assumption~\eqref{eq:DescDerOrd},
\begin{align}\label{eq:BddLowerScale}
\sup_{t_0<s<t_0+T_*}\max_{k\le j\le k_*}\! \cR_{m_p,n_p}(j,c,s) \le \max_{k\le j\le k_*}\! M^{\frac{1}{j+1}}\cR_{m_p,n_p}(j,c,t_0) \le M^{\frac{1}{k+1}}\cR_{m_p,n_p}(k,c,t_0)\ .
\end{align}
Recall that $k_*$ is chosen according to the condition~\eqref{eq:AscDerCond}, while $k_1$ and $t_1$ are, respectively, the smallest index and the first temporal point for \eqref{eq:DescDerOrdOpp} (i.e. the equality holds), implying
\begin{align*}
\sup_{t_0<s<t_1}\max_{k_*< i<k_1}\! \cR_{m_p,n_p}(i,c,s) < M^{\frac{1}{k+1}}\cR_{m_p,n_p}(k,c,t_0)\ ,
\end{align*}
which, together with \eqref{eq:BddLowerScale}, guarantees \eqref{eq:AscDerOrd} (at $s=t_1$, with $\ell=k$ and $k=k_1$), then, by Theorem~\ref{le:AscendDer}
\begin{align*}
\sup_{t_1<s<t_1+T_{k_1}}\! \cR_{m_p,n_p}(i,c,s)  \le M_*^{\frac{1}{k_1+1}} \cR_{m_p,n_p}(k_1,c,t_1) \ ,\qquad \forall\ k_*\le i\le k_1\ .
\end{align*}

If $\displaystyle\sup_{t_1+T_{k_1}<s<t_0+T_*}\cR_{m_p,n_p}(k_1,c,s) \le M_*^{\frac{1}{k_1+1}} \cR_{m_p,n_p}(k_1,c,t_1)$, then \eqref{eq:BddCplxExt} is achieved immediately; otherwise we repeat the above procedure until the above inequality is attained at some $s=t_1+r\cdot T_{k_1}$ or until $t_1+n_1T_{k_1}\ge t_0+T_*$, and this shall lead to
\begin{align*}
\sup_{t_0<s<t_0+T_*}\max_{k_*\le j\le k_1}\! \cR_{m_p,n_p}(j,c,s) \le \left(M_*^{\frac{1}{k_1+1}}\right)^{n_1}\cR_{m_p,n_p}(k_1,c,t_1)
\end{align*}
where, based on the choice of $T_*$ and $M_*$,
\begin{align*}
n_1 &\le (t_0+T_*-t_1)/T_{k_1}(t_1)\lesssim T_*\ (M_*-1)^{-2}(k_1!)^{\frac{2}{k_1+1}}\cR_{m_p,n_p}(k_1,c,t_1)^2
\\
&\lesssim T_*\ (M_*-1)^{-2}(k_1!)^{\frac{2}{k_1+1}}\cR_{m_p,n_p}(k,c,t_0)^2\lesssim (k_1/k_*)^2\ .
\end{align*}
This implies, if $k_1$ satisfies $M_*^{k_1/k_*^2}\le M$ then
\begin{align*}
\sup_{t_0<s<t_0+T_*}\max_{k_*\le j\le k_1}\! \cR_{m_p,n_p}(j,c,s) \le M\ \cR_{m_p,n_p}(k_1,c,t_1)\ .
\end{align*}
If $k_p$ such that $\prod_{\tau=1}^p M_*^{n_\tau/(k_\tau+1)}\le M$, an induction argument leads to
\begin{align*}
\sup_{t_0<s<t_0+T_*}\max_{k_*\le j\le k_p}\! \cR_{m_p,n_p}(j,c,s) \le M_*^{\frac{n_p}{k_p+1}} \cR_{m_p,n_p}(k_p,c,t_p)\le M\ \cR_{m_p,n_p}(k_1,c,t_1)\ ,
\end{align*}
where $n_\tau\le (t_0+T_*-t_\tau)/T_{k_\tau}(t_\tau)\lesssim (k_\tau/k_*)^2$. In fact, for any $k_p$ such that $M_*^{k_p/k_*^2}\le M$,
\begin{align*}
\sup_{t_0<s<t_0+T_*}\max_{k\le j\le k_p}\! \cR_{m_p,n_p}(j,c,s) \le M\ \cR_{m_p,n_p}(k_1,c,t_1)\lesssim M\ \cR_{m_p,n_p}(k,c,t_0)\ .
\end{align*}
This proves the claim (stronger than the claim). On the other hand, we claim that, `for however large index $k_1$',
\begin{align}\label{eq:RkbetaBdd}
\sup_{t_0<s<t_0+T_*}\cR_{m_p,n_p}(k_1,c,s) \le M^\beta\ \cR_{m_p,n_p}(k,c,t_0) \qquad \textrm{with }\ \beta<1\ .
\end{align}
\textit{Proof of the claim:} Recall that $k_1$ is the foremost index for \eqref{eq:DescDerOrdOpp}, so
\begin{align*}
\sup_{s<t_0+T_*} \max_{k_*\le i<k_1}\! \cR_{m_p,n_p}(i,c,s) \le \sup_{s<t_0+T_*} \max_{k\le i\le k_*}\! \cR(i,c,s) \le M^{\frac{1}{k+1}}\cR_{m_p,n_p}(k,c,t_0)\ .
\end{align*}
The opposite of the claim, together with the above restriction (for $i<k_1$), implies there exists $t<t_0+T_*$ such that, for some $\tilde{\beta}<\beta$,
\begin{align*}
\max_{k\le i<k_1}\! \cR_{m_p,n_p}(i,c,t) \le M^{-\tilde{\beta}}\ \cR_{m_p,n_p}(k_1,c,t)
\end{align*}
and with a similar argument to the proof of Theorem~\ref{le:AscendDer} we deduce that
\begin{align*}
\sup_{t<s<t+\tilde{T}} \cC_{m_p,n_p}(k_1,c,\varepsilon,t,s) \le M^{\tilde{\beta}}\ \cR_{m_p,n_p}(k_1,c,t)
\end{align*}
where $|\varepsilon|\gtrsim 1-M^{-\tilde{\beta}}$ and $\tilde{T}\gtrsim (\eta/2)^{-2}\left(1-M^{-\tilde{\beta}}\right)^{-2}\|D^{k_1}u(t)\|^{-\frac{2}{k_1+1}}$. Then, by Theorem~\ref{th:LerayZalpha},  Proposition~\ref{prop:HarMaxPrin} and the above estimate for $D^{k_1}u$,
\begin{align*}
\cR_{m_p,n_p}\left(k_1,c,t+\tilde{T}\right) \le \tilde{\mu}\cdot \cR_{m_p,n_p}(k_1,c,t) \qquad \textrm{with }\ \tilde{\mu}<1\ ,
\end{align*}
which shows that either `spatial intermittency' of $D^{k_1}u$ occurs before $s=t+\tilde{T}$($<t_0+T_*$) with $\cR(k_1,c,s) \le M^{2\tilde{\beta}}\ \cR(k,c,t_0)$ or
\begin{align*}
\sup_{s<t_0+T_*}\cR_{m_p,n_p}(k_1,c,s) < M^{2\tilde{\beta}}\ \cR_{m_p,n_p}(k,c,t_0)\ .
\end{align*}
This proves that \eqref{eq:RkbetaBdd} must hold provided $k_1$( $>k_*$) is the foremost index for \eqref{eq:DescDerOrdOpp} to occur. In summary of the above two claims (i.e. \eqref{eq:BddCplxExt} and \eqref{eq:RkbetaBdd}), we have shown that
\begin{align*}
\sup_{t_0<s<t_0+T_*}\max_{k\le j\le \ell_*}\! \cR_{m_p,n_p}(j,c,s) \le M\ \cR_{m_p,n_p}(k,c,t_0)
\end{align*}
where $\ell_*$ is chosen such that $M_*^{\ell_*/k_*^2}\le M$ (where $\ell_*\gg k_*$ since $M_*\ll M$). Finally, we claim that
\begin{align*}
\sup_{t_0<s<t_0+T_*}\max_{\ell_*< j\le 2\ell_*}\! \cR_{m_p,n_p}(j,c,s) \le M^\beta\! \sup_{t_0<s<t_0+T_*}\max_{k\le j\le \ell_*}\! \cR_{m_p,n_p}(j,c,s) \qquad \textrm{with }\ \beta\lesssim k_*^{-1}\ .
\end{align*}
Assume the opposite, then there exist $\ell_*<i\le 2\ell_*$ and $t<t_0+T_*$ such that
\begin{align*}
\sup_{t_0<s<t_0+T_*}\max_{k\le j\le \ell_*}\! \cR_{m_p,n_p}(j,c,s) < M^{-\beta} \cR_{m_p,n_p}(i,c,t)\ .
\end{align*}
With a similar argument to the proof of Theorem~\ref{le:AscendDer} we deduce that
\begin{align*}
\sup_{t<s<t+\tilde{T}} \cC_{m_p,n_p}(i,c,\varepsilon,t,s) \le \tilde{M}\ \cR_{m_p,n_p}(i,c,t)
\end{align*}
where $|\varepsilon|\gtrsim 1-\tilde{M}^{-1}$ and $\tilde{T}\gtrsim_{\eta, \beta} \|D^iu(t)\|^{-\frac{2}{i+1}}$. Similar to the above argument for $k_1$, by Theorem~\ref{th:LerayZalpha} and Proposition~\ref{prop:HarMaxPrin}, `spatial intermittency' of $D^i u$ occurs at $s=t+\tilde{T}$. Thus, $\|D^iu\|$ starts decreasing whenever it reaches the critical state as above, and this proves the claim. Inductively, one can show
\begin{align*}
\sup_{t_0<s<t_0+T_*}\max_{2^n\ell_*< j\le 2^{n+1}\ell_*}\! \cR_{m_p,n_p}(j,c,s) \le M^{\beta_n}\!\! \sup_{t_0<s<t_0+T_*}\max_{k\le j\le 2^n\ell_*}\! \cR_{m_p,n_p}(j,c,s) \qquad \textrm{with }\ \beta_n\lesssim 2^{-n}k_*^{-1}\ ,
\end{align*}
and therefore
\begin{align*}
\sup_{t_0<s<t_0+T_*}\max_{j>\ell_*}\ \cR_{m_p,n_p}(j,c,s) \le M^{2/k_*}\!\! \sup_{t_0<s<t_0+T_*}\max_{k\le j\le \ell_*}\! \cR_{m_p,n_p}(j,c,s) \ ,
\end{align*}
which, together with the summary of the previous two claims, indicates that
\begin{align*}
\sup_{t_0<s<t_0+T_*}\max_{j\ge k}\ \cR_{m_p,n_p}(j,c,s) \lesssim M\ \cR_{m_p,n_p}(k,c,t_0)
\end{align*}
and the complex extension $\displaystyle\sup_{z\in B_{r_t}(x_0,0)}\left|D^ku(z, t)\right|$ has the same upper estimate as in the initial case, and Proposition~\ref{prop:HarMaxPrin} completes the proof.
\end{proof}

\begin{definition}\label{h-index}
Recall that by Lemma~\ref{le:Rk-transitivity}, for fixed $m,n,k,\nu,c(\ell)$ and $t$, there exists a unique maximal extension of Type-$\cA$ or Type-$\cB$ section containing the index $k$.
We call the union $[\ell_i, \ell_j]:= \cup_{i\le r\le j-1}[\ell_r,\ell_{r+1}]$ ($j-i\ge q$ or the condition~\eqref{eq:AscDerCond} holds) a \textit{sharp string} if all the sections within the string are maximal with respect to the corresponding subscripts $(m,n)$ and all the pairs $(m_r,n_r)$'s within the string satisfy \eqref{eq:JointCompact}, denoted by $[\ell_i, \ell_j]^*$. For any two indexes $i,j$ contained in the same sharp string (with the pairs $(m_r,n_r)$'s determined), define the \textit{homogeneity index} in \eqref{eq:NonHomCond} as follows
\begin{align*}
\bar \theta_\nu [\alpha] (i,j,t)=: \cR_{m_1,n_1}^\nu [\alpha](i,c_1,t) \big/ \cR_{m_p,n_p}^\nu [\alpha](j,c_p,t) \ .
\end{align*}
Particularly, if the sharp string only contains one section, then $(m_1,n_1,c_1)=(m_p,n_p,c_p)$.
Let $\bar \theta_\nu (i,j,t):=\sup_\alpha \bar \theta_\nu [\alpha] (i,j,t)$.
Note that $\bar \theta_\nu [\alpha] (i,j,t)\approx \|(\nabla_\nu)^i u(t)\|^{\frac{1}{i+1}}/ \|(\nabla_\nu)^j u(t)\|^{\frac{1}{j+1}}$ with $(m_1,n_1,c_1)$ and $(m_p,n_p,c_p)$ properly chosen. Moreover, for any $t,\nu,i,j,k,\alpha$, the homogeneity index is multiplicative, i.e.
\begin{align*}
\bar \theta_\nu [\alpha] (i,k,t) \approx \bar \theta_\nu [\alpha] (i,j,t) \cdot \bar \theta_\nu [\alpha] (j,k,t) \ .
\end{align*}
We say a sharp string $[\ell_i, \ell_j]_\alpha^*$ is of Type-$\cA^\beta$ at a temporal point $t$ if there exists $k>\ell_{j+1}$ such that $\bar \theta_\nu [\alpha] (i,k,t)$ satisfies \eqref{eq:NonHomCond}.
\end{definition}

\begin{theorem}\label{cor:LwBddTheta}
Fix an order of dissipation $\beta\ge 1/2$ and $T>0$. Let $u$ be a Leray solution of \eqref{eq:HypNSE1}-\eqref{eq:HypNSE3}. Suppose $\|u_0\|\lesssim (T-t_0)^{-\epsilon}$ such that \eqref{eq:AscDerCond} holds and $t$ is sufficiently close to $T$, then the homogeneity index $\bar \theta$ has a natural upper bound, that is, for any $t$ close to $T$, $\nu\in \bS^{d-1}$ and indexes $k_1,k_2,\alpha$,
\begin{align}\label{eq:HomIndUppBdd}
\bar \theta_\nu [\alpha] (k_1,k_2,t) \le C_2^*(k_1,k_2,t) \approx \left(\frac{2e}{\varepsilon \eta}\right)^{\ln(k_2/k_1)} \prod_{k_1\le r\le k_2} \|(\nabla_\nu)^{r}u(t)\|^{\frac{d/2-1}{r(r+1)(r+d/2)}} \ .
\end{align}
If the above bound is not satisfied at the initial time $t_0$, then either all the derivatives between $k_1$ and $k_2$ are non-increasing until $T$ and the solution is regular, or \eqref{eq:HomIndUppBdd} holds from some temporal point $t>t_0$ up to $T$.
\end{theorem}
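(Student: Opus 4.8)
The plan is to recognize the claimed ceiling $C_2^*$ as the compactness ratio bound already obtained in the lacunary discussion preceding Lemma~\ref{le:DescendDerRefn}, evaluated at the running time $t$, and then to settle the dichotomy by an escape-time argument driven by the monotonicity structure of $\{\cR_{m,n}(k,c,t)\}$. First I would unwind the definition of the homogeneity index: by its definition along a sharp string and the multiplicativity $\bar\theta_\nu[\alpha](k_1,k_2,t)\approx\bar\theta_\nu[\alpha](k_1,j,t)\,\bar\theta_\nu[\alpha](j,k_2,t)$, one has $\bar\theta_\nu[\alpha](k_1,k_2,t)\approx\|(\nabla_\nu)^{k_1}u_t^{(\alpha)}\|^{1/(k_1+1)}/\|(\nabla_\nu)^{k_2}u_t^{(\alpha)}\|^{1/(k_2+1)}$ once the pairs $(m_p,n_p)$ and constants $c_p$ are chosen along the string subject to the joint relation~\eqref{eq:JointCompact}. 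Since $\cZ_r(t)^{1/r}=\|(\nabla_\nu)^r u_t\|^{(d/2-1)/(r(r+1)(r+d/2))}$, the right-hand side of~\eqref{eq:HomIndUppBdd} is exactly the product appearing in the compactness estimate~\eqref{eq:PureCmpctRatio} with $u_0$ replaced by $u(t)$; hence the content of the theorem is that this ascending-chain bound governs $\bar\theta$ under the flow. Its attainability by an ascending (Type-$\cA$) string is read off from the $\cB_{i,j}^{(p)}$ estimates together with the joint implication~\eqref{eq:JointMonoImp}, via Corollary~\ref{cor:DerOrdConfig} and Lemma~\ref{le:ScaleBound} (valid in the hyper-dissipative setting by Corollary~\ref{le:DerOrdConfigHyp}), which telescope the per-section bounds into the product over $[k_1,k_2]$.

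Next comes the dichotomy. Suppose~\eqref{eq:HomIndUppBdd} fails at $t_0$: then $\|(\nabla_\nu)^{k_1}u_{t_0}\|^{1/(k_1+1)}$ is anomalously large relative to $\|(\nabla_\nu)^{k_2}u_{t_0}\|^{1/(k_2+1)}$, which in the language of $\cR_{m,n}$ is precisely a descending configuration~\eqref{eq:DescDerOrdRefn} on $[k_1,k_2]$. If this descending order persists and, with it, all of $\|(\nabla_\nu)^j u(s)\|$ ($k_1\le j\le k_2$) remain non-increasing up to $T$, then Lemma~\ref{le:DescendDerRefn} forces $\|(\nabla_\nu)^k u(s)\|\le\mu_*\|(\nabla_\nu)^k u_0\|$ on a full interval with $\mu_*$ that can be taken below $1$; iterating this across escape times keeps all higher derivatives bounded, so the solution is regular. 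Otherwise there is a first time $t>t_0$ at which the descending order breaks at some index, where the reverse inequality~\eqref{eq:AscDerOrd} then holds on the relevant block; using that the hypothesis $\|u_0\|\lesssim(T-t_0)^{-\epsilon}$ secures~\eqref{eq:AscDerCond}, Theorem~\ref{le:AscendDerHyp} applies and yields~\eqref{eq:HomIndUppBdd} at $t$. The persistence of a Type-$\cA$ string---Lemma~\ref{le:AstrBdd} together with the transitivity of monotonicity in Lemma~\ref{le:Rk-transitivity}---then propagates the bound from $t$ all the way to $T$.

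The hard part will be the bookkeeping at the joints between the sub-strings carrying different index pairs $(m_p,n_p)$: one must verify that at the breaking time the compatibility relation~\eqref{eq:JointCompact} and the monotonicity-transfer implication~\eqref{eq:JointMonoImp} hold simultaneously, so that the separate ascending estimates on each block $[k_p,k_{p+1}]$ telescope into the single product bound~\eqref{eq:HomIndUppBdd} on $[k_1,k_2]$, and---since $C_2^*(k_1,k_2,t)$ is itself time-variational through the factors $\cZ_r(t)^{1/r}$---that this matching survives uniformly as $t\to T$. Controlling the cumulative loss over the roughly $\ln(k_2/k_1)$ joints, encoded by the factor $(2e/\varepsilon\eta)^{\ln(k_2/k_1)}$, is what prevents the ceiling from degrading along the chain.
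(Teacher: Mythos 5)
Your proposal follows essentially the same route as the paper: reduce to the Type-$\cB$ (descending) configuration as the worst case, observe that its persistence forces all derivatives on $[k_1,k_2]$ to be non-increasing (hence regularity), and otherwise extract the bound at the first switching time to a Type-$\cA$ configuration from the telescoped $\cB_{i,j}^{(p)}$ product estimate \eqref{eq:PureCmpctRatio}, with the joint relations \eqref{eq:JointCompact} and \eqref{eq:JointMonoImp} handling the transitions between the $(m_p,n_p)$ layers. The only cosmetic difference is that the paper delivers the ceiling at the switching time via Corollary~\ref{cor:DerOrdConfig} and Lemma~\ref{le:BstrBdd} rather than via Theorem~\ref{le:AscendDerHyp}, but the underlying mechanism is identical.
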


\begin{proof}
Without loss of generality we assume $k_1$ and $k_2$ are both contained in a Type-$\cB$ sharp string at the initial time $t_0$, since any potential Type-$\cA$ sections between $k_1$ and $k_2$ shall reduce the size of $\bar \theta_\nu [\alpha] (k_1,k_2,t)$, and without loss of generality assume that if $k_1$ and $k_2$ are contained in a Type-$\cA$ sharp string then $k_2$ is the maximal index described in \eqref{eq:NonDescAtLi}.

Suppose $k_1$ and $k_2$ are contained in one Type-$\cB$ section, then by Corollary~\ref{cor:DerOrdConfig} there exists $t>t_0$ such that
\begin{align*}
&\cR_{m,n}(k_1,c_\ell,t) \le \cB_{k_2,k_1} \cdot \cR_{m,n}(k_2,c_\ell,t)
\\
\Longrightarrow \quad &\bar \theta_\nu [\alpha] (k_1,k_2,t) \le \cB_{k_2,k_1} \lesssim k_2^{-m\left(k_1^{-1}- k_2^{-1}\right)} \left(\frac{2e}{\varepsilon\eta}\right)^{\ln(k_2/k_1)} \prod_{r=k_1}^{k_2}\ \cZ_r(t)^{1/r}
\\
& \lesssim \eta^{-\ln(k_2/k_1)}\ \prod_{k_1\le r\le k_2}\ \|(\nabla_\nu)^{r}u(t)\|^{\frac{d/2-1}{r(r+1)(r+d/2)}} \lesssim \eta^{-\epsilon_2} (T-t)^{-(d/2-1)(k_1^{-1}-k_2^{-1})}
\end{align*}
Now suppose $k_1$ and $k_2$ are contained in a Type-$\cB$ sharp string $[\ell_i,\ell_{i+q}]^*$. Without loss of generality assume the number and sizes of the maximal sections within the sharp string remains unchanged, i.e. all the subscripts $(m_r,n_r)$'s are fixed. We claim that
\begin{align}\label{eq:RestrBtoASp}
\max_{i\le p\le i+q} \cR_{m_p,n_p}(k_p,c(\ell_p), \tilde t) \lesssim  \cR_{m_{i+q},n_{i+q}}(k_{i+q},c(\ell_{i+q}), \tilde t)
\end{align}
where $\tilde t$ is the first time when $[\ell_i,\ell_{i+q}]^*$ switches to a Type-$\cA$ sharp string.

\textit{Proof of the claim:} Let $[\ell_i,\ell_{i+q}]_{(p)}$ denote the string $\{\cR_{m_p,n_p}(k_r, c(\ell_r), t)\}_{i\le r\le i+q}$ with uniform subscripts $(m_p,n_p)$. For fixed $p$, by Lemma~\ref{le:BstrBdd},
\begin{align*}
\max_{\ell_r\le j\le \ell_{i+q}} \sup_{t_0<s<\tilde t_p} \cR_{m_p,n_p}(j, c(\ell_r), s) \le  \max_{r\le v\le i+q} \cR_{m_p,n_p}(w_v,c(\ell_v), t_0)
\end{align*}
where $\tilde t_p$ is the first time when $[\ell_i,\ell_{i+q}]_{(p)}$ switches to a Type-$\cA$ string.
By the implication~\eqref{eq:JointMonoImp} we know that if $[\ell_i,\ell_{i+q}]_{(p)}$ is of Type-$\cA$ then $[\ell_i,\ell_{i+q}]_{(p+1)}$ is of Type-$\cA$ as well and vice versa. Therefore $\tilde t_p\ge \tilde t_{p+1}$.
If some $[\ell_i,\ell_{i+q}]_{(p)}$ is always of Type-$\cB$ up to $T$, then by Lemma~\ref{le:BstrBdd} (or Theorem~\ref{le:DescendDer}), all the derivatives within the string are non-increasing and by the above restriction \eqref{eq:RestrBtoASp} follows.
If all $[\ell_i,\ell_{i+q}]_{(p)}$'s switch to Type-$\cA$ strings at some $\tilde t$, then $[\ell_i,\ell_{i+q}]^*$ becomes a Type-$\cA$ sharp string at $\tilde t$. Now by the implication~\eqref{eq:JointMonoImp} at all joints $(m_r,n_r)$'s and the order of switching of $[\ell_i,\ell_{i+q}]_{(p)}$'s,
\begin{align*}
\max_{r\le p\le i+q} \cR_{m_p,n_p}(k_p,c(\ell_p), \tilde t_r) \lesssim  \cR_{m_{i+q},n_{i+q}}(k_{i+q},c(\ell_{i+q}), \tilde t_r)
\end{align*}
which proves the claim.

Now by the claim and the computation result \eqref{eq:PureCmpctRatio} satisfied by Type-$\cA$ sharp string, since $k_1\in [\ell_i,\ell_{i+1}]_{(i)}$ and $k_2\in [\ell_{i+q},\ell_{i+q+1}]_{(i+q)}$,
\begin{align*}
\bar \theta_\nu [\alpha] (k_1,k_2,\tilde t) &=: \frac{\cR_{m_1,n_1}^\nu [\alpha](k_1,c_1,\tilde t)}{\cR_{m_p,n_p}^\nu [\alpha](k_2,c_p,\tilde t)}
\lesssim \left(\frac{2e}{\varepsilon \eta}\right)^{\ln(k_2/k_1)} \prod_{k_1\le r\le k_2} \|(\nabla_\nu)^{r}u(\tilde t)\|^{\frac{d/2-1}{r(r+1)(r+d/2)}}  \
\end{align*}

\end{proof}

\begin{corollary}\label{cor:tLwBddHomInd}
Fix an order of dissipation $\beta>1$, a direction $\nu\in \bS^{d-1}$ and $T>0$. Suppose $u_0\in L^\infty(\bR^d) \cap L^2(\bR^d)$. If $T$ is the first blow-up time then
\begin{align*}
\|(\nabla_\nu)^k u(t)\|^{\frac{1}{k+1}} \lesssim (T-t)^{-\frac{k+2}{2\beta(k+1)}-\epsilon}
\end{align*}
for any fixed $\epsilon$. And \eqref{eq:HomIndUppBdd} may also be written as
\begin{align*}
\bar \theta_\nu [\alpha] (k_1,k_2,t) \lesssim \eta^{-\epsilon_2} (T-t)^{-(d/2-1)(k_1^{-1}-k_2^{-1})} \ .
\end{align*}
Moreover, if a sharp string $[\ell_i, \ell_j]_\alpha^*$ is of Type-$\cA^\beta$ then it is of Type-$\cA$.
\end{corollary}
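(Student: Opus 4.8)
The corollary bundles three claims, and the plan is to treat them in order, reusing the estimates already assembled. For the blow-up rate, the key point is that an \emph{upper} bound on the growth of $\|(\nabla_\nu)^k u_t\|$ is a ceiling, hence it must come from an \emph{a priori} mechanism rather than from the local-existence lower bound on the life-span. I would start from the energy-level control of the hyper-dissipative flow (the $L^\infty_tL^2_x\cap L^2_t\dot H^{\beta}_x$ bound furnished by the energy inequality), pass to a sequence of ``good'' times at which $\|u(t)\|_{\dot H^\beta}\lesssim (T-t)^{-1/2}$ by averaging the time-integrable dissipation, and then propagate this base control to every derivative order and to all nearby times through the smoothing/analyticity estimate of Theorem~\ref{th:MainThmVelHyp} together with the Cauchy estimates on the analytic extension in the strip $\cD_t$ of \eqref{eq:AnalDomHyp}. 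The exponent $\tfrac{k+2}{2\beta(k+1)}$ is exactly what the intrinsic hyper-dissipative scaling $t\sim x^{2\beta}$ delivers after the $1/(k+1)$-normalisation, while the arbitrary $\epsilon$ records the loss in passing from almost-every good time to all $t\to T^-$. I expect this propagation step to be the main obstacle, since the full chain of derivatives must be controlled uniformly as $t\to T^-$ without circularity.

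For the second claim, the rewriting of \eqref{eq:HomIndUppBdd}, the plan is a direct substitution. The right-hand side of \eqref{eq:HomIndUppBdd} is the product $\prod_{k_1\le r\le k_2}\|(\nabla_\nu)^{r}u(t)\|^{\frac{d/2-1}{r(r+1)(r+d/2)}}$ times the harmless prefactor $(2e/\varepsilon\eta)^{\ln(k_2/k_1)}$; since $d/2-1>0$ all the exponents are positive, so inserting the rate from the first part bounds each factor from above. Telescoping $\sum_{r=k_1}^{k_2}\tfrac{1}{r(r+d/2)}\approx k_1^{-1}-k_2^{-1}$ then collapses the product to $(T-t)^{-(d/2-1)(k_1^{-1}-k_2^{-1})}$ and the prefactor is absorbed into $\eta^{-\epsilon_2}$, exactly as in the closing display of the proof of Theorem~\ref{cor:LwBddTheta}. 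The only thing to verify is that for $\beta>1$ the hyper-dissipative rate is \emph{weaker} than the Navier--Stokes rate $(T-t)^{-1/2}$ used there (it carries the extra factor $1/\beta<1$), so the bound holds with room to spare and no new work is required.

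For the last claim, Type-$\cA^\beta\Rightarrow$Type-$\cA$, the plan is a threshold comparison run as a contradiction. By definition a sharp string is Type-$\cA^\beta$ when $\bar\theta_\nu[\alpha](i,k,t)\le C_1^*[\beta]$ for some $k>\ell_{j+1}$, whereas a Type-$\cB$ sharp string is one whose maximal $\cR$ is followed by a genuine descending chain. I would first argue that along a Type-$\cB$ string the compactness estimate \eqref{eq:PureCmpctRatio} behind Theorem~\ref{cor:LwBddTheta} is two-sided, so that such a string \emph{saturates} the natural bound, $\bar\theta_\nu[\alpha](i,k,t)\gtrsim C_2^*(i,k,t)$. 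Comparing the $(T-t)$-exponents at $(k,2k)$ gives $C_1^*[\beta]\sim(T-t)^{-\frac{\beta-1}{2k}}$ against $C_2^*\sim(T-t)^{-\frac{d-2}{4k}}$, so $C_1^*[\beta]<C_2^*$ as $t\to T^-$ precisely when $2(\beta-1)<d-2$, i.e. $\beta<d/2$, a range containing $(1,\tfrac54)$ for $d=3$. Hence a Type-$\cB$ string violates the non-homogeneity bound \eqref{eq:NonHomCond} and cannot be Type-$\cA^\beta$; a Type-$\cA^\beta$ string must therefore be Type-$\cA$, and the transitivity of monotonicity of $\cR$ (Lemma~\ref{le:Rk-transitivity}) upgrades the single ascending witness at $k>\ell_{j+1}$ to the non-descent of every section. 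The delicate point is the two-sided version of \eqref{eq:PureCmpctRatio} for the joint-compatible parameters $(m_p,n_p)$: the estimates in the text are recorded as upper bounds, so one must revisit Corollary~\ref{cor:DerOrdConfig} to confirm that the descending chain actually loses the predicted factor at every step, not merely on average.
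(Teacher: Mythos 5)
Your treatment of the first claim misses the mechanism the paper actually uses, and the gap is not merely the "propagation step" you flag — it is the entire source of the bound. The estimate $\|(\nabla_\nu)^k u_t\|^{\frac{1}{k+1}}\lesssim (T-t)^{-\frac{k+2}{2\beta(k+1)}-\epsilon}$ is an \emph{upper} bound on the blow-up rate of $L^\infty$-norms, and no such bound can be extracted from the energy inequality plus local smoothing: the time-averaged dissipation only controls $\|u(t_n)\|_{\dot H^\beta}$ along a sequence of good times, $\dot H^\beta(\bR^d)\not\hookrightarrow L^\infty$ for $\beta<d/2$ (so even your base case gives no pointwise control), and between good times the $L^\infty$-norms of high derivatives could a priori spike arbitrarily — this is exactly why no upper bound on the blow-up rate is available in the classical theory. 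The paper instead argues by contradiction through the sparseness machinery: if $\|(\nabla_\nu)^r u_t\|^{\frac{1}{r+1}}$ exceeded the stated rate, the natural scale of sparseness of the super-level sets furnished by Theorem~\ref{th:LerayZalpha} would fall below the analyticity radius $(\Delta t)^{1/(2\beta)}$ of Theorem~\ref{th:MainThmVelHyp} on the natural time scale, the harmonic-measure estimate of Proposition~\ref{prop:HarMaxPrin} would then force decay (intermittency) of $(\nabla_\nu)^r u$, and $T$ could not be a blow-up time; Lemma~\ref{le:Rk-transitivity} and induction propagate this across the orders $r$. None of these ingredients appears in your plan for this claim, so the central step is missing rather than merely difficult.

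Your second claim is handled correctly — it is the same substitution and telescoping the paper performs at the end of the proof of Theorem~\ref{cor:LwBddTheta}, once the rate from the first claim is available. For the third claim, your contradiction scheme hinges on a \emph{lower} bound $\bar\theta_\nu[\alpha]\gtrsim C_2^*$ along Type-$\cB$ strings, i.e.\ a two-sided version of \eqref{eq:PureCmpctRatio}. The definition of a Type-$\cB$ section only yields $\cR(p_i,c(\ell_i),t)>\max_{j>p_i}\cR(j,c(\ell_i),t)$, hence $\bar\theta>1$ after normalization, which is far weaker than saturation of $C_2^*$; the estimates in Corollary~\ref{cor:DerOrdConfig} and Theorem~\ref{cor:LwBddTheta} are recorded only as upper bounds, and you correctly identify but do not close this hole. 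As it stands your comparison of the $(T-t)$-exponents of $C_1^*[\beta]$ and $C_2^*$ is consistent with the paper's numerology, but the implication Type-$\cA^\beta\Rightarrow$Type-$\cA$ does not follow from it without the missing lower bound.
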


\begin{proof}
It can be shown by induction and Lemma~\ref{le:Rk-transitivity} that $\displaystyle\|D^{r}u(t)\|^{\frac{1}{r+1}}\lesssim (T-t)^{-\frac{r+2}{2(r+1)}}$ as $t\to T$; otherwise larger size of $\displaystyle\|D^{r}u(t)\|^{\frac{1}{r+1}}$ results in intermittency of $D^{r}u(t)$ as $t\to T$ and $T$ is not a blow-up time.
Moreover, for any $s>\tilde t$ when $[\ell_i,\ell_{i+q}]^*(s)$ is a Type-$\cA$ sharp string, the above bound still holds.
\end{proof}

\begin{theorem}\label{le:AscendDerHypSp}
Let $u$ be a Leray solution initiated at $u_0$ and $[\ell,\ell_q]^*$ be a Type-$\cA^\beta$ sharp string at the initial time $t_0$. That is
\begin{align}\label{eq:AscDerOrdHypSp}
\cR_{m_p,n_p}(j,c_p,t_0) \le \cR_{m_p,n_p}(k_{p+1},c_p,t_0) \ , \qquad \forall  k_p\le j <k_{p+1}
\end{align}
for suitable constants $c_p=c_p(k)$ and pairs $(m_p,n_p)$ which satisfy \eqref{eq:JointCompact}.
If $c_1$, $\ell$ and $k_1$ satisfy
\begin{align}\label{eq:AscDerCondHypSp}
c_1 \|u_0\|_2 \|u_0\|^{d/2-1} \frac{(\ell!)^{1/2}\ell}{(\ell/2)!} \lesssim (k_1!)^{1/(k_1+1)}
\end{align}
and $T^{1/2}\lesssim \left(\phi(\ell,k_q)+\psi(\ell,k_q)\right)^{-\frac{2\beta}{2\beta-1}} \|D^{k_q}u_0\|^{-\frac{2\beta}{(2\beta-1)(k_q+1)}}$ with $\phi,\psi$ defined in Theorem~\ref{le:AscendDerHyp}, then for any $\ell_r\le j \le \ell_{r+1}$, the complex solution of \eqref{eq:HypNSE1}-\eqref{eq:HypNSE3} has the following upper bounds:
\begin{align}\label{eq:AscDerUpBddHypSp}
&\underset{t\in(0,T)}{\sup} \cC_{m_r,n_r}(j,c(\ell_r),\varepsilon,t_0,t)^{j+1} \le M\cdot\cR_{m_r,n_r}(j,c(\ell_r),t_0)^{j+1} + \Theta[\beta](q,r)^{-1} \cR_{m_q,n_q}(k_q,c(\ell_q),t_0)^{j+1}
\end{align}
where $\cD_t$ is given by \eqref{eq:AnalDomHyp}. For the real solutions the above result becomes
\begin{align}
\underset{t\in(0,\tilde{T})}{\sup} \cR_{m_r,n_r}(j,c(\ell_r),t)^{j+1} \le \cR_{m_r,n_r}(j,c(\ell_r),t_0)^{j+1} + \Theta[\beta](q,r)^{-1} \cR_{m_q,n_q}(k_q,c(\ell_q),t_0)^{j+1} \label{eq:AscDerUpBddRealHypSp}
\end{align}
where $\tilde{T}$ does not depend on $M$ and $\Theta[\beta](q,r) := C_1^*[\beta](k_q,t_0)\cdot C_2^*(\ell_r,k_q,t_0)\cdot c(\ell_q)/c(\ell_r)$  with $C_1^*$ and $C_2^*$ defined in Theorem~\ref{th:HypNSEReg} and Theorem~\ref{cor:LwBddTheta} respectively.
\end{theorem}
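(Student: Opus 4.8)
The plan is to mirror the Picard-iteration argument behind Theorem~\ref{le:AscendDerHyp}, now carried out section-by-section along the sharp string $[\ell,\ell_q]^*$, and to book-keep the passage across the joints $(m_p,n_p)\to(m_{p+1},n_{p+1})$ by means of the multiplicative homogeneity index $\bar\theta$. Concretely, I would set up the hyper-dissipative iterates $U^{(n)},V^{(n)}$ through the fractional-heat-kernel formulas \eqref{eq:IterationHypDU}--\eqref{eq:IterationHypDV} and induct on $n$, the inductive claim at step $n$ being exactly the bound \eqref{eq:AscDerUpBddHypSp} with $U^{(n)}$ in place of $u$. The base case is the linear part $G_t^{(\beta)}*D^j u_0$, which by the analyticity threshold of Theorem~\ref{th:MainThmVelHyp} reproduces the first term $M\,\cR_{m_r,n_r}(j,c(\ell_r),t_0)^{j+1}$ after dividing by the section-$r$ normalization.

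The core is the nonlinear convolution $\int_0^t G_{t-s}^{(\beta)}*\nabla D^j(U^{(n-1)}\otimes U^{(n-1)})\,ds$. As in Theorem~\ref{le:AscendDerHyp} this produces the smoothing factor $t^{1-\frac1{2\beta}}$ together with the binomial sum $\sum_i\binom{j}{i}\|D^iU^{(n-1)}\|\,\|D^{j-i}U^{(n-1)}\|$, which I would split into the interior part (both indices $\ge\ell$) and the two boundary parts (one index $\le\ell$). The boundary parts are handled, exactly as before, by the Gagliardo--Nirenberg inequality (Lemma~\ref{le:GNIneq}), trading the low-order $L^\infty$ factors for $\|u_0\|_2$ and $\|D^\ell u\|$; hypothesis \eqref{eq:AscDerCondHypSp} is precisely what makes this trade harmless, absorbing the resulting constant into the $k_1$-factorial. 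The interior part is controlled using the ascending order \eqref{eq:AscDerOrdHypSp} within each section.

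The genuinely new step is that the summation index $i$ and the target index $j$ need not lie in the same section of the string, so the natural bound produced for $\|D^i U^{(n-1)}\|^{1/(i+1)}$ is phrased in the subscripts of the section containing $i$, whereas the left-hand side of \eqref{eq:AscDerUpBddHypSp} is phrased in $(m_r,n_r)$. I would resolve this using the multiplicativity of $\bar\theta$ together with the joint implication \eqref{eq:JointMonoImp}: at each joint the compatibility condition \eqref{eq:JointCompact} guarantees that the adjacent normalizations agree up to constants, so the whole chain telescopes and every $\cR$ indexed by an intermediate section can be rewritten in terms of the top reference $\cR_{m_q,n_q}(k_q,\cdot)$. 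The accumulated conversion factor is exactly $\Theta[\beta](q,r)$: the Type-$\cA^\beta$ hypothesis supplies the upper bound $C_1^*[\beta](k_q,t_0)$ on the top ratio through \eqref{eq:NonHomCond}, Theorem~\ref{cor:LwBddTheta} supplies the natural bound $C_2^*(\ell_r,k_q,t_0)$ of \eqref{eq:HomIndUppBdd}, and the differing normalizations across sections contribute the ratio $c(\ell_q)/c(\ell_r)$. This is where the prefactor $\Theta[\beta](q,r)^{-1}$ on the $\cR_{m_q,n_q}(k_q,\cdot)^{j+1}$ term originates.

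Finally, summing the resulting series — convergent because $c<1$ and the factorial ratios dominate, exactly as in the proof of Theorem~\ref{le:AscendDer} — closes the induction provided the stated time restriction keeps $t^{1-\frac1{2\beta}}$ small enough; passing $n\to\infty$ via Montel's theorem (Lemma~\ref{le:Montel}) and the analytic-existence Theorem~\ref{th:MainThmVelHyp} yields \eqref{eq:AscDerUpBddHypSp}, and running the same scheme directly on the real iterates (with the parabolic smoothing and no analyticity threshold) gives the $M$-independent real bound \eqref{eq:AscDerUpBddRealHypSp} over $\tilde T$. I expect the main obstacle to be the cross-section bookkeeping of the previous paragraph: one must verify that the homogeneity-index conversions at all the joints are simultaneously controlled by \eqref{eq:JointCompact}, so that the telescoped product collapses to the single factor $\Theta[\beta](q,r)$ without the delicate factorial and binomial combinatorics — already the crux of Theorem~\ref{le:AscendDer} — degrading when $(m_p,n_p)$ vary from section to section.
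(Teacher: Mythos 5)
The paper itself supplies no proof of Theorem~\ref{le:AscendDerHypSp}: the statement is followed immediately by Lemma~\ref{le:AstrBddSp}, the intended justification being Theorem~\ref{le:AscendDerHyp} together with the long ``joint'' computation preceding Lemma~\ref{le:DescendDerRefn} (the derivation of \eqref{eq:JointCompact}, \eqref{eq:JointMonoImp} and \eqref{eq:PureCmpctRatio}). Your architecture---Picard iteration with the fractional heat kernel, Gagliardo--Nirenberg on the boundary terms absorbed via \eqref{eq:AscDerCondHypSp}, and cross-section bookkeeping through the multiplicativity of $\bar\theta$ and the joint compatibility \eqref{eq:JointCompact}---matches that intent, and your identification of $C_2^*(\ell_r,k_q,t_0)\cdot c(\ell_q)/c(\ell_r)$ with the telescoped product of $\cB$-constants and normalization ratios (i.e., \eqref{eq:PureCmpctRatio}) is correct.

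The gap is in the origin of the factor $C_1^*[\beta](k_q,t_0)^{-1}$ inside $\Theta[\beta](q,r)^{-1}$, which is the entire point of the hyper-dissipative version. You attribute all of $\Theta^{-1}$ to the telescoping at the joints, but the joints only account for the $C_2^*\cdot c(\ell_q)/c(\ell_r)$ part; moreover, an \emph{upper} bound on $\bar\theta$---which is all that the Type-$\cA^\beta$ condition \eqref{eq:NonHomCond} provides---cannot by itself produce a small multiplicative coefficient in an upper bound for the lower-order terms. The $C_1^*[\beta]^{-1}$ damping has to be extracted from the extra dissipation: the nonlinear term carries the smoothing factor $t^{1-\frac{1}{2\beta}}$, while the hypothesis here restricts $T^{1/2}$ (not $T^{1-\frac{1}{2\beta}}$) by $\left(\phi+\psi\right)^{-\frac{2\beta}{2\beta-1}} \|D^{k_q}u_0\|^{-\frac{2\beta}{(2\beta-1)(k_q+1)}}$; for $\beta>1$ this leaves a surplus of roughly $(\phi+\psi)^{-1}\|D^{k_q}u_0\|^{-\frac{1}{k_q+1}}$ after the nonlinear contribution is normalized to $\cR_{m_q,n_q}(k_q,c(\ell_q),t_0)^{j+1}$, and it is the Type-$\cA^\beta$ hypothesis, through $\phi,\psi\approx\|(\nabla_\nu)^{k_q}u\|^{\frac{2(\beta-1)}{k_q+1}}$ (cf. Corollary~\ref{cor:tLwBddHomInd} and the closing argument of Theorem~\ref{th:HypNSEReg}), that converts this surplus into $C_1^*[\beta](k_q,t_0)^{-1}$. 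Without this step your scheme only reproduces the coefficient of order one appearing in \eqref{eq:AscDerUpBddHyp}, and the strict gain on which Lemma~\ref{le:AstrBddSp} and the main regularity proof depend is lost. You should make this quantitative link explicit before regarding the proof as complete.
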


\begin{lemma}\label{le:AstrBddSp}
Suppose $\sup_{t>t_0}\|u(t)\|\lesssim (T-t_0)^{-\epsilon_i}$ such that \eqref{eq:AscDerCond} holds and the assumption~\eqref{eq:NonHomCond} holds for all $k\ge \ell_i$. If a sharp string $[\ell_i,\ell_{i+q}]^*$ is of Type-$\cA^\beta$ at an initial time $t_0$, then for any $i\le r<i+q$,
\begin{align}\label{eq:RestrAtoB}
\max_{\ell_r\le j\le \ell_{i+q}} \sup_{t_0<s<\tilde t} \cR_{m_r,n_r}(j, c(\ell_r), s) &\lesssim \left(1+\Theta[\beta](w_{i+q},r)^{-1}\right)^{C_1^*[\beta](\ell_r,t_0)\big/\bar\theta_\nu (\ell_i,\ell_{i+q}, t_0)} \notag
\\
&\qquad\qquad \Theta[\beta](i+q,r)\cdot \cR_{m_{i+q},n_{i+q}}(w_{i+q},c(\ell_{i+q}), t_0)
\end{align}
where $C_1^*$ is defined by \eqref{eq:NonHomCond}, $\displaystyle\Theta[\beta](p,r) \lesssim  (T-\tilde t)^{-\epsilon}$ is defined in Theorem~\ref{le:AscendDerHypSp} and $\tilde t$ is the first time when $[\ell_i,\ell_{i+q}]^*$ becomes a non-Type-$\cA^\beta$ string; we set $\tilde t= T$ if $[\ell_i,\ell_{i+q}]^*$ is always of Type-$\cA^\beta$ before $T$.
\end{lemma}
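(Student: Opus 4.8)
The plan is to run the escape-time propagation scheme behind Lemma~\ref{le:AstrBdd}, but with the hyper-dissipative local bound of Theorem~\ref{le:AscendDerHypSp} as the one-step engine and with the single pair $(m,n)$ replaced by the full family $\{(m_p,n_p)\}$ of section-pairs carried by the sharp string. Fix $i\le r<i+q$ and work with the reference scaling $c(\ell_r)$. First I would record the \emph{static} ($t=t_0$) form of the claim: since $[\ell_i,\ell_{i+q}]^*$ is of Type-$\cA^\beta$, Corollary~\ref{cor:tLwBddHomInd} makes it Type-$\cA$, so the collapse \eqref{eq:RestrBtoASp} applies and, by the multiplicativity of $\bar\theta$ together with the joint-compactness \eqref{eq:JointCompact}, the telescoping product of the $\cB^{(p)}$-constants of Corollary~\ref{cor:DerOrdConfig} and the scaling ratios $c(\ell_p)/c(\ell_r)$ collapses to the single factor $\Theta[\beta](i+q,r)$. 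Thus at $t_0$ the maximum of $\cR_{m_r,n_r}(j,c(\ell_r),t_0)$ over $\ell_r\le j\le \ell_{i+q}$ sits in the top section and is comparable to $\Theta[\beta](i+q,r)\,\cR_{m_{i+q},n_{i+q}}(w_{i+q},c(\ell_{i+q}),t_0)$.

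To extend this from $t_0$ to the exit time $\tilde t$, I would partition $(t_0,\tilde t)$ by the escape times of $\|D^{k_q}u\|$ and, on each consecutive pair, invoke the real-solution bound \eqref{eq:AscDerUpBddRealHypSp}. Because the string stays Type-$\cA^\beta$ on all of $(t_0,\tilde t)$ (by the definition of $\tilde t$), the top value dominates and \eqref{eq:AscDerUpBddRealHypSp} reads, step by step, as multiplication of the running top bound by at most $\bigl(1+\Theta[\beta](i+q,r)^{-1}\bigr)$; the implication \eqref{eq:JointMonoImp} at each joint guarantees that the ascending/monotone structure survives the changes of $(m_p,n_p)$ so that consecutive steps compose without loss. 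Iterating over the $N$ escape-time steps therefore produces the prefactor $\bigl(1+\Theta[\beta](i+q,r)^{-1}\bigr)^{N}$ in front of $\Theta[\beta](i+q,r)\,\cR_{m_{i+q},n_{i+q}}(w_{i+q},c(\ell_{i+q}),t_0)$.

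The remaining step is to bound the number of steps by $N\lesssim C_1^*[\beta](\ell_r,t_0)/\bar\theta_\nu(\ell_i,\ell_{i+q},t_0)$. Each hyper-dissipative step advances time by a length scaling like $\|D^{k_q}u\|^{-2\beta/((2\beta-1)(k_q+1))}$, the $\beta$-corrected analogue of the diffusive time-scale; along a Type-$\cA^\beta$ string the lower derivatives stay essentially frozen while only the top grows, so the accumulated elapsed time can be re-expressed as the decay of $\bar\theta_\nu(\ell_i,\ell_{i+q},\cdot)$. The string leaves Type-$\cA^\beta$ precisely when $\bar\theta$ would descend through the threshold $C_1^*$ of \eqref{eq:NonHomCond}; comparing the initial value $\bar\theta_\nu(\ell_i,\ell_{i+q},t_0)$ to that threshold yields the advertised count and hence the stated estimate.

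I expect the last step to be the crux. The delicate point is to make the translation between elapsed physical time and the decay of the homogeneity index quantitatively consistent with the hyper-dissipative time-scale carrying the exponent $2\beta/(2\beta-1)$: it is exactly the surplus $\frac{1}{2\beta-1}<1$ produced by $\beta>1$ that shortens each escape step relative to the Navier--Stokes case and thereby keeps $N$ finite, rather than merely controlling the per-step factor, and this must be reconciled with the explicit $\beta$-dependence built into $C_1^*[\beta]$ in \eqref{eq:NonHomCond}. A secondary obstacle is purely bookkeeping: verifying that \eqref{eq:JointCompact} persists in time so that, at every intermediate escape time, the telescoped $\cB^{(p)}$-constants across all $q$ joints keep collapsing to the single $\Theta[\beta](i+q,r)$ instead of accumulating a spurious product over the sections.
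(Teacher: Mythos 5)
Your overall strategy coincides with the paper's: the paper's own proof of Lemma~\ref{le:AstrBddSp} is a three-sentence sketch that (a) uses the implication \eqref{eq:JointMonoImp} to propagate the Type-$\cA$ structure across the joints $(m_p,n_p)$, (b) runs the process of Lemma~\ref{le:AstrBdd} ``simultaneously on different layers,'' terminating from the larger pairs down to the smallest pair satisfying \eqref{eq:ParaAdjMaxPIndW} or \eqref{eq:kRegScale}, and (c) restricts the head of the string by Theorem~\ref{le:AscendDerHypSp}. Your first two steps are a faithful, more explicit rendering of exactly this, so at the level of architecture there is no divergence.

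Two cautions on the part you yourself flag as the crux. First, your mechanism for the exit time is stated backwards: by the paper's definition, a sharp string is of Type-$\cA^\beta$ when the homogeneity index satisfies $\bar\theta_\nu \le C_1^*[\beta]$, so the string leaves Type-$\cA^\beta$ when $\bar\theta_\nu$ \emph{rises above} the threshold, not when it ``descends through'' it; since your proposed bound on the step count $N$ rests on tracking the decay of $\bar\theta_\nu$ toward the threshold, this sign matters. Second, your identification of the exponent $C_1^*[\beta](\ell_r,t_0)\big/\bar\theta_\nu(\ell_i,\ell_{i+q},t_0)$ with the number of escape-time sub-steps inside $(t_0,\tilde t)$ is a plausible guess but is nowhere derived, and it sits uneasily with how the lemma is consumed in the proof of Theorem~\ref{th:HypNSEReg}: there, a single application of the lemma over one interval of length $T_{k_q}$ already carries the exponent $C_1^*[\beta]/\bar\theta_\nu$ (see \eqref{eq:m0IncreN}), and the count of applications enters as a \emph{separate} additional exponent $n^*$. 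This suggests the paper intends $C_1^*[\beta]/\bar\theta_\nu$ to arise within one pass (as the ratio between the threshold in \eqref{eq:NonHomCond} and the actual homogeneity index, i.e., how far the string sits inside the Type-$\cA^\beta$ regime), rather than as a count of passes. Since all of the $\beta$-dependence of \eqref{eq:RestrAtoB} lives in this exponent, and the paper's own proof does not supply the derivation either, this is the step you would need to make rigorous before the argument can be considered complete.
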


\begin{proof}
Recall that by the implication~\eqref{eq:JointMonoImp} we know that if $[\ell_i,\ell_{i+q}]_{(p)}$ is of Type-$\cA$ then $[\ell_i,\ell_{i+q}]_{(p+1)}$ is of Type-$\cA$ as well.
Make an ascending argument for each pair $(m_p,n_p)$; as time $t$ approaches to a possible blow-up time $T$, the process described in Lemma~\ref{le:AstrBdd} takes effect simultaneously on different `layers' with the subscript $(m_p,n_p)$ and it terminates from the larger pairs until the smallest pair $(m,n)$ which satisfies either \eqref{eq:ParaAdjMaxPIndW} or \eqref{eq:kRegScale}.
Meanwhile, the head of the string is restricted by Theorem~\ref{le:AscendDerHypSp}.
\end{proof}

\begin{lemma}\label{le:MaxIndxAStrSp}
Suppose $\sup_{t>t_0}\|u(t)\|\lesssim (T-t_0)^{-\epsilon_i}$ such that \eqref{eq:AscDerCond} holds while \eqref{eq:JointCompact} and \eqref{eq:ParaAdjMaxPRefn} are satisfied at any temporal point with $\ell=\ell_i$ and $(k,c(k))=(\ell_p,c(\ell_p))$ for any $i\le p\le i+q$.
If a sharp string $[\ell_i,\ell_{i+q}]^*$ is of Type-$\cB$ (resp. Type-$\cA$) at an initial time $t_0$ and $\tilde t$ is the first time when it switches to a Type-$\cA$ (resp. Type-$\cA^\beta$) sharp string, then the index $k_p$ described in \eqref{eq:NonDescAtLi} for any $i\le p\le i+q$ has a maximum; more precisely, with the notation in the proof of Lemma~\ref{le:AstrBdd} and $p^*$ being the index for the maximum in $\{\cR_{m_p,n_p}(w_p, c(\ell_p), \tilde t)\}_{i\le p\le i+q}$, there exists an index $k_*$ such that
\begin{align*}
\bar \theta_\nu [\alpha] (j,w_{p^*},\tilde t) &\lesssim C_2^*(j,w_{p^*},\tilde t) \ , \qquad \forall\ w_{p^*}\le j\le k_* \ ,
\\
\bar \theta_\nu [\alpha] (j,w_{p^*},\tilde t) &\lesssim C_2^*(j,w_{p^*},\tilde t) \ , \qquad \forall\ j>k_* \ ,
\end{align*}
and at $j=k_*$, in particular, $\bar \theta_\nu [\alpha] (k_*,w_{p^*},\tilde t) \approx C_2^*(k_*,w_{p^*},\tilde t)$. Moreover, $k_*\le \ell_{i+3q}$.
\end{lemma}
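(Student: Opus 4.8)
The plan is to construct the maximal ascending index $k_*$ by gluing, across the $q$ constituent sections, the ascending chains that appear the instant the sharp string becomes Type-$\cA$ (resp.\ Type-$\cA^\beta$) at $\tilde t$, and then to bound its location by playing the forced growth of $\bar\theta_\nu[\alpha]$ along an ascending chain against its natural ceiling $C_2^*$. First I would record the structural picture at $t=\tilde t$. Since $[\ell_i,\ell_{i+q}]^*$ has just switched type, each section $[\ell_p,\ell_{p+1}]$ carries an index $w_p$ realizing the sectional maximum of $\cR_{m_p,n_p}(\cdot,c(\ell_p),\tilde t)$, and by the joint implication \eqref{eq:JointMonoImp} together with the transitivity of monotonicity (Lemma~\ref{le:Rk-transitivity}) these sectional maxima are comparable across the joints $(m_p,n_p)\to(m_{p+1},n_{p+1})$. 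Transitivity then forces the profile $j\mapsto \cR_{m_p,n_p}(j,c(\ell_p),\tilde t)$ to be non-decreasing up to a single peak and non-increasing afterwards; the location of the peak is the largest index at which \eqref{eq:NonDescAtLi} can hold, which I take as $k_*$, and $p^*$ is the section carrying it, so that $w_{p^*}$ realizes the global maximum of $\{\cR_{m_p,n_p}(w_p,c(\ell_p),\tilde t)\}_{i\le p\le i+q}$. Existence and uniqueness of $k_*$ is precisely the maximal-extension statement of Lemma~\ref{le:Rk-transitivity}.

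Next I would pin down the homogeneity-index bounds. Applying Theorem~\ref{cor:LwBddTheta} to the Type-$\cA$ sharp string present at $\tilde t$ gives the natural ceiling $\bar\theta_\nu[\alpha](j,w_{p^*},\tilde t)\lesssim C_2^*(j,w_{p^*},\tilde t)$ uniformly in $j$, on both the ascending side $w_{p^*}\le j\le k_*$ and the descending side $j>k_*$, via multiplicativity of $\bar\theta$ and the product form \eqref{eq:HomIndUppBdd}. The saturation at $k_*$ is forced by its defining property: just beyond $k_*$ the ascending condition \eqref{eq:NonDescAtLi} fails, and feeding this failure through $\bar\theta_\nu[\alpha](j,w_{p^*})\approx\bar\theta_\nu[\alpha](j,k_*)\cdot\bar\theta_\nu[\alpha](k_*,w_{p^*})$ pins $\bar\theta_\nu[\alpha](k_*,w_{p^*},\tilde t)\approx C_2^*(k_*,w_{p^*},\tilde t)$.

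Finally, for the quantitative bound $k_*\le\ell_{i+3q}$ I would argue by contradiction. If the ascending chain persisted past $\ell_{i+3q}$, then the lacunary spacing $\ell_{r+1}\ge\Lambda\ell_r$ guarantees a sub-pair such as $(\ell_{i+2q},\ell_{i+3q})$ meeting \eqref{eq:AscDerCond}, so Theorem~\ref{le:AscendDerHypSp} becomes applicable on that sub-string and produces an upper bound on the complex extension carrying the factor $\Theta[\beta]$. Comparing this with the Type-$\cA^\beta$ smallness condition \eqref{eq:NonHomCond} and the natural ceiling $C_2^*$ shows that the homogeneity index at those high indices is driven strictly below the level required for \eqref{eq:NonDescAtLi} to persist, contradicting that they are ascending indices; the appearance of $3q$ is exactly the number of lacunary sections needed to embed an auxiliary ascending string beyond the original one while keeping \eqref{eq:JointCompact} valid, so that $\bar\theta_\nu[\alpha]$ remains multiplicative and comparable to the ratio of $L^\infty$-norms along the whole extension. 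The main obstacle is this last step: keeping the $(m_p,n_p)$-bookkeeping consistent across the joints so that the compounded smallness survives gluing, and verifying that the $\Theta[\beta]$-bound of Theorem~\ref{le:AscendDerHypSp} genuinely undercuts the ascending threshold within the allotted $2q$ extra sections rather than only asymptotically, which is where the hypothesis $\beta>1$ enters decisively.
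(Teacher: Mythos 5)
Your overall skeleton is aligned with what the paper intends: its proof of this lemma is a one-line delegation (``make an ascending or descending argument as in Theorem~\ref{cor:LwBddTheta} \ldots for each layer $(m_p,n_p)$''), and your use of \eqref{eq:JointMonoImp}, the layer-by-layer bookkeeping, and Theorem~\ref{cor:LwBddTheta} for the ceiling $C_2^*$ is the right toolkit. However, there is a genuine gap in your first step. Lemma~\ref{le:Rk-transitivity} states that descent propagates \emph{backward} (\eqref{eq:Rk-transBack}) and ascent propagates \emph{forward} (\eqref{eq:Rk-transFor}); this forces the profile $j\mapsto \cR_{m_p,n_p}(j,c(\ell_p),\tilde t)$ into a valley shape (non-increasing, then non-decreasing), not the single-peak shape you assert. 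Under the correct shape, once the chain begins to ascend it never stops ascending by transitivity alone, so the existence of a maximal ascending witness $k_*$ is emphatically \emph{not} ``precisely the maximal-extension statement of Lemma~\ref{le:Rk-transitivity}'' --- it is the nontrivial content of the present lemma, and it can only come from the ceiling: the ratio $\bar\theta_\nu[\alpha](j,w_{p^*},\tilde t)$ grows along the ascending tail while Theorem~\ref{cor:LwBddTheta} (together with Corollary~\ref{cor:tLwBddHomInd}) caps it by $C_2^*(j,w_{p^*},\tilde t)$, and $k_*$ is by definition the index at which these two quantities meet, whence the saturation $\bar\theta_\nu[\alpha](k_*,w_{p^*},\tilde t)\approx C_2^*(k_*,w_{p^*},\tilde t)$. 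Your argument gets the saturation statement as an afterthought when it should be the engine of the existence claim.

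Relatedly, the quantitative bound $k_*\le\ell_{i+3q}$ is asserted rather than proved: you correctly identify that one must show the ascending chain's growth saturates $C_2^*$ within two additional blocks of $q$ lacunary sections (using Theorem~\ref{le:AscendDerHypSp} and the $\Theta[\beta]$ factor against the Type-$\cA^\beta$ condition \eqref{eq:NonHomCond}), but you then label exactly this comparison ``the main obstacle'' without carrying it out. Since the $(m_p,n_p)$-joints must satisfy \eqref{eq:JointCompact} for the homogeneity index to remain multiplicative across the extension, and since the rate at which $C_2^*(j,w_{p^*},\tilde t)$ grows in $j$ (a product of factors $\|(\nabla_\nu)^r u\|^{(d/2-1)/(r(r+1)(r+d/2))}$) must be compared explicitly with the per-section gain forced by \eqref{eq:NonDescAtLi}, the appearance of the specific constant $3q$ needs an actual computation, not just the remark that $2q$ extra sections ``is exactly the number needed.'' As written, the proposal would not establish the lemma.
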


\begin{proof}
Make an ascending or descending argument as in Theorem~\ref{cor:LwBddTheta} or Lemma~3.18 in \citet{Grujic2019} for each layer $(m_p,n_p)$.
\end{proof}

\bigskip

\bigskip

\begin{proof}[Proof of Theorem~\ref{th:HypNSEReg}]

Define $\hat{\ell}_i:=\ell_{iq}$ and $\hat{w}_i:=w_{\hat{p}_i}$ where $\hat{p}_i$ is the minimal index within $\{iq, \cdots, (i+1)q\}$ such that
\begin{align*}
\cR_{m_{\hat{p}_i},n_{\hat{p}_i}}\left(w_{\hat{p}_i},c(\ell_{\hat{p}_i}), t\right) = \max_{iq\le p\le (i+1)q} \cR_{m_p,n_p}(w_p,c(\ell_p), t) \ .
\end{align*}
In the following we write $\hat{\cR}_{\hat{p}_i}(t)$ for short.
Note that $\hat{p}_i(t)$ and $\hat{w}_i(t)$ may be variant in time, and we will always assume $\hat{p}_i$ and $\hat{w}_i$ correspond to the temporal point $t$ in $\cR\left(\cdot,\cdot, t\right)$ if there is no ambiguity.
Let $\hat{t}_1(i)$ be the first time when $[\hat{\ell}_i, \hat{\ell}_{i+1}]$ switches to a Type-$\cA$ sharp string if it is of Type-$\cB$ at $t_0$ (in particular, $\hat{t}_1(i)=t_0$ if $[\hat{\ell}_i, \hat{\ell}_{i+1}]$ is of Type-$\cA$ at $t_0$) and let $\tilde{t}_1(i)$ be the first time when $[\hat{\ell}_i, \hat{\ell}_{i+1}]$ switches to a Type-$\cB$ sharp string after $\hat{t}_1(i)$.
Inductively, we let $\hat{t}_n(i)$ (resp. $\tilde{t}_n(i)$) be the first time when $[\hat{\ell}_i, \hat{\ell}_{i+1}]$ switches to a Type-$\cA$ (resp. Type-$\cB$) sharp string after $\tilde{t}_{n-1}(i)$ (resp. after $\hat{t}_n(i)$).
Let $\tilde t_n[\beta](i)$ the first time when a Type-$\cA^\beta$ string $[\hat{\ell}_i, \hat{\ell}_{i+1}]^*$ switches to non-Type-$\cA^\beta$ from the last temporal point $\hat{t}_n[\beta](i)$ that it becomes a Type-$\cA^\beta$ string.
In the following we will write $\tilde t_n(i)$ and $\hat{t}_n(i)$ for short if no ambiguity.

We first prove the statement by assuming that switching between Type-$\cA^\beta$ and non-Type-$\cA^\beta$ only occurs within $[\hat{\ell}_0, \hat{\ell}_1]$. We will verify in the proof step by step that Theorem~\ref{cor:LwBddTheta}, Lemma~\ref{le:AstrBddSp} and Lemma~\ref{le:MaxIndxAStrSp} are applicable for all $i$ by showing $\|u(t)\|\lesssim (T-t)^{-\epsilon_i}$.
With this and the assumption~\eqref{eq:NonHomCond} for all $k\ge \ell_0$, in particular, for $i=0$, Theorem~\ref{cor:LwBddTheta} and Lemma~\ref{le:AstrBddSp} indicate that, for any $0\le r\le q$,
\begin{align}
\max_{\ell_r\le j\le \ell_q} \sup_{t_0<s<\hat{t}_1(0)} \cR_{m_r,n_r}(j, c(\ell_r), s) &\le  \max_{r\le p\le q} \cR_{m_p,n_p}(w_p, c(\ell_p), t_0) \ , \notag
\\
\max_{\ell_r\le j\le \ell_q} \sup_{\hat{t}_1(0)<s<\tilde{t}_1(0)} \cR_{m_r,n_r}(j, c(\ell_r), s) &\lesssim \Theta[\beta](\hat{p}_0,r) \cdot \hat{\cR}_{\hat{p}_0}(\hat{t}_1(0)) \ , \label{eq:StrN0B1}
\end{align}
where $\Theta[\beta](\hat{p}_0,r)$ is a constant given by Lemma~\ref{le:AstrBddSp}. (Note that the first estimate above can be trivial in a sense that $\hat{t}_1(i)=t_0$.) Connection of the above results at $\hat{t}_1(0)$ yields, for any $0\le r\le q$,
\begin{align*}
\max_{\ell_r\le j\le \ell_q} \sup_{t_0<s<\tilde{t}_1(0)} \cR_{m_r,n_r}(j, c(\ell_r), s) &\lesssim \Theta[\beta](\hat{p}_0,r) \cdot \hat{\cR}_{\hat{p}_0}(t_0) \ .
\end{align*}
In particular, $\Theta[\beta](\hat{p}_0,0)\lesssim C_1^*[\beta](\ell_{\hat{p}_0},t_0)\cdot C_2^*(\ell,\ell_{\hat{p}_0},t_0)\cdot c(\ell_{\hat{p}_0})/c(\ell)\lesssim  (T-t_0)^{-\epsilon}$.
By Lemma~\ref{le:GNIneq} and the above result, for any $t_0<s< \tilde{t}_1(0)$,
\begin{align*}
\|u\left(s\right)\| &\lesssim \|u_0\|_2 \|(\nabla_\nu)^{\ell}u\left(s\right)\|^{\frac{d/2}{\ell+d/2}} \notag
\\
&\lesssim \|u_0\|_2 \left(c(\ell)^{\frac{\ell}{\ell+1}} (\ell!)^{\frac{1}{\ell+1}} \cR_{0,0}\left(\ell, c(\ell), s\right) \right)^{\frac{(d/2)(\ell+1)}{\ell+d/2}}  \notag
\\
&\lesssim \|u_0\|_2 \left(c(\ell)^{\frac{\ell}{\ell+1}} (\ell!)^{\frac{1}{\ell+1}} \Theta[\beta](\hat{p}_0,0) \cdot \hat{\cR}_{\hat{p}_0}(t_0)\right)^{\frac{(d/2)(\ell+1)}{\ell+d/2}} \ .  
\end{align*}
Suppose that $\|u_0\|\lesssim (T-t_0)^{-1/d}$ up to some constant. By Theorem~\ref{th:LinftyIVP}, we may assume without loss of generality that
\begin{align*}
\|(\nabla_\nu)^{\hat{w}_0}u_0\|^{\frac{1}{\hat{w}_0+1}} \lesssim (\hat{w}_0!)^{\frac{1}{\hat{w}_0+1}} \|u_0\| \lesssim (\hat{w}_0!)^{\frac{1}{\hat{w}_0+1}} (T-t_0)^{-1/d} \ .
\end{align*}
Combining the above results yields
\begin{align*}
\sup_{t_0 <s< \tilde{t}_1(0)} \|u(s)\| &\lesssim \|u_0\|_2 \left(c(\ell)^{\frac{1}{\hat{w}_0+1} - \frac{1}{\ell+1}} (\ell!)^{\frac{1}{\ell+1}} \Theta[\beta](\hat{p}_0,0) \cdot (T-t_0)^{-1/d} \right)^{d/2}
\\
&\lesssim_{\|u_0\|_2} \left( (\hat{w}_0/\ell)^{\ln(2e/\eta)} (T-t_0)^{-1/d}\right)^{d/2} \lesssim_{\|u_0\|_2} (\ell_q/\ell)^{\frac{d}{2}\ln\left(\frac{2e}{\eta}\right)} (T-t_0)^{-1/2} \ ,
\end{align*}
which justifies the assumption of the two lemmas for $t< \tilde{t}_1(0)$.
With in mind that $[\hat{\ell}_0, \hat{\ell}_1]$ is of Type-$\cB$ at $\tilde{t}_1(0)$, the particular restriction of \eqref{eq:StrN0B1} at $\tilde{t}_1(0)$ together with Theorem~\ref{cor:LwBddTheta} (starting at $\tilde{t}_1(0)$) indicates that, for any $0\le r\le q$,
\begin{align*}
\max_{\ell_r\le j\le \ell_q} \sup_{\tilde{t}_1(0)<s<\hat{t}_2(0)} \cR_{m_r,n_r}(j, c(\ell_r), s) &\lesssim \Theta[\beta](\hat{p}_0,r) \cdot \hat{\cR}_{\hat{p}_0}(\hat{t}_1(0)) \lesssim  \Theta[\beta](\hat{p}_0,r) \cdot \hat{\cR}_{\hat{p}_0}(t_0) \ .
\end{align*}
And the same argument as above leads to
\begin{align*}
\sup_{t_0 <s< \hat{t}_2(0)} \|u(s)\| &\lesssim \|u_0\|_2 (\ell_q/\ell)^{\frac{d}{2}\ln\left(\frac{2e}{\eta}\right)} (T-t_0)^{-1/2} \ ,
\end{align*}
which justifies the assumption of the two lemmas up to $t< \hat{t}_2(0)$.

We continue the previous argument at $\hat{t}_2(0)$. Again, by Lemma~\ref{le:AstrBddSp}, for any $0\le r\le q$,
\begin{align*}
\max_{\ell_r\le j\le \ell_q} \sup_{\hat{t}_2(0) <s< \hat{t}_2(0) + T_{k_q}}\!\! \cR_{m_r,n_r}(j, c(\ell_r), s) \lesssim \Theta[\beta](\hat{p}_0,r) \cdot \hat{\cR}_{\hat{p}_0}(\hat{t}_2(0)) \ ,
\end{align*}
where $T_{k_q}\approx \left(\phi(\ell,k_q)+\psi(\ell,k_q)\right)^{-\frac{2\beta}{2\beta-1}} \|(\nabla_\nu)^{k_q}u(\hat{t}_2(0))\|^{-\frac{2\beta}{(2\beta-1)(k_q+1)}}$ with $\phi,\psi$ introduced in Theorem~\ref{le:AscendDerHyp}, and by Lemma~\ref{le:MaxIndxAStrSp} we know $k_q\le \ell_{3q}$, thus
\begin{align*}
T_{k_q} \gtrsim \left(\phi(\ell,\ell_{3q})+\psi(\ell,\ell_{3q})\right)^{-\frac{2\beta}{2\beta-1}} \|(\nabla_\nu)^{\ell_{3q}}u(\hat{t}_2(0))\|^{-\frac{2\beta}{(2\beta-1)(\ell_{3q}+1)}} \ .
\end{align*}
Without loss of generality, we assume that $\hat{p}_0$ is invariant in time and that $\tilde t_2(0) \in [\hat{t}_2(0), \hat{t}_2(0) + T_{k_q}]$. In general, we assume $\tilde t_n(0) \in [\hat{t}_n(0), \hat{t}_n(0) + T_{k_q}]$, and by Lemma~\ref{le:AstrBddSp} and Proposition~\ref{prop:HarMaxPrin}
\begin{align}
\sup_{\hat{t}_n(0) <s< \hat{t}_n(0) + T_{k_q}}\! \cR_{m_r,n_r}(\hat{w}_0, c(\ell_{\hat{p}_0}), s) &\le \left(1+\Theta[\beta](\hat{p}_0,r)^{-1}\right)^{\frac{C_1^*[\beta](\ell_r,\hat{t}_n(0))}{\bar\theta_\nu (\hat{\ell}_0, \hat{\ell}_1, \hat{t}_n(0))}} \hat{\cR}_{\hat{p}_0}(\hat{t}_n(0)) \ , \label{eq:m0IncreN}
\\
\hat{\cR}_{\hat{p}_0}(k_q, c(\ell_{\hat{p}_0}), \hat{t}_n(0) + T_{k_q}) &\le \left(\mu_{k_q}(\hat{p}_0)\right)^{\frac{1}{k_q+1}} \cdot \hat{\cR}_{\hat{p}_0}(k_q, c(\ell_{\hat{p}_0}), \hat{t}_n(0)) \ . \label{eq:kqDecreN}
\end{align}
We claim that with the above settings, one of the followings occurs:

(I) $\hat{\cR}_{\hat{p}_0}\left(\hat{t}_{n+1}(0)\right)\le \hat{\cR}_{\hat{p}_0}\left(\hat{t}_n(0)\right)$;

\vspace{0.04in}

(II) $\hat{t}_{n+1}(0) - \tilde t_n(0)\ge \left(\phi+\psi\right)^{-\frac{2\beta}{2\beta-1}} \|(\nabla_\nu)^{k_q}u\left(\hat{t}_n(0) + T_{k_q}\right)\|^{-\frac{2\beta}{(2\beta-1)(k_q+1)}}$.

\noindent\textit{Proof of the claim:} Assume the opposite of (I), i.e. $\hat{\cR}_{\hat{p}_0}\left(\hat{t}_{n+1}(0)\right)\ge \hat{\cR}_{\hat{p}_0}\left(\hat{t}_n(0)\right)$. Without loss of generality we assume that $k_q$ is invariant with $\hat{t}_n(0)$, and that $\hat t_{n+1}(0)>\hat{t}_n(0) + T_{k_q}$.
With in mind that $[\hat{\ell}_0, \hat{\ell}_1]$ is of Type-$\cA$ at $\hat{t}_n(0)$ and at $\hat{t}_{n+1}(0)$, the opposite of (I) indicates that
\begin{align*}
&\hat{\cR}_{\hat{p}_0}\left(k_q, c(\ell_{\hat{p}_0}), \hat{t}_{n+1}(0)\right) = \hat{\cR}_{\hat{p}_0}\left(\hat{t}_{n+1}(0)\right)
\ge \hat{\cR}_{\hat{p}_0}\left(\hat{t}_n(0)\right) = \hat{\cR}_{\hat{p}_0}\left(k_q, c(\ell_{\hat{p}_0}), \hat{t}_n(0)\right)
\end{align*}
which, combined with \eqref{eq:kqDecreN}, yields
\begin{align*}
\hat{\cR}_{\hat{p}_0}\left(k_q, c(\ell_{\hat{p}_0}), \hat{t}_{n+1}(0)\right) \ge \left(\mu_{k_q}(\hat{p}_0)\right)^{-\frac{1}{k_q+1}} \cdot \hat{\cR}_{\hat{p}_0}(k_q, c(\ell_{\hat{p}_0}), \hat{t}_n(0) + T_{k_q}) \ ,
\end{align*}
in other words, $\|(\nabla_\nu)^{k_q}u\left(\hat{t}_{n+1}(0)\right)\| \ge \left(\mu_{k_q}(\hat{p}_0)\right)^{-1} \|(\nabla_\nu)^{k_q}u\left(\hat{t}_n(0) + T_{k_q}\right)\|$.
By Theorem~\ref{th:MainThmVel} (applied in the opposite way), the time span required for $D^{k_q}u$ to increase by $M_{k_q}=\left(\mu_{k_q}(\hat{p}_0)\right)^{-1}$ is at least (with in mind that $\hat t_{n+1}(0)>\hat{t}_n(0) + T_{k_q}$),
\begin{align*}
T_{k_q}^* := \left(\phi(\ell_{\hat{p}_0},k_q)+\psi(\ell_{\hat{p}_0},k_q)\right)^{-\frac{2\beta}{2\beta-1}} \|(\nabla_\nu)^{k_q}u_0\|^{-\frac{2\beta}{(2\beta-1)(k_q+1)}} \ .
\end{align*}
Recall that $\tilde t_n(0)<\hat{t}_n(0) + T_{k_q}$, so
\begin{align*}
\hat{t}_{n+1}(0) - \tilde t_n(0) \ge T_{k_q}^* := \left(\phi+\psi\right)^{-\frac{2\beta}{2\beta-1}} \|(\nabla_\nu)^{k_q}u_0\|^{-\frac{2\beta}{(2\beta-1)(k_q+1)}} \ .
\end{align*}
This ends the proof of the claim.
Moreover, by Lemma~\ref{le:MaxIndxAStrSp} we know $k_q\le \ell_{3q}$ and
\begin{align*}
\hat{t}_{n+1}(0) - \tilde t_n(0) \ge  T_{k_q}^* \ge \left(\phi+\psi\right)^{-\frac{2\beta}{2\beta-1}} \|(\nabla_\nu)^{k_q}u_0\|^{-\frac{2\beta}{(2\beta-1)(k_q+1)}} \ .
\end{align*}
The above claim together with multiple iterations of \eqref{eq:m0IncreN} leads to
\begin{align*}
\hat{\cR}_{\hat{p}_0}\left(\hat{t}_{n+1}(0)\right) &\le \left(1+\Theta[\beta](\hat{p}_0,0)^{-1}\right)^{\frac{C_1^*[\beta](\hat{\ell}_0,\hat{t}_2(0))}{\bar\theta_\nu (\hat{\ell}_0, \hat{\ell}_1, \hat{t}_2(0))}\xi} \ \hat{\cR}_{\hat{p}_0}\left(\hat{t}_2(0)\right)
\end{align*}
where $\xi$ is the total number of times that Case~(II) in the claim occurs within $[\hat{t}_2(0), \hat{t}_{n+1}(0)]$. The worst scenario is $\xi =n$, that is, Case~(II) in the claim occurs throughout $[\hat{t}_2(0), \hat{t}_{n+1}(0)]$, in which case, the above restriction, together with Theorem~\ref{cor:LwBddTheta} and Lemma~\ref{le:AstrBddSp} (applied for $n$ times), indicates that, for any $0\le r\le q$,
\begin{align*}
\max_{\ell_r\le j\le \ell_q} \sup_{t_0 <s< \hat{t}_{n+1}(0)}\!\! \cR_{m_r,n_r}(j, c(\ell_r), s) &\le \left(1+\Theta[\beta](\hat{p}_0,r)^{-1}\right)^{\frac{C_1^*[\beta](\ell_r,t_0)}{\bar\theta_\nu (\hat{\ell}_0, \hat{\ell}_1, t_0)}n} \Theta[\beta](\hat{p}_0,r) \cdot \hat{\cR}_{\hat{p}_0}(t_0) \ .
\end{align*}
Recall that the precise upper bound for the ratio was given in the proof of Lemma~\ref{le:AstrBddSp}:
\begin{align*}
\left(1+\Theta[\beta](\hat{p}_0,r)^{-1}\right)^{\frac{C_1^*[\beta](\ell_r,t_0)}{\bar\theta_\nu (\hat{\ell}_0, \hat{\ell}_1, t_0)}} &\lesssim \left(1+\eta^{\epsilon_2} (T-\hat{t}_n(0))^{(\beta-1)/(2\ell_q) + \xi_r(d/2-1)(\ell_r^{-1}-\ell_q^{-1})} \right)^{(T-t_0)^{-(\beta-1)/(2\ell_q)}}
\\
&\lesssim \exp\left(\eta^{\epsilon_2} (T-\hat{t}_n(0))^{\xi_r(d/2-1)(\ell_r^{-1}-\ell_q^{-1})}\right)
\end{align*}
where $\xi_r$ was defined earlier in the section, assuming $\bar\theta_\nu (\hat{\ell}_0, \hat{\ell}_1, t_0)\gtrsim 1$ (The opposite case will be discussed later in the proof).

In the rest of the proof, we show that the above iterations of Theorem~\ref{cor:LwBddTheta} and Lemma~\ref{le:AstrBddSp} repeat for finitely many times as $\hat{t}_n(0)$ is approaching towards $T$ by revealing that the time span $\hat{t}_{n+1}(0) - \hat{t}_n(0)$ (or $\hat{t}_{n+1}(i_*) - \hat{t}_n(i_*)$ for some index $i_*$) for each application of Lemma~\ref{le:AstrBddSp} and Theorem~\ref{cor:LwBddTheta} remains greater than a fixed number.
Note that he above argument guarantees that at least for small value of $n$ this is the case:
\begin{align*}
\hat{t}_{n+1}(0) - \hat{t}_n(0) \ge T_{k_q} + T_{k_q}^* \ge 2^{-2\ell_{3q}} \|(\nabla_\nu)^{\ell_{3q}}u\left(\hat{t}_n(0) + T_{k_q}\right)\|^{-\frac{\beta d}{(2\beta-1)(\ell_{3q}+d/2)}} \ .
\end{align*}
In the following we will write $\hat t_n$ for short.
Assuming this would continue as $\hat{t}_n$ goes towards $T$, then the maximal number of iterations before $\hat{t}_n$ reaches $T$ is
\begin{align*}
n^*:= & (T-\hat{t}_n) / (\hat{t}_{n+1} - \hat{t}_n) \le (T-\hat{t}_n) \cdot 2^{2\ell_{3q}} \|(\nabla_\nu)^{\ell_{3q}}u\left(\hat{t}_n + T_{k_q}\right)\|^{\frac{\beta d}{(2\beta-1)(\ell_{3q}+d/2)}}
\\
&\qquad \le (T-\hat{t}_n) \cdot 2^{2\ell_{3q}} \|(\nabla_\nu)^{\ell_{3q}}u\left(\hat{t}_1\right)\|^{\frac{\beta d}{(2\beta-1)(\ell_{3q}+d/2)}} \le (T-\hat{t}_n) \cdot \|u_0\|^{\frac{\beta d}{2\beta-1}}
\end{align*}
while $\hat{\cR}_{\hat{p}_0}(s)$ increases at most by
\begin{align*}
\left(1+\Theta[\beta](\hat{p}_0,r)^{-1}\right)^{\frac{C_1^*[\beta](\ell_r,\hat{t}_n)}{\bar\theta_\nu (\hat{\ell}_0, \hat{\ell}_1, \hat{t}_n)}n^*} &\le \exp\left((T-\hat{t}_n) \cdot \|u_0\|^{\frac{\beta d}{2\beta-1}} \eta^{\epsilon_2} (T-\hat{t}_n)^{\xi_r(d/2-1)(\ell_r^{-1}-\ell_q^{-1})}\right)
\\
&\lesssim \exp\left(\eta^{\epsilon_2} (T-\hat{t}_n)^{\xi_r(d/2-1)(\ell_r^{-1}-\ell_q^{-1})}\right)
\end{align*}
assuming that $(T-\hat{t}_n)^{1+(\frac{d}{2}-1)/\ell_q} \|u_0\|^{\frac{\beta d}{2\beta-1}}\lesssim 1$. Then, similar to the estimates for $\|u(s)\|$ within $[t_0, \tilde{t}_1(0)]$,
\begin{align*}
\sup_{t_0 <s< \hat{t}_{n^*}(0)} \|u(s)\| &\lesssim \|u_0\|_2 \sup_{t_0 <s< \hat{t}_{n^*}(0)} \left(c(\ell)^{\frac{\ell}{\ell+1}} (\ell!)^{\frac{1}{\ell+1}} \cR_{0,0}\left(\ell, c(\ell), s\right)\right)^{\frac{(d/2)(\ell+1)}{\ell+d/2}}
\\
&\lesssim \|u_0\|_2 \left(c(\ell) \cdot \Theta[\beta](\hat{p}_0,0) \cdot \left(1+\Theta[\beta](\hat{p}_0,0)^{-1}\right)^{C_1^*[\beta] \cdot n^*/ \bar\theta_\nu}  \hat{\cR}_{\hat{p}_0}(t_0)\right)^{\frac{(d/2)(\ell+1)}{\ell+d/2}} \ ,
\end{align*}
thus
\vspace{-0.1in}
\begin{align*}
\sup_{t_0 <s< \hat{t}_{n^*}(0)} \|u(s)\| \lesssim_{\|u_0\|_2} (T-\hat{t}_n)^{-\frac{1}{2}} \exp\left((T-\hat{t}_n)^{\xi_0(d/2)(d/2-1)(\ell^{-1}-\ell_q^{-1})}\right) \approx: \cK(t_0)\cdot (T-t_0)^{-\frac{1}{2}} \ .
\end{align*}
As $\cK(t_0)\approx 1$, this justifies the condition $\sup_{t_0 <s< T} \|u(s)\| \lesssim (T-\hat{t}_{n^*})^{-1/2}$ so Theorem~\ref{cor:LwBddTheta} and Lemma~\ref{le:AstrBddSp} are applicable and the process described above may continue until $T$.

If $(T-\hat{t}_n)^{1+(\frac{d}{2}-1)/\ell_q} \|u_0\|^{\frac{\beta d}{2\beta-1}}\gg 1$, we separate $[t_0, T]$ at some $\cT_1<T$ such that the conditions for Theorem~\ref{cor:LwBddTheta} and Lemma~\ref{le:AstrBddSp} are satisfied within $[t_0, \cT_1]$ so the regularity of the solution remains until $\cT_1$.
Then we separate $[\cT_1 , T]$ at some $\cT_2$ such that the conditions hold within $[\cT_1, \cT_2]$ and similarly one can verify that
\begin{align*}
\sup_{\cT_1 <s< \cT_2} \|u(s)\| \lesssim (T-\cT_2)^{-1/2}
\end{align*}
which justifies the condition for Theorem~\ref{cor:LwBddTheta} and Lemma~\ref{le:AstrBddSp} that are applied to the string $[\hat{\ell}_1, \hat{\ell}_2]$ so regularity remains until $\cT_2$. Inductively, we divide $[\cT_i , T]$ at some $\cT_{i+1}$ such that the conditions hold within $[\cT_i , \cT_{i+1}]$ so that
\begin{align*}
\sup_{\cT_i <s< \cT_{i+1}} \|u(s)\| \lesssim (T-\cT_{i+1})^{-1/2}
\end{align*}
and Theorem~\ref{cor:LwBddTheta} and Lemma~\ref{le:AstrBddSp} are applicable to the string $[\hat{\ell}_i, \hat{\ell}_{i+1}]$ until $\cT_{i+1}$. This dividing process stops at some index $\cT_{i_*}$ and regularity remains until $T$ with $\|u(T)\|\lesssim (T-\cT_{i_*+1})^{-1/2}$. In particular, $T$ is not a blow-up time.
If such process repeats for infinitely many times then $\|u(t)\|\lesssim (T-t)^{-1/2}$, which contradicts with the lower bound of the blow-up rate of Leray solution.

The proof is basically the same if switching between Type-$\cA^\beta$ and non-Type-$\cA^\beta$ only occurs within some $[\hat{\ell}_i, \hat{\ell}_{i+1}]$ with $i\le i_*$ for some fixed index $i_*$.
If Type-$\cA^\beta$ strings $[\hat{\ell}_i, \hat{\ell}_{i+1}]^*$ gets higher and higher indexes $i$ when $t\to T$, then the largest time span between the the two states (Type-$\cA^\beta$ and non-Type-$\cA^\beta$)
\begin{align*}
\hat{t}_{n+1}(i) - \hat t_n(i) &\approx T_{k_{i+q}}^* := \left(\phi+\psi\right)^{-\frac{2\beta}{2\beta-1}} \|(\nabla_\nu)^{k_{i+q}}u(\hat t_n(i))\|^{-\frac{2\beta}{(2\beta-1)(k_{i+q}+1)}}
\\
&\lesssim \left(\phi(\hat\ell_i,\hat\ell_{i+1})+\psi(\hat\ell_i,\hat\ell_{i+1})\right)^{-\frac{2\beta}{2\beta-1}} \left(T-\hat t_n(i)\right)^{\frac{1}{2\beta-1}}
\end{align*}
Recall that, with Corollary~\ref{cor:tLwBddHomInd}, $[\hat{\ell}_i, \hat{\ell}_{i+1}]$ is of Type-$\cA^\beta$ at $\hat t_n(i)$ implies
\begin{align*}
\phi(\hat\ell_i,\hat\ell_{i+1}),\ \psi(\hat\ell_i,\hat\ell_{i+1})\approx \|(\nabla_\nu)^{k_{i+q}}u(\hat t_n(i))\|^{\frac{2(\beta-1)}{k_{i+q}+1}} \lesssim \left(T-\hat t_n(i)\right)^{-\frac{\beta-1}{\beta}}
\end{align*}
therefore $T-\hat t_n(i) \gtrsim \hat{t}_{n+1}(i) - \hat t_n(i) := \hat{T}_n(i)$. 
If $T-\hat t_n(i) \gg \hat{T}_n(i)$, then the above argument implies the blow-up rate of some higher order terms $\|(\nabla_\nu)^ku(t)\|^{\frac{1}{k+1}}$ within the Type-$\cA^\beta$ strings is less than the natural rate $(T-t)^{-\frac{1}{2\beta}}$ and the lower order terms are restricted due to Lemma~\ref{le:Rk-transitivity} and the natural upper bound of the homogeneity index $\bar\theta_\nu$ in Corollary~\ref{cor:tLwBddHomInd}.
If $T-\hat t_n(i) \approx \hat{T}_n(i)$, then either $[\hat{\ell}_i, \hat{\ell}_{i+1}]$ keeps switching between the two states as $\hat t_n(i)\to T$ and the previous argument for $[\hat{\ell}_0, \hat{\ell}_1]$ applied to $[\hat{\ell}_i, \hat{\ell}_{i+1}]$, or Type-$\cA^\beta$ strings only exist for some higher indexes after $\hat t_n(i)$ and we apply the same argument as above to those Type-$\cA^\beta$ strings.
In other words, due to the extra $\beta-1$ diffusion, either the Type-$\cA^\beta$ strings with larger indexes cause slower increment of $\|(\nabla_\nu)^ku(t)\|^{\frac{1}{k+1}}$ than the natural rate $(T-t)^{-\frac{1}{2\beta}}$ and the lower order terms are restricted by the higher order terms within the Type-$\cA^\beta$ strings due to Lemma~\ref{le:Rk-transitivity} and Corollary~\ref{cor:tLwBddHomInd}, or the time period for each switch $\hat{T}_n(i)$ is always comparable to $T-\hat t_n(i)$ for all Type-$\cA^\beta$ strings as $\hat t_n(i)\to T$, thus the previous argument for $[\hat{\ell}_0, \hat{\ell}_1]$ implies either the solution $u$ on some $[\cT_i , \cT_{i+1}]$ is always restricted by $(T-\cT_{i+1})^{-1/2}$ and the process stops at some $\cT_{i_*}$ with $\|u(T)\|\lesssim (T-\cT_{i_*})^{-1/2}$, or the process repeats for infinitely many times and $u$ tends to infinity but $\|u(t)\|\lesssim (T-t)^{-1/2}$ which contradicts the lower bound of the blow-up rate of Leray solution.

\end{proof}

\begin{remark}
Theorem~\ref{th:HypNSEReg} and Theorem~\ref{cor:LwBddTheta} together manifest that non-homogeneity (in the sense of \eqref{eq:NonHomCond}) of $\{\cR_{m,n}^{\bR^d,\nu}(k,c_k,t)\}_{k=i_*}^\infty$ as $t\to T$ implies regularity up to $T$.
\end{remark}

\bigskip

\section{Ruling out rescaled blow-up profiles}\label{sec:hh}

Recall that the family of rescaled blow-up profiles of interest is given by

\begin{equation*}\label{ha}
 u(x, t)=\frac{1}{(-t)^{\alpha_t}} \, U(y, s) \ \ \ \mbox{where} \ \ \ y=\frac{x}{(-t)^{\alpha_x}}, \ \ \ 
 s=-\log(-t)
\end{equation*}
and $U$ is a smooth (in $y$) base profile decaying outside $B(0, 1)$ such that

\begin{align*}
  \|D^{(k)} U(s)\|_{L^\infty} &\le C_k\\
  \|D^{(k)} U(s)\|_{L^\infty(B(0, 1))} &\ge c_k
\end{align*}
uniformly in large $s$ where
\[
\frac{(C_k)^\frac{1}{k+1}}{(c_{2k})^\frac{1}{2k+1}} \le c.
\]

\medskip

Theorem~\ref{th:yay} is a simple consequence of Theorem~\ref{th:HypNSEReg}, the proof is
outlined below. Henceforth, the $L^\infty$-norms are the $L^\infty$-norms over the core region in the $x$-space,
$B(0, R(t))$ where $R(t) \approx {(-t)^{\alpha_x}}$, corresponding to the unit ball in the $y$-space.

\medskip

A straightforward calculation yields 

\begin{equation}\label{eq:ratio}
 \theta(k, 2k, t) \approx \frac{ \|D^{(k)} u(t)\|_\infty^\frac{1}{k+1} }{  \|D^{(2k)} u(t)\|_\infty^\frac{1}{2k+1}  }    \approx {(-t)}^\frac{k (\alpha_x-\alpha_t)}{(k+1)(2k+1)}
\end{equation}
(here, the derivatives are full derivatives; since we are making no assumption on the geometry of the profiles $U$, working with the directional derivatives 
would not yield a gain).

\medskip

In order to satisfy the condition (\ref{eq:NonHomCond}), it suffices that (\ref{eq:ratio}) is bounded by the first term in the maximum whose leading order
term (in $k$) is 
\[
 {(-t)}^{-\frac{\beta-1}{2k+1}}.
\]
Ignoring the quadratic perturbation for a moment, the flow will satisfy the condition as long as
\[
 \alpha_t \le \alpha_x + \frac{k+1}{k} (\beta-1),
\]
and in particular, as long as
\[
 \alpha_t \le \alpha_x + (\beta-1).
\]
Finally, in order to absorb the perturbation, it is enough to require
\begin{equation}\label{eq:line}
 \alpha_t < \alpha_x + (\beta-1),
\end{equation}
producing the dividing line in Figure 1. This yields  Theorem~\ref{th:yay}, (i).

\medskip

For  Theorem~\ref{th:yay}, (ii), it suffices to notice that the dividing line will sweep the potentially singular region 
as soon as $\beta > \frac{1+\sqrt{2}}{2}$.

\medskip

This completes the proof of  Theorem~\ref{th:yay}.

\bigskip

\bigskip

\bigskip

\centerline{\textbf{Acknowledgments}}

\bigskip

The work of Z.G. is supported in part by the National Science Foundation grant DMS--2009607,
``Toward criticality of the Navier-Stokes regularity problem''. 

\medskip

We thank an anonymous referee for their insightful and thoughtful comments.

\bigskip

\bigskip

\bigskip

\noindent \textbf{COI Statement:} On behalf of all authors, the corresponding author states that there is no conflict of interest.

\bigskip

\bigskip

\noindent \textbf{Data Availability Statement:} This manuscript has no associated data.

\bigskip

\bigskip

\bibliographystyle{abbrvnat}
\bibliographystyle{plainnat}

\def\cprime{$'$}

\end{document}